\newcommand{\N}{\mathbb{N}}                     
\newcommand{\Z}{\mathbb{Z}}                     
\newcommand{\R}{\mathbb{R}}                     
\newcommand{\C}{\mathbb{C}}                     
\newcommand{\T}{\mathbb{T}}                     
\newcommand{\coker}{\mathrm{coker\,}}           
\newcommand{\ind}{\mathrm{ind\,}}               
\newcommand{\supp}{\mathrm{supp\,}}             
\newtheorem{thm}{\sc Theorem}[section]               
\newtheorem*{thm*}{\sc Theorem}               
\newtheorem*{cor*}{\sc Corollary}        
\newtheorem{lem}[thm]{\sc Lemma}            
\newtheorem{prop}[thm]{\sc Proposition}     
\newtheorem{rem}[thm]{\sc Remark}           
\noindent\textsc{Ruhr Universit\"at Bochum, Fakult\"at f\"ur Mathematik, Geb\"aude NA 4/33, D-44801 Bochum, Germany} \par  
 \noindent \textit{E-mail address}: \texttt{\href{mailto:alberto.abbondandolo@rub.de}{alberto.abbondandolo@rub.de}} \par
\noindent\textsc{Seoul National University, Department of Mathematical Sciences, Research Institute in Mathematics, Gwanak-Gu, 
	Seoul 08826, South Korea} \par  
\noindent  \textit{E-mail address}: \texttt{\href{mailto:jungsoo.kang@snu.ac.kr}{jungsoo.kang@snu.ac.kr}} \par
\title{Symplectic homology of convex domains and\\ Clarke's duality} 
\author{Alberto Abbondandolo and Jungsoo Kang}
\date{}
\begin{document}

\maketitle

\begin{abstract}
We prove that the Floer complex that is associated with a convex Hamiltonian function on $\mathbb{R}^{2n}$  is isomorphic to the Morse complex of Clarke's dual action functional that is associated with the Fenchel-dual Hamiltonian. This isomorphism preserves the action filtrations. As a corollary, we obtain that the symplectic capacity from the symplectic homology of a convex domain with smooth boundary coincides with the minimal action of closed characteristics on its boundary.
\end{abstract}

\section*{Introduction}

Let $H: \T \times \R^{2n} \rightarrow \R$ be a smooth time-periodic Hamiltonian function on $\R^{2n}$, endowed with its standard symplectic structure 
\[
\omega_0 := \sum_{j=1}^n dp_j \wedge dq_j.
\]
Here $\T:= \R/\Z$ denotes the 1-torus. The corresponding time-periodic Hamiltonian vector field $X_H$ is defined as usual by 
\[
\imath_{X_H} \omega_0 = - dH,
\]
where $d$ denotes differentiation with respect to the spatial variables. The 1-periodic orbits of $X_H$ are precisely the critical points of the action functional
\[
\Phi_H(x) := \frac{1}{2} \int_{\T} J_0 \dot{x}(t)\cdot x(t)\, dt - \int_{\T} H_t(x(t))\, dt, \qquad x\in C^{\infty}(\T,\R^{2n}),
\]
where $J_0$ denotes the standard complex structure on $\R^{2n}$ mapping $(q,p)$ into $(-p,q)$, and we are using the notation $H_t(x):=H(t,x)$. Assuming non-degeneracy of all 1-periodic orbits of $X_H$ and suitable conditions on the behaviour of $H_t(x)$ for $|x|$ large, one can associate a Floer complex with $H$. This is a chain complex $F_*(H)$ of $\Z_2$-vector spaces that are generated by the 1-periodic orbits of $X_H$, whose boundary operator
\[
\partial : F_*(H) \rightarrow F_{*-1}(H)
\]
is obtained by a suitable count of the spaces of cylinders $u: \R \times \T \rightarrow \R^{2n}$ that satisfy the Floer equation
\begin{equation}
\label{floereq}
\partial_s u + J_t(u) ( \partial_t u - X_{H_t}(u)) = 0, \qquad (s,t)\in \R \times \T,
\end{equation}
and are asymptotic to pairs of periodic orbits for $s\rightarrow \pm \infty$. Here, $J$ is a generic time-periodic almost complex structure on $\R^{2n}$ that is compatible with $\omega_0$ and has a suitable behaviour at infinity. Actually, one can work with coefficients in an arbitrary abelian group instead of $\Z_2$, but in this paper we stick to $\Z_2$ coefficients, as this simplifies the presentation and the proofs.

The Floer complex is graded by the Conley-Zehnder index $\mu_{CZ}(x)$, an integer that counts the half-windings of the differential of the flow of $X_{H}$ along $x$ in the symplectic group. The homology of this chain complex, which is known as the Floer homology of $H$, is a considerably stable object and depends only on the behaviour at infinity of the Hamiltonian $H$. It is denoted by $HF_*(H)$. See e.g.\ \cite{sal99}, \cite{ad14}, \cite{as18} and references therein for more information on Hamiltonian Floer theory.

The Floer equation (\ref{floereq}) can be seen as a negative gradient equation for the action functional $\Phi_H$, and Floer homology should be thought as a kind of Morse theory for this functional, which does not have a Morse theory in the usual sense because all its critical points have infinite Morse index and co-index.

When $H$ is strongly convex (i.e.~has an everywhere positive definite second differential) and superlinear in the spatial variable, there is another way of deriving a Morse theory for the 1-periodic orbits of $X_H$. Indeed, following Clarke's \cite{cla79} one can introduce the following dual action functional
\[
\Psi_{H^*}(x) := - \frac{1}{2} \int_{\T} J_0 \dot{x}(t)\cdot x(t)\, dt +\int_{\T} H^*_t(J_0 \dot{x}(t))\, dt, 
\]
where $H^*$ denotes the Fenchel conjugate of $H$ in the spatial variable $z=(q,p)$, which is still a strongly convex and superlinear function (see also \cite{ce80, cla81, ce82}). This functional is invariant under translations, so it can be seen as a functional on the quotient space of smooth closed curves in $\R^{2n}$ modulo translations, that we identify with the space of closed curves with zero mean. The crucial observation of Clarke was that there is a natural one-to-one correspondence between the critical points of $\Phi_H$ and $\Psi_{H^*}$: A closed curve $x$ is a critical point of $\Psi_{H^*}$ if and only if $x+v_0$ is a critical point of $\Phi_H$, for a suitable translation vector $v_0\in \R^{2n}$. Moreover, the direct action functional and the dual one have the same value at their corresponding critical points:
\[
\Psi_{H^*}(x) = \Phi_{H}(x+v_0).
\]
See Ekeland's book \cite{eke90} for a general approach to Clarke's duality, with special emphasis on Hamiltonian systems.

Clarke's dual functional $\Psi_{H^*}$ has better analytical properties than the direct action functional $\Phi_H$: In $\Phi_H$, the indefinite quadratic form 
\begin{equation}
\label{quadratic}
\frac{1}{2} \int_{\T} J_0 \dot{x}(t)\cdot x(t)\, dt,
\end{equation}
is the leading part (the part involving derivatives of $x$, as opposed to the integral of $H_t(x)$, which does not),  whereas in $\Psi_{H^*}$ the leading term is the integral of $H^*_t(J_0 \dot x)$, which defines a convex functional. A consequence of this is that the critical points of $\Psi_{H^*}$ have finite Morse index. Moreover, under suitable assumptions on $H$ (e.g.~$H$ subquadratic, so that $H^*$ is superquadratic), one can find critical points of $\Psi_{H^*}$ just by minimization.

Here we are interested in the global properties of $\Psi_{H^*}$ and its critical points. These properties can be encoded in the Morse complex of $\Psi_{H^*}$. Constructing such a Morse complex presents some analytical difficulties, which we will mention in due time, but it is possible and the outcome is a Morse theory that, unlike Floer's theory for $\Phi_H$, is essentially finite dimensional. It is then a natural question to compare the Floer chain complex associated with $\Phi_H$ to the Morse complex induced by $\Psi_{H^*}$. It is to this question that this paper is devoted.

Before stating our main results, we need to clarify the class of convex Hamiltonians we are going to work with. We shall assume that $H\in C^{\infty}(\T\times \R^{2n})$ is non-degenerate, meaning that all the 1-periodic orbits of $X_H$ are non-degenerate, and quadratically convex, meaning that
\[
\underline{h} |u|^2 \leq d^2 H_t(x)[u,u] \leq \overline{h} |u|^2 \qquad \forall x,u\in \R^{2n},
\]
for suitable positive numbers $\underline{h}$ and $\overline{h}$. Moreover, we shall assume that $H$ is non-resonant at infinity. This means that there are positive numbers $\epsilon$ and $r$ such that every smooth curve $x:\T \rightarrow \R^{2n}$ satisfying 
\[
\|\dot{x}-X_H(x)\|_{L^2(\T)}< \epsilon
\]
has $L^2$-norm bounded by $r$. This is a kind of Palais-Smale condition for the action functional $\Phi_H$ and implies in particular that all 1-periodic orbits are contained in a compact set. By the non-degeneracy assumption, $X_H$ has then only finitely many 1-periodic orbits. 

Here is a concrete condition on the behaviour of $H$ at infinity that guarantees that it is non-resonant at infinity:
\[
H(t,z) = \eta |z|^2 + \xi \qquad \mbox{for } |z| \geq R, \; \forall t\in \T,
\]
where $R>0$, $\xi\in \R$, and $\eta\in (0,+\infty) \setminus \pi \N$. A more general class of non-resonant Hamiltonians is described in Lemma \ref{cresce} below.

Fix a Hamiltonian $H\in C^{\infty}(\T\times \R^{2n})$ that is non-degenerate, quadratically convex and non-resonant at infinity. The Floer complex $F_*(H)$ is a finitely generated chain complex. It is filtered by the action: $F^{<a}_*(H)$ denotes the subcomplex that is generated by all 1-periodic orbits $x$ of $X_H$ with $\Phi_H(x)<a$. 

As mentioned above, constructing a Morse complex for $\Psi_{H^*}$ presents some analytical difficulties. Indeed, under the above assumptions on $H$ a suitable space for studying $\Psi_{H^*}$ is the space $\mathbb{H}_1$ of closed curves $x: \T \rightarrow \R^{2n}$ of Sobolev class $H^1$ and zero mean. However, the function $\Psi_{H^*}$ is nowhere twice Fr\'echet-differentiable on $\mathbb{H}_1$, except for the very special case in which $H$ is a quadratic form in the spatial variable. It is however twice Gateaux-differentiable, so the Morse index and nullity of its critical points are well-defined, and as mentioned above both finite, but the construction of a Morse complex for $\Psi_{H^*}$ is problematic since this function is not $C^2$. Following \cite{vit89b}, we will overcome this difficulty by performing a saddle point reduction. Indeed, we shall construct a finite dimensional submanifold $M$ of $\mathbb{H}_1$ such that: 
\begin{enumerate}[(i)]
\item the restriction $\psi_{H^*}$ of $\Psi_{H^*}$ to $M$ is a smooth Morse function;
\item $M$ is diffeomorphic to $\R^N$ for some large $N$ and contains all critical points of $\Psi_{H^*}$;
\item a point $x\in M$ is a critical point of $\psi_{H^*}$ if and only if is a critical point of $\Psi_{H^*}$, and its Morse index and nullity with respect to these two functions  coincide;
\item $\psi_{H^*}$ satisfies the Palais-Smale compactness condition.
\end{enumerate}
 The manifold $M$ is defined by splitting $\mathbb{H}_1$ into a suitable finite dimensional space $\mathbb{H}_1^{N,+}$ and its orthogonal complement $\widehat{\mathbb{H}}_1^{N,+}$ and showing that if $N$ is large enough then the functions $y\mapsto \Psi_{H^*}(x,y)$ are strictly convex and have a unique minimum $Y(x)$ on $\widehat{\mathbb{H}}_1^{N,+}$, for every $x\in \mathbb{H}_1^{N,+}$. The manifold $M$ is then the graph of the map $Y: \mathbb{H}_1^{N,+} \rightarrow \widehat{\mathbb{H}}_1^{N,+}$.

 The above properties allow us to associate a Morse complex $M_*(\psi_{H^*})$ with the function $\psi_{H^*}$. This chain complex is graded by the Morse index, is filtered by the values of $\psi_{H^*}$, and its homology is isomorphic to the singular homology of the pair $(M,\{\psi_{H^*}<a\})$, where $a$ is any real number smaller than the minimum of $\psi_{H^*}$  on its critical set (in general, $\psi_{H^*}$ is unbounded from below).

The Conley-Zehnder index $\mu_{CZ}(x)$ of a critical point $x$ of $\Phi_H$ is related to the Morse index $\mathrm{ind}(\pi(x);\psi_{H^*})$ of $\pi(x)$ as a critical point of $\psi_{H^*}$ by the identity
\[
\mu_{CZ}(x) = \mathrm{ind}(\pi(x);\psi_{H^*}) + n.
\]
Here, $\pi$ is the standard projection onto the space $\mathbb{H}_1$ of curves with zero mean.
We can now state the main result of this paper in the following form.

\begin{thm*}
Assume that the Hamiltonian $H\in C^{\infty}(\T\times \R^{2n})$ is non-degenerate, quadratically convex and non-resonant at infinity.
Then there is a chain complex isomorphism
\[
\Theta: M_{*-n}(\psi_{H^*}) \rightarrow F_*(H)
\]
from the Morse complex of the reduced dual functional $\psi_{H^*}$ to the Floer complex of the direct action functional $\Phi_H$. This isomorphism preserves the action filtrations.
\end{thm*}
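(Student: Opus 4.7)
The plan is to define $\Theta$ via Clarke's critical-point correspondence and to verify that it is a chain map by a hybrid moduli space argument. On generators, set $\Theta(y):=x$, where $y\in M$ is a critical point of $\psi_{H^*}$ and $x=y+v_0$ is the corresponding 1-periodic orbit of $X_H$ provided by Clarke's duality. By the grading identity $\mu_{CZ}(x)=\mathrm{ind}(\pi(x);\psi_{H^*})+n$ recalled before the theorem, this is a degree-preserving $\Z_2$-linear map $M_{*-n}(\psi_{H^*})\to F_*(H)$, and by $\Psi_{H^*}(y)=\Phi_H(x)$ it respects the action filtrations. Since Clarke's correspondence is a bijection and both complexes are finitely generated over $\Z_2$, $\Theta$ is a bijection of filtered graded vector spaces; the only substantive claim is the chain-map identity $\partial^F\circ\Theta=\Theta\circ\partial^M$.

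To prove that identity I would interpolate between the two gradient flows by a hybrid moduli space. For each pair $(y,x')$ with $\mathrm{ind}(y;\psi_{H^*})+n-\mu_{CZ}(x')=k\in\{0,1\}$, let $\mathcal{N}_k(y,x')$ be the space of pairs $(\gamma,u)$ in which $\gamma:(-\infty,0]\to M$ is a negative $\psi_{H^*}$-gradient half-trajectory with $\gamma(-\infty)=y$, $u:[0,+\infty)\times\T\to\R^{2n}$ solves the Floer equation with $u(+\infty,\cdot)=x'$, and the matching condition $\pi(u(0,\cdot))=\gamma(0)\in M$ holds at the interface. For generic choice of $J$ and of Morse--Smale metric on $M$, $\mathcal{N}_k$ is a smooth $k$-manifold. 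A standard breaking analysis of the one-dimensional strata $\mathcal{N}_1(y,x')$ identifies its ends with Morse breakings at the left end and Floer breakings at the right end; counting mod $2$ yields
\[
\widetilde{\Theta}\circ\partial^M=\partial^F\circ\widetilde{\Theta},\qquad
\widetilde{\Theta}(y):=\sum_{x}\,\#_{\Z_2}\mathcal{N}_0(y,x)\cdot x.
\]
To identify $\widetilde{\Theta}$ with the tautological $\Theta$, I would combine the Floer energy identity with monotonicity of $\psi_{H^*}$ under its gradient flow and an application of the Fenchel--Young inequality at the matching slice: for $(\gamma,u)\in\mathcal{N}_0(y,x)$ the Morse half controls $\psi_{H^*}(\gamma(0))\le\psi_{H^*}(y)=\Phi_H(\Theta(y))$, the Floer half controls $\Phi_H(u(0,\cdot))\ge\Phi_H(x)$, and Clarke's pointwise equality case should pin these together only when both halves are constant, forcing $\mathcal{N}_0(y,x)$ to be a singleton if $x=\Theta(y)$ and empty otherwise.

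The main obstacle is the analytical framework for the hybrid moduli space $\mathcal{N}_k$, which couples a finite-dimensional ODE on $M$ to the infinite-dimensional Floer PDE through a nonlinear matching condition on the zero-mean projection. One must linearize at a solution, check surjectivity of the linearized evaluation $\delta u\mapsto \pi(\delta u(0,\cdot))$ into $T_{\gamma(0)}M$ for generic data, and obtain compactness of $\mathcal{N}_k$ from the non-resonance condition at infinity (which supplies the Palais--Smale/uniform bounds on the Floer side), the quadratic convexity of $H$, and the built-in action bounds. A further subtlety is that $\Psi_{H^*}$ is only Gateaux twice-differentiable on $\mathbb{H}_1$, which forces all Morse-side analysis to be carried out on the smooth finite-dimensional reduction $M$ rather than on $\mathbb{H}_1$; the graph structure $M=\graf(Y)$ is what makes the matching condition a well-posed smooth constraint. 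Verifying the strict count $\#_{\Z_2}\mathcal{N}_0(y,\Theta(y))=1$ demands a precise control of the Fenchel equality case at the interface, and this is where I expect the main technical work to concentrate.
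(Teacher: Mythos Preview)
Your hybrid problem is not Fredholm as stated. The matching condition $\pi(u(0,\cdot))=\gamma(0)\in M$ forces the trace $u(0,\cdot)$ to lie in the \emph{finite-dimensional} set $W^u(y;-\nabla\psi_{H^*})+\R^{2n}\subset\mathbb{H}_{1/2}$. The linearized half-cylinder operator with boundary values constrained to a finite-dimensional subspace $V$ has infinite-dimensional cokernel: its formal adjoint carries the boundary condition $V^\perp$, whose intersection with $\mathbb{H}_{1/2}^-$ is infinite-dimensional, and this produces an infinite-dimensional kernel for the adjoint. Hence your $\mathcal{N}_k$ cannot be cut out as a manifold of any expected dimension and the counting argument does not get off the ground. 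The paper's fix is to relax the boundary condition to $u(0,\cdot)\in\pi^{-1}(W^u(\pi(x)))+\mathbb{H}_{1/2}^-$, adding the entire negative space $\mathbb{H}_{1/2}^-$. This makes the tangent space a compact perturbation of $\mathbb{H}_{1/2}^-$, which is exactly what the Fredholm analysis of Proposition~\ref{linop} requires, and it is also what makes the Fenchel inequality of Proposition~\ref{confronto} deliver the interface bound $\Phi_H(u(0,\cdot))\le\Psi_{H^*}(v)-\frac{1}{2}\|P^-w\|_{1/2}^2$.

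There is a second, conceptual, gap: even with the correct moduli space, the resulting chain map is \emph{not} the tautological identification of generators. The energy estimate (Proposition~\ref{hybrid_ineq}) only shows that $\mathcal{M}(x,y)$ is empty when $\Phi_H(x)<\Phi_H(y)$ and that $\mathcal{M}(x,x)$ is the singleton given by the constant half-cylinder; it does not rule out nonempty $\mathcal{M}(x,y)$ for $x\neq y$ with $\Phi_H(x)>\Phi_H(y)$ and equal Conley--Zehnder index. The paper therefore \emph{defines} $\Theta$ to be the hybrid count itself and proves it is an isomorphism by showing that its matrix, in an action-ordered basis, is upper-triangular with $1$'s on the diagonal (Section~\ref{sec:isom}). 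Your plan of fixing $\Theta$ a priori as the identity on generators and then verifying the chain identity cannot succeed, because that identity map is generically not a chain map between the two complexes.
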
 

The theorem is proven in Section \ref{sec:isom}. The main ideas in the construction of the chain isomorphism $\Theta$ will be sketched at the end of this introduction. See also Appendix \ref{appB} for a heuristic argument that shows that these two chain complexes should be isomorphic and suggests the way for the rigorous proof that is contained in this paper.

Our next result is a corollary of the above theorem and concerns the symplectic homology of convex domains in $\R^{2n}$ and the resulting $SH$-capacity of such domains. 

Symplectic homology is an algebraic invariant that is associated with certain symplectic compact manifolds with boundary. It was introduced by Floer and Hofer in \cite{fh94} and further developed in \cite{fhw94}, \cite{cfh95} and \cite{cfhw96}. See also \cite{vit99} for a somehow different and quite fruitful approach, and the surveys \cite{oan04} and \cite{sei08}.

In Section \ref{sec:SHcap}, we recall its definition for smooth starshaped domains $W$, that is, bounded open subsets of $\R^{2n}$ that are starshaped with respect to a point  and have a smooth boundary that is transverse to all the lines through this point. Being a smooth hypersurface, the boundary of $W$ carries a 1-dimensional foliation, that is called the characteristic foliation and is tangent to the kernel of the restriction of $\omega_0$ to the tangent spaces of $\partial W$. The closed leaves of this foliation are called closed characteristics, and their action is defined to be the absolute value of the integral of $\omega_0$ over a disk in $\R^{2n}$ capping them. The set of the actions of all closed characteristics - including their iterations - is called the action spectrum of $\partial W$. It is a closed subset of $\R$ consisting of positive numbers and having zero measure.

Given a smooth starshaped domain $W$, the filtered symplectic homology $SH^{<a}(W)$ is defined as the direct limit of the filtered Floer homologies of suitable Hamiltonian functions on $\R^{2n}$ adapted to $W$. This limit formalizes the idea of a Hamiltonian that is zero on $W$ and infinitely steep outside of it. There are natural homomorphisms
\begin{equation}
\label{connhom}
SH^{<a}_*(W) \rightarrow SH^{<b}_*(W)
\end{equation}
whenever $a\leq b$. By construction, 
\begin{equation}
\label{isobasso}
SH^{<\epsilon}_*(W) \cong H_{*+n}(W,\partial W),
\end{equation}
for every small enough positive number $\epsilon$, and (\ref{connhom}) may fail to be an isomorphism only when the interval $[a,b)$ contains elements of the spectrum of $\partial W$. Moreover, the full symplectic homology $SH_*(W)$ is zero. By (\ref{isobasso}), the space $SH^{<\epsilon}_n(W)$ is isomorphic to $\Z_2$, but since $SH_*(W)=0$ the homomorphism 
\[
SH^{<\epsilon}_n(W) \rightarrow SH^{<a}_n(W)
\]
must vanish for $a$ large enough. One defines the $SH$-capacity $c_{SH}(W)$ of $W$ as the infimum of all numbers $a$ for which this happens:
\[
c_{SH}(W) := \inf \{ a>\epsilon \mid SH^{<\epsilon}_n(W) \rightarrow  SH^{<a}_n(W) \mbox{ is zero}\}.
\]
This capacity is also known as Floer-Hofer capacity, as its definition is based on the seminal paper \cite{fh94}, or Floer-Hofer-Wysocki capacity, as it was first defined in \cite{fhw94}. The definition that we have sketched here requires $W$ to be a smooth starshaped domain, but $c_{SH}$ can actually be extended to arbitrary open subsets of $\R^{2n}$ and fulfills the axiomatic properties of a symplectic capacity, see \cite{fhw94}.

When $W$ is a smooth starshaped domain, the number $c_{SH}(W)$ is always an element of the spectrum of $\partial W$. Easy examples show that in general it might not coincide with the minimum of the spectrum of $\partial W$. See Section \ref{sec:SHcap} below for the description of such an example from \cite[Chapter 3.5]{hz94}.
Building on our main theorem stated above, we will show in Section \ref{sec:SHcap} that the symplectic capacity of a smooth convex domain $C$ coincides with the minimum of the action spectrum of $\partial C$.

\begin{cor*}
Let $C$ be a convex bounded open subset of $\R^{2n}$ with smooth boundary. Then $c_{SH}(C)$ coincides with the minimum of the action spectrum of $\partial C$.
\end{cor*}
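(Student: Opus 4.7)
My strategy is to use the filtered chain isomorphism from the main theorem to reduce the computation of $c_{SH}(C)$ to a Morse-theoretic question about Clarke's dual functional for a cofinal family of convex Hamiltonians adapted to $C$, and then to identify the relevant critical value by the classical Clarke-Ekeland minimax for closed orbits on convex hypersurfaces. Write $T_{\min}$ for the minimum of the action spectrum of $\partial C$.

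I would first fix a cofinal sequence $(H_k)_{k\in\N}$ of smooth, quadratically convex, non-degenerate and non-resonant-at-infinity Hamiltonians adapted to $C$: each $H_k$ is non-positive on $C$ with $\min H_k \to 0^-$ as $k\to\infty$, rises steeply across $\partial C$, and satisfies $H_k(z)=\eta_k|z|^2+\xi_k$ for $|z|$ large with $\eta_k\notin \pi\N$ and $\eta_k\to\infty$. By the standard direct-limit description of symplectic homology, $SH^{<a}_*(C)=\varinjlim_k HF^{<a}_*(H_k)$. Applying the main theorem, each filtered Floer complex is isomorphic to the filtered Morse complex $M^{<a}_{*-n}(\psi_{H_k^*})$. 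Under this identification, the generator of $SH^{<\epsilon}_n(C)\cong H_{2n}(C,\partial C)\cong \Z_2$ corresponds to the Morse-index-$0$ critical point of $\psi_{H_k^*}$ coming, via Clarke's bijection, from the unique constant orbit at the minimum of $H_k$; its critical value equals $-\min H_k$ and tends to $0^+$.

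The inequality $c_{SH}(C) \geq T_{\min}$ is immediate from the general fact, recalled in Section \ref{sec:SHcap}, that $c_{SH}(C)$ belongs to the action spectrum of $\partial C$. For the reverse inequality I would exhibit, in each reduced Morse complex, a critical point of Morse index $1$ whose Morse boundary contains the index-$0$ generator above and whose critical value approaches $T_{\min}$ as $k\to\infty$. Such a critical point is supplied by the Clarke-Ekeland minimax for convex Hamiltonians: a linking/mountain-pass minimax joining the constant orbit to infinity inside $\mathbb{H}_1$ detects a non-constant critical point of $\Psi_{H_k^*}$ which, by the Ekeland index computation, has Morse index exactly $1$ and critical value equal to the minimal $\Phi_{H_k}$-action over non-constant 1-periodic orbits of $X_{H_k}$. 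For $k$ large these orbits $C^0$-approximate closed characteristics on $\partial C$, so this minimum converges to $T_{\min}$, and passing to the direct limit gives $c_{SH}(C)\leq T_{\min}$.

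The main obstacle lies in this last step. One must check, on the one hand, that the minimax values really converge to $T_{\min}$ rather than to some larger element of the action spectrum; here convexity of $C$ is essential, since it forces every non-constant orbit of $X_{H_k}$ below a fixed action threshold to be a small perturbation of a closed characteristic of $\partial C$, allowing one to invoke continuity of the action functional. On the other hand, one must verify that the Morse differential from the above index-$1$ critical point to the index-$0$ generator is non-zero modulo $2$; this follows from the minimax presentation, since the minimax class is carried by a relative $1$-cycle in the reduced finite-dimensional manifold $M\cong\R^N$ whose boundary, by the uniqueness (due to convexity) of the index-$0$ critical point of $\psi_{H_k^*}$ below the minimax level, reduces in the Morse complex to the single index-$0$ generator.
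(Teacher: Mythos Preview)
Your overall strategy --- pass through the main theorem to the Morse complex of the reduced dual functional, identify the constant orbit with the unique index-$0$ generator, and show it becomes a boundary at action level $T_{\min}$ --- is exactly the paper's. But your execution diverges from the paper's in two places, and one of them contains a genuine gap.

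First, the paper does not use a cofinal family with slopes $\eta_k\to\infty$. It fixes a \emph{single} Hamiltonian $H\in\mathcal{H}(C)$ of the form $\varphi\circ H_C$ plus small perturbations, with slope $\eta\in(A_{\min}(\partial C),A_{\mathrm{2nd}}(\partial C))$ (after reducing to strongly convex $C$ with non-degenerate Reeb flow). With this choice the $1$-periodic orbits of $X_H$ are completely enumerated: one constant orbit $z$ with $\mu_{CZ}=n$ and, for each closed characteristic of minimal action, a pair $y_i^-,y_i^+$ with $\mu_{CZ}=n+1,n+2$ and action in $(A_{\min}-\epsilon,A_{\min}+\epsilon)$. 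There is nothing else. This makes the action bookkeeping trivial: any index-$1$ critical point of $\psi_{H^*}$ automatically sits at level $\approx A_{\min}$, so once you know $\pi(z)$ is a boundary you are done. Your setup with $\eta_k\to\infty$ forces you to argue that the \emph{smallest} index-$1$ level among many approaches $T_{\min}$; this is doable but unnecessary.

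Second, and more importantly, your argument that the Morse differential from the mountain-pass critical point hits $\pi(z)$ is not correct as written. A minimax level produces a critical point, but it does not by itself single out a Morse $1$-chain with $\partial^M$ equal to $\pi(z)$; your sentence about ``the minimax class carried by a relative $1$-cycle whose boundary reduces to the index-$0$ generator'' conflates the singular $1$-chain (the path) with a Morse $1$-chain. What one actually needs is the vanishing $HM_0(\psi_{H^*})=0$, and the paper proves this cleanly and without any minimax: since $M\cong\R^{2nN}$ is connected, $HM_0(\psi_{H^*})\cong H_0(M,\{\psi_{H^*}<a\})$ vanishes as soon as the sublevel $\{\psi_{H^*}<a\}$ is non-empty for $a$ below all critical values. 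The paper shows this either by the topological trick that $H_2(M,\{\psi_{H^*}<a\})\cong HF_{n+2}(H)\neq 0$ forces the sublevel to be non-empty, or directly by checking that $\Psi_{H^*}(s\gamma)\to-\infty$ along the ray through a minimal closed characteristic $\gamma$. Once $HM_0=0$, the unique index-$0$ generator $\pi(z)$ is a boundary, and by the previous paragraph every index-$1$ generator has value $<A_{\min}+\epsilon$. Your proposal never establishes the unboundedness of $\psi_{H_k^*}$ from below (it is implicit in your ``joining to infinity'', but needs to be said), and once you do establish it the minimax machinery is superfluous.
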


The above result has been very recently proven also by Kei Irie in \cite{iri19}, by extending to arbitrary convex bodies in $\R^{2n}$ the approach that he had developed in \cite{iri14b} for the cotangent disk bundle of domains in $\R^n$. 

The above corollary adds some more evidence to the conjecture that all symplectic capacities should coincide on convex bodies. Indeed, it has been known for a long time that the above result holds for the Hofer-Zehnder capacity and the Ekeland-Hofer capacity, see \cite{eh89,hz90}. Moreover, Hermann has shown in \cite{her04} that the Viterbo capacity and the symplectic homology capacity $c_{SH}$ coincide on domains with contact type boundary. The above corollary allows us to conclude that these four capacities coincide on the set of convex bodies.

The proof of this corollary is contained in Section \ref{sec:SHcap}. The idea is to perturb $C$ to make it strongly convex and non-degenerate, and then to see $SH_*^{<\eta}(C)$ for $\eta$ just above the minimum of the spectrum of $\partial C$ as the Floer homology of a suitable Hamiltonian $H$ that is non-degenerate, quadratically convex and non-resonant at infinity. The corresponding Floer complex is generated by two generators for each closed characteristic of $\partial C$ of minimal action plus an extra generator $z$ corresponding to the global minimum of $H$. The generator $z$ defines a cycle in $F_n(H)$ whose homology class generates the image of the homomorphism
\[
SH_n^{<\epsilon}(C) \rightarrow  SH_n^{<\eta}(C).
\]
The generator $z$ corresponds to a local minimizer $\pi(z)$ of the reduced dual action functional $\psi_{H^*}$, while the pairs of generators given by the closed characteristics of minimal action correspond to pairs of critical points of $\psi_{H^*}$ of Morse index 1 and 2, respectively. Moreover, the function $\psi_{H^*}$ is unbounded from below, and this easily implies that $\pi(z)$ is the boundary of a critical point of index 1 in the Morse complex of $\psi_{H^*}$. Using the isomorphism of our main theorem, we deduce that $z$ is a boundary in $F_*(H)$, and hence vanishes in $SH_*^{<\eta}(C)$. 
This proves that $c_{SH}(C)$ does not exceed the minimum of the spectrum of $\partial C$. Being an element in this set, $c_{SH}(C)$ must then coincide with the minimum of the spectrum of $\partial C$.

We conclude this introduction by sketching the construction of the isomorphism $\Theta$ from our main theorem. As it is now customary in Floer homological theories, see in particular \cite{as06}, the chain isomorphism
\[
\Theta: M_{*-n}(\psi_{H^*}) \rightarrow F_*(H)
\]
is defined by counting solutions of a suitable hybrid problem that relates the negative gradient flow lines of $\psi_{H^*}$ and the solutions of the Floer equation (\ref{floereq}). We now wish to describe this hybrid problem.

The quadratic form (\ref{quadratic}) is continuous on the Sobolev space $\mathbb{H}_{1/2}$ of closed curves $x:\T \rightarrow \R^{2n}$ of Sobolev class $H^{1/2}$. The corresponding bounded self-adjoint operator on $\mathbb{H}_{1/2}$ is a Fredholm operator having a finite dimensional kernel - the space of constant curves, which we denote by $\R^{2n}$ - and two infinite dimensional positive and negative eigenspaces $\mathbb{H}_{1/2}^+$ and $\mathbb{H}_{1/2}^-$. So we have the orthogonal splitting
\[
\mathbb{H}_{1/2} = \mathbb{H}_{1/2}^+ \oplus \mathbb{H}_{1/2}^- \oplus \R^{2n}.
\]

Let $x$ and $y$ be two 1-periodic orbits of $X_H$. By what we have seen above, $\pi(x)$ belongs to $M$ and is a critical point of $\psi_{H^*}$. As before, $\pi$ is the standard projection on the space of closed curves with zero mean. We denote by $W^u(\pi(x))\subset M$ its unstable manifold with respect to the negative gradient flow of $\psi_{H^*}$. The hybrid problem we are interested in is the following: We look for smooth solutions 
\[
u:[0,+\infty) \times \T \rightarrow \R^{2n}
\]
of the Floer equation (\ref{floereq}) that converge to the periodic orbit $y$ for $s\rightarrow +\infty$ and satisfy the following boundary condition for $s=0$:
\begin{equation}
\label{bdrycondintro}
u(0,\cdot) \in \pi^{-1}(W^u(\pi(x))) + \mathbb{H}_{1/2}^-.
\end{equation}
The set appearing on the right-hand side turns out to be a submanifold of the Sobolev space $\mathbb{H}_{1/2}$ with infinite dimension and infinite codimension. Its tangent space at every point is a closed vector subspace that is a compact perturbation of $\mathbb{H}_{1/2}^{-}$. 

This is a non-local and somehow non-standard boundary condition for the Floer equation - standard boundary conditions would require $u(0,\cdot)$ to take values in a Lagrangian submanifold of $\R^{2n}$ - but conditions of this kind have been considered in \cite{hec12}, \cite{hec13} and  \cite{as15}. The Fredholm analysis for the linearization of the above hybrid problem builds on Hecht's work, see \cite[Section 4.2]{hec13}. It ultimately relies on the identity
\[
\int_{[0,+\infty)\times \T} |\nabla u|^2\, ds dt = \int_{[0,+\infty)\times \T} |\overline{\partial} u|^2\, ds dt + 2 \int_{\T} u(0,\cdot)^* \lambda_0,  \qquad \forall u\in C^{\infty}_c([0,+\infty)\times \T,\R^{2n}),
\]
where $\overline{\partial} = \partial_s + J_0 \partial_t$ is the Cauchy-Riemann operator, and on the fact that the second integral on the right-hand side is 
the quadratic form
\[
2 \int_{\T} x^* \lambda_0 =  \int_{\T} J_0 \dot{x}(t)\cdot x(t)\, dt,
\]
which is negative definite on $\mathbb{H}_{1/2}^-$. This analysis is carried out in Section \ref{fredsec}.

A good functional space for studying the above hybrid problem is the space of maps $u:[0,+\infty) \times \T \rightarrow \R^{2n}$ of Sobolev class $H^1$, as the trace at $s=0$ of these maps belongs to $\mathbb{H}_{1/2}$. The usual arguments from Floer theory for showing that solutions of (\ref{floereq}) are smooth require the solutions to be in $W^{1,p}_{\mathrm{loc}}$ for some $p>2$ (see \cite[Appendix B.4]{ms04}), or at least in $C^0\cap H^1_{\mathrm{loc}}$ (see \cite[Section 2.3]{is99} or \cite{is00}), and hence cannot be applied directly here. In Appendix \ref{appA} we show how interior regularity can be obtained also starting from $H^1_{\mathrm{loc}}$ solutions, while in Section \ref{funsetsec} we deal with regularity up to the boundary. 

The compactness of the spaces of solutions of the hybrid problem relies on the following inequality relating the direct and the dual action functionals
\begin{equation}
\label{ineq}
\Phi_H(x+y) \leq \Psi_{H^*}(\pi(x)) - \frac{1}{2} \|P^- y\|_{1/2}^2,
\end{equation}
where $x:\T\to\R^{2n}$ is of Sobolev class $H^1$ and $y\in\mathbb{H}^-_{1/2}\oplus\R^{2n}$. This inequality follows from Fenchel duality and is proven in Proposition \ref{confronto}. Here, $\|\cdot\|_{1/2}$ denotes the $H^{1/2}$-norm on $\mathbb{H}_{1/2}$ and $P^-$ is the orthogonal projection onto $\mathbb{H}_{1/2}^-$. The equality holds, in particular, when $y$ is a constant loop and $x+y$ is a critical point of $\Phi_H$. 

Once all these facts have been proven, the homomorphism
\[
\Theta : M_{*-n}(\psi_{H^*}) \rightarrow F_*(H)
\]
is defined in the usual way by counting the zero-dimensional spaces of solutions of the hybrid problem. The inequality (\ref{ineq}) implies that the space of solutions of the hybrid problem with asymptotic Hamiltonian orbits $x$ and $y$ can be non-empty only when $\Phi_H(x) \geq \Phi_H(y)$. This fact is used in the proofs of the fact that $\Theta$ is an isomorphism and of the fact that it preserves the action filtrations.

There is also an alternative way to compare the Morse theories of the direct action functional $\Phi_H$ and of the dual one $\Psi_{H^*}$. The idea, which was already used in \cite{as15} in order to compare the Hamiltonian and the Lagrangian action functionals for fiberwise convex Hamiltonians on cotangent bundles, is the following: The functionals $\Phi_H$ and $\Psi_{H^*}$ can be extended to larger domains, without modifying their critical points and gradient flow lines connecting them, in such a way that the extended functionals $\widetilde{\Phi}_H$ and $\widetilde{\Psi}_{H^*}$ are obtained one from the other by a change of variables: $\widetilde{\Psi}_{H^*}=\widetilde{\Phi}_H\circ \Gamma$, where $\Gamma$ is a diffeomorphism between the domains of the extended functionals. This argument is sketched rather informally in Appendix \ref{appB}. It gives an a posteriori explanation of the fact that (\ref{bdrycondintro}) is the right boundary condition to look at in order to define the isomorphism $\Theta$, and shows which boundary condition one would need to look at in order to define directly an isomorphism going in the opposite direction.

\paragraph{Outlook} The argument behind our main theorem is quite flexible and it should be possible to adapt it to several different situations. A natural direction is to extend our isomorphism to the $S^1$-equivariant setting. This is particularly interesting in view of some recent results of Gutt and Hutchings, who defined a sequence of symplectic capacities for a starshaped domain with extremely good properties using $S^1$-equivariant symplectic homology, see \cite{gh18}. This sequence of symplectic capacities is reminiscent of another sequence of symplectic capacities that was defined by Ekeland and Hofer \cite{eh90} using the direct functional $\Phi_H$ on the space $\mathbb{H}_{1/2}$ together with the Fadell-Rabinowitz index \cite{fr78}. 
On the other hand, using Clarke's duality and the Fadell-Rabinowitz index, one can also obtain a sequence of positive numbers belonging to the action spectrum of the boundary of a smooth convex domain as in \cite{eh87,eke90}, which is monotone with respect to symplectic embeddings between smooth convex domains. It should be also possible to build an isomorphism between  the Floer homology for the Hamiltonian $H$ and the Morse homology for  $\Phi_H$ on $\mathbb{H}_{1/2}$. Once the corresponding isomorphisms are upgraded to the $S^1$-equivariant setting, this could suggest how to compare Gutt and Hutchings' symplectic capacities with the sequence of actions defined using Clarke's duality and with Ekeland and Hofer's symplectic capacities.

\paragraph{Acknowledgments} We are very grateful to Urs Fuchs for explaining us how to prove interior regularity of solutions of the Floer equation of Sobolev class $H^1_{\mathrm{loc}}$, which is the content of Appendix \ref{appA}. We would like to thank Kei Irie for discussing with us his proof of the above corollary and the comparison of the two approaches. Our gratitude goes also to three anonymous referees, whose precious comments and corrections have allowed us to improve the quality of this paper.

The research of A.~Abbondandolo is supported by the SFB/TRR 191 ``Symplectic Structures in Geometry, Algebra and Dynamics'', funded by the Deutsche Forschungsgemeinschaft. The research of J.~Kang is supported by Samsung Science and Technology Foundation under Project Number SSTF-BA1801-01. Some part of this paper was written during several visits of the second author to the Ruhr-Universit\"at Bochum and the Universit\"at Heidelberg. He would like to thank A.~Abbondandolo, P.~Albers, and G.~Benedetti for their warm hospitality.

\numberwithin{equation}{section}

\tableofcontents

\section{The action functional and the relative Morse index of its critical points}

We equip $\R^{2n}$ with coordinates $(q_1,p_1,\dots,q_n,p_n)$, with the standard Liouville form 
\[
\lambda_0 := \frac{1}{2} \sum_{j=1}^n ( p_j \, dq_j - q_j \, dp_j)
\]
and with the standard symplectic form 
\[
\omega_0 := d\lambda_0 = \sum_{j=1}^n dp_j \wedge dq_j.
\]
Note that
\begin{equation}
\label{lambda-omega}
\lambda_0(u)[v] = \frac{1}{2} \omega_0(u,v)  \qquad \forall u,v\in \R^{2n}.
\end{equation}
The linear automorphism
\[
J_0: \R^{2n} \rightarrow \R^{2n}, \qquad (q,p) \mapsto (-p,q),
\]
is the standard complex structure on $\R^{2n}$, according to the identification $\R^{2n} \cong \C^n$ given by $(q,p)\mapsto q+ip$. The symplectic form $\omega_0$ and the complex structure $J_0$ are related to the standard Euclidean scalar product on $\R^{2n}$ by the identity
\[
u\cdot v = \omega_0(J_0u,v) \qquad \forall u,v\in \R^{2n}.
\]

The Hamiltonian vector field $X_H$ associated with a smooth Hamiltonian $H:\R^{2n} \rightarrow \R$ is defined by the identity
\[
\omega_0(X_H,\cdot) = - dH,
\]
or equivalently by
\[
X_H = -J_0 \nabla H,
\]
where $\nabla$ denotes the Euclidean gradient on $\R^{2n}$.

We now fix a time-periodic smooth Hamiltonian $H:\T \times \R^{2n} \rightarrow \R$, where $\T:= \R/\Z$ denotes the 1-torus, and use the notation $H_t(x):= H(t,x)$. 
We recall that a $1$-periodic orbit $x$ of $X_H$ is said to be non-degenerate if $1$ is not an eigenvalue of the linearization of the Hamiltonian flow along $x$:
\[
1 \notin \sigma \bigl( d\phi_{X_H}^1(x(0)) \bigr),
\]
where $\phi_{X_H}^t$ denotes the (possibly non-autonomous) local flow of $X_H$. When needed, the time-periodic Hamiltonian $H$ will be assumed to be non-degenerate:

\begin{description}
\item[Non-degeneracy:] The Hamiltonian $H\in C^{\infty}(\T\times \R^{2n})$ is said to be non-degenerate if all the 1-periodic orbits of $X_H$ are non-degenerate. 
\end{description}

The 1-periodic orbits of $X_H$ are exactly the critical points of the action functional $\Phi_H : C^{\infty}(\T,\R^{2n}) \rightarrow \R$ given by
\[
\begin{split}
\Phi_H (x) &:= \int_{\T} x^*\lambda_0 - \int_{\T} H_t(x(t))\, dt \\ &=
\frac{1}{2} \int_{\T} J_0 \dot{x}(t)\cdot x(t)\, dt - \int_{\T} H_t(x(t))\, dt.
\end{split}
\]
If the second differential of $H$ in the spatial variable $z=(q,p)\in \R^{2n}$ has polynomial growth, meaning that there are $c>0$ and $N>0$ such that
\[
|d^2 H_t(z)| \leq c (1 + |z|^N) \qquad \forall (t,z)\in \T\times\R^{2n},
\]
then $\Phi_H$ is twice continuously differentiable on the Sobolev space 
\[
\mathbb{H}_{1/2}:= H^{1/2}(\T,\R^{2n}).
\] 
This space consists of all $L^2$ curves $x:\T \rightarrow \R^{2n}$ such that the coefficients $(\hat{x}_k)_{k\in \Z}$ of the Fourier decomposition
\begin{equation}
\label{fourier}
x(t) = \sum_{k\in \Z} e^{-2\pi k J_0 t} \hat{x}_k, \qquad \hat{x}_k\in \R^{2n},
\end{equation}
satisfy
\[
\sum_{k\in \Z} |k|\, |\hat{x}_k|^2 < +\infty.
\]
See \cite[Section 3.3 and Appendix A.3]{hz94} for more information on the properties of the action functional on the Sobolev space $\mathbb{H}_{1/2}$. 

The non-degeneracy of the 1-periodic orbits of $X_H$ translates into the fact that $\Phi_H$ is a Morse functional on $\mathbb{H}_{1/2}$. The critical points of $\Phi_H$ have infinite Morse index, but one can associate with them a finite relative Morse index. Indeed, this is due to the fact that the leading part of this functional has the form
\[
\frac{1}{2} \int_{\T} J_0 \dot{x}(t)\cdot x(t)\, dt = \frac{1}{2} \bigl( \|P^+ x\|^2_{1/2} -  \|P^- x\|^2_{1/2} \bigr),
\]
where $\|\cdot\|_{1/2}$ denotes the $H^{1/2}$-Hilbert norm
\[
\|x\|_{1/2}^2 := |\hat{x}_0|^2 + 2\pi \sum_{k\in \Z} |k| \, |\hat{x}_k|^2, 
\]
and $P^+$ and $P^-$ are the orthogonal projectors onto the closed subspaces
\[
\begin{split}
\mathbb{H}^+_{1/2} &:= \bigl\{ x\in  \mathbb{H}_{1/2} \mid \hat{x}_k = 0 \; \forall k\leq 0 \bigr\}, \\
\mathbb{H}^-_{1/2} &:= \bigl\{ x\in  \mathbb{H}_{1/2} \mid \hat{x}_k = 0 \; \forall k\geq 0 \bigr\},
\end{split}
\]
defined by the Fourier decomposition (\ref{fourier}).
The Hilbert space $\mathbb{H}_{1/2}$ has the orthogonal splitting
\[
\mathbb{H}_{1/2}= \mathbb{H}^+_{1/2} \oplus \mathbb{H}^-_{1/2} \oplus \R^{2n},
\]
where $\R^{2n}$ denotes the space of constant curves. We also denote by $P^0$ the orthogonal projector onto $\R^{2n}$. It is convenient to fix a splitting of the space of constant curves $\R^{2n}$ 
\[
\R^{2n} = E^+ \oplus E^-
\]
where $E^+$ and $E^-$ are any orthogonal subspaces of dimension $n$.
The fact that the Hessian of the functional
\[
x\mapsto \int_{\T} H_t(x(t))\, dt
\]
is a compact operator on $\mathbb{H}_{1/2}$ implies that the negative eigenspace $V^-(\nabla^2 \Phi_H(x))$ of the Hessian $\nabla^2 \Phi_H(x)$ of $\Phi_H$ at a critical point $x$ is a compact perturbation of the space $\mathbb{H}^-_{1/2}\oplus E^-$, meaning that the difference of the orthogonal projectors onto these subspaces is compact.  Then we define the relative Morse index of $x$ as the
relative dimension of $V^-(\nabla^2 \Phi_H(x))$ with respect to $\mathbb{H}^-_{1/2}\oplus E^-$, that is, the integer
\begin{equation}\label{eq:relative_ind}
\begin{split}
\mathrm{ind}_{\mathbb{H}^-_{1/2} \oplus E^-} (x;\Phi_H) &:= \dim \bigl( V^-(\nabla^2 \Phi_H(x)), \mathbb{H}^-_{1/2} \oplus E^-) \\ &:= \dim V^-(\nabla^2 \Phi_H(x)) \cap (\mathbb{H}^+_{1/2} \oplus E^+) \\
&\quad - \dim \big(V^+(\nabla^2 \Phi_H(x))\oplus\ker \nabla^2 \Phi_H(x)\big) \cap (\mathbb{H}^-_{1/2} \oplus E^-).
\end{split}
\end{equation}
Here $V^+(\nabla^2 \Phi_H(x))$ denotes the positive eigenspace of $\nabla^2\Phi_H(x)$ at $x$. 
An equivalent definition is the following:~any maximal closed subspace $W$ of $\mathbb{H}_{1/2}$ on which the bilinear form $d^2 \Phi_H(x)$ is negative definite forms a Fredholm pair with the subspace $\mathbb{H}^+_{1/2} \oplus E^+$, and the relative Morse index of $x$ is the Fredholm index of this pair:
\[
\mathrm{ind}_{\mathbb{H}^-_{1/2} \oplus E^-} (x;\Phi_H) = \mathrm{ind} (W, \mathbb{H}^+_{1/2} \oplus E^+). 
\]
See \cite[Chapter 2]{abb01} for more details about relative dimensions and relative Morse indices. 
The nullity of $x$ is defined as usual as the dimension of the kernel of the Hessian of $\Phi_H$ at $x$:
\[
\mathrm{null}(x;\Phi_H) := \dim \ker \nabla^2 \Phi_H(x).
\]
We shall make use of the following fact, which is proven in \cite[Corollary 3.3.1]{abb01}.

\begin{prop}
\label{cz=relind}
Assume that $x$ is a 1-periodic orbit of $X_H$. Then the nullity of $x$ coincides with the geometric multiplicity of the eigenvalue 1 of the linearization of the flow along $x$:
\[
\mathrm{null}(x;\Phi_H) = \dim \ker \bigl(I - d\phi_{X_H}^1(x(0)) \bigr),
\]
and the relative Morse index of $x$ coincides with its Conley-Zehnder index:
\[
\mathrm{ind}_{\mathbb{H}^-_{1/2} \oplus E^-} (x;\Phi_H) = \mu_{CZ}(x).
\]
\end{prop}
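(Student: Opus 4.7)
I would split the proof into two independent parts.

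First, I would compute the Hessian directly: one gets
\[
d^2\Phi_H(x)[\xi,\eta]=\int_{\T}J_0\dot\xi(t)\cdot\eta(t)\, dt-\int_{\T}d^2 H_t(x(t))[\xi(t),\eta(t)]\, dt
\]
for $\xi,\eta\in C^\infty(\T,\R^{2n})$, and by density this extends to a bounded symmetric form on $\mathbb{H}_{1/2}$. An element $\xi\in\mathbb{H}_{1/2}$ belongs to $\ker\nabla^2\Phi_H(x)$ precisely when it is a weak, and by elliptic bootstrap smooth, $1$-periodic solution of the linearized Hamiltonian equation $\dot\xi=dX_{H_t}(x(t))\,\xi=-J_0\,d^2 H_t(x(t))\,\xi$. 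The evaluation map $\xi\mapsto\xi(0)$ then identifies this kernel with $\ker\bigl(I-d\phi^1_{X_H}(x(0))\bigr)$, which is the first claim.

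For the index equality I would argue by spectral flow. Pick a continuous homotopy $\lambda\mapsto S_\lambda$, $\lambda\in[0,1]$, of time-periodic symmetric matrices with $S_1(t)=d^2 H_t(x(t))$ and $S_0(t)\equiv\epsilon I$ for some small $\epsilon>0$, and form the associated path $\lambda\mapsto A_\lambda$ of self-adjoint Fredholm operators on $\mathbb{H}_{1/2}$; by construction their quadratic forms differ from the indefinite leading part $\tfrac{1}{2}\int_{\T}J_0\dot\xi\cdot\xi\, dt$ by a compact perturbation. Additivity of the relative Morse index along such a path then gives
\[
\mathrm{ind}_{\mathbb{H}^-_{1/2}\oplus E^-}(A_1)-\mathrm{ind}_{\mathbb{H}^-_{1/2}\oplus E^-}(A_0)=-\mathrm{sf}(\lambda\mapsto A_\lambda),
\]
where $\mathrm{sf}$ denotes spectral flow. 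A direct Fourier computation at $\lambda=0$ identifies $\mathrm{ind}_{\mathbb{H}^-_{1/2}\oplus E^-}(A_0)$ with $\mu_{CZ}(\Psi_0)$, where $\Psi_0(t)=e^{t\epsilon J_0}$. On the symplectic side the paths $\Psi_\lambda(t)$ determined by $\dot\Psi_\lambda=J_0 S_\lambda\Psi_\lambda$ and $\Psi_\lambda(0)=I$ interpolate between $\Psi_0$ and the linearized flow $t\mapsto d\phi^t_{X_H}(x(0))$, and the classical Robbin--Salamon identity between the spectral flow of $A_\lambda$ and the opposite of the change of Conley-Zehnder index along $\Psi_\lambda$ yields $\mathrm{ind}_{\mathbb{H}^-_{1/2}\oplus E^-}(x;\Phi_H)=\mu_{CZ}(x)$.

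The hard part is verifying Fredholmness of $A_\lambda$ throughout the homotopy and correctly handling degenerate spectral crossings, so that the signs in the spectral-flow/Maslov-index translation match the chosen normalization of $\mu_{CZ}$. These technicalities are carried out in full generality in \cite[Chapter 3]{abb01}, where the statement of the proposition appears as Corollary 3.3.1.
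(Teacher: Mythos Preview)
The paper does not prove this proposition at all: it simply states it and cites \cite[Corollary~3.3.1]{abb01}. Your proposal ends at exactly the same place, so in that sense you are in complete agreement with the paper.

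The sketch you give on top of the citation is reasonable and follows the standard route (kernel of the Hessian via the linearized equation; index via spectral flow of a homotopy to a constant small symmetric matrix and the Robbin--Salamon correspondence). One small inconsistency: in the nullity paragraph you correctly write the linearized equation as $\dot\xi=-J_0\,d^2H_t(x(t))\,\xi$, but in the index paragraph you define the symplectic paths by $\dot\Psi_\lambda=J_0 S_\lambda\Psi_\lambda$ and set $\Psi_0(t)=e^{t\epsilon J_0}$, which has the opposite sign; with the paper's conventions the linearized flow is $t\mapsto e^{-\epsilon J_0 t}$, whose Conley--Zehnder index is normalized to be $n$. You already flag the sign bookkeeping as the delicate point, so this is not a gap, but you should make the two paragraphs consistent before relying on the endpoint computation $\mathrm{ind}_{\mathbb{H}^-_{1/2}\oplus E^-}(A_0)=n=\mu_{CZ}(\Psi_0)$.
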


The Conley-Zehnder index is an integer assigned to every element in the space
\begin{equation}\label{symplectic_path_space}
\mathcal{SP}(2n)=\big\{Z\in C^0([0,1],\mathrm{Sp}(2n))\mid Z(0)=I\textrm{ and } \det(I-Z(1))\neq0\big\}
\end{equation}
which we extend to degenerate paths, i.e.~$\det(I-Z(1))=0$, by lower semi-continuity as in \cite{lz90} and \cite{lon02}.
In the above proposition $\mu_{CZ}(x)$ is the Conley-Zehnder index of the symplectic path $t\mapsto d \phi_{X_H}^t(x(0))$, $t\in [0,1]$. Our sign convention is that the Conley-Zehnder index of the symplectic path
\[
t\mapsto e^{-\epsilon J_0 t}, \quad t\in [0,1],
\]
is $n$ for every $\epsilon\in (0,2\pi)$.

\section{Smooth starshaped domains}

We recall that the characteristic line bundle of a smooth hypersurface $\Sigma\subset \R^{2n}$ is given by the kernel of the restriction of $\omega_0$ to the tangent bundle of $\Sigma$. Its integral lines are called characteristics. The action of a closed characteristic $\gamma$ on $\Sigma$ is defined as the absolute value of the integral of a primitive of $\omega_0$ over $\gamma$. Stokes' theorem implies that this definition does not depend on the choice of the primitive of $\omega_0$. The set of all actions of closed characteristics of $\Sigma$ is called action spectrum of $\Sigma$, or just spectrum of $\Sigma$, and denoted by $\mathrm{spec}(\Sigma)$. Here, iterates of closed characteristics are also considered, so the spectrum of $\Sigma$ is a subset of $[0,+\infty)$ that is invariant under multiplication by positive integers.

We now restrict the attention to those hypersurfaces that are obtained as boundaries of starshaped domains. In this paper, by a smooth starshaped domain we mean a bounded open subset $W$ which is starshaped with respect to the origin and has a smooth boundary which is transverse to all lines through the origin. The restriction of the Liouville 1-form $\lambda_0$ to the boundary of $W$ is denoted by
\[
\alpha_W := \lambda_0|_{\partial W}.
\]
This is a contact form on $\partial W$, meaning that the restriction of the differential $d\alpha_W$ to the kernel of $\alpha_W$ is non-degenerate. The corresponding Reeb vector field on $\partial W$ is denoted by $R_{\alpha_W}$ and is defined by
\[
d\alpha_W(R_{\alpha_W},\cdot) = 0, \qquad \alpha_W(R_{\alpha_W})=1.
\]
This vector field is a smooth non-vanishing section of the characteristic line bundle of the hypersurface $\partial W$. Therefore, the orbits of $R_{\alpha_W}$ are parametrizations of the characteristic curves on $\partial W$. The action of a closed characteristic on $\partial W$ coincides with its period as a closed orbit of $R_{\alpha_W}$:~If $\gamma: \R/T\Z \rightarrow \partial W$ is a closed orbit of $R_{\alpha_W}$ of (not necessarily minimal) period $T$, then
\[
\int_{\R/T\Z} \gamma^* \lambda_0 =  \int_{\R/T\Z} \gamma^* \alpha_W = T.
\]
The spectrum of $\partial W$ is a measure zero nowhere dense closed subset of $\R$ consisting of positive numbers and invariant under the multiplication by positive integers. 
 
We denote by
\[
H_W : \R^{2n} \rightarrow \R
\]
the positively 2-homogeneous function that takes the value 1 on $\partial W$. This function is continuously differentiable on $\R^{2n}$ and smooth on $\R^{2n}\setminus \{0\}$. The restriction of the Hamiltonian vector field $X_{H_W}$ to the boundary of $W$ coincides with the Reeb vector field $R_{\alpha_W}$:
\[
R_{\alpha_W} = X_{H_W}|_{\partial W}.
\]
Indeed, this follows from the fact that for every $x\in \partial W$ the vector $X_{H_W}(x)$ spans $\ker d\alpha_W(x) = \ker \omega_0|_{T_x \partial W}$ and from the identity
\[
\alpha_W(x)[X_{H_W}(x)]  = \lambda_0(x)[X_{H_W}(x)] = \frac{1}{2} \omega_0 (x, X_{H_W}(x)) = \frac{1}{2} dH_W(x)[x] = H_W(x) = 1,
\]
where we have used the Euler identity for the positively 2-homogeneous function $H_W$. 

The symplectization of the contact manifold $(\partial W,\alpha_W)$ is the manifold $(0,+\infty) \times \partial W$ equipped with the Liouville form $\lambda_W:= r\alpha_W$ and the symplectic form $\omega_W := d\lambda_W$, where $r\in (0,+\infty)$ denotes the variable in the first factor. The symplectization of $(\partial W,\alpha_W)$ can be identified with $(\R^{2n}\setminus \{0\},\lambda_0)$ thanks to the following well-known fact:

\begin{lem}
\label{liouville}
The diffeomorphism
\[
\varphi: \R^{2n} \setminus \{0\} \longrightarrow (0,+\infty) \times \partial W, \qquad \varphi(x) = \left( H_W(x), \frac{x}{\sqrt{H_W(x)}} \right),
\]
satisfies $\varphi^* \lambda_W = \lambda_0$. In particular, $\varphi$ is a symplectomorphism from $( \R^{2n} \setminus \{0\} , \omega_0)$ to $( (0,+\infty) \times \partial W, \omega_W)$.
\end{lem}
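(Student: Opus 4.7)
The plan is to verify the pointwise identity $\varphi^*\lambda_W=\lambda_0$ by a direct computation on tangent vectors, after which the ``in particular'' clause follows upon taking exterior derivatives. Writing $\varphi(x)=(r(x),\sigma(x))$ with $r(x):=H_W(x)$ and $\sigma(x):=H_W(x)^{-1/2}x$, positive $2$-homogeneity of $H_W$ together with $H_W|_{\partial W}=1$ guarantees that $\sigma(x)\in\partial W$, so $d\sigma(x)[v]\in T_{\sigma(x)}\partial W$ for every $v\in T_x(\R^{2n}\setminus\{0\})$. Since $\lambda_W=r\alpha_W$ and $\alpha_W=\lambda_0|_{\partial W}$, unwinding definitions immediately gives
\begin{equation*}
(\varphi^*\lambda_W)(x)[v] \;=\; H_W(x)\,\lambda_0(\sigma(x))\bigl[d\sigma(x)[v]\bigr].
\end{equation*}

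The main step is to compute $d\sigma$ by the product rule, producing a term proportional to $v$ plus a term proportional to $x$ whose coefficient involves $dH_W(x)[v]$, and then to expand $\lambda_0(\sigma(x))\bigl[d\sigma(x)[v]\bigr]$ using the identity $\lambda_0(u)[w]=\tfrac{1}{2}\omega_0(u,w)$ recorded in the text. The term proportional to $x$ contributes $\omega_0(x,x)=0$ and vanishes by antisymmetry, leaving
\begin{equation*}
\lambda_0(\sigma(x))\bigl[d\sigma(x)[v]\bigr] \;=\; \tfrac{1}{2}H_W(x)^{-1}\omega_0(x,v) \;=\; H_W(x)^{-1}\lambda_0(x)[v].
\end{equation*}
Multiplying by $H_W(x)$ yields $(\varphi^*\lambda_W)(x)[v]=\lambda_0(x)[v]$, as desired.

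For the final statement, $\varphi$ is manifestly a diffeomorphism onto $(0,+\infty)\times\partial W$, with inverse $(r,\sigma)\mapsto\sqrt{r}\,\sigma$ (using $2$-homogeneity of $H_W$ once more to see that $H_W(\sqrt{r}\sigma)=r$). Taking the exterior derivative of $\varphi^*\lambda_W=\lambda_0$ gives $\varphi^*\omega_W=\omega_0$, proving the symplectomorphism claim. I do not anticipate any real obstacle here: the proof reduces to the chain rule combined with the single cancellation $\omega_0(x,x)=0$, which is precisely what eliminates the otherwise inconvenient contribution of $dH_W$. If one preferred a coordinate-free viewpoint, an equivalent argument would observe that the Euler vector field $x\mapsto x/2$ is the Liouville vector field of $\lambda_0$ and that $\varphi$ intertwines it with the Liouville vector field $r\partial_r$ of $\lambda_W$, but the direct computation above appears to be the most economical route.
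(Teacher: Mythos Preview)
Your proof is correct. The approach differs from the paper's in a small but genuine way: the paper splits the tangent space at $x$ into $\ker d\mu_W(x)$ (where $\mu_W=\sqrt{H_W}$) and the radial direction $\R x$, checking the identity separately on each piece. On $\ker d\mu_W(x)$ the $d\sigma$ term proportional to $x$ is absent from the outset, and on the radial vector both sides vanish because $d\sigma(x)[x]=0$ by Euler's identity for $\mu_W$. Your computation treats a general tangent vector $v$ in one stroke, and the role played in the paper by the case split is taken over by the single cancellation $\omega_0(x,x)=0$. Your route is a bit more economical; the paper's decomposition is perhaps more transparent geometrically, since it isolates the Liouville (radial) direction explicitly. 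Both are entirely elementary and neither offers a real advantage over the other.
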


\begin{proof}
We denote by
\[
\mu_W:\R^{2n} \rightarrow \R, \qquad \mu_W:= \sqrt{H_W},
\]
the Minkowski gauge function of $W$, which is continuous on $\R^{2n}$, smooth on $\R^{2n}\setminus \{0\}$ and positively 1-homogeneous. Let $x\in \R^{2n}\setminus \{0\}$. For every $v\in \ker d\mu_W(x)$ we have
\[
\begin{split}
(\varphi^* \lambda_W)(x)[v] &= \lambda_W(\varphi(x))\bigl[d\varphi(x)[v] \bigr] = H_W(x) \alpha_W \left( \frac{x}{\mu_W(x)} \right) \left[ \frac{v}{\mu_W(x)} \right] \\ &= \frac{H_W(x)}{\mu_W(x)^2} \lambda_0(x)[v] = \lambda_0(x)[v].
\end{split}
\]
There remains to check that the one-forms $(\varphi^* \lambda_W)(x)$ and $\lambda_0(x)$ coincide on a vector which is transverse to $\ker d\mu_W(x)$. The vector $x$ has this property, and we compute
\[
\begin{split}
(\varphi^* \lambda_W)(x)[x] &= \lambda_W(\varphi(x))\bigl[d\varphi(x)[x] \bigr] = H_W(x) \alpha_W \left( \frac{x}{\mu_W(x)} \right) \left[ \frac{x}{\mu_W(x)} - \frac{x}{\mu_W(x)^2} d\mu_W(x)[x] \right] \\ &= H_W(x) \alpha_W \left( \frac{x}{\mu_W(x)} \right) \left[ \frac{x}{\mu_W(x)} - \frac{x}{\mu_W(x)} \right] = 0 = \lambda_0(x)[x],
\end{split}
\]
where we have used the Euler identity for the 1-homogeneous function $\mu_W$.
\end{proof}
 
Any $T$-periodic Reeb orbit of $R_{\alpha_W}$ on $\partial W$ can be seen as a 1-periodic orbit of $X_{T H_W}$, after time reparametrization. More generally, it can be seen as a 1-periodic orbit of $X_{\varphi\circ H_W}$, where $\varphi$ is any smooth function on $\R$ such that $\varphi'(1)=T$. In the next proposition we study how the index and nullity of this orbit change, when it is seen as a critical point of $\Phi_{T H_W}$ or $\Phi_{\varphi\circ H_W}$.

\begin{prop}\label{index1}
Assume that $\gamma:\R/T\Z\to\partial W$ is a periodic orbit of $R_{\alpha_W}$. Let $\varphi:\R\to\R$ be a smooth function with $\varphi'(1)=T$. Then $x_\gamma(t):=\gamma(Tt)$ is a critical point of both $\Phi_{TH_W}$ and $\Phi_{\varphi\circ H_W}$ and
\[
\mathrm{ind}_{\mathbb{H}^-_{1/2} \oplus E^-}(x_\gamma,\Phi_{\varphi\circ H_W})=\left\{\begin{aligned}
&\mathrm{ind}_{\mathbb{H}^-_{1/2} \oplus E^-}(x_\gamma,\Phi_{TH_W}) & \textrm{ if } \varphi''(1)\leq 0\\[0.5ex]
&\mathrm{ind}_{\mathbb{H}^-_{1/2} \oplus E^-}(x_\gamma,\Phi_{TH_W})+1 & \textrm{ if } \varphi''(1)>0
\end{aligned}\right.\;
\]
\[
\mathrm{null}(x_\gamma,\Phi_{\varphi\circ H_W})=\left\{\begin{aligned}
&\mathrm{null}(x_\gamma,\Phi_{TH_W}) & \textrm{ if } \varphi''(1)= 0\\[0.5ex]
&\mathrm{null}(x_\gamma,\Phi_{TH_W})-1 & \textrm{ if } \varphi''(1)\neq0
\end{aligned}\right.\;.
\]
\end{prop}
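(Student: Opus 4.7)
The plan is to apply Proposition \ref{cz=relind}, which reduces the relative Morse index and nullity of $x_\gamma$ to the Conley-Zehnder index and to $\dim\ker(I - d\phi^1_{X_K}(x_\gamma(0)))$ for each Hamiltonian $K$. First, since $X_{\varphi\circ H_W} = \varphi'(H_W)\,X_{H_W}$ and $H_W\equiv 1$ on $\partial W$, both vector fields $X_{TH_W}$ and $X_{\varphi\circ H_W}$ restrict to $TX_{H_W}$ along $x_\gamma$, so $x_\gamma$ is a $1$-periodic orbit of both. Using that $H_W$ is conserved by the flow of $X_{H_W}$, one obtains $\phi^t_{X_{\varphi\circ H_W}}(z) = \phi^{\varphi'(H_W(z))t}_{X_{H_W}}(z)$. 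Differentiating in $z$ at $x_\gamma(0)$ and evaluating at $t=1$ yields the rank-one identity
\[
\Psi_\varphi(1) = \Psi_T(1) + \varphi''(1)\,v_0 w_0^T,
\]
where $\Psi_T(t),\Psi_\varphi(t)$ denote the linearized flows along $x_\gamma$, $v_0 := X_{H_W}(x_\gamma(0))$, and $w_0 := \nabla H_W(x_\gamma(0))$.

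For the nullity I would rely on four elementary facts: (a) $v_0 \in \ker(I - \Psi_T(1))$ (the Reeb direction) and $w_0^T v_0 = 0$, so $v_0 \in \ker(I - \Psi_\varphi(1))$; (b) by $2$-homogeneity of $H_W$ one has $\phi^t_{X_{H_W}}(\lambda z) = \lambda\phi^t_{X_{H_W}}(z)$, whose differentiation at $\lambda = 1$ gives $\Psi_T(t)x_\gamma(0) = x_\gamma(t)$, so $x_\gamma(0) \in \ker(I - \Psi_T(1))$; (c) Euler's identity gives $w_0^T x_\gamma(0) = 2H_W(x_\gamma(0)) = 2$; (d) conservation of $H_W$ gives $\Psi_T(1)^T w_0 = w_0$, hence $\mathrm{range}(I - \Psi_T(1)) \subset w_0^\perp$, and combined with the symplectic identity $\ker(I - \Psi_T(1)^T) = J_0\ker(I - \Psi_T(1))$ and (b), (c), this forces $v_0 \notin \mathrm{range}(I - \Psi_T(1))$. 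Now $v \in \ker(I - \Psi_\varphi(1))$ is equivalent to $(I - \Psi_T(1))v = \varphi''(1)(w_0^T v)\,v_0$; when $\varphi''(1) \neq 0$, this forces both sides to vanish by (d), so $w_0^T v = 0$ and $v \in \ker(I - \Psi_T(1))$. Conversely any such $v$ clearly lies in $\ker(I - \Psi_\varphi(1))$. Hence
\[
\ker(I - \Psi_\varphi(1)) = \{v \in \ker(I - \Psi_T(1)) : w_0^T v = 0\},
\]
which by (b), (c) is a hyperplane of codimension one. The case $\varphi''(1) = 0$ is immediate since $\Psi_\varphi = \Psi_T$.

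For the index I would deform along the $1$-parameter family $K_s := TH_W + \frac{s}{2}(H_W - 1)^2 = f_s \circ H_W$, where $f_s'(1) = T$ and $f_s''(1) = s$, so the preceding analysis applies uniformly in $s$. The Hessian on $\mathbb{H}_{1/2}$ depends linearly on $s$:
\[
\nabla^2\Phi_{K_s}(x_\gamma) = \nabla^2\Phi_{TH_W}(x_\gamma) - sB,
\]
where $B$ is the compact, positive semi-definite operator associated to the quadratic form $\xi\mapsto \int_\T(w_0(t)^T\xi(t))^2\,dt$, with $w_0(t) := \nabla H_W(x_\gamma(t))$. From the nullity analysis, $\dim\ker\nabla^2\Phi_{K_s}(x_\gamma)$ is constant for $s\neq 0$ and jumps up by one at $s=0$, the extra Jacobi field being $\xi_0(t) := x_\gamma(t)$. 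Moreover, every other Jacobi field in $\ker\nabla^2\Phi_{K_0}(x_\gamma)$ is tangent to $\partial W$ along $x_\gamma$ and thus lies in the null space of $B$, so it stays in the kernel for all $s$. Using Euler's identity $w_0(t)^T x_\gamma(t) = 2H_W(x_\gamma(t)) = 2$ one computes
\[
\frac{d}{ds}\Big|_{s=0}\langle\nabla^2\Phi_{K_s}(x_\gamma)\xi_0,\xi_0\rangle = -\langle B\xi_0,\xi_0\rangle = -\int_\T 4\,dt = -4 < 0,
\]
so a single eigenvalue crosses zero transversely at $s=0$, moving from positive ($s<0$) to negative ($s>0$). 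The standard crossing formula for the relative Morse index of a family of compact perturbations of a fixed Fredholm quadratic form then yields that the index is unchanged for $s \leq 0$ and jumps by $+1$ for $s > 0$; applying this at $s = \varphi''(1)$ finishes the proof. The main obstacle is making this crossing/spectral-flow argument precise in the presence of a persistent higher-dimensional kernel (coming from the Reeb direction $v_0$ and any further Jacobi fields from iteration or Bott degeneracy); the crucial point is that $B$ has rank one when restricted to $\ker\nabla^2\Phi_{K_0}(x_\gamma)$, so only $\xi_0$ contributes to the spectral flow.
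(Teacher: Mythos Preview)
Your proof is correct. For the nullity you take a genuinely different route from the paper: you invoke Proposition~\ref{cz=relind} to pass to the finite-dimensional linearized return map, derive the rank-one identity $\Psi_\varphi(1)=\Psi_T(1)+\varphi''(1)\,v_0 w_0^T$, and analyze $\ker(I-\Psi_\varphi(1))$ by elementary linear algebra (using $2$-homogeneity for $x_\gamma(0)\in\ker(I-\Psi_T(1))$ and energy conservation together with the symplectic relation $\ker(I-\Psi_T(1)^T)=J_0\ker(I-\Psi_T(1))$ for $v_0\notin\mathrm{range}(I-\Psi_T(1))$). The paper instead works directly with the Hessians on $\mathbb{H}_{1/2}$, writing $d^2\Phi_{\varphi\circ H_W}(x_\gamma)=a-\varphi''(1)b$ with $b(u,v)=\int_\T(\nabla H_W(x_\gamma)\cdot u)(\nabla H_W(x_\gamma)\cdot v)\,dt$ and arguing that $\ker(a+sb)=\ker a\cap\ker b$ for $s\neq 0$. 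For the index the two arguments essentially coincide: your family $K_s$ produces exactly $a-sb$, and the crossing computation $\langle B x_\gamma,x_\gamma\rangle=b(x_\gamma,x_\gamma)=4>0$ is the same in both. What your detour through Proposition~\ref{cz=relind} buys is a transparent nullity result and, via the dimension identity $\dim\ker(a-sb)=\dim\ker a-1$ for $s\neq 0$, an immediate proof of $\ker(a-sb)=\ker a\cap\ker b$ from the obvious inclusion; the paper's direct argument for this step invokes a decomposition $\mathbb{H}_{1/2}=\ker b\oplus\R x_\gamma$ that is problematic as stated (the radical of $b$ has infinite codimension), so your dimension-count route is in fact cleaner here. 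One wording issue: ``every other Jacobi field \dots is tangent to $\partial W$'' is not literally correct in general, but your concluding remark that $B$ has rank one when restricted to $\ker\nabla^2\Phi_{K_0}(x_\gamma)$ is the right statement and is all the crossing argument needs.
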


\begin{proof}
From the hypothesis $\varphi'(H_W(x_\gamma))=\varphi'(1)=T$, we have for every $u\in\mathbb{H}_{1/2}$
\[
d\Phi_{\varphi\circ H_W}(x_\gamma)[u]=\int_{\T}\big(J_0\dot x_\gamma -\varphi'(H_W(x_\gamma))\nabla H_W(x_\gamma)\big)\cdot u\,dt=d\Phi_{TH_W}(x_\gamma)[u].
\]
Since $x_\gamma$ satisfies $J_0\dot x_\gamma = T\nabla H_W(x_\gamma)$, 
the above formula shows that $x_\gamma$ is a critical point of both $\Phi_{TH_W}$ and $\Phi_{\varphi\circ H_W}$.

Consider the following continuous symmetric bilinear forms on $\mathbb{H}_{1/2}$:
\[
\begin{split}
a(u,v) &:= \int_\T u\cdot\big(J_0\dot v- T\nabla^2H_W(x_\gamma)v\big)\, dt, \\
b(u,v) &:= \int_\T \big(\nabla H_W(x_\gamma)\cdot u\big)(\nabla H_W(x_\gamma)\cdot v\big)\, dt.
\end{split}
\]
The bilinear form $a$ is Fredholm, meaning that the corresponding self-adjoint operator representing it with respect to the scalar product of $\mathbb{H}_{1/2}$ is Fredholm (see e.g.\ \cite[Section 3.2]{abb01}). Then we have
\begin{equation}
\label{2nddiffs}
d^2\Phi_{TH_W}(x_\gamma) = a, \qquad d^2\Phi_{\varphi\circ  H_W}(x_\gamma) = a - \varphi''(1) b.
\end{equation}
From the 2-homogeneity of $H_W$ we deduce the identity
\[
\nabla^2 H_W (z) z = \nabla H_W(z) \qquad \forall z\in \R^{2n} \setminus \{0\},
\]
which implies that $x_\gamma$ belongs to the kernel of $a$. This reflects the isochronicity property of 2-homogeneous Hamiltonians, namely the fact that the Hamiltonian flows on their different energy levels are conjugated. The kernel of $b$ is the $L^2$-orthogonal complement of the line $\R \nabla H_W(x_\gamma)$ in $\mathbb{H}_{1/2}$. Since
\[
b(x_\gamma,x_\gamma) = \int_\T \big(\nabla H_W(x_\gamma)\cdot x_\gamma\big)^2\, dt = \int_\T(2H_W(x_\gamma))^2\,dt=4 >0
\]
by the Euler identity, we have
\[
\mathbb{H}_{1/2} = \ker b \oplus \R x_\gamma.
\]
Let $s\neq 0$. A vector $u=v+\lambda x_\gamma$, $v\in \ker b$, $\lambda\in \R$, belongs to the kernel of $a+sb$ if and only if
\[
(a+sb)(u,x_\gamma) = 0 \qquad \mbox{and} \qquad (a+sb)(u,w)=0 \quad \forall w\in \ker b.
\]
The first identity is equivalent to
\[
0 = (a+sb)(v+\lambda x_\gamma,x_\gamma) = s \lambda\, b(x_\gamma,x_\gamma),
\]
and hence to $\lambda=0$. The second identity then reads
\[
0 = (a+sb)(v,w) = a(v,w) \quad \forall w\in \ker b.
\]
Since $x_\gamma$ belongs to the kernel of $a$, the latter requirement is equivalent to the fact that $v$ belongs to the kernel of $a$. We conclude that for every real number $s\neq 0$ the kernel of $a+sb$ is the following space
\[
\ker (a+sb) = \ker a \cap \ker b \qquad \forall s\in \R \setminus \{0\},
\]
which is independent of $s$ and has codimension 1 in $\ker a$. The formula for the nullity of $d^2 \Phi_{\varphi\circ H_W}(x_\gamma)$ immediately follows from this and (\ref{2nddiffs}).

The path $s\mapsto a+sb$ describes a 1-parameter family of continuous symmetric bilinear forms that are rank one perturbations of the Fredholm form $a$. The fact that the nullity of $a+sb$ is constant for $s\neq 0$ implies that the relative index of $a+sb$ with respect to $\mathbb{H}_{1/2}^-\oplus E^-$ is constant for $s>0$ and for $s<0$. Since the kernel of $a+sb$ increases by one dimension - and precisely by addition of the line $\R x_\gamma$ - when $s$ becomes 0, the inequality
\[
\frac{d}{ds}\Big|_{s=0} (a+sb)(x_\gamma,x_\gamma) = b(x_\gamma,x_\gamma) > 0
\]
implies the identities
\[
\dim \bigl( V^-(a+sb), \mathbb{H}^-_{1/2} \oplus E^-) = 
\left\{\begin{aligned}  
& \dim \bigl( V^-(a), \mathbb{H}^-_{1/2} \oplus E^-) \qquad & \forall s\geq 0, \\[.5ex]
& \dim \bigl( V^-(a), \mathbb{H}^-_{1/2} \oplus E^-)+ 1\qquad & \forall s<0.
\end{aligned}
\right.
\]
The formula for the relative index of $x_\gamma$ as a critical point of $\Phi_{\varphi\circ H_W}$ immediately follows from this and (\ref{2nddiffs}).
\end{proof}

\section{Uniform bounds for solutions of the Floer equation}
\label{secunifbou}

In this section, we wish to prove uniform bounds for solutions
\[
u: I \times \T \rightarrow \R^{2n}
\]
of the Floer equation
\begin{equation}
\label{floer}
\partial_s u + J(s,t,u) \bigl( \partial_t u - X_{H_t}(u) \bigr) = 0.
\end{equation}
Here $I$ is an open unbounded interval, i.e.\ either $\R$, or $(a,+\infty)$, or $(-\infty,a)$ for some $a\in \R$. The symbol $J$ denotes a smooth almost complex structure on $\R^{2n}$, which is allowed to depend on the variables $(s,t)\in I\times \T$ and is supposed to be $\omega_0$-compatible, meaning that the formula
\[
g_J(u,v) := \omega_0(Ju,v)
\]
defines an $(s,t)$-dependent family of Riemannian metrics on $\R^{2n}$. The corresponding family of norms will be denoted by $|\cdot |_J$, and $\nabla_J$ will denote the gradient operator with respect to these Riemannian metrics. By our sign conventions we have
\[
X_H = - J \nabla_J H.
\]
The Floer equation can be rewritten as
\[
\overline{\partial}_J u = \nabla_J H(u), \qquad \mbox{where}\quad \overline{\partial}_J u := \partial_s u + J(u) \partial_t u.
\]
We shall also assume that $J$ is uniformly bounded as a map with values in the space of endomorphisms of $\R^{2n}$. It follows that the family of metrics $g_J$ is uniformly globally equivalent to the Euclidean metric $g_{J_0}(u,v) = u\cdot v$. 

We will consider the following growth assumptions on Hamiltonian functions $H\in C^{\infty}(\T\times \R^{2n})$.

\begin{description}
\item[Linear growth of the Hamiltonian vector field:] The Hamiltonian vector field $X_H$ is said to have linear growth at infinity if there exists a positive number $c$ such that $|X_{H_t}(z)| \leq c  (1+|z|)$ for every $(t,z)\in \T\times \R^{2n}$.
\item[Non-resonance at infinity:] The Hamiltonian $H$ is said to be non-resonant at infinity if there exist positive numbers $\epsilon>0$ and $r>0$ such that for every smooth curve $x:\T \rightarrow \R^{2n}$ satisfying
\[
\|\dot{x}-X_H(x)\|_{L^2(\T)} \leq \epsilon,
\]
there holds $\|x\|_{L^2(\T)}\leq r$.
\end{description}

The latter requirement is a version of the Palais-Smale condition for the direct action functional $\Phi_H$: It is equivalent to saying that every sequence $(x_h)\subset C^{\infty}(\T,\R^{2n})$ on which the $L^2$-gradient of $\Phi_H$ tends to zero in the $L^2$-norm is bounded in $L^2(\T,\R^{2n})$. The above two assumptions imply in particular that 1-periodic orbits of $X_{H}$ are uniformly bounded. At the end of the next section, we shall discuss some sufficient conditions for $H$ to be non-resonant at infinity in the above sense. Here, we shall prove the following a priori bounds.

\begin{prop}
\label{linfbound} 
Let $I\subset \R$ be an open unbounded interval, $J$ be a uniformly bounded $\omega_0$-compatible almost complex structure on $\R^{2n}$, smoothly depending on $(s,t)\in I\times \T$, and $H\in C^{\infty}(\T\times \R^{2n})$ be a smooth Hamiltonian which is non-resonant at infinity and whose Hamiltonian vector field has linear growth  at infinity.
Let $I'$ be a (possibly unbounded) interval such that $\overline{I'}\subset I$. For every $E>0$ there is a positive number $M=M(E)$ such that every solution $u\in C^{\infty}(I\times \T,\R^{2n})$ of (\ref{floer}) with energy bound
\[
\int_{I\times \T} |\partial_s u|_J^2\, ds dt \leq E
\]
satisfies
\[
\sup_{(s,t)\in I'\times \T} |u(s,t)| \leq M.
\]
\end{prop}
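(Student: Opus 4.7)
The plan is to combine the non-resonance condition and the energy bound to locate, near every $s\in I'$, a ``good slice'' $\sigma_0$ on which $u(\sigma_0,\cdot)$ is a bounded near-solution of the time-periodic Hamiltonian ODE, and then to propagate this pointwise control from $\sigma_0$ to the strip $[\sigma_0-L_0,\sigma_0+L_0]\times\T$ using the energy bound together with the linear growth of $X_H$. Let $c>0$ bound $|J^{-1}|$ uniformly and let $\epsilon,r>0$ be the constants from the non-resonance hypothesis. Setting $e(\sigma):=\int_\T |\partial_s u(\sigma,\cdot)|_J^2\,dt$, so that $\int_I e(\sigma)\,d\sigma\leq E$, Chebyshev's inequality shows that on any sub-interval $[a,b]\subset I$ of length $L_0:=2c^2 E/\epsilon^2$ there exists $\sigma_0\in[a,b]$ with $e(\sigma_0)\leq \epsilon^2/c^2$. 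The hypothesis $\overline{I'}\subset I$ together with the unboundedness of $I$ guarantees that for every $s\in I'$ one such interval sits within distance $L_0$ from $s$.

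\textbf{Slice bound and propagation of the $L^2$ estimate.} On the good slice $\sigma_0$ the Floer equation rewrites as $\dot x_{\sigma_0}-X_{H_t}(x_{\sigma_0})=g$ with $g:=-J^{-1}\partial_s u(\sigma_0,\cdot)$ and $\|g\|_{L^2(\T)}\leq c\sqrt{e(\sigma_0)}\leq \epsilon$. The non-resonance hypothesis then yields $\|x_{\sigma_0}\|_{L^2(\T)}\leq r$, so there is $t_0\in\T$ with $|x_{\sigma_0}(t_0)|\leq r$; Gronwall's inequality applied to the ODE on $\T$, using the linear growth $|X_{H_t}(z)|\leq c(1+|z|)$ and the bound on $\|g\|_{L^2}$, then gives $\|x_{\sigma_0}\|_{L^\infty(\T)}\leq R_0$ with $R_0=R_0(r,c,\epsilon)$. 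Next, setting $\phi(s):=\|u(s,\cdot)\|_{L^2(\T)}^2$, differentiation and Cauchy-Schwarz give $(\sqrt\phi\,)'(s)\leq \|\partial_s u(s,\cdot)\|_{L^2(\T)}$; integrating from $\sigma_0$ and applying Cauchy-Schwarz against the energy bound produces $\|u(s,\cdot)\|_{L^2(\T)}\leq R_0+\sqrt{L_0 E}=:R_1$ for every $s$ with $|s-\sigma_0|\leq L_0$.

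\textbf{Upgrading to $L^\infty$ and main difficulty.} To promote the strip $L^2$ bound to an $L^\infty$ bound I would apply the mean value inequality to $f:=\tfrac{1}{2}|u|^2$: from the Floer equation and the linear growth of $X_H$ one derives a subharmonic-type inequality of the form $\Delta f\geq |\nabla u|^2-K(1+f)$ on the region $\{|u|\leq R_1\}$, with $K$ depending on $H$ and $J$ on that region. Combined with the strip estimate $\int_{[\sigma_0-L_0,\sigma_0+L_0]\times\T} f\,ds\,dt\leq CL_0 R_1^2$, the mean value inequality then yields $|u(s,t)|^2\leq M(E)$ pointwise. The delicate point is to establish this subharmonic inequality without an a priori pointwise bound on $\nabla^2 H$; I would close the gap via a bootstrap combining $\epsilon$-regularity for the Floer equation (giving pointwise control of $|\partial_s u|$ on those parts of the strip where the local energy lies below a universal threshold $\delta$) with a direct Gronwall-in-$t$ estimate on the exceptional ``energy-concentrated'' subset of slices, whose $s$-measure is at most $E/\delta$ by Chebyshev. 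On that exceptional subset the already-established $L^2(\T)$ slice bound provides a point $t_s$ with $|u(s,t_s)|\leq R_1$, from which Gronwall in $t$ using the linear growth of $X_H$ delivers the missing pointwise bound.
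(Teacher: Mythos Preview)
Your first two phases --- locating a ``good slice'' via Chebyshev, invoking non-resonance to obtain an $L^2(\T)$ bound on that slice, and propagating this to a uniform $L^2(\T)$ bound on each slice within distance $L_0$ --- are correct and match the paper's argument essentially line by line.

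The gap is in your final phase, the upgrade from the slice-wise $L^2$ bound to a pointwise $L^{\infty}$ bound. Both prongs of your proposed bootstrap fail under the stated hypotheses. First, $\epsilon$-regularity for Floer cylinders (of the type that converts small local energy into a pointwise gradient bound) requires a bound on $\nabla X_H$, equivalently on $\nabla^2 H$, on the range of $u$; with only the linear-growth hypothesis on $X_H$ and no a priori $L^{\infty}$ control of $u$, no such bound is available, so the constant in your $\epsilon$-regularity step is not uniform. Second, on your ``exceptional'' slices you propose Gronwall in $t$ for $\partial_t u = X_H(u) + J\partial_s u$; this needs $\|\partial_s u(s,\cdot)\|_{L^1(\T)}$ (hence $\|\partial_s u(s,\cdot)\|_{L^2(\T)}$) to be bounded on that slice, but by definition these are exactly the slices on which this quantity is large. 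So the exceptional set is neither handled nor bypassed. The subharmonic route has the same underlying obstruction: writing $\Delta f$ via the Floer equation produces terms involving $\nabla X_H(u)$, and restricting to ``the region $\{|u|\leq R_1\}$'' is circular because $R_1$ is only an $L^2(\T)$ bound, not a pointwise one.

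The paper sidesteps all of this by using Calderon--Zygmund estimates for $\overline{\partial}_J$. Once you have the slice-wise $L^2$ bound, the linear growth of $X_H$ gives an $L^2$ bound on $\overline{\partial}_J u = \nabla_J H(u)$ on each finite strip; Calderon--Zygmund then yields a uniform $H^1$ bound on a slightly smaller strip, hence $L^p$ for all $p<\infty$ by Sobolev embedding. Feeding this back through the linear growth of $X_H$ gives $\overline{\partial}_J u \in L^p$, and a second application of Calderon--Zygmund produces a $W^{1,p}$ bound, which for $p>2$ embeds into $L^{\infty}$. This two-step elliptic bootstrap uses nothing about $H$ beyond the linear growth of $X_H$, which is precisely why it succeeds where your mean-value approach stalls.
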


When $I=\R$, we are allowed to take $I'=\R$ and we obtain uniform bounds on the whole cylinder. The above formulation allows us to get uniform bounds also for solutions on half-cylinders, as long as we stay away from the boundary.

\begin{proof}
Without loss of generality, we may assume that $I$ is unbounded from below, the case in which it is unbounded from above being completely analogous.
Since the family of metrics $g_J$ is uniformly globally equivalent to the Euclidean metric, the energy bound on $u$ translates into
\begin{equation}
\label{eebb}
\int_{I\times \T} |\partial_s u|^2\, ds dt \leq E',
\end{equation}
for a suitable number $E'=E'(E)$. Consider the set
\[
S = S(u) = \{ s\in \R \mid \|\partial_s u(s,\cdot)\|_{L^2(\T)} < \epsilon/\|J\|_{\infty}\},
\]
where $\epsilon$ is the positive number appearing in the assumption of non-resonance at infinity and the $L^{\infty}$-norm of $J$ is induced by the Euclidean metric on $\R^{2n}$. 
By the Chebyshev inequality, the complement of $S$ in $I$ has uniformly bounded measure:
\begin{equation}
\label{cheb}
|I \setminus S| \leq \frac{\|J\|_{\infty}^2}{\epsilon^2} \int_{I} \|\partial_s u(s,\cdot)\|_{L^2(\T)}^2\, ds =  \frac{\|J\|_{\infty}^2}{\epsilon^2}  \int_{I \times \T} |\partial_s u|^2 \, ds dt < L,\end{equation}
where we have set 
\[
L=L(E):= \frac{\|J\|_{\infty}^2 E'}{\epsilon^2}+1.
\]
Let $I''\subset I$ be an interval of length $L$. By (\ref{cheb}), $I''$ intersects $S$ in at least one point $s_0$. For such a point $s_0$ we have, using the fact that $u$ solves the Floer equation (\ref{floer}):
\[
\begin{split}
\|\partial_t u(s_0,\cdot) - X_H(u(s_0,\cdot))\|_{L^2(\T)} &= \|J(s_0,\cdot,u(s_0,\cdot)) \partial_s u(s_0,\cdot)\|_{L^2(\T)} \\ &\leq \|J\|_{\infty} \| \partial_s u(s_0,\cdot)\|_{L^2(\T)} 
< \epsilon,
\end{split}
\]
and the non-resonance at infinity implies the $L^2$-bound
\[
\|u(s_0,\cdot)\|_{L^2(\T)} \leq r.
\]
Together with the energy bound on $u$, we easily get a uniform $L^2$-bound for $u$ on $I''\times \T$. Indeed, from the identity
\[
u(s,t) = u(s_0,t) + \int_{s_0}^s \partial_s u(\sigma,t)\, d\sigma 
\]
and the bound (\ref{eebb}), we get the inequality
\[
\begin{split}
|u(s,t)|^2 &\leq 2 |u(s_0,t)|^2 + 2 \left| \int_{s_0}^s \partial_s u(\sigma,t)\, d\sigma \right|^2 \leq 2  |u(s_0,t)|^2 + 2|s-s_0| \left| \int_{s_0}^s |\partial_s u(\sigma,t)|^2\, d\sigma \right|  \\ &\leq 2 |u(s_0,t)|^2 + 2 L E'.
\end{split}
\]
for every $s\in I''$. Integration over $\T$ yields
\[
\int_{\T} |u(s,t)|^2\, dt \leq 2 \int_{\T} |u(s_0,t)|^2 \, dt + 2 L E' \leq  2r^2 + 2 L E'=: C, \qquad \forall s\in I'',
\]
and hence
\[
\int_{I'' \times \T} |u(s,t)|^2\, ds dt \leq L C.
\]
This inequality holds for every interval $I''\subset I$ of length $L$. It follows that 
\begin{equation}
\label{L2}
\int_{I'' \times \T} |u(s,t)|^2\, ds dt \leq C( |I''| + L),
\end{equation}
for any bounded interval $I''\subset I$. Together with the assumption of linear growth at infinity for $X_H=-J\nabla_J H$ and the boundedness of $J$, we get the bound
\begin{equation}
\label{L3}
\int_{I'' \times \T} |\nabla_J H_t(u(s,t))|_J^2 \, ds dt \leq C'( |I''| + L),
\end{equation}
for a suitable positive number $C'$. Now let $I'$ be the possibly unbounded interval such that $\overline{I'}\subset I$ which appears in the statement. Fix a positive number $\delta$ such that 
\[
I'+[-2\delta,2\delta]\subset I.
\]
In order to prove a uniform bound for $|u|$ on $I'\times \T$ it is enough to prove such a uniform bound on $I''\times \T$ for all intervals $I''\subset I'$ of length 1. Let $I''$ be such an interval.

By the Calderon-Zygmund estimates in $L^2$ for the uniformly bounded almost complex structure $J$ (see e.g.~\cite{ms04}[Proposition B.4.9]), we have
\[
\begin{split}
\|\nabla u\|_{L^2((I''+ [-\delta,\delta])\times \T)} &\leq c_2 \bigl( \|\overline{\partial}_J u\|_{L^2((I''+[-2\delta,2\delta])\times \T)} + \|u\|_{L^2((I''+[-2\delta,2\delta])\times \T)} \bigr) \\ &= c_2 \bigl( \|\nabla_J H(u)\|_{L^2((I''+[-2\delta,2\delta])\times \T)} + \|u\|_{L^2((I''+[-2\delta,2\delta])\times \T)} \bigr).
\end{split}
\]
Together with the estimates (\ref{L2}) and (\ref{L3}) we obtain a bound
\[
\|\nabla u\|_{L^2((I''+ [-\delta,\delta])\times \T)} \leq C''',
\]
holding for all intervals $I''\subset I'$ of length 1. Using again (\ref{L2}), we deduce that the restriction of $u$ to $(I''+ [-\delta,\delta])\times \T$ is uniformly bounded in the Sobolev space $H^1$, and hence in all Lebesgue spaces $L^p$ for $p<+\infty$. Using again the linear growth at infinity of $X_H=-J\nabla_J H$ we deduce that the function 
\[
\overline{\partial}_J u = \nabla_J H_t(u)
\]
has a uniform $L^p$ bound on $(I''+ [-\delta,\delta])\times \T$. By the Calderon-Zygmund estimate in $L^p$ (see again \cite{ms04}[Proposition B.4.9])
\[
\|\nabla u\|_{L^p(I''\times \T)} \leq c_p \bigl( \|\overline{\partial}_J u\|_{L^p((I''+[-\delta,\delta])\times \T)} + \|u\|_{L^p((I''+[-\delta,\delta])\times \T)} \bigr),
\]
we conclude that the restriction of $u$ to $I'' \times \T$ has a uniform $W^{1,p}$ bound for every $p<\infty$. For $p>2$ this bound implies the desired uniform $L^{\infty}$ bound on the restriction of $u$ to $I'' \times \T$, where $I''\subset I'$ is an arbitrary interval of length 1.
\end{proof}

\begin{rem}
\label{sdep}
A straightforward modification of the above argument allows one to extend the above result to $s$-dependent Hamiltonians. The precise assumptions are that $H\in C^{\infty}(I\times \T \times \R^{2n})$ depends on $s$ only for $s$ in a bounded interval, that for $s$ outside of this interval $H(s,\cdot)$ is non-resonant at infinity, and that the Hamiltonian vector field of $H$ has linear growth at infinity, uniformly on $s\in I$.
\end{rem}

\section{The Floer complex of $H$}

\label{floer-complex}

Assume that the Hamiltonian $H\in C^{\infty}(\T\times \R^{2n})$ is non-degenerate, non-resonant at infinity and that the corresponding Hamiltonian vector field $X_H$ has linear growth at infinity. Such a Hamiltonian has a well-defined Floer complex, whose construction we quickly recall here.

Let $J=J(t,z)$ be a time-periodic smooth almost complex structure on $\R^{2n}$, that is compatible with $\omega_0$ and uniformly bounded. By Proposition \ref{linfbound}, energy bounds on solutions of the Floer equation (\ref{floer}) on the whole cylinder $\R \times \T$ imply $L^{\infty}$-bounds. Once uniform bounds in $L^{\infty}$ have been established, the standard bubbling-off argument and an elliptic bootstrap imply that solutions with uniformly bounded energy are compact in $C^{\infty}_{\mathrm{loc}}(\R\times \T,\R^{2n})$. Actually, the fact that we are working in $\R^{2n}$ would allow us to prove the above results using only a bootstrap argument involving the Calderon-Zygmund inequalities, as in the last part of the proof of Proposition \ref{linfbound}, avoiding the bubbling-off argument.

Let $x$ and $y$ be two 1-periodic orbits of $X_H$ with $\mu_{CZ}(x)-\mu_{CZ}(y)=1$. Standard index and transversality arguments imply that, if $J$ is chosen generically, the space of solutions of the Floer equation (\ref{floer}) that for $s\rightarrow -\infty$ are asymptotic to $x$ and for $s\rightarrow +\infty$ to $y$ is finite, after modding out translation in the $s$-variables. Let $n^F(x,y)\in \Z_2$ denote the parity of this finite set. The Floer complex is then defined as usual by
\[
\partial^F : F_{k}(H) \rightarrow F_{k-1}(H), \qquad \partial^F x := \sum_{y} n^F(x,y) \, y,
\]
where $F_k(H)$ denotes the $\Z_2$-vector space generated by the 1-periodic orbits of $X_H$ with Conley-Zehnder index $k$, $x$ is any 1-periodic orbit with $\mu_{CZ}(x)=k$, and the sum ranges over all 1-periodic orbits $y$ of Conley-Zehnder index $k-1$. A standard cobordism argument implies that $\partial^F\circ \partial^F=0$, so $\{F_*(H),\partial^F\}$ is a chain complex of finite dimensional $\Z_2$-vector spaces.

The fact that $n^F(x,y)=0$ whenever $\Phi_H(y)>\Phi_H(x)$ implies that the Floer complex of $H$ is graded by the Hamiltonian action: If $F^{<a}_k(H)$ denotes the subspace of $F_k(H)$ that is generated by the 1-periodic orbits with $\Phi_H(x)<a$, then $\partial^F$ maps $F_k^{<a}(H)$ into $F^{<a}_{k-1}(H)$. For real numbers $b>a$, there is a canonical inclusion $F^{<a}_*(H)\hookrightarrow F^{<b}_*(H)$ and it is a chain map. We denote the homology of the subcomplex $\{F^{<a}_*(H),\partial^F\}$ by
\[
HF_*^{<a}(H).
\]
We simply write $HF_*(H)$ when $a=+\infty$. The choice of a different generic almost complex structure $J$ produces chain isomorphic Floer complexes. 

We show in this section that a typical growth condition on the Hamiltonian $H$ - which is considered for instance in Viterbo's definition of symplectic homology of a starshaped domain $W\subset \R^{2n}$, see \cite{vit99} - implies the non-resonance at infinity condition considered here. Below we denote by $\|\cdot\|_{\mathbb{H}_{1/2}^*}$ the dual norm of $\|\cdot\|_{1/2}$.

\begin{lem}
\label{cresce}
Let $W\subset \R^{2n}$ be a smooth starshaped domain. Assume that the Hamiltonian $H: \T \times \R^{2n} \rightarrow \R$ is smooth and satisfies
\begin{equation}
\label{slope}
H(t,z) = \eta H_W(z) + \xi, \qquad \forall t\in \T, \; \forall |z|\geq R, 
\end{equation}
where $\eta$ is a positive number that does not belong to $\mathrm{spec}(\partial W)$, $\xi$ is any real number and $R>0$. Then there exist positive numbers $a$ and $b$ such that
\[
\bigl\|d\Phi_H(x)\bigr\|_{\mathbb{H}_{1/2}^*} \geq a \|x\|_{1/2} - b,
\]
for every $x\in \mathbb{H}_{1/2}$. In particular, there are positive numbers $a'$ and $b'$ such that
\[
\bigl\| \dot{x} - X_{H}(x) \bigr\|_{L^2(\T)} \geq a' \|x\|_{L^2(\T)} - b',
\]
for every $x\in C^{\infty}(\T,\R^{2n})$, and hence $H$ is non-resonant at infinity.
\end{lem}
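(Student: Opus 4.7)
My plan is to prove the first (main) inequality by a contradiction-and-compactness argument, and then to deduce the second inequality formally.

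Suppose the main inequality fails. Then there exists a sequence $(x_n) \subset \mathbb{H}_{1/2}$ with $\|x_n\|_{1/2} \to \infty$ and $\|d\Phi_H(x_n)\|_{\mathbb{H}_{1/2}^*} / \|x_n\|_{1/2} \to 0$. Set $v_n := x_n/\|x_n\|_{1/2}$, so $\|v_n\|_{1/2} = 1$. Writing $H_t = \eta H_W + \xi + r_t$ with $r_t$ supported in $\{|z| \leq R\}$ (hence $\nabla r_t$ uniformly bounded), the positive 1-homogeneity of $\nabla H_W$ gives $\nabla H_W(x_n) = \|x_n\|_{1/2}\nabla H_W(v_n)$, and consequently
\[
\frac{1}{\|x_n\|_{1/2}} d\Phi_H(x_n)[u] = d\Phi_{\eta H_W}(v_n)[u] - \frac{1}{\|x_n\|_{1/2}} \int_\T \nabla r_t(x_n) \cdot u \, dt.
\]
The perturbation term is $O(\|u\|_{1/2}/\|x_n\|_{1/2})$ uniformly in $u$, so $d\Phi_{\eta H_W}(v_n) \to 0$ in $\mathbb{H}_{1/2}^*$.

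The next step is to upgrade this to strong convergence of $v_n$. Under the Riesz identification, the vanishing differential becomes $B v_n - \eta K(v_n) \to 0$ in $\mathbb{H}_{1/2}$, where $B = P^+ - P^-$ represents the quadratic form $\int J_0 \dot x \cdot x\, dt$, and $K$ is the gradient of $v \mapsto \int_\T H_W(v)\, dt$. Because $\mathbb{H}_{1/2} \hookrightarrow L^p$ compactly for every $p < \infty$ and $\nabla H_W$ is continuous with linear growth, $K$ is a compact operator. Extracting a subsequence with $v_n \rightharpoonup v$ in $\mathbb{H}_{1/2}$ and $v_n \to v$ strongly in $L^p$, compactness of $K$ yields $K(v_n) \to K(v)$ strongly in $\mathbb{H}_{1/2}$, hence $B v_n \to \eta K(v)$ strongly; applying the orthogonal projectors $P^{\pm}$ to $B v_n = P^+ v_n - P^- v_n$ gives strong convergence of $P^{\pm} v_n$, and $P^0 v_n$ converges in the finite-dimensional space $\R^{2n}$. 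Thus $v_n \to v$ strongly in $\mathbb{H}_{1/2}$, so $\|v\|_{1/2}=1$, and passing to the limit in the equation, $v$ is a $1$-periodic orbit of $X_{\eta H_W}$.

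The non-resonance of $\eta$ now forces a contradiction. The time rescaling $\tilde v(s) := v(s/\eta)$ converts $v$ into an $\eta$-periodic orbit of $X_{H_W}$. By 2-homogeneity of $H_W$, the scaling $\gamma \mapsto \lambda\gamma$ considered in the proof of Proposition \ref{index1} conjugates the orbits on different non-zero energy levels of $H_W$, and the restriction of $X_{H_W}$ to $\partial W$ equals the Reeb vector field of $\alpha_W$, so the periods of the non-constant closed orbits of $X_{H_W}$ coincide with $\mathrm{spec}(\partial W)$ (with iterates). Since $\eta \notin \mathrm{spec}(\partial W)$, $\tilde v$ must be constant; but $X_{H_W}(z)=-J_0\nabla H_W(z)$ vanishes only at $z=0$ because $\nabla H_W(z)\cdot z = 2 H_W(z)$ is positive on $\R^{2n}\setminus\{0\}$. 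Hence $v \equiv 0$, contradicting $\|v\|_{1/2} = 1$.

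For the second inequality, integration by parts gives $d\Phi_H(x)[u] = \int_\T J_0(\dot x - X_H(x)) \cdot u\, dt$ for smooth $x$; Cauchy-Schwarz together with the elementary inequality $\|u\|_{L^2} \leq \|u\|_{1/2}$ (verified on Fourier series) yields $\|d\Phi_H(x)\|_{\mathbb{H}_{1/2}^*} \leq \|\dot x - X_H(x)\|_{L^2}$. Combining with the main bound and $\|x\|_{L^2} \leq \|x\|_{1/2}$, we may take $a' = a$ and $b' = b$; then $\|\dot x - X_H(x)\|_{L^2} < \epsilon$ forces $\|x\|_{L^2} \leq (\epsilon + b)/a$, proving non-resonance at infinity. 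The main technical obstacle is the weak-to-strong upgrade for $v_n$, which relies simultaneously on the compactness of $K$, the invertibility of $B$ away from its finite-dimensional kernel, and the spectral characterisation of $1$-periodic orbits of $X_{\eta H_W}$ via the Reeb dynamics on $\partial W$.
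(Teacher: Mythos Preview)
Your proof is correct and follows essentially the same strategy as the paper: both reduce the main inequality to showing that $\|d\Phi_{\eta H_W}\|_{\mathbb{H}_{1/2}^*}$ is bounded below on the unit sphere, argue by contradiction via a Palais--Smale-type sequence, upgrade weak to strong convergence, and obtain a nontrivial $1$-periodic orbit of $X_{\eta H_W}$ which rescales to a closed Reeb orbit of period $\eta$, contradicting $\eta\notin\mathrm{spec}(\partial W)$. The only noteworthy difference is in the weak-to-strong step: you invoke compactness of the Nemytskii operator $v\mapsto\nabla H_W(v)$ followed by the splitting $B=P^+-P^-$, whereas the paper tests $d\Phi_{\eta H_W}(x_h)$ against the specific vector $v_h=P^+(x_h-x)-P^-(x_h-x)$ to force $\|x_h-x\|_{1/2}^2\to 0$ directly; both are standard and equivalent in effect.
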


\begin{proof}
The functional $\Phi_{\eta H_W+\xi}$ is continuously differentiable on $\mathbb{H}_{1/2}$. The fact that $\eta$ is not in the spectrum of $\partial W$ implies that $\Phi_{\eta H_W+\xi}$ has a unique critical point at 0. Indeed, a critical point $x$ of this functional is a 1-periodic orbit of the Hamiltonian vector field 
\[
X_{\eta H_W + \xi} = X_{\eta H_W} = \eta X_{H_W},
\]
and hence $\gamma(t):= x(t/\eta)$ is an $\eta$-periodic orbit of $H_W$, which then lies in a level set of $H_W$. If, by contradiction, $x$ is not identically zero, then $H_W(x)=H_W(\gamma)>0$, and hence by rescaling we obtain an $\eta$-periodic orbit of $R_{\alpha_W}=X_{H_W}|_{\partial W}$ on $\partial W$, contradicting the fact that $\eta$ is not a period of closed Reeb orbits on $(\partial W,\alpha_W)$.  

We claim that the number
\[
a:= \inf_{\substack{x\in \mathbb{H}_{1/2} \\ \|x\|_{1/2} = 1}} \bigl\| d\Phi_{\eta H_W+\xi}(x)\bigr\|_{\mathbb{H}_{1/2}^*}
\]
is positive. If by contradiction $a$ is zero, then there exists a sequence $(x_h)\subset \mathbb{H}_{1/2}$ such that $\|x_h\|_{1/2}=1$ and $d\Phi_{\eta H_W+\xi}(x_h)\rightarrow 0$ in $\mathbb{H}_{1/2}^*$. Up to considering a subsequence, we may assume that $(x_h)$ converges weakly to some $x\in \mathbb{H}_{1/2}$. The sequence
\[
v_h:= P^+(x_h - x) - P^-(x_h-x)
\]
is bounded, and hence the real sequence
\[
d\Phi_{\eta H_W+\xi}(x_h)[v_h] 
\]
is infinitesimal. This sequence has the form
\[
\begin{split}
d\Phi_{\eta H_W +\xi}&(x_h)[v_h] = (x_h,P^+ v_h)_{1/2} - (x_h,P^-v_h)_{1/2} - \eta \int_{\T} \nabla H_W(x_h)\cdot v_h\, dt \\ &= \|x_h - x\|_{1/2}^2+(x,x_h-x)_{1/2} - (x_h, P^0 (x_h-x))_{1/2}  - \eta \int_{\T} \nabla H_W(x_h)\cdot v_h\, dt,
\end{split}
\]
where $P^0: \mathbb{H}_{1/2} \rightarrow  \mathbb{H}_{1/2}$ is the orthogonal projector onto the space of constant loops.  Since $x_h$ converges to $x$ weakly in $\mathbb{H}_{1/2}$, the term $(x,x_h-x)_{1/2}$ is  infinitesimal. Moreover since $P^0$ has finite rank, the sequence $P^0 (x_h-x)$ converges to zero strongly and hence the sequence $(x_h, P^0 (x_h-x))_{1/2}$ is also infinitesimal. Since $(v_h)$ converges to 0 weakly in $\mathbb{H}_{1/2}$, it converges to 0 strongly in $L^2(\T)$. Since the sequence $\nabla H_W(x_h)$ is bounded in $L^2$, we deduce that the last integral defines an infinitesimal sequence. We conclude that the sequence $\|x_h - x\|_{1/2}^2$ is also infinitesimal, so the convergence of $(x_h)$ to $x$ is actually strong in $\mathbb{H}_{1/2}$. But then $x$ is a critical point of $\Phi_{\eta H_W+\xi}$ lying on the unit sphere and this is impossible, because 0 is the only critical point of $\Phi_{\eta H_W+\xi}$. This contradiction proves the claim.

The above claim and the 1-homogeneity of $d\Phi_{\eta H_W+\xi}= d\Phi_{\eta H_W}$ imply that
\begin{equation}
\label{bdhom}
\bigl\| d\Phi_{\eta H_W+\xi}(x)\bigr\|_{\mathbb{H}_{1/2}^*} \geq a \|x\|_{1/2} \qquad \forall x\in \mathbb{H}_{1/2}.
\end{equation}
The continuously differentiable function
\[
K: \T \times \R^{2n} \rightarrow \R, \qquad
K_t(z):= \eta H_W(z) + \xi - H_t(z),
\]
is compactly supported and hence has uniformly bounded first derivatives. From the identity
\[
\Phi_H(x) - \Phi_{\eta H_W+\xi}(x) = \int_{\T} ( \eta H_W(x) + \xi - H_t(x) )\, dt = \int_{\T} K_t(x)\, dt
\]
we deduce that
\[
d\Phi_H(x)[u] - d\Phi_{\eta H_W+\xi}(x)[u] = \int_{\T} dK_t(x)[u]\, dt \qquad \forall x,u\in \mathbb{H}_{1/2},
\]
and hence there is a number $b$ such that
\[
\bigl\|d\Phi_H(x) - d\Phi_{\eta H_W+\xi}(x)\bigr\|_{\mathbb{H}_{1/2}^*} \leq b \qquad \forall x\in \mathbb{H}_{1/2}.
\]
Together with (\ref{bdhom}) we deduce the desired first bound.

From the identity
\[
d\Phi_H(x)[v] = \int_{\T} (J_0 \dot{x} - \nabla H(x)) \cdot v\, dt =  \big(J_0(\dot{x} - X_H(x)), v\big)_{L^2(\T)} \qquad \forall x,v\in  C^{\infty}(\T,\R^{2n}),
\]
we deduce that
\[
\|d\Phi_H(x)\|_{\mathbb{H}_{1/2}^*} = \|J_0(\dot{x} - X_{H}(x))\|_{H^{-1/2}(\T)} \leq c \|\dot{x} - X_{H}(x)\|_{L^2(\T)} \qquad \forall x\in  C^{\infty}(\T,\R^{2n}),
\]
and the second bound follows from the first one and from the inequality $\|x\|_{L^2(\T)} \leq \|x\|_{1/2}$.
 \end{proof}

\begin{rem}
The condition on $H$ that appears in the above lemma clearly implies also the linear growth condition on $X_H$, so a Hamiltonian of this kind has a well-defined Floer complex. Another class of Hamiltonians that are non-resonant at infinity and have a Hamiltonian vector field with linear growth is given by functions of the form
\[
H(t,x) = \frac{1}{2} \langle A(t)x,x\rangle + K(t,x),
\]
where the function $K\in C^{\infty}(\T\times \R^{2n})$ satisfies  $\nabla K(t,x)= o(\|x\|)$ for $\|x\|\rightarrow \infty$ uniformly in $t\in \T$ and $t\mapsto A(t)$ is a smooth  loop of symmetric endomorphisms of $\R^{2n}$ such that the linear Hamiltonian system
\[
\dot{x}(t) = - J_0 A(t) x(t)
\]
does not have any non-zero 1-periodic orbit.
\end{rem}

\section{The dual action functional}
\label{dualsec}

We now focus our attention on convex Hamiltonians. More precisely, we consider the following convexity assumption.

\begin{description}
\item[Quadratic convexity:] The Hamiltonian $H\in C^{\infty}(\T \times \R^{2n})$ is said to be quadratically convex if there are positive numbers $\underline{h}$ and $\overline{h}$ such that
\[
\underline{h}\,  I \leq \nabla^2 H_t(x)  \leq \overline{h}\, I
\]
for all $x\in \R^{2n}$ and $t\in \T$. Here, $I$ denotes the identity endomorphism on $\R^{2n}$.
\end{description}
Note that the upper bound on the Hessian of $H$ implies that the Hamiltonian vector field $X_H$ is globally Lipschitz-continuous in the space variables, 
\begin{equation}
\label{globlip}
|X_{H_t}(y)-X_{H_t}(x)| \leq \overline{h} |y-x| \qquad \forall t\in \T, \; \forall x,y\in \R^{2n},
\end{equation}
and in particular $X_H$ has linear growth at infinity.

Let $C\subset \R^{2n}$ be a bounded convex open set with smooth boundary. Assume moreover that all the sectional curvatures of $\partial C$ are positive. This is equivalent to the fact that, after shifting $C$ so that the origin belongs to its interior, the second differential of the positively 2-homogeneous function $H_C$ at every point in $\R^{2n}\setminus \{0\}$ is positive definite. We shall refer to such a set as a smooth strongly convex domain. From the compactness of $\partial C$ and from the 2-homogeneity of $H_C$ we deduce the bounds
\[
\underline{c}\, I \leq \nabla^2 H_C(x)  \leq \overline{c}\, I
\]
for all $x\in \R^{2n}\setminus \{0\}$, for suitable positive numbers $\underline{c}$ and $\overline{c}$.  Therefore, the square $H_C$ of the Minkowski gauge function of the strongly convex domain $C$ is quadratically convex, except for the lack of differentiability at the origin. 

Throughout this section, the Hamiltonian $H: \T \times \R^{2n} \rightarrow \R$ is assumed to be smooth and quadratically convex. We denote by
\[
H_t^* : \R^{2n} \rightarrow \R, \qquad H_t^*(x) := \max_{y\in \R^{2n}} \bigl( x\cdot y - H(t,y) \bigr),
\]
the Fenchel conjugate of $H_t$. The properties of Fenchel duality, see e.g.~\cite[\S 26]{Roc70}, imply that the function $H^*: \T \times \R^{2n}\rightarrow \R$ is smooth and satisfies
\[
\overline{h}^{-1} I \leq \nabla^2 H_t^*(x) \leq \underline{h}^{-1} I \qquad \forall (t,x)\in\T\times \R^{2n}.
\]
By the definition of $H^*_t$, we have for every $t\in \T$
\begin{equation}\label{eq:Fenchel_ineq}
H_t^*(x)+H_t(y)\geq x\cdot y \qquad \forall x,y\in\R^{2n}	
\end{equation}
where the equality holds if and only if $x=\nabla H_t(y)$, or equivalently $y=\nabla H_t^*(x)$. The equality case is called the Legendre reciprocity formula.

Clarke's dual action functional is defined by the formula
\[
\Psi_{H^*}(x) := - \frac{1}{2} \int_{\T} J_0\dot{x}(t)\cdot x(t) \, dt + \int_{\T} H_t^*\bigl(J_0 \dot{x}(t)\bigr)\, dt.
\]
The functional $\Psi_{H^*}$ is continuously differentiable on the Hilbert space
\[
\mathbb{H}_1 := H^1(\T,\R^{2n})/\R^{2n},
\]
where the action of $\R^{2n}$ on the Sobolev space $H^1(\T,\R^{2n})$ is given by translations. Rather than working with equivalence classes of curves modulo translations, it is convenient to work with genuine curves by identifying  $\mathbb{H}_1$ with the space of closed curves with zero mean:
\[
\mathbb{H}_1 = \Big\{ x\in H^1(\T,\R^{2n}) \mid \int_{\T} x(t)\, dt = 0 \Big\}.
\]
On this space, it is convenient to use the inner product 
\[
(x,y)_{\mathbb H_1}:=(\dot x,\dot y)_{L^2}.
\]
By the Poincar\'e-Wirtinger inequality, the induced norm $\|\cdot\|_{\mathbb{H}_1}$ is equivalent to the $H^1$-norm on $\mathbb{H}_1$. We denote by 
\[
\pi : H^1(\T,\R^{2n}) \longrightarrow \mathbb{H}_1, \qquad \pi(x) = x - \int_{\T} x(t)\, dt
\]
the quotient projection.
The differential of $\Psi_{H^*}$ is given by the formula
\begin{equation}\label{eq:Psi'}
\begin{split}
d\Psi_{H^*}(x)[v] &= - \int_{\T} J_0 \dot{v}(t) \cdot x(t)\, dt + \int_{\T} dH_t^*\bigl(J_0 \dot{x}(t)\bigr)[J_0\dot{v}(t)]\, dt \\ &= - \int_{\T} J_0 \dot{v}(t) \cdot x(t)\, dt + \int_{\T} J_0 \dot{v}(t)\cdot \nabla H_t^*\bigl(J_0 \dot{x}(t)\bigr) \, dt.
\end{split}
\end{equation}
Since $\nabla H_t^*$ is Lipschitz continuous, so is the differential $d\Psi_{H^*}$.

There is a one-to-one correspondence between the critical points of $\Phi_H$ and $\Psi_{H^*}$. More precisely, we have the following well-known fact, of which we include a proof for sake of completeness.

\begin{lem}
\label{crit}
If $x$ is a critical point of $\Phi_H$, then $\pi(x)$ is a critical point of $\Psi_{H^*}$. Conversely, every critical point $x$ of $\Psi_{H^*}$ is smooth and there exists a unique vector $v_0\in \R^{2n}$ such that $x+v_0$ is a critical point of $\Phi_H$. In this case, we have
\[
\Phi_H(x+v_0) = \Psi_{H^*}(x).
\]
\end{lem}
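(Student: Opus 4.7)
The proof will proceed along four self-contained steps organized around the Legendre reciprocity formula encoded in \eqref{eq:Fenchel_ineq}, the essence being that the Euler-Lagrange equations for $\Phi_H$ and $\Psi_{H^*}$ are Fenchel-dual to each other.

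For the forward direction, the plan is to assume $x$ is a critical point of $\Phi_H$ and rewrite its Euler-Lagrange equation $J_0 \dot x = \nabla H_t(x)$ in its Legendre-dual form $x = \nabla H_t^*(J_0 \dot x)$. Since $\pi(x) = x - \bar{x}$ has the same derivative as $x$, plugging directly into \eqref{eq:Psi'} I obtain
\[
d\Psi_{H^*}(\pi(x))[v] = \int_\T J_0 \dot v(t) \cdot \bigl( \nabla H_t^*(J_0 \dot x) - \pi(x) \bigr)\, dt = \int_\T J_0 \dot v(t) \cdot \bar{x}\, dt,
\]
which vanishes because $\int_\T \dot v\, dt = 0$ for every periodic $v$.

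For the converse, starting from $d\Psi_{H^*}(x) = 0$, formula \eqref{eq:Psi'} says that the $L^2$-map $y(t) := \nabla H_t^*(J_0 \dot x(t)) - x(t)$ satisfies $\int_\T J_0 \dot v \cdot y\, dt = 0$ for every $v \in \mathbb{H}_1$. As $v$ ranges over $\mathbb{H}_1$, the derivative $\dot v$ ranges over all zero-mean $L^2$-maps $\T \to \R^{2n}$; so the Riesz-type duality argument (writing any zero-mean $w$ as $w = \dot v$ with $v(t) = \int_0^t w$ corrected by a constant to have zero mean) forces $J_0 y$, and hence $y$, to be constant. Setting $y \equiv v_0 \in \R^{2n}$ and applying Legendre reciprocity backward to $\nabla H_t^*(J_0 \dot x) = x + v_0$ yields $J_0 \dot x = \nabla H_t(x+v_0)$, i.e.\ $x + v_0$ satisfies Hamilton's equation. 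Smoothness of $x$ then follows from a standard bootstrap: $x \in H^1 \hookrightarrow C^0$ and the ODE $\dot x = -J_0 \nabla H_t(x+v_0)$ upgrade $x$ to $C^1$, then to $C^2$, and iteratively to $C^\infty$. Uniqueness of $v_0$ follows because strict convexity of $H_t$ (guaranteed by quadratic convexity) makes $\nabla H_t$ injective: two choices $v_0, v_0'$ would give $\nabla H_t(x+v_0) = J_0 \dot x = \nabla H_t(x+v_0')$, forcing $v_0 = v_0'$.

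For the equality of actions, I would apply Legendre reciprocity pointwise at the critical pair $(x+v_0, J_0 \dot x)$ to get $H_t(x+v_0) = (x+v_0)\cdot J_0 \dot x - H_t^*(J_0 \dot x)$. Integrating over $\T$ and using $\int_\T v_0 \cdot J_0 \dot x\, dt = 0$ (which follows from $\int_\T \dot x\, dt = 0$ by periodicity) together with a parallel cancellation of the cross term in $\Phi_H(x+v_0) = \tfrac{1}{2} \int_\T J_0 \dot x \cdot (x+v_0)\, dt - \int_\T H_t(x+v_0)\, dt$ collapses the expression to $-\tfrac12 \int_\T J_0 \dot x \cdot x\, dt + \int_\T H_t^*(J_0 \dot x)\, dt = \Psi_{H^*}(x)$. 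None of the steps presents a serious obstacle; the only mildly delicate point is the density statement justifying that $\{J_0\dot v : v\in\mathbb H_1\}$ equals the zero-mean $L^2$-functions, which is however elementary.
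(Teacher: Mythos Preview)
Your proof is correct and follows essentially the same approach as the paper's own proof: both use the Legendre reciprocity formula to pass between the Euler--Lagrange equations of $\Phi_H$ and $\Psi_{H^*}$, identify the constant $v_0$ via the orthogonality condition on $J_0\dot v$, bootstrap for smoothness, and compute the equality of actions by integrating the Fenchel equality. You additionally spell out the uniqueness of $v_0$ via injectivity of $\nabla H_t$, which the paper leaves implicit.
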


\begin{proof}
Assume that $x$ is a critical point of $\Phi_H$. Then $x$ is smooth and is a 1-periodic orbit of $X_H$, that is,
\[
J_0 \dot{x}(t) = \nabla H_t(x(t)) \qquad \forall t\in \T.
\]
Fenchel duality implies that for every $t\in \T$ the map $\nabla H_t^*$ is the inverse of the map $\nabla H_t$. Therefore, the above identity implies
\[
\nabla H_t^*\bigl(  J_0 \dot{x}(t) \bigr) = x(t) \qquad \forall t\in \T.
\]
Then $y:= \pi(x)$ satisfies
\[
\nabla H_t^*\bigl(  J_0 \dot{y}(t) \bigr) = y(t) + \hat{x}_0 \qquad \forall t\in \T,
\]
where $\hat{x}_0\in \R^{2n}$ denotes the average of $x$.
For every $v\in \mathbb{H}_1$ we have
\[
d\Psi_{H^*}(y)[v] = - \int_{\T} J_0 \dot{v}(t) \cdot y(t)\, dt + \int_{\T} J_0 \dot{v}(t)\cdot \nabla H_t^*\bigl(J_0 \dot{y}(t)\bigr) \, dt =  \int_{\T} J_0 \dot{v}(t)\cdot \hat{x}_0\, dt = 0,
\]
so $\pi(x)$ is a critical point of $\Psi_{H^*}$.

Now assume that $x\in \mathbb{H}_1$ is a critical point of $\Psi_{H^*}$. Then
\[
\int_{\T} J_0 \dot{v}(t) \cdot \Bigl( x(t) - \nabla H_t^*\bigl(J_0 \dot{x}(t)\bigr) \Bigr)\, dt = 0
\]
for every $v\in \mathbb{H}_1$, and hence there exists $v_0\in \R^{2n}$ such that
\[
\nabla H_t^*\bigl(J_0 \dot{x}(t)\bigr) - x(t)= v_0 \qquad \mbox{for a.e. } t\in \T,
\]
that is,
\[
\nabla H_t^*\bigl(J_0 \dot{x}(t)\bigr) = x(t) + v_0  \qquad \mbox{for a.e. } t\in \T.
\]
By applying the map $\nabla H_t$ to the above identity we obtain
\[
J_0 \dot{x}(t) = \nabla H_t (x(t)+v_0) \qquad \mbox{for a.e. } t\in \T.
\]
The above identity and a standard bootstrap argument imply that the curve $x$ is smooth and $x+v_0$ is a 1-periodic orbit of $X_H$, and hence a critical point of $\Phi_H$. 

The fact that $x(t)+v_0$ is the image of $J_0\dot{x}(t)$ by $\nabla H^*_t$ implies that
\[
H_t(x(t)+v_0) + H^*_t(J_0\dot{x}(t)) = J_0 \dot{x}(t) \cdot (x(t)+v_0) \qquad \forall t\in \T
\]
by \eqref{eq:Fenchel_ineq}, and integrating over $\T$ we find
\[
\int_{\T} H_t(x(t)+v_0)\, dt + \int_{\T} H^*_t(J_0\dot{x}(t))\, dt = \int_{\T} J_0 \dot{x}(t)\cdot x(t)\, dt.
\]
We conclude that
\[
\Psi_{H^*}(x) - \Phi_H(x+v_0) = - \int_{\T} J_0 \dot{x}(t)\cdot x(t)\, dt + \int_{\T} H^*_t(J_0\dot{x}(t))\, dt + \int_{\T} H_t(x(t)+v_0)\, dt = 0.
\]
\end{proof}

The following result plays a fundamental role in the construction of the isomorphism between the Morse complex induced by $\Psi_{H^*}$ and the Floer complex of $\Phi_H$.

\begin{prop}
\label{confronto}
Let $x\in H^1(\T,\R^{2n})$ and $y\in  \R^{2n}\oplus\mathbb{H}^-_{1/2}$. Then we have
\begin{equation}
\label{disdadim}
\Phi_H(x+y) \leq \Psi_{H^*}(\pi(x)) - \frac{1}{2} \|P^-y\|_{1/2}^2,
\end{equation}
with the equality holding if and only if $J_0\dot{x} = \nabla H_t(x+y)$ almost everywhere. In particular, the equality
\[
\Phi_H(x+y) = \Psi_{H^*}(\pi(x))
\]
holds if and only if $y\in \R^{2n}$ and $x+y$ is a critical point of $\Phi_H$.
\end{prop}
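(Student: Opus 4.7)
The approach is to reduce the inequality to the pointwise Fenchel--Young inequality \eqref{eq:Fenchel_ineq} by a direct algebraic manipulation of the quadratic action form on $\mathbb{H}_{1/2}$. Set $Q(z) := \frac{1}{2}\int_{\T} J_0 \dot{z}(t) \cdot z(t)\, dt$, viewed as a continuous quadratic form on $\mathbb{H}_{1/2}$. Its polarization, combined with the standard integration by parts on the circle (valid since $\dot{x}\in L^2(\T)$ and $y\in L^2(\T)$), yields
\[
Q(x+y) = Q(x) + Q(y) + \int_{\T} J_0 \dot{x}(t)\cdot y(t)\, dt.
\]
From the Fourier description of $Q$, this form vanishes on the space of constants and equals $-\frac{1}{2}\|\cdot\|_{1/2}^{2}$ on $\mathbb{H}^-_{1/2}$; hence $Q(y) = -\frac{1}{2}\|P^- y\|_{1/2}^{2}$ for every $y\in \R^{2n} \oplus \mathbb{H}^-_{1/2}$. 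Moreover, $\int_{\T} J_0 \dot{x}\, dt = 0$ for any loop, so $Q(\pi(x)) = Q(x)$.

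Substituting these identities into the definitions gives
\[
\Phi_H(x+y) = Q(x) - \tfrac{1}{2}\|P^- y\|_{1/2}^{2} + \int_{\T} J_0 \dot{x}\cdot y\, dt - \int_{\T} H_t(x+y)\, dt,
\]
\[
\Psi_{H^*}(\pi(x)) = -Q(x) + \int_{\T} H_t^*(J_0 \dot{x})\, dt.
\]
Cancelling $-\tfrac{1}{2}\|P^- y\|_{1/2}^{2}$ and using $2Q(x) = \int_{\T} J_0 \dot{x}\cdot x\, dt$, the claimed inequality \eqref{disdadim} is equivalent to
\[
\int_{\T} J_0 \dot{x}(t) \cdot \bigl( x(t)+y(t) \bigr)\, dt \leq \int_{\T} \Bigl( H_t^*(J_0 \dot{x}(t)) + H_t(x(t)+y(t)) \Bigr)\, dt,
\]
which is \eqref{eq:Fenchel_ineq} applied pointwise to $u = J_0 \dot{x}(t)$ and $v = x(t)+y(t)$ and then integrated over $\T$. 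The equality case of Fenchel--Young then characterizes equality in \eqref{disdadim} as $J_0 \dot{x}(t) = \nabla H_t(x(t)+y(t))$ for almost every $t\in \T$.

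For the last assertion, the stronger equality $\Phi_H(x+y) = \Psi_{H^*}(\pi(x))$ is equivalent to simultaneously having equality in \eqref{disdadim} and $\|P^- y\|_{1/2} = 0$, i.e.\ $y\in \R^{2n}$. In that case $\dot{(x+y)} = \dot{x} = -J_0\nabla H_t(x+y) = X_{H_t}(x+y)$, and the bootstrap argument from the proof of Lemma \ref{crit} promotes $x+y$ to a smooth 1-periodic orbit of $X_H$, hence a critical point of $\Phi_H$; the converse is immediate by reversing these steps. The only mildly delicate point in the argument is the cross-term identity for $Q(x+y)$, because $y$ is only of Sobolev class $H^{1/2}$; however, the $L^2$-pairing with $\dot{x}\in L^2$ is unambiguous, and the identity follows either by density from smooth loops together with the continuity of $Q$ and its polarization on $\mathbb{H}_{1/2}$, or directly from the Fourier expansion on $\T$.
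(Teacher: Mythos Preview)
Your proof is correct and follows essentially the same approach as the paper: both arguments reduce the inequality to the integrated Fenchel--Young inequality \eqref{eq:Fenchel_ineq} via the same algebraic manipulation of the quadratic form $Q$, and both handle the equality cases identically. The only cosmetic difference is the order of operations---you first expand $Q(x+y)$ and then invoke Fenchel, whereas the paper applies Fenchel first and then simplifies the quadratic terms---but the underlying computation and the treatment of the cross term (via integration by parts, valid because $\dot{x}\in L^2$ pairs with $y\in L^2$) are the same.
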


\begin{proof}
Fenchel duality \eqref{eq:Fenchel_ineq} implies that
\begin{equation}
\label{fenin}
H_t(x+y) \geq J_0\dot{x} \cdot (x+y) - H_t^*(J_0 \dot{x}) \qquad \mbox{a.e.}
\end{equation}
with equality if and only if
\begin{equation}
\label{eqcase}
J_0\dot{x} = \nabla H_t(x+y) \qquad \mbox{a.e.}
\end{equation}
By integration we get
\[
\begin{split}
\Phi_H(x+y) &= \frac{1}{2} \int_{\T} J_0 (\dot{x} + \dot{y})\cdot (x+y)\, dt - \int_{\T} H_t(x+y)\, dt \\ &\leq    \frac{1}{2} \int_{\T} J_0 (\dot{x} + \dot{y})\cdot (x+y)\, dt - \int_{\T} J_0 \dot{x} \cdot (x+y)\, dt + \int_{\T} H_t^*(J_0\dot{x})\, dt \\ &= -\frac{1}{2} \int_{\T} J_0 \dot{x}\cdot x\, dt + \frac{1}{2} \int_{\T} J_0 \dot y\cdot y +  \int_{\T} H_t^*(J_0\dot{x})\, dt \\ &= \Psi_{H^*}(x)  - \frac{1}{2} \|P^- y\|_{1/2}^2,
\end{split}
\]
where in the last equality we have used the fact that $y$ belongs to $\mathbb{H}^-_{1/2}\oplus \R^{2n}$. This shows that (\ref{disdadim}) holds, with equality if and only if the Fenchel inequality (\ref{fenin}) is an equality, that is if and only if (\ref{eqcase}) holds. In particular,
\[
\Phi_H(x+y) \leq \Psi_{H^*}(\pi(x))
\]
with equality if and only if $P^- y=0$ and (\ref{eqcase}) holds. This is equivalent to the fact that $y$ is a constant loop and the loop $x+y$ is a 1-periodic orbit of $X_H$, or equivalently a critical point of $\Phi_H$.
\end{proof}

The functional $\Psi_{H^*}$ is in general not twice differentiable, unless the function $x\mapsto H_t(x)$ is quadratic for every $t\in \T$, but it is twice Gateaux differentiable, meaning that for every $x,v\in \mathbb{H}_1$ the limit
\[
\nabla^2 \Psi_{H^*}(x) v := \lim_{h\rightarrow 0} \frac{1}{h} \bigl( \nabla \Psi_{H^*}(x+hv) - \nabla \Psi_{H^*}(x) \bigr)
\]
exists and defines a continuous linear operator $\nabla^2 \Psi_{H^*}(x)$ on $\mathbb{H}_1$. The corresponding second differential of $\Psi_{H^*}$ at $x$ has the form
\[
d^2 \Psi_{H^*}(x)[u,v] = -\int_{\T} J_0 \dot{u}\cdot v\, dt + \int_{\T} \nabla^2 H_t^*(J_0\dot{x}) J_0 \dot{u} \cdot J_0 \dot v\, dt.
\]
The second integral defines a coercive bilinear form on $\mathbb{H}_1$, that is,
\[
\int_\T\nabla^2H_t^*(J_0\dot x)J_0\dot u\cdot J_0\dot u dt\geq {\bar h}^{-1}\|u\|_{\mathbb{H}_1}^2.
\]
The first integral defines a bilinear form which is continuous in $H^{1/2}$ and, thanks to the compactness of the embedding $H^{1} \hookrightarrow H^{1/2}$, this bilinear form is represented by a compact operator with respect to the inner product of $\mathbb{H}_1$. It follows that the critical point $x$ of $\Psi_{H^*}$ has finite Morse index and finite nullity
\[
\begin{split}
\mathrm{ind}(x;\Psi_{H^*}) &:= \dim V^- \bigl( \nabla^2 \Psi_{H^*}(x) \bigr) \\ = \max &\{ \dim W \mid W \mbox{ linear subspace of } \mathbb{H}_1, \; d^2 \Psi_{H^*}(x) \mbox{ negative definite on } W \}, \\
\mathrm{null}(x;\Psi_{H^*}) &:= \dim \ker \nabla^2 \Psi_{H^*}(x).
\end{split}
\]
The next result could be deduced from Proposition \ref{cz=relind} and from the relationship between the Conley-Zehnder index and the Morse index of the dual action functional, see \cite{bro86,bro90,lon02}. Here we give a direct proof.

\begin{prop}
\label{relind=ind}
Let $x$ be a critical point of $\Phi_H$ and let $\pi(x)$ be the corresponding critical point of $\Psi_{H^*}$. Then
\[
\mathrm{null}(x;\Phi_{H}) = \mathrm{null}(\pi(x);\Psi_{H^*}) \quad \mbox{and} \quad
\mathrm{ind}_{\mathbb{H}_{1/2}^- \oplus E^-} (x;\Phi_H) = \mathrm{ind} (\pi(x);\Psi_{H^*})+ n.
\]
\end{prop}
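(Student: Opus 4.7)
Here is my plan.

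\textbf{Step 1: Explicit form of the two Hessians.} Set $A(t) := \nabla^2 H_t(x(t))$, a smooth loop of positive-definite symmetric matrices by quadratic convexity of $H$. Since $x$ is a critical point of $\Phi_H$, we have $J_0\dot x = \nabla H_t(x)$, and the Legendre reciprocity $\nabla^2 H^*_t(\nabla H_t(z)) = (\nabla^2 H_t(z))^{-1}$ yields $\nabla^2 H^*_t(J_0\dot x(t)) = A(t)^{-1}$. The Hessians thus read
\[
d^2\Phi_H(x)[u,u] = \int_\T \bigl( J_0\dot u\cdot u - Au\cdot u\bigr)\, dt \quad\text{on } \mathbb{H}_{1/2},
\]
\[
d^2\Psi_{H^*}(\pi(x))[v,v] = \int_\T \bigl( - J_0\dot v\cdot v + A^{-1}J_0\dot v\cdot J_0\dot v\bigr)\, dt \quad\text{on } \mathbb{H}_1.
\]

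\textbf{Step 2: Equality of nullities.} From the first formula, $u\in\ker d^2\Phi_H(x)$ iff $\dot u = -J_0 Au$, i.e.\ $u$ is a 1-periodic solution of the linearized Hamiltonian system along $x$. Integrating by parts in the second formula, $v\in\ker d^2\Psi_{H^*}(\pi(x))$ iff $J_0 v - J_0 A^{-1}J_0\dot v$ is a constant vector, equivalently iff there exists $c\in\R^{2n}$ with $J_0\dot v = A(v+c)$, i.e.\ iff $w:=v+c$ solves the same linearized Hamiltonian system and is 1-periodic. Averaging $J_0\dot v = A(v+c)$ over $\T$ and using invertibility of $\int_\T A\,dt$ (since $A$ is positive definite) shows that $c$ is uniquely determined by $v$, so $w\mapsto \pi(w)=w-\overline w$ is a bijection between the two kernels. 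Combining with Proposition~\ref{cz=relind} gives $\mathrm{null}(x;\Phi_H) = \mathrm{null}(\pi(x);\Psi_{H^*}) = \dim\ker(I - d\phi^1_{X_H}(x(0)))$.

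\textbf{Step 3: Equality of indices (up to $+n$).} Decompose $\mathbb{H}_{1/2} = \R^{2n} \oplus \mathbb{H}_{1/2}^{\prime}$ with $\mathbb{H}_{1/2}^\prime := \mathbb{H}_{1/2}^+\oplus\mathbb{H}_{1/2}^-$ the zero-mean subspace. On the constant loops $\R^{2n}$, $d^2\Phi_H(x)$ restricts to the negative-definite form $-\bigl(\int_\T A\,dt\bigr)$, contributing $2n$ to the Morse index but only $n$ to the reference subspace $E^-$; this is the source of the $+n$ shift. On the zero-mean part, use the Hilbert-space isomorphism $S:\mathbb{H}_1\to L^2_0(\T,\R^{2n})$, $Sv=J_0\dot v$, under which $d^2\Psi_{H^*}(\pi(x))$ pulls back to the quadratic form $\xi\mapsto \int A^{-1}\xi\cdot\xi\,dt + $ lower-order compact perturbation, whose Morse index is $\mathrm{ind}(\pi(x);\Psi_{H^*})$. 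A duality calculation identifies the latter with the relative dimension of the negative eigenspace of $d^2\Phi_H(x)|_{\mathbb{H}_{1/2}^\prime}$ against $\mathbb{H}_{1/2}^-$; adding the $2n$ from the constants and subtracting $n$ for $E^-$ yields the claimed identity.

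\textbf{Main obstacle.} The hard part is making the zero-mean duality in Step 3 rigorous: the two Hessians live on different Sobolev spaces, and the substitution $\xi=J_0\dot v$ does not preserve the indefinite leading form $\int J_0\dot u\cdot u$, so one cannot simply equate Morse indices. I would handle this by a common Fourier-Galerkin truncation (as in \cite{abb01,lon02}): on each finite-dimensional truncation both Morse indices are ordinary dimensions of negative eigenspaces, and a direct comparison via Sylvester's law shows their difference stabilizes to exactly $n$ for large truncation level. An alternative is a spectral-flow argument along the homotopy $s\mapsto sA(\cdot)+(1-s)A_0$ with $A_0$ a suitable constant symmetric matrix, verifying that both the Conley–Zehnder index of the linearized flow and $\mathrm{ind}(\pi(x);\Psi_{H^*})+n$ change by the same jumps at degenerate parameters.
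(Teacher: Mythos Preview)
Your Steps 1 and 2 are correct, and Step 2 is actually a cleaner direct argument for the nullity than what the paper does. However, Step 3 contains a genuine gap.

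The problem is your decomposition $\mathbb{H}_{1/2} = \R^{2n}\oplus \mathbb{H}_{1/2}'$. It is true that $d^2\Phi_H(x)$ restricts to $-\int_{\T} A\,dt$ on constants, but the form does \emph{not} split along this decomposition: writing $u = c+w$ with $c$ constant and $w$ zero-mean, the term $-\int_{\T} Au\cdot u\,dt$ produces a cross term $-2\int_{\T} Ac\cdot w\,dt$, which is nonzero whenever $A(t)$ is not constant in $t$. Hence the negative eigenspace of $d^2\Phi_H(x)$ is \emph{not} $\R^{2n}\oplus(\text{negative eigenspace on }\mathbb{H}'_{1/2})$, and you cannot simply add the ``contribution $2n$ from constants'' to a zero-mean relative index. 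Your proposed workarounds (Galerkin truncation with Sylvester's law, or spectral flow along a homotopy of $A$) are plausible strategies but are not worked out; in particular, invoking Sylvester's law does not by itself compare two \emph{different} quadratic forms on two \emph{different} spaces.

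The paper avoids this difficulty by a single algebraic trick that you are missing. It introduces the weighted $L^2$ inner product $(u,v)_S := \int_{\T} A u\cdot v\,dt$ and the unbounded self-adjoint operator $T := A^{-1}J_0\,d/dt$ on $L^2$. Then both Hessians are simultaneously represented as polynomials in $T$:
\[
d^2\Phi_H(x)[u,v] = ((T-I)u,v)_S, \qquad d^2\widehat{\Psi}_{H^*}(x)[u,v] = (T(T-I)u,v)_S,
\]
where $\widehat{\Psi}_{H^*} = \Psi_{H^*}\circ\pi$ is the lift to $H^1(\T,\R^{2n})$. Since $T$ has compact resolvent, both forms diagonalize in the same eigenbasis $\{\varphi_j\}$ of $T$ with eigenvalues $\{\mu_j\}$: the negative space of $d^2\Phi_H(x)$ is $\overline{\mathrm{span}}\{\varphi_j:\mu_j<1\}$, while that of $d^2\widehat{\Psi}_{H^*}(x)$ is the finite-dimensional space $W=\mathrm{span}\{\varphi_j:0<\mu_j<1\}$. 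Computing the relative dimension of the former against $\mathbb{H}_{1/2}^-\oplus E^-$ gives $\dim W + n$ directly, with no need to handle cross terms or pass to finite-dimensional approximations. This simultaneous diagonalization is the key idea your proposal lacks.
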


\begin{proof}
We denote by 
\[
\widehat{\Psi}_{H^*}: H^1(\T,\R^{2n}) \longrightarrow \R, \qquad \widehat{\Psi}_{H^*} = \Psi_{H^*} \circ \pi
\]
the natural lift of the functional $\Psi_{H^*}$. Notice that $x\in H^1(\T,\R^{2n})$ is a critical point of $\widehat{\Psi}_{H^*}$ if and only if $\pi(x)$ is a critical point of $\Psi_{H^*}$. In this case we have
\begin{equation}
\label{lift}
\mathrm{null}(x;\widehat{\Psi}_{H^*}) = \mathrm{null}(\pi(x);\Psi_{H^*}) + 2n, \qquad
\mathrm{ind} (x; \widehat{\Psi}_{H^*}) = \mathrm{ind} (\pi(x);\Psi_{H^*}).
\end{equation}
Now let $x\in \mathbb{H}_{1/2}$ be a critical point of $\Phi_H$. Being a 1-periodic orbit of $X_H$, $x: \T \rightarrow \R^{2n}$ is smooth and in particular in $H^1(\T,\R^{2n})$.
Let $S$ be the smooth loop of positive definite symmetric endomorphisms of $\R^{2n}$ defined by
\[
S(t):= \nabla^2 H_t(x(t)).
\]
By Fenchel duality we have
\[
\nabla H_t^* \circ \nabla H_t = \mathrm{id},
\]
and differentiation gives us
\[
\nabla^2 H_t^* (\nabla H_t(z)) \nabla^2 H_t(z) = I \qquad \forall z\in \R^{2n}, \; \forall t\in \T.
\]
If $z=x(t)$ then $\nabla H_t(z)=J_0 \dot{x}(t)$, so the above identity yields
\[
\nabla^2 H_t^* (J_0 \dot{x}(t)) \nabla^2 H_t(x(t)) = I, \qquad \forall t\in \T,
\]
and hence
\[
\nabla^2 H_t^* (J_0 \dot{x}(t)) = S(t)^{-1}, \qquad \forall t\in \T.
\]
If we denote by $\Omega$ the symmetric bilinear form
\[
\Omega : \mathbb{H}_{1/2} \times \mathbb{H}_{1/2} \rightarrow \R, \qquad \Omega[u,v] = \int_{\T} J_0 \dot{u}\cdot v \, dt,
\]
the second differentials of $\Phi_H$ and $\widehat{\Psi}_{H^*}$ at $x$ take the form
\begin{equation}
\label{diffe}
\begin{split}
d^2 \Phi_H(x)[u,v] &= \Omega[u,v] - \int_{\T} S u \cdot v\, dt, \qquad \forall u,v\in \mathbb{H}_{1/2}\\
d^2 \widehat{\Psi}_{H^*}(x)[u,v] &= -\Omega[u,v] + \int_{\T} S^{-1} (J_0\dot{u}) \cdot (J_0\dot{v}) \, dt, \qquad \forall u,v\in \mathbb{H}_1.
\end{split}
\end{equation}
Since $S(t)$ is symmetric and positive definite, the formula 
\[
(u,v)_S := \int_{\T} S(t) u(t)\cdot v(t)\, dt
\]
defines an equivalent inner product on $L^2(\T,\R^{2n})$. Let $T$ be the symmetric operator which represents the bilinear form $\Omega$ with respect to the inner product $(\cdot,\cdot)_S$:
\[
\Omega [u,v] = (T u,v)_S, \qquad \mbox{where } T := S^{-1} J_0 \frac{d}{dt}.
\]
Here $T$ is an unbounded operator on $L^2(\T,\R^{2n})$ with domain $H^1(\T,\R^{2n})$ and the above identity holds for every $u\in H^1(\T,\R^{2n})$ and $v\in L^2(\T,\R^{2n})$. The operator $T$ is self-adjoint and has a compact resolvent. Its spectrum is discrete and consists of real eigenvalues with finite multiplicities which are unbounded from above and from below. The space $L^2(\T,\R^{2n})$ admits a Hilbert basis $\{\varphi_j\}_{j\in \Z}$ of eigenvectors of $T$  which is orthonormal with respect to the inner product $(\cdot,\cdot)_S$:
\[
T \varphi_j = \mu_j \varphi_j \qquad \forall j\in \Z.
\]
Here, the real eigenvalues  $\mu_j$ satisfy
\[
\lim_{j\rightarrow -\infty} \mu_j = - \infty, \qquad \lim_{j\rightarrow +\infty} \mu_j = + \infty.
\]
Being solutions of the differential equation
\[
J_0 \dot{\varphi}_j = \mu_j S \varphi_j,
\]
the eigenvectors $\varphi_j$ are smooth loops. There are exactly $2n$ eigenvalues $\mu_j$ with value $0$ and the corresponding eigenvectors $\varphi_j$ are constant loops forming a basis of $\R^{2n}$. 

Let $V$ be the closure in $\mathbb{H}_{1/2}$ of the linear subspace
\[
\mathrm{span} \{\varphi_j \mid \mu_j < 0\}.
\]
Then $V$ is a maximal subspace of $\mathbb{H}_{1/2}$ on which $\Omega$ is negative definite. Therefore, 
\begin{equation}
\label{splitting}
\mathbb{H}_{1/2} = V \oplus \R^{2n} \oplus \mathbb{H}_{1/2}^+.
\end{equation}
By the first formula in (\ref{diffe}), the second differential of $\Phi_H$ at $x$ has the following representation with respect to the inner product $(\cdot,\cdot)_S$:
\[
d^2 \Phi_H(x)[u,v] = ((T-I) u,v)_S.
\]
Therefore,
\begin{equation}
\label{nullPhi}
\ker d^2 \Phi_H(x) = \ker (T-I) = \mathrm{span} \{\varphi_j \mid \mu_j = 1\}.
\end{equation}
Moreover, the $\mathbb{H}_{1/2}$-closure of the linear subspace
\[
\mathrm{span} \{\varphi_j \mid \mu_j < 1\}
\]
is a maximal subspace of $\mathbb{H}_{1/2}$ on which $d^2 \Phi_H(x)$ is negative definite. This space has the form
\[
V \oplus \R^{2n} \oplus W,
\]
where $W$ is the following finite dimensional subspace 
\[
W := \mathrm{span} \{\varphi_j \mid 0< \mu_j < 1\}.
\]
Together with (\ref{splitting}), we compute the relative Morse index of $x$ defined in \eqref{eq:relative_ind} as follows:
\begin{equation}
\label{ilrelind}
\begin{split}
\mathrm{ind}_{\mathbb{H}_{1/2}^-\oplus E^-}(x;\Phi_H) &= \dim(V \oplus \R^{2n} \oplus W,\mathbb{H}_{1/2}^- \oplus E^-) \\
&=  \dim(V \oplus \R^{2n} \oplus W,\mathbb{H}_{1/2}^- \oplus \R^{2n} \oplus W) \\
&\quad + \dim ( \mathbb{H}_{1/2}^- \oplus \R^{2n} \oplus W, \mathbb{H}_{1/2}^- \oplus E^-)\\
&= \dim(V ,\mathbb{H}_{1/2}^-) + \dim W + \dim E^+=  \dim W + n.
\end{split}
\end{equation}
Since
\[
\begin{split}
\int_{\T} S^{-1} (J_0\dot{u}) \cdot (J_0\dot{v}) \, dt &= \int_{\T} (J_0\dot{u}) \cdot S^{-1} (J_0\dot{v}) \, dt = \int_{\T} S S^{-1} (J_0\dot{u}) \cdot S^{-1} (J_0\dot{v}) \, dt \\ &= (Tu,Tv)_S = (T^2 u,v)_S,
\end{split}
\]
the second formula in (\ref{diffe}) gives us the following representation for $d^2 \widehat{\Psi}_{H^*}(x)$ with respect to the inner product $(\cdot,\cdot)_S$:
\[
d^2 \widehat{\Psi}_{H^*}(x)[u,v] = ((T^2-T)u,v)_S = (T(T-I)u,v)_S.
\]
The Hilbert basis $\{\varphi_j\}_{j\in \Z}$ is a basis of eigenvectors for $T(T-I)$ and the eigenvalue corresponding to $\varphi_j$ is $\mu_j(\mu_j -1)$. This eigenvalue is 0 if and only if $\mu_j=0$ or $\mu_j=1$. Therefore, the kernel of $d^2 \widehat{\Psi}_{H^*}(x)$ is given by
\[
\ker d^2 \widehat{\Psi}_{H^*}(x) = \mathrm{span} \{ \varphi_j \mid \mu_j=0\} \oplus \mathrm{span} \{ \varphi_j \mid \mu_j=1\} = \R^{2n} \oplus \ker d^2 \Phi_H(x),
\]
where we have used (\ref{nullPhi}).  Together with the first identity in (\ref{lift}), we deduce that
\[
\mathrm{null}(x;\Phi_{H}) = \mathrm{null}(x;\widehat{\Psi}_{H^*}) - 2n = \mathrm{null}(\pi(x);\Psi_{H^*}).
\]
The eigenvalue $\mu_j(\mu_j-1)$ is negative if and only if $0<\mu_j <1$. Therefore, the finite dimensional space $W$ defined above is a maximal subspace of $H^1(\T,\R^{2n})$ on which $d^2 \widehat{\Psi}_{H^*}(x)$ is negative definite and hence
\[
\ind (x;\widehat{\Psi}_{H^*}) = \dim W = \mathrm{ind}_{\mathbb{H}_{1/2}^-\oplus E^-}(x;\Phi_H) - n,
\]
where we have used (\ref{ilrelind}). By the second identity in (\ref{lift}), we conclude that
\[
\ind (\pi(x);\Psi_{H^*}) = \mathrm{ind}_{\mathbb{H}_{1/2}^-\oplus E^-}(x;\Phi_H) - n.
\]
\end{proof}

Let $C$ be a smooth strongly convex domain, as defined at the beginning of this section. We conclude this section by showing how Clarke's duality can be used to determine the relative Morse index - and hence by Proposition \ref{cz=relind} the Conley-Zehnder index - of periodic Hamiltonian orbits that correspond to closed characteristics of $\partial C$ having minimal action.

\begin{prop}\label{index2}
Let $C$ be a smooth strongly convex domain.
If $\gamma:\R/T\Z\to\partial C$ is a periodic orbit of $R_{\alpha_C} = X_{H_C}|_{\partial C}$ with minimal period $T$ among all periodic Reeb orbits on $\partial C$, then setting $x_\gamma(t):=\gamma(Tt)$ we have
\[
\mathrm{ind}_{\mathbb{H}^-_{1/2} \oplus E^-}(x_\gamma,\Phi_{TH_C})=n.
\]
\end{prop}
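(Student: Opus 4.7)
The plan is to apply Proposition \ref{relind=ind}, which yields
\[
\mathrm{ind}_{\mathbb{H}^-_{1/2}\oplus E^-}(x_\gamma;\Phi_{TH_C})=\mathrm{ind}(\pi(x_\gamma);\Psi_{(TH_C)^*})+n,
\]
thereby reducing the problem to showing that $\pi(x_\gamma)$ has Morse index zero as a critical point of the dual action functional $\Psi_{(TH_C)^*}$. Since $H_C$ is only $C^1$ at the origin, I would first replace $TH_C$ by a smooth, quadratically convex Hamiltonian $\widetilde H$ that agrees with $TH_C$ in a neighborhood of the image of $\gamma$; both Hessians involved are determined by the Hamiltonian on a neighborhood of the image of $x_\gamma$, so this modification changes neither the relative Morse index nor the dual Morse index, and Proposition \ref{relind=ind} applies.

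To compute the dual Morse index, decompose $\Psi_{(TH_C)^*}(x)=-\tfrac{1}{2}G(x)+\tfrac{1}{T}F(x)$, where $G(x):=\int_\T x\cdot J_0\dot x\,dt$ and $F(x):=\int_\T H_C^*(J_0\dot x)\,dt$, both positively $2$-homogeneous since $H_C^*$ is, and consider the $0$-homogeneous quotient $Q(x):=2F(x)/G(x)$ on the open set $\{G>0\}\subset\mathbb{H}_1$. A Lagrange multiplier computation shows that critical points of $Q$ correspond, after rescaling onto $\partial C$, to closed Reeb orbits of $R_{\alpha_C}$, with critical value equal to the period. Combining coercivity of $F$ (from the bound $H_C^*(z)\geq c|z|^2$), the compact embedding $\mathbb{H}_1\hookrightarrow H^{1/2}$ and weak lower semi-continuity of $F$, one shows that the infimum of $Q$ is attained by a closed characteristic; by the minimality hypothesis on $T$ this infimum equals $T$, and a direct computation gives $G(\pi(x_\gamma))=2T$ and $F(\pi(x_\gamma))=T^2$, so $x_0:=\pi(x_\gamma)$ is a minimizer.

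For $v\in\mathbb{H}_1$ with $dG(x_0)[v]=0$, a second-order Taylor expansion using the Lagrange identity $dF(x_0)=(T/2)\,dG(x_0)$ yields
\[
Q(x_0+\epsilon v)=T+\frac{\epsilon^2}{2}\,d^2\Psi_{(TH_C)^*}(x_0)[v,v]+o(\epsilon^2),
\]
so the inequality $Q\geq T$ forces $d^2\Psi_{(TH_C)^*}(x_0)[v,v]\geq 0$ on the hyperplane $\ker dG(x_0)$. To extend the positivity to all of $\mathbb{H}_1$, differentiate Euler's identity $d\Psi_{(TH_C)^*}(x)[x]=2\Psi_{(TH_C)^*}(x)$ at the critical point $x_0$ to obtain $d^2\Psi_{(TH_C)^*}(x_0)[x_0,\,\cdot\,]=0$. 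Since $dG(x_0)[x_0]=2G(x_0)=4T\neq 0$, every $v\in\mathbb{H}_1$ decomposes uniquely as $v=\alpha x_0+v'$ with $dG(x_0)[v']=0$, and thus
\[
d^2\Psi_{(TH_C)^*}(x_0)[v,v]=d^2\Psi_{(TH_C)^*}(x_0)[v',v']\geq 0.
\]
This shows $\mathrm{ind}(\pi(x_\gamma);\Psi_{(TH_C)^*})=0$ and completes the proof.

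The main obstacle is the minimization step: one must rigorously argue that $\inf Q$ is attained and coincides with the minimal period $T$; this is essentially the classical Clarke-Ekeland principle, whose proof requires extracting a convergent subsequence from a minimizing sequence via the coercivity of $F$ together with the strong continuity of $G$ on $H^{1/2}$ (where $\mathbb{H}_1$ embeds compactly). A secondary subtlety is the lack of $C^2$-regularity of $H_C^*$ at the origin, which is bypassed by noting that $J_0\dot x_0=T\,\nabla H_C(\gamma)$ is bounded away from zero, so $H_C^*$ is smooth on the range of $J_0\dot x_0$ and the second-order Taylor expansion is legitimate in a neighborhood of $x_0$.
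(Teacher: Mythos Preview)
Your proof is correct and takes a genuinely different route from the paper. The paper avoids the degeneracy of the 2-homogeneous case by composing with $\varphi(r)=\tfrac{2T}{p}r^{p/2}$ for some $p\in(1,2)$: then $H^*$ is $q$-homogeneous with $q>2$, so $\Psi_{H^*}$ is coercive on $W^{1,q}_0$ and one can minimize it directly; the translation back to $\Phi_{TH_C}$ then goes through Proposition~\ref{index1}. You instead keep the natural 2-homogeneous Hamiltonian $TH_C$ and minimize the Rayleigh-type quotient $Q=2F/G$; the 2-homogeneity, which in the paper's approach is an obstacle (all critical values of $\Psi_{(TH_C)^*}$ are zero), becomes in yours a tool: Euler's identity forces $x_0$ into the kernel of $d^2\Psi_{(TH_C)^*}(x_0)$, which is exactly what is needed to pass from positivity on $\ker dG(x_0)$ to positivity on all of $\mathbb{H}_1$. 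Your argument is more self-contained in that it does not invoke Proposition~\ref{index1}, while the paper's approach has the advantage that the minimization step is a straightforward application of the direct method rather than a constrained minimization. Both are standard incarnations of the Clarke--Ekeland principle.
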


\begin{proof}
Set 
\[
H:=\varphi\circ H_C, \quad \mbox{with} \quad \varphi(r) = \frac{2T}{p} r^{\frac{p}{2}},
\]
for some real number $p\in (1,2)$. Applying Proposition \ref{index1}, we know that $x_\gamma$ is a critical point of $\Phi_H$, which we now see as a functional on $W^{1,p}(\T,\R^{2n})$. Let $q>2$ be such that $1/p+1/q=1$. The dual functional $\Psi_{H^*}$ is well-defined on the space $W^{1,q}_0(\T,\R^{2n})$ of $W^{1,q}$-loops in $\R^{2n}$ with zero mean, and the proof of Lemma \ref{crit} shows that $\pi(x_\gamma)$ is a critical point of $\Psi_{H^*}$. We use the well-known fact, which we will recall below, that $\Psi_{H^*}$ is bounded from below and attains a global minimizer that corresponds exactly to a periodic orbit of $X_{H_C}$ on $\partial C$ with minimal period. From this, we deduce that $\pi(x_\gamma)$ is a minimizer of $\Psi_{H^*}$ and therefore
\[
\ind(\pi(x_\gamma);\Psi_{H^*})=0.
\]
Then Proposition \ref{relind=ind} and Proposition \ref{index1} yield the identities
\[
\mathrm{ind}_{\mathbb{H}_{1/2}^- \oplus E^-} (x_\gamma;\Phi_{TH_C})=\mathrm{ind}_{\mathbb{H}_{1/2}^- \oplus E^-} (x_\gamma;\Phi_{H})=\ind(\pi(x_
\gamma);\Psi_{H^*})+n=n.
\]

For sake of completeness, we include a proof of the well-known fact mentioned above. Since $H_C$ is positively 2-homogeneous, $H$ is positively $p$-homogeneous and $H^*$ is positively $q$-homogeneous with $q>2$. Using this and the Poincar\'e-Wirtinger inequality, we obtain the following lower bound for an arbitrary element $x$ of $W^{1,q}_0(\T,\R^{2n})$: 
\[
\begin{split}
\Psi_{H^*}(x) &= - \frac{1}{2} \int_{\T} J_0\dot{x}\cdot x \, dt + \int_{\T} H^*(J_0 \dot{x})\, dt \geq -\frac{1}{2} \|\dot x\|_{L^2} \|x\|_{L^2} + c \|\dot{x}\|_{L^q}^q\\
&\geq - \frac{1}{4\pi} \|\dot x\|_{L^2}^2 + c \|\dot{x}\|_{L^q}^q \geq  \frac{c}{2} \|\dot{x}\|_{L^q}^q - d
\end{split}
\]
for some suitable constants $c,\,d>0$. Since the $L^q$ norm of the derivative defines a Banach norm on $W^{1,q}_0(\T,\R^{2n})$, thanks to the $L^q$-version of the Poincar\'e-Wirtinger inequality, the above lower bound shows that the sublevels of the functional $\Psi_{H^*}$ are bounded in  $W^{1,q}_0(\T,\R^{2n})$. Moreover, $\Psi_{H^*}$ is weakly lower semi-continuous on $W^{1,q}_0(\T,\R^{2n})$, because the quadratic form
\[
\frac{1}{2} \int_{\T} J_0\dot{x}\cdot x \, dt 
\]
is weakly continuous, being strongly continuous on the space $\mathbb{H}_{1/2}$ into which $W^{1,q}(\T,\R^{2n})$ embeds compactly, and the term
\[
\int_{\T} H^*(J_0 \dot{x})\, dt
\]
is strongly continuous and convex. Therefore, the sublevels $\{\Psi_{H^*}\leq c\}$ are bounded and weakly closed in $W^{1,q}_0(\T,\R^{2n})$, and hence weakly compact in this reflexive Banach space, thanks to the Banach-Alaoglu theorem. We conclude that $\Psi_{H^*}$ attains its minimum on $W^{1,q}_0(\T,\R^{2n})$.

Moreover, this minimum  is negative, since for any element $x$ of $W^{1,q}_0(\T,\R^{2n})$ with positive action $\frac{1}{2}\int_\T J_0\dot x\cdot x dt\,>0$ and any $\lambda>0$ sufficiently small we have
\[
\Psi_{H^*}(\lambda x)=-\frac{\lambda^2}{2}\int_\T J_0\dot x\cdot x\,dt + \lambda^q\int_\T H^*(-J_0 \dot x)\,dt <0,
\]
because $q>2$. Since the loop $z$ mapping to the origin is the unique constant critical point of $\Psi_{H^*}$ and $\Psi_{H^*}(z)=0$, every minimizer of $\Psi_{H^*}$ is non-constant.

Next we compute critical values of $\Psi_{H^*}$. If $x$ is a critical point of $\Psi_{H^*}$ and $y$ is the critical point of $\Phi_H$ with $\pi(y)=x$, then 
\[
\begin{split}
\Psi_{H^*}(x)=\Phi_H(y)& =  \frac{1}{2} \int_{\T} J_0 \dot{y}\cdot y\, dt - \int_{\T} H(y)\, dt, \\
&= \frac{1}{2} \int_{\T} \nabla H(y)\cdot y\, dt -  H(y(0)) =\left(\frac{p}{2}-1\right)H(y(0))
\end{split}
\]
where we have used the Euler identity and the fact that $H$ is constant along $y$. In particular if $x$ is non-constant, $\Psi_{H^*}(x)<0$. A simple computation shows that the curve
\[
 t\mapsto H(y(0))^{-\frac{1}{p}}y\left(\frac{2}{p}H(y(0))^{\frac{2-p}{p}}t\right),\qquad t\in\R
\] 
is a periodic orbit of $X_{H_C}$ sitting on $\partial C$ with period 
\[
\frac{p}{2}H(y(0))^\frac{p-2}{p}=\frac{p}{2}\left(\frac{2}{p-2}\Psi_{H^*}(x)\right)^\frac{p-2}{p}.
\]
The above formula shows the following: There exists a one-to-one correspondence between the closed orbits of $X_{H_C}$ on $\partial C$ and the critical points of $\Psi_{H^*}$ with negative critical value, and the function that to every negative critical value of $\Psi_{H^*}$ associates the period of the corresponding closed orbit - or orbits - of $X_{H_C}$ on $\partial C$ is strictly monotonically increasing. In particular, the global minimizers of $\Psi_{H^*}$ correspond to periodic Reeb orbits on $\partial C$ with minimal period, as claimed above.
\end{proof}

\section{The Morse complex of the dual action functional}
\label{sec:Morse}

The first aim of this section is to prove the Palais-Smale condition for the dual action functional that is associated with a quadratically convex Hamiltonian that is non-resonant at infinity. We begin with the following lemma:

\begin{lem}
\label{psb}
Let $K:\T \times \R^{2n} \rightarrow \R$ be a smooth function such that
\begin{equation}
\label{c1}
| dK_t(x) | \leq c (1+|x|) \qquad \forall (t,x) \in \T\times \R^{2n},
\end{equation}
and
\begin{equation}
\label{c2}
\nabla^2 K_t(x) \geq \delta I \qquad \forall (t,x) \in  \T\times \R^{2n}, 
\end{equation}
for some positive number $\delta$. Then the functional
\[
\Psi_K : \mathbb{H}_1 \rightarrow \R, \qquad \Psi_K(x) = -\frac{1}{2} \int_{\T} J_0\dot{x}(t)\cdot x(t)\, dt + \int_{\T} K_t(J_0\dot{x}(t))\, dt,
\]
has the following property: If the sequence $(x_h)\subset \mathbb{H}_1$ converges weakly to $x\in \mathbb{H}_1$ and $d\Psi_K(x_h)$ converges to zero strongly in the dual of $\mathbb{H}_1$, then $(x_h)$ converges to $x$ strongly in $\mathbb{H}_1$.
\end{lem}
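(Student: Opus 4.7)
The plan is to exploit the strict convexity assumption \eqref{c2} to produce a coercivity estimate on the leading (convex) part of $\Psi_K$, while controlling the indefinite quadratic part via the compact embedding $\mathbb{H}_1 \hookrightarrow L^2(\T,\R^{2n})$. The key identity to analyze is
\[
\bigl(d\Psi_K(x_h) - d\Psi_K(x)\bigr)[x_h - x],
\]
which I will show tends to zero while simultaneously bounding $\delta \|x_h - x\|_{\mathbb{H}_1}^2$ from above.

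First I would compute the differential of $\Psi_K$: after an integration by parts on the symplectic area term one gets
\[
d\Psi_K(y)[v] = -\int_{\T} J_0 \dot y\cdot v\, dt + \int_{\T} \nabla K_t(J_0 \dot y)\cdot J_0 \dot v\, dt,
\]
and the growth assumption \eqref{c1} guarantees that each summand defines a continuous linear form on $\mathbb{H}_1$ (so that $d\Psi_K(x)$ is a bona fide element of $\mathbb{H}_1^*$ against which the bounded sequence $x_h - x$ can be paired). Then I would subtract the expression at $x$ from the one at $x_h$ evaluated against $v = x_h - x$, obtaining
\[
\bigl(d\Psi_K(x_h) - d\Psi_K(x)\bigr)[x_h - x] = -\int_{\T} J_0(\dot x_h - \dot x)\cdot(x_h - x)\, dt + \int_{\T}\bigl(\nabla K_t(J_0 \dot x_h) - \nabla K_t(J_0 \dot x)\bigr)\cdot J_0(\dot x_h - \dot x)\, dt.
\]
By the convexity bound \eqref{c2}, the second integral is bounded below by $\delta\|\dot x_h - \dot x\|_{L^2}^2 = \delta\|x_h - x\|_{\mathbb{H}_1}^2$.

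Next I would argue that the remaining terms vanish in the limit. The left-hand side splits as $d\Psi_K(x_h)[x_h - x] - d\Psi_K(x)[x_h - x]$: the first piece tends to zero because $d\Psi_K(x_h)\to 0$ strongly in $\mathbb{H}_1^*$ while $x_h - x$ is bounded in $\mathbb{H}_1$, and the second piece tends to zero because $x_h - x \rightharpoonup 0$ weakly in $\mathbb{H}_1$ and $d\Psi_K(x)$ is a fixed continuous linear form. For the symplectic area term on the right I would use the Rellich-Kondrachov compact embedding $\mathbb{H}_1 \hookrightarrow L^2$, which upgrades the weak convergence $x_h \rightharpoonup x$ to strong convergence in $L^2$; combined with the $L^2$-boundedness of $\dot x_h - \dot x$, this forces
\[
\int_{\T} J_0(\dot x_h - \dot x)\cdot(x_h - x)\, dt \longrightarrow 0.
\]
Putting everything together yields $\delta\|x_h - x\|_{\mathbb{H}_1}^2 \to 0$, which is the desired strong convergence.

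I expect no serious obstacle: the only subtlety is keeping track of which convergence (weak versus strong, in $\mathbb{H}_1$ versus in $L^2$) is available for each factor of each integral. The role of the linear growth hypothesis \eqref{c1} is confined to ensuring integrability of $\nabla K_t(J_0\dot y)$ against $L^2$ test functions, so that the differential $d\Psi_K$ and its pairing against $x_h - x$ are well defined throughout the argument.
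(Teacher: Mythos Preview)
Your proof is correct and follows essentially the same approach as the paper's: both arguments test the differential against $x_h - x$, use the convexity bound \eqref{c2} to extract $\delta\|x_h - x\|_{\mathbb{H}_1}^2$, and kill the indefinite symplectic-area term via the compact embedding $\mathbb{H}_1\hookrightarrow L^2$. Your packaging via the symmetric expression $\bigl(d\Psi_K(x_h)-d\Psi_K(x)\bigr)[x_h-x]$ is slightly tidier than the paper's, which first isolates $d\Psi_K(x_h)[x_h-x]$ and then handles the $\nabla K_t(J_0\dot x)$ contribution separately, but the substance is identical.
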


\begin{proof}
Assumption (\ref{c1}) guarantees that $\Psi_K$ is continuously differentiable on $\mathbb{H}_1$. From the assumption on $(d\Psi_K(x_h))$ and the boundedness of $(x_h-x)$ in $\mathbb{H}_1$ we deduce that the real sequence
\[
d\Psi_K(x_h)[x_h-x] = -\int_{\T} J_0 (\dot{x}_h - \dot{x})\cdot x_h\, dt + \int_{\T} dK_t(J_0 \dot{x}_h)[J_0(\dot{x}_h - \dot{x})]\, dt
\]
is infinitesimal. The fact that $(\dot{x}_h - \dot{x})$ converges to zero weakly in $L^2$ and $(x_h)$ converges to $x$ weakly in $H^1$ and hence strongly in $L^2$ implies that the first integral in the above expression is infinitesimal. Therefore, the second integral must be infinitesimal too:
\begin{equation}
\label{inf}
\int_{\T} dK_t(J_0 \dot{x}_h)[J_0(\dot{x}_h - \dot{x})]\, dt = o(1).
\end{equation}
From assumption (\ref{c2}) we deduce the inequality
\[
\begin{split}
dK_t(J_0\dot{x}_h)[J_0(\dot{x}_h &- \dot{x})] - dK_t(J_0\dot{x})[J_0(\dot{x}_h - \dot{x})] \\ &= \int_0^1 d^2 K_t(J_0\dot{x} + s J_0(\dot{x}_h-\dot{x}))[ J_0 (\dot{x}_h - \dot{x}),J_0 (\dot{x}_h - \dot{x})]\, ds \\ &\geq \delta |J_0 (\dot{x}_h - \dot{x})|^2 = \delta |\dot{x}_h-\dot{x}|^2 \qquad \mbox{a.e.}
\end{split}
\]
By integrating this inequality over $\T$ we get
\[
\delta \int_{\T} |\dot{x}_h-\dot{x}|^2 \, ds \leq \int_{\T} dK_t(J_0\dot{x}_h)[J_0(\dot{x}_h - \dot{x})]\, dt - \int_{\T} dK_t(J_0\dot{x})[J_0(\dot{x}_h - \dot{x})]\, dt.
\]
The first integral on the right-hand side is infinitesimal because of (\ref{inf}). The second integral is also infinitesimal, because $(\dot{x}_h-\dot{x})$ converges to zero weakly in $L^2$ and $dK_t(J_0\dot{x})$ is an $L^2$ function, thanks to (\ref{c1}). We conclude that the $L^2$-norm of $\dot{x}_h-\dot{x}$ is infinitesimal, that is, $(x_h)$ converges to $x$ in $\mathbb{H}_1$.
\end{proof}

\begin{prop}
\label{psc}
Assume that the Hamiltonian $H\in C^{\infty}(\T \times \R^{2n})$ is quadratically convex and non-resonant at infinity. Then the dual action functional $\Psi_{H^*} : \mathbb{H}_1 \rightarrow \R$ satisfies the Palais-Smale condition. More precisely, any sequence $(x_h)\subset\mathbb{H}_1$ such that $d\Psi_{H^*}(x_h)$ converges to zero strongly in the dual of $\mathbb{H}_1$ has a convergent subsequence.
\end{prop}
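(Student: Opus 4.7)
The plan is to derive boundedness of $(x_h)$ in $\mathbb{H}_1$ from the non-resonance hypothesis, and then invoke Lemma \ref{psb} with $K = H^*$ to upgrade weak convergence to strong convergence. The hypotheses on $H^*$ needed by that lemma are automatic from quadratic convexity: the bound $\nabla^2 H_t \leq \overline{h}\, I$ yields $\nabla^2 H_t^* \geq \overline{h}^{-1} I$, while $\nabla^2 H_t^* \leq \underline{h}^{-1} I$ forces $|dH_t^*(x)| \leq c(1+|x|)$. So the only substantial work is the boundedness, and this is where non-resonance enters.

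To show boundedness, set $y_h(t) := \nabla H_t^*(J_0 \dot{x}_h(t))$, which is an $L^2$-curve. Using the explicit formula \eqref{eq:Psi'} for $d\Psi_{H^*}$, together with the fact that the dual norm on $\mathbb{H}_1^*$ induced by $(\cdot,\cdot)_{\mathbb{H}_1} = (\partial_t\cdot,\partial_t\cdot)_{L^2}$ agrees, via the Riesz isomorphism, with the $L^2$-norm on the mean-zero projection, I would verify the identity
\[
\|d\Psi_{H^*}(x_h)\|_{\mathbb{H}_1^*} = \|\pi(y_h - x_h)\|_{L^2(\T)} = \|y_h - x_h - c_h\|_{L^2(\T)},
\]
where $c_h \in \R^{2n}$ is the mean of $y_h$ (recall that $x_h$ has zero mean). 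Setting $z_h := x_h + c_h \in H^1(\T,\R^{2n})$, the assumption $d\Psi_{H^*}(x_h) \to 0$ becomes $y_h - z_h \to 0$ in $L^2$. Legendre reciprocity gives $\nabla H_t(y_h) = J_0 \dot{x}_h = J_0 \dot{z}_h$, hence $\dot{z}_h = X_{H_t}(y_h)$, and the global Lipschitz estimate \eqref{globlip} on $X_H$ then yields
\[
\| \dot{z}_h - X_{H_t}(z_h) \|_{L^2(\T)} \leq \overline{h} \, \|y_h - z_h\|_{L^2(\T)} \longrightarrow 0.
\]

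The curves $z_h$ lie in $H^1(\T,\R^{2n})$ rather than $C^\infty$, but by mollification (using that $X_H$ is globally Lipschitz) the non-resonance condition extends to Sobolev loops; for $h$ large the preceding estimate falls below the threshold $\epsilon$, so $\|z_h\|_{L^2(\T)} \leq r$. In particular $|c_h|^2 + \|x_h\|_{L^2(\T)}^2 = \|z_h\|_{L^2(\T)}^2 \leq r^2$, which bounds the mean and the zero-mean part separately. Linear growth of $X_H$ then gives
\[
\|\dot{x}_h\|_{L^2(\T)} = \|\dot{z}_h\|_{L^2(\T)} \leq \|X_{H_t}(z_h)\|_{L^2(\T)} + o(1) \leq c(1 + r) + o(1),
\]
so $(x_h)$ is bounded in $\mathbb{H}_1$.

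Passing to a subsequence, $(x_h)$ converges weakly in $\mathbb{H}_1$ to some $x$. Applying Lemma \ref{psb} with $K = H^*$ then produces strong convergence, completing the argument. The only genuinely delicate step is the first one: translating the abstract PS hypothesis $d\Psi_{H^*}(x_h) \to 0$ in $\mathbb{H}_1^*$ into the concrete $L^2$-smallness of $\dot{z}_h - X_{H_t}(z_h)$ that triggers non-resonance. All subsequent estimates are straightforward consequences of Fenchel duality and the quadratic convexity of $H$.
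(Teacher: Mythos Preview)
Your proposal is correct and follows essentially the same approach as the paper: translate $d\Psi_{H^*}(x_h)\to 0$ into the statement that some translate of $x_h$ is an approximate $X_H$-orbit, invoke non-resonance for an $L^2$ bound, upgrade to an $\mathbb{H}_1$ bound via linear growth of $X_H$, and finish with Lemma~\ref{psb}. Your treatment is in fact slightly more careful than the paper's, which writes $x_h-\nabla H_t^*(J_0\dot x_h)=\dot y_h$ after differentiating and tacitly drops the mean of $\nabla H_t^*(J_0\dot x_h)$; your explicit introduction of $c_h$ and $z_h=x_h+c_h$ is exactly the missing bookkeeping, and the rest of the argument is unchanged.
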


\begin{proof}
By (\ref{eq:Psi'}), the differential of $\Psi_{H^*}$ at $x\in \mathbb{H}_1$ has the form
\[
d\Psi_{H^*}(x)[v] = \bigl( \dot{v}, J_0(x- \nabla H_t^*(J_0 \dot{x})) \bigr)_{L^2(\T)}.
\]
Therefore, endowing $\mathbb{H}_1$ with the inner product given by the $L^2$-product of the derivatives, the gradient of $\Psi_{H^*}$ has the form
\[
\nabla \Psi_{H^*}(x) = \Pi \bigl( J_0(x- \nabla H_t^*(J_0 \dot{x})) \bigr),
\]
where 
\begin{equation}\label{eq:integration}
\Pi: L^2(\T,\R^{2n}) \rightarrow \mathbb{H}_1,\qquad (\Pi v)(t)=\int_0^t v(s)\,ds-\int_\T\left(\int_0^tv(s)\,ds\right)dt.
\end{equation}
is the linear operator mapping each $v$ into the primitive of $v$ with zero mean.

Let $(x_h)\subset \mathbb{H}_1$ be a sequence such that $(d\Psi_{H^*}(x_h))$ converges to zero strongly in $\mathbb{H}_1^*$, or equivalently $(\nabla \Psi_{H^*}(x_h))$ converges to zero strongly in $\mathbb{H}_1$. Then
\[
\Pi(x_h - \nabla H_t^*(J_0 \dot{x}_h)) = y_h
\]
where $(y_h)\subset \mathbb{H}_1$ converges to zero strongly. Differentiation in $t$ yields
\[
x_h - \nabla H_t^*(J_0 \dot{x}_h)= \dot{y}_h.
\]
By applying the nonlinear map $\nabla H_t$ to the identity
\[
\nabla H_t^*(J_0 \dot{x}_h) = x_h - \dot{y}_h,
\]
we obtain
\[
J_0 \dot{x}_h = \nabla H_t(x_h - \dot{y}_h),
\]
or equivalently
\begin{equation}
\label{dari}
\dot{x}_h = X_{H_t}(x_h - \dot{y}_h).
\end{equation}
Therefore, using the fact that $X_{H_t}$ is globally Lipschitz-continuous (see (\ref{globlip})), we find
\[
|\dot{x}_h - X_{H_t}(x_h)| = |X_{H_t}(x_h - \dot{y}_h) - X_{H_t}(x_h)| \leq \overline{h} |\dot{y}_h|,
\]
and integrating over $\T$ we obtain the bound
\[
\|\dot{x}_h - X_{H}(x_h)\|_{L^2(\T)} \leq  \overline{h} \|\dot{y}_h\|_{L^2(\T)}.
\]
Therefore, the sequence $(\dot{x}_h - X_{H}(x_h))$ is infinitesimal in $L^2(\T)$, and hence the non-resonance at infinity assumption implies that $(x_h)$ is uniformly bounded in $L^2$. But then the identity (\ref{dari}) and the linear growth of $X_H$ imply that $(\dot{x}_h)$ is uniformly bounded in $L^2(\T)$, and hence $(x_h)$ is uniformly bounded in $\mathbb{H}_1$.
 
Up to passing to a subsequence, we may assume that $(x_h)$ converges to some $x$ weakly in  $\mathbb{H}_1$. By Lemma \ref{psb} we conclude that this convergence is strong. This proves that $\Psi_{H^*}$ satisfies the Palais-Smale condition.
\end{proof} 

If the quadratically convex Hamiltonian $H\in C^{\infty}(\T\times \R^{2n})$ is non-degenerate, then the functional $\Psi_{H^*}$ is Morse, meaning that the (Gateaux) second differential of $\Psi_{H^*}$ at each critical point is non-degenerate. However, the functional $\Psi_{H^*}$ is in general not of class $C^2$ (it is not even twice differentiable), so some care is needed in order to associate a Morse complex with it.

One way of doing this would be to show that $\Psi_{H^*}$ admits a smooth pseudo-gradient vector field on $\mathbb{H}_1$ with good properties. This has been done in another setting for a functional whose analytical properties are similar to those of $\Psi_{H^*}$, see \cite{as09b}. Here we prefer to use a different strategy and to use the fact that $\Psi_{H^*}$ is smooth when restricted to a suitable finite dimensional smooth submanifold of $\mathbb{H}_1$, which contains all the critical points of $\Psi_{H^*}$ and is defined by a saddle-point reduction. This approach has also been used by Viterbo in \cite{vit89b}.

Given a natural number $N\in\N$, consider the splitting
\[
\mathbb{H}_1 = \mathbb{H}_1^{N,+} \oplus \widehat{\mathbb{H}}_1^{N,+},
\]
where 
\[
\begin{split}
\mathbb{H}_1^{N,+} &:= \left\{ x\in \mathbb{H}_1 \mid x(t) = \sum_{k=1}^N e^{-2\pi k J_0 t} \hat{x}_k, \; \hat{x}_k\in \R^{2n} \right\}, \\
\widehat{\mathbb{H}}_1^{N,+} &:= \left\{ x\in \mathbb{H}_1 \mid x(t) = \sum_{k\leq -1} e^{-2\pi k J_0 t}\hat{x}_k +\sum_{k\geq N+1} e^{-2\pi k J_0 t}\hat{x}_k, \; \hat{x}_k\in \R^{2n} \right \}.
\end{split}
\]
This splitting is orthogonal with respect to the $\mathbb{H}_1$ and to the $L^2$ inner products. We identify $\mathbb{H}_1$ with the product space $\mathbb{H}_1^{N,+} \times \widehat{\mathbb{H}}_1^{N,+}$. The following proposition summarizes the main properties of the saddle point reduction.

\begin{prop}\label{prop:reduction}
Assume the Hamiltonian $H\in C^{\infty}(\T \times \R^{2n})$ to be quadratically convex and non-resonant at infinity.
If $N\in \N$ is large enough, then the following facts hold:
\begin{enumerate}[(a)]
\item For every $x\in \mathbb{H}_1^{N,+}$ the restriction of $\Psi_{H^*}$ to $\{x\} \times \widehat{\mathbb{H}}_1^{N,+}$ has a unique critical point $(x,Y(x))$, which is a non-degenerate global minimizer of this restriction. 
\item The map $Y: \mathbb{H}_1^{N,+} \rightarrow \widehat{\mathbb{H}}_1^{N,+}$ takes values in $C^\infty(\T,\R^{2n})$ and is smooth with respect to the $C^k$-norm  on the target, for any $k\in\N$. In particular, its graph
\[
M:= \{(x,y) \in \mathbb{H}_1^{N,+} \times \widehat{\mathbb{H}}_1^{N,+} \mid y=Y(x)\}
\]
is a smooth $2nN$-dimensional submanifold of $\mathbb{H}_1$ consisting of smooth loops.
\item The restriction of $\Psi_{H^*}$ to $M$, which we denote by $\psi_{H^*} : M \rightarrow \R$, is smooth.
\item A point $z\in \mathbb{H}_1$ is a critical point of $\Psi_{H^*}$ if and only if it belongs to $M$ and is a critical point of $\psi_{H^*}$. In this case, the Morse index and the nullity with respect to the two functionals coincide:
\[
\mathrm{ind}(z;\Psi_{H^*}) = \mathrm{ind}(z;\psi_{H^*}), \qquad \mathrm{null}(z;\Psi_{H^*}) = \mathrm{null}(z;\psi_{H^*}).
\]
\item If $M$ is endowed with the Riemannian metric induced by the inclusion into $\mathbb{H}_1$, then the functional $\psi_{H^*}$ satisfies the Palais-Smale condition.
\end{enumerate}
\end{prop}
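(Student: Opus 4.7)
The whole statement is a saddle-point reduction in the spirit of Amann-Zehnder and Viterbo. The key is to identify a threshold for $N$ beyond which the Gateaux Hessian $d^2\Psi_{H^*}$ is uniformly coercive on $\widehat{\mathbb{H}}_1^{N,+}$. Decomposing $u\in\widehat{\mathbb{H}}_1^{N,+}$ in Fourier series and using $\nabla^2 H_t^*\geq\overline{h}^{-1}I$, one obtains
\[
d^2\Psi_{H^*}(z)[u,u]\geq \sum_k\bigl(4\pi^2 k^2\overline{h}^{-1}-2\pi k\bigr)|\hat u_k|^2,
\]
where the sum is over $k\leq -1$ and $k\geq N+1$. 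The terms with $k\leq -1$ are automatically positive, while for $k\geq N+1$ we have $4\pi^2 k^2\overline{h}^{-1}-2\pi k\geq 2\pi^2 k^2\overline{h}^{-1}$ as soon as $N\geq\overline{h}/\pi$. Fixing such an $N$, we obtain $d^2\Psi_{H^*}(z)[u,u]\geq\alpha\|u\|_{\mathbb{H}_1}^2$ for some $\alpha>0$ independent of $z\in\mathbb{H}_1$. Combined with the Taylor expansion of $\Psi_{H^*}$ along line segments in $\widehat{\mathbb{H}}_1^{N,+}$, which is justified because $d\Psi_{H^*}$ is Lipschitz and $d^2\Psi_{H^*}(\cdot)[v,v]$ is continuous for each fixed $v$, this yields strict convexity and coercivity of $y\mapsto\Psi_{H^*}(x+y)$ on every affine slice $x+\widehat{\mathbb{H}}_1^{N,+}$. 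Existence, uniqueness, and non-degeneracy of the minimizer $Y(x)$ follow, proving~(a).

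For~(b), the Euler-Lagrange equation for the slice reads
\[
\widehat{P}_N\bigl(\nabla H_t^*(J_0\dot x+J_0\dot Y(x))-(x+Y(x))\bigr)=0,
\]
where $\widehat{P}_N$ is the $L^2$-projection annihilating the Fourier modes $k=0,1,\dots,N$. Since $x$ is a trigonometric polynomial, a standard bootstrap applied to the pointwise ODE obtained by inverting $\nabla H_t^*$ shows $Y(x)\in C^\infty(\T,\R^{2n})$. Smoothness of $x\mapsto Y(x)$ in any $C^k$-norm is then obtained by the implicit function theorem applied to the above equation, reinterpreted as a smooth map from $\mathbb{H}_1^{N,+}\times(\widehat{\mathbb{H}}_1^{N,+}\cap C^{k+1})$ into $\widehat{\mathbb{H}}_1^{N,+}\cap C^{k-1}$: in these stronger topologies the nonlinearity is smooth, and its differential in the second variable is the operator representing $d^2\Psi_{H^*}$ restricted to $\widehat{\mathbb{H}}_1^{N,+}$, which is invertible by the uniform coercivity. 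This proves~(b), and~(c) follows because $\psi_{H^*}(x)=\Psi_{H^*}(x+Y(x))$ with $x\mapsto x+Y(x)$ smooth into $C^\infty(\T,\R^{2n})$, and $\Psi_{H^*}$ is smooth when restricted to smooth loops.

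For~(d), a point $z=x+y$ is critical for $\Psi_{H^*}$ iff both components of $\nabla\Psi_{H^*}(z)$ in the splitting $\mathbb{H}_1^{N,+}\oplus\widehat{\mathbb{H}}_1^{N,+}$ vanish. The second component vanishes precisely when $y=Y(x)$, i.e.~$z\in M$, and in that case vanishing of the first component is the criticality of $\psi_{H^*}$ at $x$. The coincidence of Morse index and nullity follows from a block-diagonalization argument: writing $d^2\Psi_{H^*}(z)$ as a $2\times 2$ block matrix along the splitting, with the lower-right block positive definite by the above coercivity, one obtains that the restriction of $d^2\Psi_{H^*}(z)$ to $T_z M$ is congruent (after a change of variables adapted to $dY(x)$) to $d^2\psi_{H^*}(x)$, hence has the same index and nullity. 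For~(e), if $d\psi_{H^*}(x_h)\to 0$ then since $d\Psi_{H^*}(x_h+Y(x_h))$ automatically vanishes on $\widehat{\mathbb{H}}_1^{N,+}$ by construction, it converges to zero in $\mathbb{H}_1^*$; Proposition~\ref{psc} yields a convergent subsequence of $x_h+Y(x_h)$, whose $\mathbb{H}_1^{N,+}$-projection gives the desired convergent subsequence of $(x_h)$.

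The main technical obstacle is part~(b): upgrading from the Hilbert setting, in which $\Psi_{H^*}$ is not twice Fr\'echet-differentiable, to a smooth category where the implicit function theorem applies. The key observation is that the Euler-Lagrange equation involves only pointwise composition with the smooth function $\nabla H^*$ together with differentiation in $t$, both of which are well-behaved in the $C^k$-topology on loops; once one works in $C^{k+1}$ rather than $H^1$, the classical Banach space implicit function theorem yields $Y\in C^\infty(\mathbb{H}_1^{N,+},C^k(\T,\R^{2n}))$ for every $k$.
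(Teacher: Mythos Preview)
Your arguments for (a), (c), (d), (e) follow the paper's. For (b) you take a different route: the paper does not invoke the implicit function theorem but instead shows directly that $Y$ is Lipschitz, then Gateaux-differentiable with the explicit formula $\nabla Y(x)=-\nabla_{22}\Psi_{H^*}(x,Y(x))^{-1}\nabla_{12}\Psi_{H^*}(x,Y(x))$, then continuous into $C^\infty$ via the ODE satisfied by $Y(x)$, and finally uses that $x\mapsto\nabla^2\Psi_{H^*}(x)$ is continuous from the $C^1$-topology to the operator-norm topology to conclude that $\nabla Y$ is continuous, hence $Y$ is $C^1$; bootstrapping this gives smoothness.

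Your implicit-function-theorem approach is viable but has a gap in the invertibility step. The coercivity bound $d^2\Psi_{H^*}(z)[u,u]\geq\alpha\|u\|_{\mathbb{H}_1}^2$ shows that $\nabla_{22}\Psi_{H^*}$ is an isomorphism of the Hilbert space $\widehat{\mathbb{H}}_1^{N,+}$; it does not by itself show that the linearization is an isomorphism from $\widehat{\mathbb{H}}_1^{N,+}\cap C^{k+1}$ onto $\widehat{\mathbb{H}}_1^{N,+}\cap C^k$, which is what the Banach-space IFT actually requires. The missing ingredient is a regularity statement for the linearized equation: given $f\in C^k$, the unique $\mathbb{H}_1$-solution $u$ of $\widehat{P}_N\bigl(\nabla^2 H_t^*(J_0\dot z)J_0\dot u-u\bigr)=f$ lies in $C^{k+1}$. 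This is true---the equation rewrites as a first-order linear ODE for $u$ with smooth coefficients modulo a trigonometric polynomial, so a straightforward bootstrap works, and the open mapping theorem then gives boundedness of the inverse---but it needs to be said; as written you conflate two different Banach-space settings. (A minor point: your target should be $C^k$, not $C^{k-1}$.) Once this is filled in, both routes arrive at the same place; the paper's hands-on argument and your shift to $C^k$ spaces are two ways of circumventing the fact that $\Psi_{H^*}$ is not $C^2$ on $\mathbb{H}_1$.
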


The proof of this proposition is postponed to the end of this section. Statements (a)-(d) require only the assumption of quadratic convexity, whereas the proof of (e) uses also the assumption of non-resonance at infinity.
If we further assume that the Hamiltonian $H$ is non-degenerate, from (d) and (e) we obtain that $\psi_{H^*}$ is a smooth Morse function with finitely many critical points and satisfying the Palais-Smale condition on the finite-dimensional manifold $M$. As such, it has a Morse complex, which we denote by
\[
\{M_*(\psi_{H^*}),\partial^M\}
\]
and is uniquely defined up to chain isomorphisms. The space $M_*(\psi_{H^*})$ is the $\Z_2$-vector space generated by the critical points of $\psi_{H^*}$, graded by the Morse index. The boundary operator
\[
\partial^M : M_*(\psi_{H^*}) \rightarrow M_{*-1}(\psi_{H^*})
\]
is defined by the formula
\[
\partial^M x= \sum_{y} n^M(x,y) y \qquad \forall x\in \mathrm{crit}\, \psi_{H^*},
\]
where $y$ ranges over all critical points with Morse index equal to the index of $x$ minus 1 and $n^M(x,y)\in \Z_2$ is the parity of the finite set of negative gradient flow lines of $\psi_{H^*}$ going from $x$ to $y$. Here, the negative gradient vector field of $\psi_{H^*}$ is induced by a generic Riemannian metric on $M$, uniformly equivalent to the standard one and such that the negative gradient flow is Morse-Smale, meaning that stable and unstable manifolds of pairs of critical points meet transversally. Changing the generic metric changes the Morse complex by a chain isomorphism. The homology of the Morse complex of $\psi_{H^*}$ is isomorphic to the singular homology of the pair $(M,\{\psi_{H^*}<a\})$, where $a$ is any number which is smaller than the smallest critical level of $\psi_{H^*}$:
\begin{equation}\label{morse=singular}
HM_k(\psi_{H^*}) \cong H_k(M,\{\psi_{H^*}<a\}).
\end{equation}

We conclude this section by proving Proposition \ref{prop:reduction}.

\begin{proof}[Proof of Proposition \ref{prop:reduction}]
By the inequality $\nabla^2H_t^*(x)\geq \overline{h}^{-1}I$, we have for every $x,u\in\mathbb{H}_1$,
\[
\begin{split}
d^2\Psi_{H^*}(x)[u,u]&=-\int_\T J_0\dot u\cdot u\,dt+\int_\T\nabla^2H_t^*(J_0\dot x)J_0\dot u\cdot J_0\dot u\,dt\\
&\geq-\int_\T J_0\dot u\cdot u\,dt+\frac{1}{\overline{h}}\|\dot u\|^2_{L^2}\\
&=-2\pi\sum_{k\in\Z}k|\hat{u}_k|^2+\frac{4\pi^2}{\overline{h}}\sum_{k\in\Z}k^2|\hat{u}_k|^2.
\end{split}
\]
We choose $N\in\N$ so that $2\pi(N+1)>\overline{h}$. For all $x\in\mathbb{H}_1$ and all $u\in\widehat{\mathbb{H}}_1^{N,+}$, we have
\begin{equation}\label{eq:lower_bound}
\begin{split}
d^2\Psi_{H^*}(x)[u,u]&\geq\frac{4\pi^2}{\overline{h}}\sum_{k\leq -1}k^2|\hat{u}_k|^2+2\pi\sum_{k\geq N+1}\left(\frac{2\pi k}{\overline{h}}-1\right)k|\hat{u}_k|^2\\
&\geq4\pi^2\delta\left(\sum_{k\leq -1}k^2|\hat{u}_k|^2+\sum_{k\geq N+1}k^2|\hat{u}_k|^2\right)\\
&=\delta\|u\|^2_{\mathbb{H}_1},
\end{split}
\end{equation}
where $\delta>0$ is a sufficiently small constant. This shows that for every $x\in\mathbb{H}_1^{N,+}$, the second differential of the function 
\[
\widehat{\mathbb{H}}_1^{N,+}\to\R,\qquad y\mapsto\Psi_{H^*}(x+y)
\]
is bounded from below by a coercive quadratic form. In particular, this function is strictly convex and coercive and hence has a unique non-degenerate critical point $Y(x)$ which is a minimizer. This proves (a).\\[-2ex]

From the expression \eqref{eq:Psi'} for $d\Psi_{H^*}$ we deduce that the gradient of $\Psi_{H^*}$ with respect to the inner product $(\cdot,\cdot)_{\mathbb{H}_1}$ is 
\begin{equation}\label{eq:gradient_Psi}
\nabla \Psi_{H^*}(x)=\Pi\big(J_0x-J_0\nabla H_t^*(J_0\dot x)\big)
\end{equation}
where $\Pi:L^2(\T,\R^{2n})\to\mathbb{H}_1$ is the inverse of the derivative given by \eqref{eq:integration}.
The vector $y\in\widehat{\mathbb{H}}_1^{N,+}$ satisfies $y=Y(x)$ for some $x\in\mathbb{H}_1^{N,+}$ if and only if $\nabla\Psi_{H^*}(x+y)\in\mathbb{H}_1^{N,+}$ which by \eqref{eq:gradient_Psi} is equivalent to 
\[
J_0(x+y)-J_0\nabla H_t^*(J_0(\dot x+\dot y))=\dot u
\]
for some $u\in\mathbb{H}_1^{N,+}$. The above equality can be reformulated as 
\begin{equation}\label{eq:formula_y}
\dot y=-J_0\nabla H_t(x+y+J_0\dot u)-\dot x.
\end{equation}
The fact that $x$ and $u$ are smooth implies that $y=Y(x)$ is also smooth.

We now deal with the regularity of the map $Y$ and start by showing that $Y$ is Lipschitz continuous. We  use subscripts 1 and 2 to denote partial derivatives with respect to the splitting $\mathbb{H}_1=\mathbb{H}_1^{N,+}\times\widehat{\mathbb{H}}_1^{N,+}$. Then by (a) we have for every $x\in\mathbb{H}_1^{N,+}$,
\[
\nabla_2\Psi_{H^*}(x,Y(x))=0.
\]
From the fact that $\Psi_{H^*}$ is twice Gateaux-differentiable and from the lower bound \eqref{eq:lower_bound}, we deduce for all $x\in\mathbb{H}_1^{N,+}$ and all $u\in\widehat{\mathbb{H}}_1^{N,+}$
\begin{equation}\label{eq:nabla2}
\begin{split}
\left\|\nabla_2\Psi_{H^*}(x,Y(x)+u)\right\|_{\mathbb{H}_1}&=\left\|\nabla_2\Psi_{H^*}(x,Y(x)+u)-\nabla_2\Psi_{H^*}(x,Y(x))\right\|_{\mathbb{H}_1}\\
&=\left\|\left(\int_0^1\nabla_{22}\Psi_{H^*}(x,Y(x)+tu)dt\right)u\right\|_{\mathbb{H}_1}\\
&\geq\delta\|u\|_{\mathbb{H}_1}.
\end{split}
\end{equation}
The above inequality with $x$ replaced by $x+h$, $h\in\mathbb{H}_1^{N,+}$ and with $u=Y(x)-Y(x+h)$ gives us
\[
\begin{split}
\delta\|Y(x+h)-Y(x)\|_{\mathbb{H}_1}&\leq \left\|\nabla_2\Psi_{H^*}(x+h,Y(x))\right\|_{\mathbb{H}_1}\\
&=\left\|\nabla_2\Psi_{H^*}(x+h,Y(x))-\nabla_2\Psi_{H^*}(x,Y(x)) \right\|_{\mathbb{H}_1}\\
&\leq c\|h\|_{\mathbb{H}_1}
\end{split}
\]
for some constant $c>0$ where we used the fact that $\nabla\Psi_{H^*}$ is Lipschitz continuous. This proves that $Y$ is Lipschitz continuous.

Let $x,h\in\mathbb{H}_1^{N,+}$. By the Gateaux differentiability of $\nabla\Psi_{H^*}$, a first order expansion yields
\[
\begin{split}
\nabla_2\Psi_{H^*}\big(x+th,Y(x)&-t\nabla_{22}\Psi_{H^*}(x,Y(x))^{-1}\nabla_{12}\Psi_{H^*}(x,Y(x))h\big)\\
&=\nabla_2\Psi_{H^*}(x,Y(x))+t\nabla_{12}\Psi_{H^*}(x,Y(x))h\\
&\quad-t\nabla_{22}\Psi_{H^*}(x,Y(x))\nabla_{22}\Psi_{H^*}(x,Y(x))^{-1}\nabla_{12}\Psi_{H^*}(x,Y(x))h+o(t)\\
&=o(t),
\end{split}
\]
where $\nabla_{22}\Psi(x,Y(x))$ is invertible since it is self adjoint and bounded from below as observed in \eqref{eq:lower_bound}.
On the other hand, the bound \eqref{eq:nabla2} with $x$ replaced by $x+th$ and with $u=-Y(x+th)+Y(x)-t\nabla_{22}\Psi_{H^*}(x,Y(x))^{-1}\nabla_{12}\Psi_{H^*}(x,Y(x))h$ gives 
\[
\begin{split}
\delta\|Y(x+th)& - Y(x) + t\nabla_{22}\Psi_{H^*}(x,Y(x))^{-1}\nabla_{12}\Psi_{H^*}(x,Y(x))h\|_{\mathbb{H}_1}\\
&\leq \big\|\nabla_2\Psi_{H^*}\big(x+th,Y(x)-t\nabla_{22}\Psi_{H^*}(x,Y(x))^{-1}\nabla_{12}\Psi_{H^*}(x,Y(x)\big)h\big\|_{\mathbb{H}_1}\\
&=o(t).
\end{split}
\]
This shows that $Y:\mathbb{H}_1^{N,+}\to\widehat{\mathbb{H}}_1^{N,+}$ is Gateaux-differentiable with Gateaux-gradient
\begin{equation}\label{eq:nabla_Y}
\nabla Y(x)=-\nabla_{22}\Psi_{H^*}(x,Y(x))^{-1}\nabla_{12}\Psi_{H^*}(x,Y(x)).
\end{equation}
We have already seen that the map $Y$ takes values in $C^\infty(\T,\R^{2n})$. We claim that $Y$ is continuous with respect to the $C^\infty$-topology on the target space. Indeed, we assume that $(x_n)\subset\mathbb{H}_1^{N,+}$ converges to $x$ in the $\mathbb{H}_1$-norm. Since $\mathbb{H}_1^{N,+}$ is contained in $C^\infty(\T,\R^{2n})$ and is finite dimensional, $(x_n)$ converges to $x$ in the $C^k$-norm for any $k\in\N$. The vector 
\[
u_n=\nabla\Psi_{H^*}(x_n,Y(x_n))
\]
converges to $u=\nabla\Psi_{H^*}(x,Y(x))$ in the $\mathbb{H}_1$-norm due to the continuity of $Y$ and $\nabla\Psi_{H^*}$. Being a sequence in $\mathbb{H}_1^{N,+}$, the sequence $(u_n)$ converges to $u$ in any $C^k$-norm. As seen in \eqref{eq:formula_y}, the vector $y_n=Y(x_n)$ is characterized by
\[
\dot y_n=-J_0\nabla H_t(x_n+y_n+J_0\dot u_n)-\dot x_n.
\]
This ODE shows that $(y_n)$ converges to the solution $y=y(x)$ of
\[
\dot y=-J_0\nabla H_t(x+y+J_0\dot u)-\dot x
\]
in any $C^k$-norm. This proves the claim.

Although $\Psi_{H^*}$ is not of class $C^2$, the map $x\mapsto\nabla^2\Psi_{H^*}(x)$ is easily seen to be continuous from the $C^1$-topology on $\mathbb{H}_1$ to the operator norm topology on $L(\mathbb{H}_1,\mathbb{H}_1)$. Then the identity \eqref{eq:nabla_Y} and the regularity property of $Y$ proven above yield that the map 
\[
\nabla Y:\mathbb{H}_1^{N,+}\to L(\mathbb{H}_1^{N,+},\widehat{\mathbb{H}}_1^{N,+})
\]
is continuous. This together with the Gateaux-differentiability of $Y$ implies that the map $Y:\mathbb{H}_1^{N,+}\to\widehat{\mathbb{H}}_1^{N,+}$ is of class $C^1$ by the total differential theorem.

Since the restriction of $\Psi_{H^*}$ to $C^k(\T,\R^{2n})$ is smooth for all $k\in\N$, the above argument can be bootstrapped and implies that the map $Y$ is smooth with respect to the $C^k$-norm for all $k\in\N$ on the target. This completes the proof of (b).\\[-2ex]

Statement (c) follows from (b) and the smoothness property of $\Psi_{H^*}$ mentioned above.\\[-2ex]

To prove (d) we first observe that all critical points of $\Psi_{H^*}$ are contained in $M$. A point $(x,Y(x))\in M$ is a critical point of the restriction $\psi_{H^*}$ of $\Psi_{H^*}$ to $M$ if and only if
\[
d\Psi_{H^*}(x,Y(x))|_{T_{(x,Y(x))}M}=0
\]
which is equivalent to $d\Psi_{H^*}(x,Y(x))=0$ since $d\Psi_{H^*}(x,Y(x))|_{\widehat{\mathbb{H}}_1^{N,+}}=0$ and $\mathbb{H}_1=T_{(x,Y(x))}M\oplus \widehat{\mathbb{H}}_1^{N,+}$. This proves the first statement of (d).

The estimate \eqref{eq:lower_bound} that $d^2\Psi_{H^*}(x,Y(x))$ is positive on $\widehat{\mathbb{H}}_1^{N,+}$ guarantees that the index and the nullity does not change when restricting $\Psi_{H^*}$ to $M$. This completes the proof of (d).\\[-2ex]

The statement (e) follows immediately from Proposition \ref{psc} since any sequence $(z_h)\subset M$  with the property that $d\psi_{H^*}(z_h)$ converges to zero with respect to the Riemannian metric induced from $\mathbb{H}_1$ satisfies also that $d\Psi_{H^*}(z_h)$ strongly converges to zero. 
\end{proof}

\section{The functional setting for the hybrid problem}
\label{funsetsec}

Throughout this section, we assume that $H\in C^\infty(\T\times\R^{2n})$ is non-degenerate, quadratically convex, and non-resonant at infinity. 

Let $x$ and $y$ be 1-periodic orbits of $X_H$. We shall view $\pi(x)\in \mathbb{H}_1$ as a critical point of $\Psi_{H^*}$, and hence of $\psi_{H^*}$ on the finite dimensional manifold $M$ that is introduced in Section \ref{sec:Morse}, and $y\in \mathbb{H}_{1/2}$ as a critical point of $\Phi_H$. Let $J$ be a family of uniformly bounded $\omega_0$-compatible almost complex structures on $\R^{2n}$ parametrized by $[0,+\infty)\times\T$ such that $J=J_0$ on $[0,1]\times\T$ and $J(s,t)$ is independent of $s$ for $s$ large.
We denote by 
\[
\mathcal{M}(x,y)=\mathcal{M}(x,y;H,J)
\] 
the space of smooth maps
\[
u: [0,+\infty) \times \T \rightarrow \R^{2n}
\]
which solve the Floer equation
\[
\partial_s u + J(s,t,u) (\partial_t u - X_{H_t}(u)) = 0\qquad \mbox{on }\;\;  [0,+\infty)\times \T
\]
with the asymptotic condition
\[
\lim_{s\rightarrow +\infty} u(s,\cdot) = y \qquad \mbox{in }\;\; C^{\infty}(\T,\R^{2n}),
\]
and the boundary condition
\[
u(0,\cdot) \in \pi^{-1}\bigl(W^u(\pi(x);-\nabla\psi_{H^*}) \bigr) + \mathbb{H}_{1/2}^-.
\]
Here $W^u(\pi(x);-\nabla\psi_{H^*})$ is the unstable manifold of the negative gradient vector field of $\psi_{H^*}$ at $\pi(x)$ in the finite dimensional submanifold $M$ of $\mathbb H_1$, which is used to construct the Morse complex of $\psi_{H^*}$ in Section \ref{sec:Morse}. In other words, the trace of $u$ at the boundary of the half-cylinder is the sum of a loop in $W^u(\pi(x);-\nabla\psi_{H^*})$, seen as a submanifold of the space of loops with zero mean, and a loop in $\R^{2n} \oplus \mathbb{H}_{1/2}^-$.

The proposition below will be used in the following sections.
\begin{prop}
\label{hybrid_ineq}
If $u\in\mathcal{M}(x,y)$, we have
\[
\Phi_H(x)- \Phi_H(y)\geq \|\partial_su\|_{L^2([0,+\infty)\times\T)}
\] 
Moreover $\Phi_H(x)=\Phi_H(y)$ if and only if $x=y$, and in this case $\mathcal{M}(x,x)$ consists of a unique solution, namely the constant half-cylinder mapping to $x$.
\end{prop}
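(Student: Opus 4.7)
The plan is to combine the standard Floer energy identity with Proposition \ref{confronto} and the descent property of the negative gradient flow on the unstable manifold.

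First, a routine computation gives the usual Floer energy identity: from $d\Phi_H(z)[v]=\int_\T J_0(\dot z-X_H(z))\cdot v\,dt$, the Floer equation $\partial_s u=-J(\partial_t u-X_H(u))$, and the $\omega_0$-compatibility of $J$, one obtains
\[
\frac{d}{ds}\Phi_H(u(s,\cdot))=-\int_\T|\partial_s u|_J^2\,dt,
\]
which integrated over $s\in[0,+\infty)$, together with the asymptotic condition at $+\infty$, yields
\[
\Phi_H(u(0,\cdot))-\Phi_H(y)=\int_0^{+\infty}\!\!\int_\T|\partial_s u|_J^2\,dt\,ds.
\]

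Second, I would bound $\Phi_H(u(0,\cdot))$ from above using the boundary condition. Pick any decomposition $u(0,\cdot)=x_1+y_1$ with $x_1\in\pi^{-1}(W^u(\pi(x);-\nabla\psi_{H^*}))\subset H^1(\T,\R^{2n})$ and $y_1\in\mathbb{H}_{1/2}^-$. Proposition \ref{confronto} gives
\[
\Phi_H(u(0,\cdot))\leq\Psi_{H^*}(\pi(x_1))-\tfrac12\|y_1\|_{1/2}^2.
\]
Since $\pi(x_1)\in W^u(\pi(x);-\nabla\psi_{H^*})$, one has $\psi_{H^*}(\pi(x_1))\leq\psi_{H^*}(\pi(x))$, and by Lemma \ref{crit} the latter equals $\Phi_H(x)$. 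Chaining everything,
\[
\Phi_H(x)-\Phi_H(y)\geq\int_0^{+\infty}\!\!\int_\T|\partial_s u|_J^2\,dt\,ds+\tfrac12\|y_1\|_{1/2}^2+\bigl(\psi_{H^*}(\pi(x))-\psi_{H^*}(\pi(x_1))\bigr),
\]
which implies the stated energy estimate, since $g_J$ is uniformly equivalent to the Euclidean metric.

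For the equality case, if $\Phi_H(x)=\Phi_H(y)$ then each of the three non-negative contributions above must vanish: $\partial_s u\equiv 0$, $y_1=0$, and $\psi_{H^*}(\pi(x_1))=\psi_{H^*}(\pi(x))$. The first forces $u$ to be constant in $s$, hence equal to $y$. The third, combined with the strict descent of $\psi_{H^*}$ along every non-constant negative gradient trajectory inside $W^u(\pi(x))$ (a standard Morse-theoretic fact, since $\psi_{H^*}$ is a smooth Morse function on the finite-dimensional manifold $M$ by Proposition \ref{prop:reduction}), yields $\pi(x_1)=\pi(x)$. Equality in Proposition \ref{confronto} moreover requires $x_1+y_1=x_1$ to be a critical point of $\Phi_H$. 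Therefore $y=u(0,\cdot)=x_1$ is a critical point of $\Phi_H$ with $\pi(y)=\pi(x)$, and the uniqueness clause of Lemma \ref{crit} forces $y=x$. Conversely, $u(s,\cdot)\equiv x$ trivially solves the Floer equation, is asymptotic to $x$ at $+\infty$, and has trace $x\in\pi^{-1}(W^u(\pi(x)))+\mathbb{H}_{1/2}^-$ (as $\pi(x)\in W^u(\pi(x))$ and $0\in\mathbb{H}_{1/2}^-$), so it is the unique element of $\mathcal{M}(x,x)$.

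The main conceptual input is Proposition \ref{confronto}; once that and the standard Floer action identity are in hand, the argument is essentially a bookkeeping of non-negative contributions, and no serious obstacle is expected.
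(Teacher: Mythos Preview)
Your proof is correct and follows essentially the same route as the paper's: the Floer energy identity gives $\Phi_H(u(0,\cdot))-\Phi_H(y)$ as the energy, Proposition \ref{confronto} bounds $\Phi_H(u(0,\cdot))$ by $\Psi_{H^*}$ of the $W^u$-component, and the descent along the unstable manifold together with Lemma \ref{crit} closes the chain. The only cosmetic difference is that the paper places the constant part of the boundary trace into the $\R^{2n}\oplus\mathbb{H}_{1/2}^-$ factor rather than into $\pi^{-1}(W^u(\pi(x)))$, and drops the $\tfrac12\|y_1\|_{1/2}^2$ term immediately; the equality analysis is then identical in substance.
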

\begin{proof}
If $u\in\mathcal{M}(x,y)$, then $u(0,\cdot) = v + w$ for some $v\in W^u(\pi(x);-\nabla\psi_{H^*})$ and $w\in \R^{2n} \oplus \mathbb{H}_{1/2}^-$. This yields the estimate
\begin{equation}
\label{bdden}
\begin{split}
\int_{[0,+\infty)\times \T} |\partial_s u|^2\, ds dt &= \Phi_H(u(0,\cdot)) - \Phi_H(y) \leq \Psi_{H^*}(v) - \Phi_H(y) \\ &\leq \Psi_{H^*}(\pi(x)) - \Phi_H(y) = \Phi_{H}(x) - \Phi_H(y)
\end{split}
\end{equation}
where we have used Lemma \ref{crit} and Proposition \ref{confronto}. If we have $\Phi_H(x)=\Phi_H(y)$, the two inequalities in \eqref{bdden} become equalities and $u(s,\cdot)=y$ for all $s\in[0,+\infty)$. The last inequality is equality if and only if $\pi(x)=v$. That the first one is equality is equivalent to $w\in\R^{2n}$ and $u(0,\cdot)=y$ is a critical point of $\Phi_H$ by Proposition \ref{confronto}. Therefore $\pi(x)+w=y$ and by Lemma \ref{crit} again, this shows that $x=y$.
\end{proof}

In this section, we exhibit the functional setting which allows us to see $\mathcal{M}(x,y)$ as the set of zeroes of a nonlinear Fredholm map. 
We consider the following space of $\R^{2n}$-valued maps on the positive half-cylinder converging to $y$ for $s\rightarrow +\infty$ and having the prescribed boundary condition at $s=0$: 
\[
\begin{split}
\mathcal{H}_{x,y} := \big\{ u: (0,+\infty) \times \T \rightarrow \R^{2n} \mid & u - y \in H^1((0,+\infty) \times \T,\R^{2n}), \\ & u(0,\cdot) \in \pi^{-1}\bigl( W^u(\pi(x);-\nabla\psi_{H^*}) \bigr) + \mathbb{H}_{1/2}^- \big\}.
\end{split}
\]
Notice that the tangent space of $M$ at any point has trivial intersection with $\mathbb{H}^-_{1/2}$ and therefore the set 
\[
\pi^{-1}\bigl( W^u(\pi(x);-\nabla\psi_{H^*}) \bigr) + \mathbb{H}_{1/2}^-
\]
is a smooth submanifold of $\mathbb{H}_{1/2}$ having infinite dimension and codimension. The boundary condition at $s=0$ in the definition of $\mathcal{H}_{x,y}$ is well-posed because the trace of an $H^1$ map on the half-cylinder belongs to $\mathbb{H}_{1/2}$. Therefore, $\mathcal{H}_{x,y}$ is a smooth submanifold of the affine Hilbert space
\[
y + H^1((0,+\infty) \times \T,\R^{2n}).
\]
On $\mathcal{H}_{x,y}$ we shall consider the topology and the differentiable structure which is induced by this embedding. This Hilbert manifold is the domain of the map
\[
\overline{\partial}_{J,H}: \mathcal{H}_{x,y} \rightarrow L^2((0,+\infty) \times \T,\R^{2n}), \qquad \overline{\partial}_{J,H} u = \overline{\partial}_J u - \nabla_J H_t(u),
\]
where the Cauchy-Riemann operator $\overline{\partial}_J=\partial_s+J\partial_t$ is to be understood in the distributional sense.
It is easy to check that this map is well-defined, meaning that $\overline{\partial}_{J,H} u$ belongs indeed to $L^2((0,+\infty) \times \T)$. Indeed, if $u=y+u_0$, $u_0\in H^1((0,+\infty) \times \T,\R^{2n})$, is an element of $\mathcal{H}_{x,y}$ we have
\[
 \overline{\partial}_{J,H} u = \overline{\partial}_J u_0 + J(s,t,u)(y' -X_{H_t}(y+u_0)).
\]
Since $\|J\|_\infty<\infty$, the map $\overline{\partial}_J u_0$ belongs to $L^2((0,+\infty) \times \T,\R^{2n})$. The fact that $y$ is a 1-periodic orbit of $X_H$ implies that  $y' - X_{H_t}(y+u_0)$ belongs to $L^2((0,+\infty) \times \T,\R^{2n})$ as well. To see this, we compute
\[
y'-X_{H_t}(y+u_0)=y'-X_{H_t}(y)-\int_0^1\frac{d}{d\theta}X_{H_t}(y+\theta u_0)d\theta=\int_0^1J_0\nabla^2 H_t(y+\theta u_0)u_0\,d\theta,
\]
and obtain the pointwise estimate
\[
|y'(t)-X_{H_t}(y(t)+u_0(s,t))|\leq \|\nabla^2 H\|_{\infty}|u_0(s,t)|
\]
which in particular implies
\[
\|y' - X_{H_t}(y+u_0)\|_{L^2((0,+\infty) \times \T)}^2 \leq \|\nabla^2 H\|_{\infty}^2 \|u_0\|^2_{L^2((0,+\infty) \times \T)}< +\infty.
\]
This shows that $\overline{\partial}_{J,H} u$ belongs to $L^2((0,+\infty) \times \T,\R^{2n})$. The regularity and growth assumptions on $H$ easily imply that the map $\overline{\partial}_{J,H}$ is smooth.

We claim that the set of zeroes of $\overline{\partial}_{J,H}$ coincides with $\mathcal{M}(x,y)$. In order to prove the inclusion $\mathcal{M}(x,y) \subset \overline{\partial}_{J,H}^{-1}(0)$, we have just to prove that every $u\in \mathcal{M}(x,y)$ is also in $\mathcal{H}_{x,y}$. This is true because by the non-degeneracy of $y$ the elements $u$ of $\mathcal{M}(x,y)$ converge to $y$ for $s\rightarrow +\infty$ exponentially fast together with all their derivatives, and in particular $u-y\in H^1((0,+\infty)\times \T)$.
The opposite inclusion follows from the next regularity result instead:

\begin{prop}
\label{bdryreg}
Let $u\in \mathcal{H}_{x,y}$ be such that $\overline{\partial}_{J,H} u = 0$. Then $u$ is smooth on $[0,+\infty)\times \T$ and $u(s,\cdot)\rightarrow y$ for $s\rightarrow +\infty$ in $C^{\infty}(\T,\R^{2n})$.
\end{prop}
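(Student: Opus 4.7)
The proof splits into three parts: interior smoothness on $(0,+\infty)\times\T$, smooth convergence to $y$ as $s\to+\infty$, and smoothness up to the boundary at $s=0$; the last is the main difficulty. Interior smoothness is immediate from Appendix \ref{appA}, which gives $C^\infty$-regularity for $H^1_{\mathrm{loc}}$ solutions of the Floer equation. Smooth exponential convergence $u(s,\cdot)\to y$ follows from $u-y\in H^1((0,+\infty)\times\T)$, the uniform $L^\infty$-bound of Proposition \ref{linfbound}, and the standard asymptotic analysis at the non-degenerate orbit $y$ via the exponential dichotomy of the hyperbolic operator $-J_0\partial_t-\nabla^2 H_t(y(t))$ on $L^2(\T,\R^{2n})$.

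For regularity at $s=0$, decompose the trace as $u(0,\cdot)=v_0+w_0$, with $v_0\in\pi^{-1}(W^u(\pi(x);-\nabla\psi_{H^*}))$ and $w_0\in\mathbb{H}^-_{1/2}$. Since $W^u(\pi(x))$ is a submanifold of the finite-dimensional manifold $M\subset\mathbb{H}_1$, whose points are smooth loops by Proposition \ref{prop:reduction}(b), the loop $v_0$ is smooth. Choose a cutoff $\chi\in C^\infty([0,+\infty))$ with $\chi(0)=1$ and $\chi=0$ outside $[0,1]$, set $V(s,t):=\chi(s)v_0(t)+(1-\chi(s))y(t)$, and define $\tilde u:=u-V$. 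Then $\tilde u\in H^1$, $\tilde u(0,\cdot)=w_0\in\mathbb{H}^-_{1/2}$, and $\tilde u(s,\cdot)\to 0$ as $s\to+\infty$. Using $J=J_0$ on $[0,1]\times\T$, on this strip $\tilde u$ solves the inhomogeneous Cauchy-Riemann equation $\overline{\partial}_{J_0}\tilde u=g$ with $g:=\nabla_{J_0}H_t(\tilde u+V)-\overline{\partial}_{J_0}V\in L^2$.

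The crux, and the main obstacle, is an a priori estimate for the linear problem $\overline{\partial}_{J_0}w=f$ on $[0,+\infty)\times\T$ with $w(0,\cdot)\in\mathbb{H}^-_{1/2}$ and $w\to 0$ at $+\infty$. Expanding $w(s,t)=\sum_{k\in\Z} w_k(s)e^{-2\pi k J_0 t}$ decouples the equation into the first-order ODEs $w_k'+2\pi k w_k=f_k$: for $k\geq 0$ the boundary condition forces $w_k(0)=0$ and the stable inversion gives $w_k(s)=\int_0^s e^{-2\pi k(s-\sigma)}f_k(\sigma)\,d\sigma$; for $k<0$ the ODE is unstable but the decay at $+\infty$ pins $w_k$ down uniquely as $w_k(s)=-\int_s^\infty e^{-2\pi k(s-\sigma)}f_k(\sigma)\,d\sigma$, which is bounded because $e^{-2\pi k(s-\sigma)}\leq 1$ when $\sigma\geq s$ and $k<0$. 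Summing over modes via Parseval yields the local estimate
\[
\|w\|_{H^{m+1}([0,1-\epsilon]\times\T)}\leq C_{m,\epsilon}\big(\|f\|_{H^m([0,+\infty)\times\T)}+\|w\|_{L^2([0,+\infty)\times\T)}\big)
\]
for every $m\geq 0$ and $\epsilon>0$. Since $v\mapsto\nabla_{J_0}H_t(v+V)$ sends $H^m$ to $H^m$ (thanks to the uniform $L^\infty$-bound on $u$ from Proposition \ref{linfbound} and the smoothness of $H$), applying the linear estimate iteratively to $\tilde u$ produces $\tilde u\in H^m$ on a neighborhood of the boundary for every $m\geq 0$, whence $u=\tilde u+V\in C^\infty$ up to $s=0$. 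The asymmetry between $\mathbb{H}^+_{1/2}$ and $\mathbb{H}^-_{1/2}$ with respect to $\overline{\partial}_{J_0}$ is exactly what makes the non-local boundary condition elliptic in the sense of Hecht \cite{hec13}, whose framework underlies the Fredholm theory developed in Section \ref{fredsec}.
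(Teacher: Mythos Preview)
Your route to boundary regularity is genuinely different from the paper's. The paper works locally on the strip $[0,1]\times\T$: its key Lemma~\ref{reglemma} shows that if $\overline{\partial}u\in H^1$ and the trace at $s=0$ is smooth plus an element of $\mathbb{H}^-_{1/2}$, then $u\in H^2$; the proof is by difference quotients in $t$ combined with the energy identity of Lemma~\ref{formula}, the favorable sign coming from $\int_\T(\Delta_h w)^*\lambda_0\leq 0$ for $w\in\mathbb{H}^-_{1/2}$, and the smooth interior value $u(1,\cdot)$ serves as the second boundary datum. You instead subtract off the smooth part of the trace and invert $\overline{\partial}_{J_0}$ on the full half-cylinder mode by mode. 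Both arguments rest on the same ellipticity, but the paper's stays on a bounded strip, whereas yours needs the equation $\overline{\partial}_{J_0}\tilde u=g$ to hold on all of $[0,+\infty)\times\T$. The formula $g=\nabla H_t(u)-\overline{\partial}_{J_0}V$ you write down is valid only where $J=J_0$, i.e.\ on $[0,1]\times\T$; to run your mode-by-mode inversion (in particular, to use decay at $+\infty$ to pin down the modes $k<0$) you must set $g:=\overline{\partial}_{J_0}\tilde u$ globally and observe that interior smoothness and exponential convergence give $g\in H^m$ for every $m$ on $[1,+\infty)\times\T$. This is easily arranged, but should be said.

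There is, however, a genuine error in the bootstrap: you invoke Proposition~\ref{linfbound} to obtain an $L^\infty$-bound on $u$ up to the boundary, but that proposition requires $\overline{I'}\subset I$ and here $I=(0,+\infty)$, so it only yields bounds on $[\sigma,+\infty)\times\T$ for $\sigma>0$. Without $L^\infty$-control near $s=0$ the claim that $v\mapsto\nabla H_t(v+V)$ sends $H^m$ to $H^m$ fails for $m\geq 2$, since the chain rule (cf.\ Lemma~\ref{bootstrap}) produces polynomial expressions in the lower-order derivatives whose coefficients depend on $u$. The correct way to launch the bootstrap is the one the paper uses: the step $H^1\Rightarrow H^2$ needs only the global bound $\|\nabla^2 H\|_\infty\leq\overline h$ from quadratic convexity, so that $\partial_\alpha\nabla H_t(u)=\nabla^2 H_t(u)\,\partial_\alpha u+\nabla(\partial_t H_t)(u)$ lies in $L^2$ without any $L^\infty$-input; once $u\in H^2$ near the boundary, the Sobolev embedding $H^2\hookrightarrow C^0$ supplies the $L^\infty$-bound required for all subsequent steps.
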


The regularity of $u$ on the open half-cylinder $(0,+\infty) \times \T$ does not follow from the standard regularity results in Floer theory (see e.g. \cite[Appendix B.4]{ms04}), because these require the map $u$ to be in $W^{1,p}_{\mathrm{loc}}$ for some $p>2$, or at least in $H^1_{\mathrm{loc}}\cap C^0$ (see \cite[Section 2.3]{is99} or \cite{is00}). However, a different argument implies that interior regularity holds also for solutions of the Floer equation that are just $H^1_{\mathrm{loc}}$. This argument is explained in Appendix \ref{appA}.

The convergence to $y$ in $C^{\infty}(\T,\R^{2n})$ is due to the non-degeneracy of $y$, see e.g.\ \cite[Section 2.7]{sal99}. It remains to prove that $u$ is smooth up to the boundary. The proof of this fact is based on a bootstrap argument which makes use of the following lemmas. In what follows, we omit the subscript in the standard Cauchy-Riemann operator $\overline{\partial}_{J_0}=\overline{\partial}$.

\begin{lem}
\label{formula}
Let $-\infty<a<b<+\infty$ and $u\in H^1((a,b)\times \T,\R^{2n})$. Denote by $\alpha$ and $\beta$ the boundary traces of $u$,
\[
\alpha(t):= u(a,t), \qquad \beta(t):= u(b,t),
\]
which are almost everywhere well-defined functions belonging to $H^{1/2}(\T,\R^{2n})$. Then
\[
\int_{(a,b) \times \T} |\nabla u|^2\, ds dt =  \int_{(a,b) \times \T} |\overline{\partial} u|^2\, ds dt - 2 \int_{\T} \beta^*\lambda_0 + 2 \int_{\T} \alpha^* \lambda_0
\]
and 
\[
\int_{(a,b) \times \T} |\nabla u|^2\, ds dt =  \int_{(a,b) \times \T} |\overline{\partial}^* u|^2\, ds dt + 2 \int_{\T} \beta^*\lambda_0 - 2 \int_{\T} \alpha^* \lambda_0,
\]
where $\overline{\partial}^*=-\partial_s+J_0\partial_t$.
\end{lem}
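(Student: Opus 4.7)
The proof will rest on a pointwise algebraic identity together with Stokes' theorem, applied first to smooth maps and then extended to $H^1$ maps by a density/continuity argument.

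The key algebraic identity is
\[
|\overline{\partial} u|^2 = |\partial_s u + J_0\partial_t u|^2 = |\partial_s u|^2 + |J_0\partial_t u|^2 + 2\,\partial_s u\cdot J_0\partial_t u = |\nabla u|^2 + 2\,\partial_s u\cdot J_0\partial_t u,
\]
using that $J_0$ is an isometry. From \eqref{lambda-omega}, one checks that $v\cdot J_0 w = \omega_0(v,w)$ for all $v,w\in\R^{2n}$, so
\[
\partial_s u\cdot J_0\partial_t u = \omega_0(\partial_s u,\partial_t u),
\]
and consequently $|\overline{\partial} u|^2 = |\nabla u|^2 + 2\,\omega_0(\partial_s u,\partial_t u)$ pointwise. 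The analogous computation for $\overline{\partial}^* = -\partial_s + J_0\partial_t$ yields $|\overline{\partial}^* u|^2 = |\nabla u|^2 - 2\,\omega_0(\partial_s u,\partial_t u)$, with the opposite sign arising from the sign flip in $\partial_s$.

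Now I would integrate over the strip $(a,b)\times\T$. Recognizing that $\omega_0(\partial_s u,\partial_t u)\,ds\wedge dt = u^*\omega_0$, and using $\omega_0 = d\lambda_0$, Stokes' theorem on the oriented boundary $\{b\}\times\T - \{a\}\times\T$ gives
\[
\int_{(a,b)\times\T} \omega_0(\partial_s u,\partial_t u)\,dsdt = \int_{(a,b)\times\T} d(u^*\lambda_0) = \int_{\T}\beta^*\lambda_0 - \int_{\T}\alpha^*\lambda_0.
\]
Combining with the pointwise identity and rearranging yields both formulas of the statement.

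The main technical point to address is that $u$ is only assumed to lie in $H^1((a,b)\times\T,\R^{2n})$, so Stokes' theorem is not immediately applicable. My plan is to approximate $u$ by smooth maps $u_k\to u$ in $H^1$; for each $u_k$ the identity holds by the computation above. On the left-hand side, both $\int|\nabla u_k|^2$ and $\int|\overline{\partial}u_k|^2$ (or $\int|\overline{\partial}^*u_k|^2$) converge to the corresponding quantities for $u$, since $\overline{\partial}$ and $\overline{\partial}^*$ are bounded operators from $H^1$ to $L^2$. For the boundary terms, I would use continuity of the trace operator $H^1((a,b)\times\T,\R^{2n})\to H^{1/2}(\T,\R^{2n})$: the traces $\alpha_k\to\alpha$ and $\beta_k\to\beta$ in $H^{1/2}$, and since $\lambda_0$ is linear in the spatial variable, the quadratic functionals $x\mapsto\int_\T x^*\lambda_0 = \tfrac12\int_\T J_0\dot x\cdot x\,dt$ are continuous on $H^{1/2}(\T,\R^{2n})$ (as discussed in Section 1). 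Hence the boundary integrals also pass to the limit, completing the proof. I do not anticipate any serious obstacle beyond this density step, which is standard.
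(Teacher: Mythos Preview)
Your proof is correct and follows exactly the approach indicated in the paper, which does not spell out the argument but simply notes that it is based on Stokes' theorem and refers to \cite[Lemma 1.1]{as15}. Your pointwise identity, application of Stokes, and density passage to $H^1$ are precisely what is intended.
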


The simple proof of the lemma is based on Stokes' theorem, see \cite[Lemma 1.1]{as15}.

\begin{lem}
\label{reglemma}
Let $u\in H^1((0,1)\times \T,\R^{2n})\cap C^{\infty}((0,1]\times \T,\R^{2n})$ be such that $\overline{\partial} u$ belongs to $H^1((0,1)\times \T,\R^{2n})$ and 
\[
u(0,\cdot) = v + w,
\]
with $v\in C^{\infty}(\T,\R^{2n})$ and $w\in \mathbb{H}_{1/2}^-$. Then $u$ belongs to $H^2((0,1)\times \T,\R^{2n})$. Moreover,
\begin{equation}
\label{extremis}
\|\nabla^2 u\|_{L^2((0,1)\times \T)} \leq C\bigl(\|\overline{\partial} u\|_{H^1((0,1)\times \T)} +  \|\partial_t u(1,\cdot)\|_{1/2} + \|v'\|_{1/2}\bigr),
\end{equation}
for some $C>0$.
\end{lem}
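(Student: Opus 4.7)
The strategy is a tangential difference-quotient argument combined with the integration-by-parts identity of Lemma~\ref{formula}. For small $h\neq 0$, let $u_h(s,t):=(u(s,t+h)-u(s,t))/h$; since $u\in H^1((0,1)\times\T)$ and $\T$ is a circle, $u_h\in H^1((0,1)\times\T)$ as well. My plan is to establish an $h$-uniform bound on $\|\nabla u_h\|_{L^2}$: by the standard characterization of weak differentiability via difference quotients this will yield $\partial_t\partial_s u,\,\partial_t\partial_t u\in L^2((0,1)\times\T)$, with the required estimate. The remaining second derivative is then recovered from the Cauchy--Riemann identity $\partial_{ss}u=\partial_s(\overline\partial u)-J_0\,\partial_s\partial_t u$, whose right-hand side lies in $L^2$ thanks to the hypothesis $\overline\partial u\in H^1$.

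To obtain the $h$-uniform bound on $\|\nabla u_h\|_{L^2}$, apply Lemma~\ref{formula} to $u_h$. Since $\overline\partial$ has constant coefficients and thus commutes with translation, $\overline\partial u_h=(\overline\partial u)_h$; writing $\alpha_h:=v_h+w_h$ and $\beta_h:=u_h(1,\cdot)$ for the traces at $s=0$ and $s=1$, the identity reads
\[
\|\nabla u_h\|_{L^2}^2 = \|(\overline\partial u)_h\|_{L^2}^2 - 2\int_\T\beta_h^*\lambda_0 + 2\int_\T\alpha_h^*\lambda_0.
\]
The first term is controlled by $\|\overline\partial u\|_{H^1}^2$ uniformly in $h$. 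The term at $s=1$ is controlled by a multiple of $\|\partial_t u(1,\cdot)\|_{1/2}^2$ thanks to the smoothness of $u$ on $(0,1]\times\T$, which yields $\|\beta_h\|_{1/2}\leq\|\partial_t u(1,\cdot)\|_{1/2}$. The contribution of $v_h$ to $2\int\alpha_h^*\lambda_0$ is likewise bounded by a multiple of $\|v'\|_{1/2}^2$ via $\|v_h\|_{1/2}\leq\|v'\|_{1/2}$ and the continuity of $x\mapsto\int_\T x^*\lambda_0$ on $\mathbb{H}_{1/2}$.

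The main obstacle, as I expect it, is the contribution of $w_h$ to $2\int\alpha_h^*\lambda_0$: since $w$ is only in $\mathbb{H}_{1/2}^-$, the quantity $\|w_h\|_{1/2}$ need not remain bounded as $h\to 0$. What saves the argument is that translation preserves $\mathbb{H}_{1/2}^-$, so $w_h\in\mathbb{H}_{1/2}^-$, and on this subspace the quadratic form $x\mapsto\int_\T J_0\dot{x}\cdot x\,dt$ equals $-\|x\|_{1/2}^2$, which is \emph{negative definite}. Expanding
\[
2\int_\T\alpha_h^*\lambda_0 = \int_\T J_0\dot v_h\cdot v_h\,dt + 2\int_\T J_0\dot v_h\cdot w_h\,dt - \|w_h\|_{1/2}^2,
\]
and controlling the mixed term via the elementary inequality $2ab\leq a^2+b^2$, the possibly unbounded $-\|w_h\|_{1/2}^2$ absorbs it and leaves an upper bound of the form $C\|v'\|_{1/2}^2$ independent of $h$. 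This is precisely the cancellation mechanism that motivates the appearance of $\mathbb{H}_{1/2}^-$ in the hybrid boundary condition and that underlies the Fredholm analysis in Section~\ref{fredsec}. Combining the three estimates gives the promised $h$-uniform bound on $\|\nabla u_h\|_{L^2}^2$, and passing to the limit $h\to 0$ together with the Cauchy--Riemann identity for $\partial_{ss}u$ completes the proof of $u\in H^2((0,1)\times\T,\R^{2n})$ and of the inequality~\eqref{extremis}.
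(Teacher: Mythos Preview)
Your argument is correct and follows essentially the same route as the paper's: tangential difference quotients, the integration-by-parts identity of Lemma~\ref{formula}, the negativity of $\int_\T(\cdot)^*\lambda_0$ on $\mathbb{H}_{1/2}^-$ to dispose of the $w_h$-contribution at $s=0$, and recovery of $\partial_{ss}u$ from the Cauchy--Riemann relation. Your explicit absorption of the cross term $2\int_\T J_0\dot v_h\cdot w_h$ by $-\|w_h\|_{1/2}^2$ is in fact more careful than the paper's writeup, which splits $2\int_\T(\Delta_h v+\Delta_h w)^*\lambda_0$ additively without comment; the only other cosmetic difference is that you pass to the limit via the standard difference-quotient criterion for weak derivatives rather than via Fatou's lemma and pointwise convergence on $(0,1]\times\T$.
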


\begin{proof}
Set
\[
\Delta_h u (s,t) := \frac{u(s,t+h)-u(s,t)}{h},
\]
where $h\in \R$.  The fact that $u$ is in $H^1$ implies that 
\[
\lim_{h\rightarrow 0} \Delta_h u = \partial_t u \ \qquad \mbox{in }\; L^2((0,1)\times \T,\R^{2n}).
\]
Indeed, this follows from the inequality
\[
\begin{split}
\|\Delta_h u - \partial_t u\|_{L^2((0,1)\times \T)}^2 &= \int_{(0,1)\times \T} \left| \int_0^1 \bigl( \partial_t u(s,t+\theta h) - \partial_t u(s,t) \bigr) \, d\theta \right|^2\, ds dt \\ &\leq \int_{(0,1)\times \T} \left( \int_0^1 \bigl| \partial_t u(s,t+\theta h) - \partial_t u(s,t)\bigl|^2 \, d\theta \right) \, ds dt \\ &= \int_0^1 \| T_{\theta h} \partial_t u - \partial_t u\|_{L^2((0,1)\times \T)}^2\, d\theta,
\end{split}
\]
where $T_h$ is the translation operator $T_h u(s,t) = u(s,t+h)$, and from the fact that $T_h \partial_t u \rightarrow \partial_t u$ in $L^2((0,1)\times \T)$ for $h\rightarrow 0$ because $\partial_t u$ belongs to $L^2((0,1)\times \T)$.

Analogously, the function
\[
f:= \overline{\partial} u \in H^1((0,1)\times \T,\R^{2n})\cap C^{\infty}((0,1]\times \T,\R^{2n})
\]
satisfies
\[
\lim_{h\rightarrow 0} \Delta_h f = \partial_t f \ \qquad \mbox{in } L^2((0,1)\times \T).
\]
The fact that $u$ is in $C^{\infty}((0,1]\times \T)$ implies that for every $\epsilon>0$
\[
\lim_{h\rightarrow 0} \Delta_h u = \partial_t u \ \qquad \mbox{in } C^{\infty}([\epsilon,1]\times \T).
\]
In particular,
\[
\nabla \Delta_h u \rightarrow \nabla \partial_t u \qquad \mbox{pointwise in } (0,1]\times \T,
\]
and by the Fatou Lemma
\begin{equation}
\label{controllo}
\|\nabla \partial_t u\|_{L^2((0,1)\times \T)} \leq \liminf_{h\rightarrow 0} \|\nabla \Delta_h u\|_{L^2((0,1)\times \T)}.
\end{equation}
We claim that the right-hand side of this inequality is finite. Indeed, by the identity of Lemma \ref{formula} we have
\[
\begin{split}
\int_{(0,1)\times \T} &|\nabla \Delta_h u|^2\, ds dt = \int_{(0,1)\times \T} |\overline{\partial} \Delta_h u|^2\, ds dt - 2 \int_{\T} (\Delta_h u(1,\cdot))^* \lambda_0 + 2 \int_{\T} (\Delta_h u(0,\cdot))^* \lambda_0 \\ &= \int_{(0,1)\times \T} |\Delta_h f|^2\, ds dt- 2 \int_{\T} (\Delta_h u(1,\cdot))^* \lambda_0 + 2 \int_{\T} (\Delta_h v)^* \lambda_0 + 2 \int_{\T} (\Delta_h w)^* \lambda_0.
\end{split}
\]
The first integral in the last expression converges to the square of the $L^2$ norm of $\partial_t f$. The second and third one converge to the integral of $\partial_t u(1,\cdot)^* \lambda_0$ and $(v')^* \lambda_0$ over $\T$, because the functions $u(1,\cdot)$ and $v$ are smooth. The last integral is non-positive, because $\Delta_h w$ belongs to $\mathbb{H}_{1/2}^-$. We conclude that
\begin{equation}
\label{extremis1}
\limsup_{h\rightarrow 0} \|\nabla \Delta_h u\|^2_{L^2((0,1)\times \T)} \leq \int_{(0,1)\times \T} |\partial_t f|^2\, ds dt- 2 \int_{\T} (\partial_t u(1,\cdot))^* \lambda_0 + 2 \int_{\T} (v')^* \lambda_0 < +\infty,
\end{equation}
and by (\ref{controllo}) the $L^2$ norm of $\nabla \partial_t u$ is finite. Equivalently, the functions $\partial_s \partial_t u$ and $\partial_t^2 u$ have finite $L^2$ norm on $(0,1)\times \T$. From the identity
\begin{equation}
\label{extremis2}
\partial_s^2 u = \partial_s \overline{\partial} u - J_0 \partial_s \partial_t u = \partial_s f -  J_0 \partial_s \partial_t u 
\end{equation}
and the fact that $f$ is in $H^1((0,1)\times \T)$, we deduce that also the $L^2$ norm of $\partial_s^2 u$ is finite. We conclude that $u$ is in $H^2((0,1)\times \T)$.

From (\ref{controllo}) and (\ref{extremis1}) we deduce the bound
\[
\begin{split}
\|\partial_t^2 u\|_{L^2((0,1)\times \T)}^2 + \|\partial_s \partial_t  u\|_{L^2((0,1)\times \T)}^2 &= \|\nabla \partial_t u\|_{L^2((0,1)\times \T)}^2 \\ &\leq \|\partial_t \overline{\partial} u\|_{L^2((0,1)\times \T)}^2 + 2 \|\partial_t u(1,\cdot)\|_{1/2}^2 + 2 \|v'\|_{1/2}^2.
\end{split}
\]
The bound (\ref{extremis}) follows from the above inequality together with (\ref{extremis2}).
\end{proof}

In order to complete the bootstrap argument, we need the following easy consequence of the chain rule.

\begin{lem}
\label{bootstrap}
Let $H\in C^{\infty}(\T \times \R^{2n})$ and $u\in C^{\infty}((0,1) \times \T,\R^{2n})$. Let $h\geq 0$ and $k\geq 0$ be integers with $h+k\geq 1$. Then
\[
\partial_s^h \partial_t^k (\nabla H_t\circ u) = \nabla^2H_t(u)  \partial_s^h \partial_t^k u + p,
\]
where $p$ is a $\R^{2n}$-valued polynomial mapping of the partial derivatives $\partial_s^i \partial_t^j u$ with $0\leq i \leq h$, $0\leq j \leq k$, $1\leq i+j\leq h+k-1$, whose coefficients are of the form $A(t,u(s,t))$, where $A$ is smooth.
\end{lem}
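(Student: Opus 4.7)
The plan is to prove the identity by induction on the total order $m = h + k$, using only the chain rule, the product rule, and the fact that the $s$-variable does not appear explicitly in $\nabla H_t$.

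\textbf{Base case.} For $m = 1$, there are only two subcases. If $(h,k) = (1,0)$, direct application of the chain rule gives
\[
\partial_s (\nabla H_t \circ u) = \nabla^2 H_t(u)\, \partial_s u,
\]
so $p = 0$ and the claim holds trivially. If $(h,k) = (0,1)$, then
\[
\partial_t(\nabla H_t \circ u) = (\partial_t \nabla H)_t(u) + \nabla^2 H_t(u)\, \partial_t u,
\]
so $p(s,t) = (\partial_t \nabla H)_t(u(s,t))$ is of the form $A(t, u(s,t))$ with $A := \partial_t \nabla H$ smooth; since the index set $\{(i,j) : 0 \leq i \leq 0,\ 0 \leq j \leq 1,\ 1 \leq i+j \leq 0\}$ is empty, $p$ qualifies as the zero-variable polynomial whose constant coefficient has the prescribed form.

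\textbf{Inductive step.} Suppose the lemma holds for all pairs $(h', k')$ with $h'+k' = m \geq 1$, and let $h+k = m+1$. If $h \geq 1$, apply $\partial_s$ to the identity for $(h-1, k)$; if $h=0$, apply $\partial_t$ to the identity for $(0, k-1)$. In either case, we differentiate a sum
\[
\nabla^2 H_t(u)\, \partial_s^{h'} \partial_t^{k'} u \;+\; p',
\]
where $(h', k') \in \{(h-1, k), (0, k-1)\}$. Writing $D \in \{\partial_s, \partial_t\}$ for the outer differentiation and using the product and chain rules,
\[
D\bigl(\nabla^2 H_t(u)\, \partial_s^{h'} \partial_t^{k'} u\bigr) = \nabla^2 H_t(u)\, \partial_s^{h} \partial_t^{k} u \;+\; \bigl(D \nabla^2 H_t(u)\bigr)\, \partial_s^{h'} \partial_t^{k'} u,
\]
and $D \nabla^2 H_t(u)$ equals $\nabla^3 H_t(u)\, Du$ if $D = \partial_s$, or $(\partial_t \nabla^2 H)_t(u) + \nabla^3 H_t(u)\, \partial_t u$ if $D = \partial_t$. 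In all cases the extra term is a polynomial in partial derivatives of $u$ of orders within $\{\partial_s^i \partial_t^j u : 0 \le i \le h,\ 0 \le j \le k,\ 1 \le i+j \le h+k-1\}$, with coefficients of the form $A(t, u(s,t))$.

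\textbf{Handling $Dp'$.} The polynomial $p'$ is a finite sum of monomials $A(t,u)\prod_\alpha (\partial_s^{i_\alpha} \partial_t^{j_\alpha} u)^{n_\alpha}$ with $0 \le i_\alpha \le h'$, $0 \le j_\alpha \le k'$, $1 \le i_\alpha + j_\alpha \le h'+k'-1 = m-1$. Applying $D$ differentiates either the coefficient---producing a new coefficient $(\partial_z A)(t,u)\,Du$ (or additionally $(\partial_t A)(t,u)$ if $D = \partial_t$), still of the form $A(t,u) \cdot Du$ or $A(t,u)$---or one of the factors, replacing $\partial_s^{i_\alpha} \partial_t^{j_\alpha} u$ by its derivative, whose order increases by exactly one and hence stays at most $m = h+k-1$ and within the index box $[0,h] \times [0,k]$. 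All the resulting terms are therefore polynomials of the required form.

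\textbf{Main obstacle and closing.} There is no substantive obstacle here; the only point requiring care is bookkeeping the index constraints $0 \le i \le h$, $0 \le j \le k$, $1 \le i+j \le h+k-1$, and in particular allowing the ``constant'' (degree-zero) term of the polynomial to be a function of the form $A(t,u(s,t))$, which is consistent with the statement since the coefficients, including that of the empty monomial, are permitted to have this form. Collecting the contributions above yields
\[
\partial_s^h \partial_t^k(\nabla H_t \circ u) = \nabla^2 H_t(u)\, \partial_s^h \partial_t^k u + p,
\]
with $p$ of the stated form, completing the induction. \finedim
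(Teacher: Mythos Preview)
Your proof is correct and follows essentially the same route as the paper's: induction on the total order $h+k$, with the same base cases $(1,0)$ and $(0,1)$, and an inductive step that applies $\partial_s$ when $h\ge 1$ and $\partial_t$ when $h=0$ (the paper phrases this split as $k'\le \ell$ versus $k'=\ell+1$, which is equivalent). The bookkeeping of the index constraints and the treatment of the degree-zero term in the base case $(0,1)$ match the paper's handling.
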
 

\begin{proof}
We argue by induction on $h+k$. If $h+k=1$, then either $h=1$ and $k=0$ or $h=0$ and $k=1$. In the first case we find
\[
\partial_s (\nabla H_t\circ u) = \nabla^2 H_t(u) \partial_s u,
\]
so the desired conclusion holds with $p=0$. In the second case we have
\[
\partial_t (\nabla H_t\circ u) = \nabla^2 H_t(u) \partial_t u + \nabla (\partial_t H_t) (u),
\]
so the desired conclusion holds with $p$ being the polynomial map of degree 0 
\[
p = A(t,z) := \nabla (\partial_t H_t) (z) \qquad \forall (t,z)\in \T \times \R^{2n},
\]
which is indeed smooth.

Now we assume that the thesis is true for all integers $h\geq 0$ and $k\geq 0$ with $1 \leq h+ k \leq \ell$,  for a given $\ell\geq 1$. Let $h'\geq 0$ and $k'\geq 0$ be integers with $h'+k'=\ell+1$. Our aim is to show that $\partial_s^{h'} \partial_t^{k'} (\nabla H_t\circ u) $ has the desired form.

We first assume that $k'\leq \ell$, so that $h'\geq 1$. Then the inductive assumption implies that
\[
\partial_s^{h'-1} \partial_t^{k'} (\nabla H_t\circ u) = \nabla^2 H_t(u)  \partial_s^{h'-1} \partial_t^{k'} u + p,
\]
where $p$ is a polynomial map of the partial derivatives $\partial_s^i \partial_t^j u$ with $0\leq i \leq h'-1$, $0\leq j \leq k'$, $1\leq i+j\leq h'+k'-2$, whose coefficients are of the form $A(t,u(t))$, where $A$ is smooth. By differentiating the above identity with respect to $s$ we obtain
\[
\partial_s^{h'} \partial_t^{k'} (\nabla H_t\circ u) = \nabla^2 H_t(u)  \partial_s^{h'} \partial_t^{k'} u + \nabla^3 H_t(u) [\partial_s u, \partial_s^{h'-1} \partial_t^{k'} u] + \partial_s p.
\]
The middle term on the right-hand side is a bilinear map in $\partial_s u$ and $\partial_s^{h'-1} \partial_t^{k'} u$, which are partial derivatives of $u$ of order not exceeding $h'+k'-1$, with coefficient of the form $A(t,u(s,t))$, where
\[
A(t,z) := \nabla^3 H_t(z) \qquad \forall (t,z)\in \T \times \R^{2n},
\]
is smooth. When we differentiate $p$ with respect to $s$ we obtain terms of two kinds. The terms of the first kind are obtained by differentiating a given coefficient $A(t,u(s,t))$ with respect to $s$. This produces a term of the form $\nabla A(t,u) \partial_s u$. This term is multilinear in the set of partial derivatives of $u$ of admissible order. Such a term has the required form. The terms of the second kind are obtained by differentiating with respect to $s$ a given partial derivative of $u$. This operation produces a monomial in which a term of the form $\partial_s^i \partial_t^j u$, with $0\leq i \leq h'-1$, $0\leq j \leq k'$, $1\leq i+j\leq h'+k'-2$, is replaced by $\partial_s^{i+1} \partial_t^j u$. Since $i+1\leq h'$, the new monomial satisfies the required conditions. We conclude that $\partial_s p$ is a polynomial map of the partial derivatives $\partial_s^i \partial_t^j u$ with $0\leq i \leq h'$, $0\leq j \leq k'$, $1\leq i+j\leq h'+k'-1$, whose coefficients are of the required form.

There remains to consider the case $k'=\ell+1$, which implies that $h'=0$. By the inductive assumption we have
\[
\partial_t^{\ell}  (\nabla H_t\circ u) = \nabla^2 H_t(u) \partial_t^{\ell}  u + p,
\]
where $p$ is a polynomial map in $\partial_t u,\partial_t^2 u, \dots,\partial_t^{\ell-1} u$, whose coefficients have the required form. Differentiation with respect to $t$ gives 
\[
\partial_t^{\ell+1}  (\nabla H_t\circ u) = \nabla^2 H_t(u) \partial_t^{\ell+1}  u + \nabla^3 H_t(u)[\partial_t u, \partial_t^{\ell} u] + \nabla^2 (\partial_t H_t) (u) \partial_t^{\ell} u + \partial_t p.
\]
The maps
\[
(t,z) \mapsto \nabla^3 H_t(z) \qquad (t,z) \mapsto \nabla^2 (\partial_t H_t) (z)
\]
are smooth. Moreover, an argument analogous to the previous one shows that $\partial_t p$ is  a polynomial map of the partial derivatives $\partial_t^j u$ with $1\leq j \leq \ell$, whose coefficients are of the required form. 

This proves that in both cases $\partial_s^{h'} \partial_t^{k'} (\nabla H_t\circ u)$ has the desired form and concludes the proof of the induction step.
\end{proof}

\begin{proof}[Proof of Proposition \ref{bdryreg}]
We can now conclude the proof of Proposition \ref{bdryreg} by showing that $u$ is smooth up to the boundary. By the Sobolev embedding theorems, it is enough to prove that the restriction of $u$ to $(0,1)\times \T$ belongs to $H^k((0,1)\times \T,\R^{2n})$ for every natural number $k$. It certainly belongs to $H^1((0,1)\times \T,\R^{2n})$ by the definition of $\mathcal{H}_{x,y}$. Note that on $(0,1)\times\T$, $J=J_0$ and the equation $\overline{\partial}_{J,H}u=0$ simplifies to 
\begin{equation}
\label{floern}
\overline{\partial} u = \nabla H_t(u).
\end{equation}
Since the Hessian of $H_t$ is globally bounded, the maps
\[
\partial_s(\nabla H_t(u)) = \nabla^2 H_t(u) \partial_s u, \qquad  \partial_t(\nabla H_t(u)) = \nabla^2 H_t(u) \partial_t u + \nabla (\partial_t H_t)(u),
\]
belong to $L^2((0,1)\times \T,\R^{2n})$. Therefore, the right-hand side of (\ref{floern}) belongs to $H^1((0,1)\times \T,\R^{2n})$. Thanks to the boundary conditions satisfied by $u$, Lemma \ref{reglemma} implies that  the restriction of $u$ to $(0,1)\times \T$ belongs to $H^2((0,1)\times \T,\R^{2n})$. In particular, $u$ extends continuously to the closed half-cylinder $[0,+\infty) \times \R$ and is globally bounded.

Arguing by induction, we assume that the restriction of $u$ to $(0,1)\times \T$ belongs to $H^k((0,1)\times \T,\R^{2n})$ for some integer $k\geq 2$ and need to show that it belongs to $H^{k+1}((0,1)\times \T,\R^{2n})$. By differentiating \eqref{floern} $k-1$ times with respect to $t$ we obtain, thanks to Lemma \ref{bootstrap}:
\begin{equation}
\label{difffloer}
\overline{\partial} \partial_t^{k-1} u = \nabla^2 H_t(u)  \partial_t^{k-1} u  + p,
\end{equation}
where $p$ is a $\R^{2n}$-valued polynomial mapping of the partial derivatives $\partial_t u,\dots,\partial_t^{k-2} u$ whose coefficients are of the form $A(t,u(t))$, where $A$ is smooth. By the inductive assumption, the function $ \partial_t^{k-1} u$ belongs to $H^1((0,1)\times \T)\cap C^{\infty}((0,1]\times \T)$. Therefore, its trace at $s=0$ is in $\mathbb{H}_{1/2}$ and, since
\[
u(0,\cdot) \in \pi^{-1}(W^u(x;-\nabla\psi_{H^*})) + \mathbb{H}_{1/2}^-
\]
where the first set consists of smooth loops by Proposition \ref{prop:reduction}, it has the form
\[
\partial_t^{k-1} u(0,\cdot)= v + w,
\]
where $v$ is a smooth loop and $w$ is an element of $\mathbb{H}_{1/2}$ which is the $(k-1)$-th derivative of an element of $\mathbb{H}_{1/2}^-$. As such, $w$ also belongs to $\mathbb{H}_{1/2}^-$.

By differentiating the right-hand side of (\ref{difffloer}) we get
\begin{equation}
\label{dueder}
\begin{split}
\partial_s (\nabla^2 H_t(u)  \partial_t^{k-1} u  + p) &= \nabla^2 H_t(u)  \partial_t^{k-1} \partial_s u + q_1, \\
\partial_t(\nabla^2 H_t(u)  \partial_t^{k-1} u  + p) &= \nabla^2 H_t(u)  \partial_t^{k} u + q_2,
\end{split}
\end{equation}
where 
\[
\begin{split}
q_1 &:= \nabla^3 H_t(u)[\partial_s u] \partial_t^{k-1} u + \partial_s p, \\
q_2 &:= \nabla^2 (\partial_t H)(u) \partial_t^{k-1} u + \nabla^3 H_t(u)[\partial_t u] \partial_t^{k-1} u + \partial_t p,
\end{split}
\]
are $\R^{2n}$-valued polynomial mappings of the partial derivatives 
\[
\partial_t u,\dots,\partial_t^{k-1} u,\partial_s u,\partial_s\partial_t u,\dots,\partial_s\partial_t^{k-2}u
\] 
whose coefficients are of the form $A(t,u(t))$, where $A$ is smooth. Note that the coefficients $A(t,u(s,t))$ are uniformly bounded since $u$ is bounded as observed above. The function $\nabla^2 H_t(u)  \partial_t^{k-1} \nabla u$ is in $L^2((0,1)\times \T)$ because of the inductive assumption and the boundedness of $\nabla^2 H$. The polynomial mappings $q_1$ and $q_2$ have the pointwise estimate
\[
|q_j|^2 \leq C \bigl( 1 + |\partial_t u|^N + \dots + |\partial_t^{k-1} u|^N +  |\partial_s u|^N + |\partial_s\partial_t u|^N + \dots + |\partial_s\partial_t^{k-2}u|^N \bigr), \qquad j=1,2,
\]
on $(0,1)\times\T$ for a suitable positive number $C$ and a suitable natural number $N$. Thanks to the Sobolev embedding of $H^k((0,1)\times \T)$ into $W^{k-1,N}((0,1)\times \T)$, all the partial derivatives which appear in the right-hand side of the above estimate are in $L^N((0,1)\times \T)$ and hence $q_1$ and $q_2$ are in $L^2((0,1)\times \T)$. From \eqref{dueder}, we conclude that the right-hand side of (\ref{difffloer}) is in $H^1((0,1)\times \T)$. Then we can apply Lemma \ref{reglemma} to the function $\partial_t^{k-1} u$ and we obtain that this function belongs to $H^2((0,1)\times \T)$, which means that the functions $\partial_s^2 \partial_t^{k-1} u$, $\partial_s \partial_t^k u$ and $\partial_t^{k+1}u$ are in $L^2((0,1)\times \T)$.

The fact that $u$ solves the equation (\ref{floern}) easily implies that all other derivatives of order $k+1$ of $u$ are in $L^2((0,1)\times \T)$. Indeed, by applying the differential operator $\partial_s^2 \partial_t^{k-2}$ to (\ref{floern}) we obtain, thanks to Lemma \ref{bootstrap},
\[
\partial_s^3 \partial_t^{k-2} u = - J_0 \partial_s^2 \partial_t^{k-1} u + \nabla^2 H_t(u) \partial_s^2 \partial_t^{k-2} u + r,
\]
where $r$ is a $\R^{2n}$-valued polynomial mapping of the partial derivatives 
\[
\partial_t u,\dots,\partial_t^{k-2} u,\partial_s u,\partial_s \partial_t u, \dots, \partial_s \partial_t^{k-2} u,\partial_s^2u, \partial_s^2 \partial_t u, \dots, \partial_s^2 \partial_t^{k-3} u
\]
whose coefficients are of the form $A(t,u(t))$, where $A$ is smooth. Arguing as above, we deduce that $\partial_s^3 \partial_t^{k-2} u$ belongs to $L^2((0,1)\times \T)$. Iteratively, we conclude that all derivatives of order $k+1$ of $u$ belong to $L^2((0,1)\times \T)$ and hence $u$ is in $H^{k+1}((0,1)\times \T)$, as we wished to prove.
\end{proof}

\section{The Fredholm index of the hybrid problem}
\label{fredsec}

We continue to assume that $H\in C^{\infty}(\T\times \R^{2n})$ is non-degenerate, quadratically convex and non-resonant at infinity. Let $J$ be as before a family of uniformly bounded $\omega_0$-compatible almost complex structures smoothly parametrized by $[0,+\infty)\times\T$, such that $J(s,t)=J_0$ for all $s\in [0,1]$ and $J(s,t)$ is independent of $s$ for all $s$ large. The aim of this section is to prove the following result.

\begin{prop}
\label{fredholm}
Let $x$ and $y$ be 1-periodic orbits of $X_H$. For every $u\in \mathcal{M}(x,y)$, the linear operator 
\[
D\overline{\partial}_{J,H}(u): T_u \mathcal{H}_{x,y} \longrightarrow L^2((0,+\infty)\times\T,\R^{2n})
\]
is Fredholm of index $\mu_{CZ}(x) - \mu_{CZ}(y)$.
\end{prop}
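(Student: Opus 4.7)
The proof follows the standard pattern for Fredholm analysis of Floer-type operators with two ends: at $s\to+\infty$ the non-degeneracy of $y$ makes the asymptotic operator hyperbolic, while at $s=0$ the commensurability of the boundary subspace with a reference ``negative'' subspace makes the boundary value problem elliptic in the appropriate sense. The main novelty, for which I would follow the framework developed by Hecht \cite{hec13} and adapted in \cite{as15}, is that the boundary condition is non-local and has both infinite dimension and infinite codimension; this is why the analysis does not reduce to standard Lagrangian boundary conditions.

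First I would identify the tangent space $T_u\mathcal{H}_{x,y}$ as the subspace of $v\in H^1([0,+\infty)\times\T,\R^{2n})$ with $v(0,\cdot)\in L$, where
\[
L:=T_{\pi(x)}W^u(\pi(x);-\nabla\psi_{H^*})+\R^{2n}+\mathbb{H}^-_{1/2}\subset\mathbb{H}_{1/2}.
\]
By Proposition \ref{prop:reduction}(b) and Proposition \ref{relind=ind}, the first summand has dimension $\mathrm{ind}(\pi(x);\psi_{H^*})=\mu_{CZ}(x)-n$, so $L$ is a finite-rank perturbation of the reference subspace $V_0:=\mathbb{H}^-_{1/2}\oplus E^-$ with relative dimension $\dim_{\mathrm{rel}}(L,V_0)=n+(\mu_{CZ}(x)-n)=\mu_{CZ}(x)$. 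After a unitary trivialization of $u^*T\R^{2n}$, the operator $D\overline{\partial}_{J,H}(u)$ takes the form $\partial_s+J_0\partial_t+S(s,t)$ with $S(s,\cdot)$ converging uniformly as $s\to+\infty$ to a symmetric loop $S_\infty$ corresponding to $-\nabla^2H_t(y)$; the asymptotic operator $A_y:=J_0\partial_t+S_\infty$ on $L^2(\T,\R^{2n})$ is self-adjoint, and by non-degeneracy of $y$ has $0\notin\mathrm{spec}(A_y)$, producing a spectral splitting $L^2(\T,\R^{2n})=E^-_{A_y}\oplus E^+_{A_y}$.

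The Fredholm property, which I expect to be the main obstacle, I would establish via the integration-by-parts identity of Lemma \ref{formula},
\[
\int_{[0,+\infty)\times\T}|\nabla v|^2\,dsdt=\int_{[0,+\infty)\times\T}|\overline{\partial}v|^2\,dsdt+2\int_\T v(0,\cdot)^*\lambda_0,
\]
combined with the strict negative definiteness of the quadratic form $x\mapsto 2\int_\T x^*\lambda_0=\int_\T J_0\dot x\cdot x\,dt$ on $\mathbb{H}^-_{1/2}$ and its vanishing on the constants $\R^{2n}$. Since $L$ differs from $V_0$ by a finite-dimensional space, this yields a coercive a priori $H^1$-estimate on $v$ in terms of $\|D\overline{\partial}_{J,H}(u)v\|_{L^2}$ modulo a finite-dimensional correction; the zeroth-order term $S(s,t)v$ is absorbed using the exponential-decay estimates at $+\infty$ provided by hyperbolicity of $A_y$. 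This gives closed range and finite-dimensional kernel, and the analogous argument for the formal adjoint, based on the $\overline{\partial}^*$-version of Lemma \ref{formula}, gives finite-dimensional cokernel.

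For the index computation I would use homotopy invariance. Deforming $J$ to $J_0$ and $S(s,t)$ to its asymptotic value $S_\infty(t)$ keeping $L$ and the hyperbolicity of the asymptotic operator fixed leaves the Fredholm index invariant and reduces the problem to the constant-coefficient operator $\partial_s+A_y$ on $[0,+\infty)\times\T$. For that operator, solutions decaying at $+\infty$ are exactly $v(s)=e^{-sA_y}v_0$ with $v_0$ in the $\mathbb{H}_{1/2}$-closure of $E^+_{A_y}$, so the index equals the Kato--Fredholm index of the pair $(L,E^+_{A_y})$ of closed subspaces of $\mathbb{H}_{1/2}$. Setting $V_0^+:=\mathbb{H}^+_{1/2}\oplus E^+$, the pair $(V_0,V_0^+)$ is complementary with Fredholm index zero, and by additivity of relative dimensions under finite-rank perturbations of both arguments,
\[
\mathrm{ind}(L,E^+_{A_y})=\mathrm{ind}(V_0,V_0^+)+\dim_{\mathrm{rel}}(L,V_0)+\dim_{\mathrm{rel}}(E^+_{A_y},V_0^+).
\]
Proposition \ref{cz=relind} identifies $\dim_{\mathrm{rel}}(E^-_{A_y},V_0)=\mu_{CZ}(y)$, whence $\dim_{\mathrm{rel}}(E^+_{A_y},V_0^+)=-\mu_{CZ}(y)$ by passing to complements. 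Combining these identities yields $\mathrm{ind}(D\overline{\partial}_{J,H}(u))=\mu_{CZ}(x)-\mu_{CZ}(y)$, as required.
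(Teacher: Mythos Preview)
Your proposal is correct and aligns with the paper's overall strategy: identify $T_u\mathcal{H}_{x,y}$ as an $H^1$-space with boundary values in a subspace $V$ that is a compact perturbation of $\mathbb{H}_{1/2}^-$, compute $\dim(V,\mathbb{H}_{1/2}^-\oplus E^-)=\mu_{CZ}(x)$ via Propositions~\ref{relind=ind} and~\ref{cz=relind}, and then reduce to a linear Fredholm result for operators $\partial_s+J\partial_t-A$ with this boundary condition (the paper's Proposition~\ref{linop}). For the Fredholm property, both you and the paper rely on the identity of Lemma~\ref{formula} near $s=0$, invertibility of the asymptotic operator for large $s$, a Calderon--Zygmund estimate on the intermediate region, and the analogous adjoint estimate. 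One technical point worth making explicit: the paper keeps $J=J_0$ on $[0,1]\times\T$ throughout all deformations, so that Lemma~\ref{formula} applies verbatim near the boundary; your homotopy of $J$ should respect the same constraint.

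Where you genuinely diverge is in the index computation. After homotoping to a translation-invariant operator, the paper further homotopes the asymptotic data (through non-degenerate paths, using that $\mathcal{SP}(2n)$-components are detected by the Conley--Zehnder index) to explicit diagonal models $A_{odd}$ or $A_{even}$, and then computes $\dim\ker$ and $\dim\ker^*$ by a direct Fourier-mode count, matching the result against an equally explicit formula for $\dim(V,\mathbb{H}_{1/2}^-\oplus E^-)$. You instead identify the index of the constant-coefficient half-cylinder operator with the Fredholm pair index $\mathrm{ind}(L,E^+_{A_y})$ and evaluate the latter by additivity under compact perturbations of each factor, invoking Proposition~\ref{cz=relind} to get $\dim_{\mathrm{rel}}(E^-_{A_y},V_0)=\mu_{CZ}(y)$ and passing to complements. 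Your route is cleaner and avoids the parity case split and the Fourier bookkeeping; the cost is that the identification of the operator index with a pair index, and the additivity formula for pair indices under compact perturbations, are not proved in the paper and would need to be supplied or cited (they are standard, e.g.\ in \cite{abb01}). A minor slip: the boundary tangent space should be $T_vW^u(\pi(x);-\nabla\psi_{H^*})$ at the actual landing point $v$ of $u(0,\cdot)$, not at $\pi(x)$; this does not affect the relative dimension, hence not the index.
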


Denote by
\[
\mathrm{tr}_0 : H^1((0,+\infty)\times \T,\R^{2n}) \rightarrow \mathbb{H}_{1/2}
\]
the trace operator, i.e.\ the unique continuous extension of the operator mapping each smooth map $u:[0,+\infty) \times \T \rightarrow \R^{2n}$ with compact support to its restriction $u(0,\cdot)$ to the boundary of the half cylinder. As we already did in Section \ref{funsetsec}, we will sometimes use the notation $u(0,\cdot)$ to denote $\mathrm{tr}_0 u$.

Given a closed linear subspace $V$ of $\mathbb{H}_{1/2} = H^{1/2}(\T,\R^{2n})$, we define
\[
H^1_V((0,+\infty)\times \T,\R^{2n}) := \{ u\in H^1((0,+\infty)\times \T,\R^{2n}) \mid \mathrm{tr}_0 u \in V \}.
\]
The proof of the above result relies on the following proposition, which is a slightly more general version of \cite[Theorem 4.4]{hec13}.

\begin{prop}
\label{linop}
Let $A\in C^0([0,+\infty]\times \T, L(\R^{2n}))$ be a continuous map into the space of  linear endomorphisms of $\R^{2n}$. Let $J\in C^0([0,+\infty]\times \T, L(\R^{2n}))$ be a continuous map such that 
$J(s,t)$ is an $\omega_0$-compatible almost complex structure for every $(s,t)\in[0,+\infty]\times\T$ and $J(s,t)=J_0$ for every $s\in[0,1]\times\T$. We assume that $-J_0J(+\infty,t)A(+\infty,t)$ is symmetric for every $t\in \T$. Denote by $Z_A:[0,1] \rightarrow \mathrm{Sp}(2n)$ the symplectic path which is defined by
\[
Z_A(0) = I, \qquad Z_A'(t) = - J(+\infty,t) A(+\infty,t) Z_A (t) \quad \forall t\in [0,1],
\]
and assume that $Z_A$ is non-degenerate, meaning that 1 is not an eigenvalue of $Z_A(1)$. So $Z_A\in\mathcal{SP}(2n)$ and its Conley-Zehnder index is denoted by $\mu_{CZ}(Z_A)$. Let $V$ be a closed linear subspace of  $\mathbb{H}_{1/2}$ which is a compact perturbation of $\mathbb{H}_{1/2}^-$. Then the linear operator
\[
T:H^1_V((0,+\infty)\times \T,\R^{2n}) \rightarrow L^2((0,+\infty)\times \T,\R^{2n}), \quad u \mapsto \partial_s u + J \partial_t u - A u,
\]
is Fredholm of index
\[
\ind T=\mathrm{dim}(V,\mathbb{H}^-_{1/2} \oplus E^-) - \mu_{CZ}(Z_A).
\]
\end{prop}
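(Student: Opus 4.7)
The plan is to follow the strategy of Hecht in \cite[Theorem 4.4]{hec13}, adapting it to the present more general boundary condition. The proof proceeds in three steps: establishing the Fredholm property, reducing to a model case via homotopy, and computing the index in the model.

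For the Fredholm property, the crucial tool is the Stokes-type identity from Lemma \ref{formula}. Since $J = J_0$ on $[0,1]\times\T$, for $u$ of class $H^1$ vanishing at infinity this identity yields
\[
\int_{[0,+\infty)\times\T} |\nabla u|^2\,ds\,dt = \int_{[0,+\infty)\times\T} |\overline{\partial} u|^2\,ds\,dt + \|P^+\mathrm{tr}_0 u\|_{1/2}^2 - \|P^-\mathrm{tr}_0 u\|_{1/2}^2.
\]
The boundary quadratic form $2\int_{\T} u(0,\cdot)^* \lambda_0$ is negative definite on $\mathbb{H}_{1/2}^-$, hence negative definite on $V$ modulo a finite-rank form, since $V$ is a compact perturbation of $\mathbb{H}_{1/2}^-$. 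Combining this with the invertibility on $L^2(\T,\R^{2n})$ of the asymptotic operator $L_\infty := -J(+\infty,\cdot)\partial_t + A(+\infty,\cdot)$, which is equivalent to the non-degeneracy of $Z_A$, one obtains an a priori estimate
\[
\|u\|_{H^1} \leq C\bigl(\|Tu\|_{L^2} + \|u\|_{L^2(K)}\bigr)
\]
for some compact $K\subset [0,+\infty)\times\T$. Since $H^1\hookrightarrow L^2(K)$ is compact, this implies that $T$ is semi-Fredholm, and the analogous estimate for the formal adjoint (on the orthogonal complement of $V$) upgrades semi-Fredholmness to the Fredholm property.

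The index is then computed by homotopy invariance. One deforms, successively: (i) the almost complex structure $J$ to $J_0$ everywhere, using the convexity of the space of $\omega_0$-compatible almost complex structures; (ii) the coefficient $A$ to its asymptotic value $A(+\infty,\cdot)$, through paths for which the associated symplectic solutions remain non-degenerate; (iii) the subspace $V$, through compact perturbations of $\mathbb{H}_{1/2}^-$, to $\mathbb{H}_{1/2}^- \oplus W$, where $W$ is a finite-dimensional subspace of $\mathbb{H}_{1/2}^+\oplus \R^{2n}$ with $\dim W = n + \dim(V,\mathbb{H}_{1/2}^-\oplus E^-)$. Along each such deformation the semi-Fredholm estimate is uniform in the parameter, so the index of $T$ is constant.

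In the model, $T = \partial_s - L_\infty$ with $L_\infty = -J_0\partial_t + A(t)$ self-adjoint on $L^2(\T,\R^{2n})$, having compact resolvent and trivial kernel. Expansion $u(s,t) = \sum_k c_k(s)\varphi_k(t)$ in an orthonormal eigenbasis $\{\varphi_k\}$ of $L_\infty$ with eigenvalues $\lambda_k \neq 0$ decouples the problem into scalar ODEs $\dot c_k - \lambda_k c_k = f_k$ on $[0,+\infty)$ with $c_k\to 0$ at infinity and $\sum_k c_k(0)\varphi_k \in \mathbb{H}_{1/2}^-\oplus W$. A direct count of solvability conditions, together with Proposition \ref{cz=relind} which identifies $\mu_{CZ}(Z_A)$ with the relative Morse index of the Hessian of the corresponding quadratic Hamiltonian action, yields
\[
\ind T = \dim W - n - \mu_{CZ}(Z_A) = \dim(V,\mathbb{H}_{1/2}^- \oplus E^-) - \mu_{CZ}(Z_A).
\]
The main obstacle will be step (iii) of the homotopy: one must realize the deformation of $V$ through a path of closed subspaces that remain compact perturbations of $\mathbb{H}_{1/2}^-$ and for which the operator $T$ depends continuously in the gap topology, so that the semi-Fredholm estimate is uniform; this is conveniently handled using the abstract framework of Fredholm pairs in \cite[Chapter 2]{abb01}, and also underlies the equality between the explicit eigenvalue count and the relative dimension $\dim(V,\mathbb{H}_{1/2}^- \oplus E^-)$.
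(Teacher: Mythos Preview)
Your outline of the Fredholm estimate coincides with the paper's: the boundary identity of Lemma~\ref{formula} near $s=0$, Calderon--Zygmund in the interior, and invertibility of the asymptotic operator near $s=+\infty$, followed by the formal adjoint on $V^{\perp}$.

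The index computation, however, follows a genuinely different route. The paper keeps the boundary subspace $V$ fixed throughout and instead homotopes $A(+\infty,\cdot)$ to an explicit constant block-diagonal matrix (blocks $\theta I_2$ and $\mathrm{diag}(-1,1)$, chosen according to the parity of $\mu_{CZ}(Z_A)$), then computes the kernels and cokernels of the resulting $2\times 2$ operators $T_\theta$ and $T_Q$ by a direct Fourier-mode count, using a concrete decomposition $V=W^-\oplus\bigoplus_{|k|\le\ell}W_k$. You do the opposite: keep $A(+\infty,\cdot)$ general but deform $V$ to a standard form $\mathbb{H}_{1/2}^-\oplus W$, and then argue via the spectral decomposition of $L_\infty$ together with relative-index theory. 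Your approach is conceptually cleaner but pays in two places. First, step (iii) changes the \emph{domain} $H^1_V$ of the operator, so you need continuity of the Fredholm index in the gap topology of closed subspaces; this is standard but is an extra ingredient the paper avoids entirely. Second, your ``direct count'' in the model is less direct than it sounds: the kernel and cokernel of $\partial_s-L_\infty$ are the intersections of the stable and unstable subspaces of $L_\infty$ with $V$ and $V^{\perp}$, and one must recognize these as Fredholm pairs and identify their index with $\dim(V,\mathbb{H}_{1/2}^-\oplus E^-)-\mu_{CZ}(Z_A)$. Note here that the $\mathbb{H}_{1/2}$-closure of the \emph{negative} eigenspace of $L_\infty$ is a compact perturbation of $\mathbb{H}_{1/2}^{+}$, not of $\mathbb{H}_{1/2}^{-}$, and Proposition~\ref{cz=relind} as stated is about critical points of $\Phi_H$ rather than an arbitrary symmetric loop $t\mapsto -J_0 J(+\infty,t)A(+\infty,t)$; so what you actually need is the general relative-index statement from \cite[Chapter~2]{abb01}, not Proposition~\ref{cz=relind} itself.
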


\begin{proof}[Proof of Proposition \ref{fredholm}]
Let $u\in \mathcal{M}(x,y)$ and write
\[
u(0,\cdot) = v + w,
\]
where $v\in W^u(\pi(x);-\nabla\psi_{H^*})$, seen as a loop with zero mean, and $w\in \R^{2n} \oplus \mathbb{H}_{1/2}^-$. The tangent space of $\mathcal{H}_{x,y}$ at $u$ is
\[
T_u \mathcal{H}_{x,y} = H^1_V((0,+\infty)\times \T,\R^{2n}),
\]
where 
\[
V := T_v W^u(\pi(x);-\nabla\psi_{H^*}) \oplus \R^{2n} \oplus \mathbb{H}_{1/2}^-
\]
is  a closed linear subspace of $\mathbb{H}_{1/2}$. This subspace is clearly a compact perturbation of $\mathbb{H}_{1/2}^-$ and
\[
\begin{split}
\dim ( V, \mathbb{H}_{1/2}^- \oplus E^-) &= \dim T_v W^u(\pi(x);-\nabla\psi_{H^*}) + \dim ( \R^{2n} \oplus \mathbb{H}_{1/2}^-, \mathbb{H}_{1/2}^- \oplus E^-) \\ &= \mathrm{ind}(\pi(x);\Psi_{H^*}) + n.
\end{split}
\]
Together with Propositions \ref{relind=ind} and \ref{cz=relind}, we find
\[
\dim ( V, \mathbb{H}_{1/2}^- \oplus E^-) = \mathrm{ind}_{\mathbb{H}^-_{1/2}\oplus E^-}(x;\Phi_H) -n + n = \mu_{CZ}(x).
\]
The differential of $\overline{\partial}_{J,H}$ at $u$ is an operator of the form considered 
in Proposition \ref{linop}, that is
\[
D \overline{\partial}_{J,H}(u) : H^1_V((0,+\infty)\times \T,\R^{2n}) \to L^2((0,+\infty)\times \T,\R^{2n}), \quad v\mapsto  \partial_s v + J(s,t,u) \partial_t v - A(s,t) v,
\]
where 
\[
A(s,t) v = J(s,t,u)\nabla_v X_{H_t}(u(s,t)) - \nabla_v J(s,t,u)\big(\partial_tu(s,t)-X_{H_t}(u(s,t))\big)
\]
and 
\[
Z_A (t) = d\phi_{X_H}^t(y(0)).
\] 
Thanks to the above computation of the relative dimension of $V$ with respect to $\mathbb{H}_{1/2}^- \oplus E^-$, Proposition \ref{linop} implies that this operator is Fredholm of index
\[
\mu_{CZ}(x) - \mu_{CZ}(y).
\]
\end{proof}

\begin{proof}[Proof of Proposition \ref{linop}] 
Let $\Lambda(s,t)$ be a family of endomorphisms of $\R^{2n}$ smoothly depending on $(s,t)\in[0,+\infty]\times \T$ such that 
\[
\Lambda(s,t)^*\omega_0=\omega_0,\qquad \Lambda(s,t)^*J(s,t)=J_0,\qquad \Lambda (s,t)=I\quad \mbox{for all }\;(s,t)\in[0,1]\times\T.
\]
Then $T$ conjugates to an operator $T_\Lambda$ by $\Lambda$ of the form 
\[
T_\Lambda:H^1_V((0,+\infty)\times \T,\R^{2n}) \rightarrow L^2((0,+\infty)\times \T,\R^{2n}), \quad u \mapsto \partial_s u + J_0 \partial_t u - A_\Lambda u
\]
where 
\[
A_\Lambda:=\Lambda^{-1}(A\Lambda-\partial_s\Lambda-J\partial_t\Lambda):[0,+\infty]\times\T\to L(\R^{2n}). 
\]
To show that $T_\Lambda$ is Fredholm, we choose smooth cut-off functions $\beta_0$, $\beta_1$, $\beta_2:[0,+\infty)\to[0,1]$ such that 
\[
\supp \beta_0\subset [0,1],\quad \supp\beta_1\in [1/2,\tau-1],\quad \supp\beta_2\subset [\tau-2,+\infty),\quad \beta_1+\beta_2+\beta_3=1
\]
where $\tau>0$ is determined below. For $u\in H^1_V((0,+\infty)\times \T,\R^{2n})$, let $u_0=\beta_0 u$, $u_1=\beta_1 u$, and $u_2=\beta_2 u$.

Since $u_0$ has support in $[0,1]\times\T$, using the first identity in Lemma \ref{formula} we estimate 
\[
\begin{split}
\|\nabla u_0\|^2_{L^2((0,+\infty)\times\T)}&=\|\overline{\partial}u_0\|^2_{L^2((0,+\infty)\times\T)}+2\int_\T u(0,\cdot)^*\lambda_0\\
&\leq\|\overline{\partial}u_0\|^2_{L^2((0,+\infty)\times\T)} + 2\int_\T\big(P^+u(0,\cdot)\big)^*\lambda_0\\  
&=\|\overline{\partial}u_0\|^2_{L^2((0,+\infty)\times\T)} + \|P^+ \mathrm{tr}_0 u_0\|^2_{1/2}.
\end{split}
\]
This implies
\begin{equation}\label{estimate1}
\|u_0\|_{H^1((0,+\infty)\times\T)}\leq c_0\big(\|T_\Lambda u_0\|_{L^2((0,+\infty)\times\T)}+\|u_0\|_{L^2((0,1)\times\T)}+\|P^+ \mathrm{tr}_0 u_0\|_{1/2}\big)
\end{equation}
where $c_0=1+\|A_\Lambda\|_{L^\infty}$. 

The estimates for $u_1$ and $u_2$ below are standard. Applying the Calderon-Zygmund estimate to $u_1$, we know that there exists $c_1>0$ such that 
\[
\|u_1\|_{H^1((0,+\infty)\times\T)}\leq c_1\big(\|T_\Lambda u_1\|_{L^2((0,+\infty)\times\T)}+\|u_1\|_{L^2((0,\tau)\times\T)}\big).
\]
Since $Z_A$ is non-degenerate, for sufficiently large $\tau$, there exists $c_2>0$ such that 
\[
\|u_2\|_{H^1((0,+\infty)\times\T)}\leq c_2\|T_\Lambda u_2\|_{L^2((0,+\infty)\times\T)}.
\]
Putting the above three estimates together, we obtain $c_3>0$ such that 
\begin{equation}
\label{CZ_bdry}
\|u\|_{H^1((0,+\infty)\times\T,\R^{2n})}\leq c_3\big(\|T_\Lambda u\|_{L^2((0,+\infty)\times\T,\R^{2n})}+\|u\|_{L^2((0,\tau)\times\T,\R^{2n})}+\|P^+ \mathrm{tr}_0 u\|_{1/2}\big).
\end{equation}
Since $V$ is a compact perturbation of $\mathbb{H}^-_{1/2}$, 
\[
P^+|_V:V\to\mathbb{H}_{1/2}
\]
and hence $P^+|_V\circ \mathrm{tr}_0$ is a compact operator. A standard argument using \eqref{CZ_bdry} shows that $T_\Lambda$ is semi-Fredholm. To conclude that $T_\Lambda$ is Fredholm, we  analyze the formal adjoint operator $T_\Lambda^*$ which enjoys the property $\ker T_\Lambda^*=\coker T_\Lambda$. In view of the computation 
\[
\int_{(0,+\infty)\times\T} v\cdot T_\Lambda u\,dsdt=\int_{(0,+\infty)\times\T} (-\partial_s v+J_0\partial_t v-A_\Lambda^*)\cdot u\,dsdt -\int_{\{0\}\times\T} v\cdot u\,dt
\]
for a compactly supported smooth map $v:[0,+\infty)\times\T\to\R^{2n}$, where we have used integration by parts, it is given by
\[
T_\Lambda^*:H^1_{V^\perp}((0,+\infty)\times\T,\R^{2n})\to L^2((0,+\infty)\times\T,\R^{2n}),\qquad  -\partial_s + J_0\partial_t - A_\Lambda^*,
\]
where $A_\Lambda^*$ denotes the transpose of $A_\Lambda$. Here $V^\perp$ is the orthogonal complement of $V$ with respect to the $L^2$-metric which  is a compact perturbation of $\mathbb{H}^+_{1/2}$. Arguing as above, one can readily see that $T_\Lambda^*$ also satisfies an inequality like \eqref{CZ_bdry} with $P^+$ replaced by $P^-$ using the second identity in Lemma \ref{formula}. This proves that $T_\Lambda^*$ is also semi-Fredholm and consequently $T_\Lambda$ is Fredholm. 
\medskip

In order to compute the Fredholm index of $T$, we homotope $T$ to another Fredholm operator in two steps and use the following well-known facts. First, the Fredholm index is locally constant in the space of Fredholm operators and therefore the index does not change along a continuous homotopy of Fredholm operators. Second, two paths of symplectic matrices in $\mathcal{SP}(2n)$ lie in the same connected component if and only if their Conley-Zehnder indices coincide.

We choose a continuous path of $\omega_0$-compatible almost complex structures $\{J_r\}_{r\in[0,1]}$ such that $J_0(s,t)=J_0$ and $J_1(s,t)=J(s,t)$ for all $(s,t)\in[0,+\infty]\times\T$ and $J_r(s,t)=J_0$ for all $(r,s,t)\in[0,1]\times[0,1]\times\T$, which exists due to the contractibility of the space of $\omega_0$-compatible almost complex structures. We set
\[
A_r(s,t)=-J_r(s,t)J(s,t)A(s,t)\in L(\R^{2n})
\]
and consider a continuous family of  operators 
\[
T_r:H^1_V((0,+\infty)\times \T,\R^{2n}) \rightarrow L^2((0,+\infty)\times \T,\R^{2n}), \qquad v\mapsto  \partial_s v + J_r(s,t) \partial_t v - A_r(s,t) v,
\]
such that $T_1=T$. Since its asymptotic operator at $s=+\infty$ being of the form 
\[
H^1_V(\T,\R^{2n}) \rightarrow L^2(\T,\R^{2n}),\qquad v\mapsto J_r(+\infty,t)(\partial_tv+J(+\infty,t)A(+\infty,t)v)
\]
is invertible, the arguments as above show that $T_r$ is Fredholm for all $r\in[0,1]$.  Therefore we have $\ind T=\ind T_1=\ind T_0$. Moreover, the symplectic path $Z_{A_r}(t)$ defined by
\[
Z_{A_r}(0) = I, \qquad Z_{A_r}'(t) = - J_r(+\infty,t) A_r(+\infty,t) Z_{A_r} (t) \quad \forall t\in [0,1],
\]
is independent of $r$ which yields that the Conley-Zehnder index $\mu_{CZ}(Z_{A_r})$ does not change in $r$. It remains to show 
\begin{equation}\label{index_T_0}
\ind T_0=\dim(V,\mathbb H^-_{1/2}\oplus E^-)-\mu_{CZ}(Z_A).
\end{equation}
\medskip

\noindent{\bf Case 1}:~The Conley-Zehnder index $\mu_{CZ}(Z_A)$ is odd.\\[-2ex]

We pick any number $\theta\in\R\setminus 2\pi\Z$ satisfying 
\begin{equation}
\label{CZ=theta}
\mu_{CZ}(Z_A)=2\left\lfloor\frac{\theta}{2\pi}\right\rfloor+1
\end{equation}
and define a symmetric matrix 
\[
A_{odd}=
  \begin{pmatrix}
    \theta & 0 \\
    0 & \theta
  \end{pmatrix}
\oplus 
\begin{pmatrix}
    -1 & 0 \\
    0 & 1
  \end{pmatrix}^{\oplus n-1}.
\]
The associated non-degenerate path of symplectic matrices
\[
Z_{A_{odd}}=  
e^{-tJ_0\theta}
\oplus 
\begin{pmatrix}
    \cosh t & \sinh t \\
    \sinh t & \cosh t
  \end{pmatrix}^{\oplus n-1}
\]
satisfies $\mu_{CZ}(Z_{A_{odd}})=\mu_{CZ}(Z_A)$ since $\begin{pmatrix}
    -1 & 0 \\
    0 & 1
  \end{pmatrix}$ has zero signature and thus do not contribute to the Conley-Zehnder index. Then we choose another continuous family 
\[
A_r:[0,+\infty]\times\T\to L(\R^{2n}) ,\qquad r\in[-1,0]
\]
such that 
\begin{enumerate}
\item[-] $A_0(s,t)=A(s,t)$, $A_{-1}(s,t)=A_{odd}(t)$ for all $(s,t)\in[0,+\infty]\times\T$,
\item[-] $A_r(+\infty,t)$ is symmetric  for all $t\in\T$ and $r\in[-1,0]$,
\item[-] $Z_{A_r}\in\mathcal {SP}(2n)$ for all $r\in[-1,0]$.
\end{enumerate}
Due to the last property, we have a continuous family of Fredholm  operators 
\[
T_r=\partial_s+J_0\partial_t-A_r:H^1_V((0,+\infty)\times \T,\R^{2n}) \rightarrow L^2((0,+\infty)\times \T,\R^{2n}).
\]
In particular, $\mathrm{ind}\,T_0=\mathrm{ind}\,T_{-1}$.  Since the Fredholm index, the relative dimension, the Conley-Zehnder index are additive under direct sum, it is enough to establish \eqref{index_T_0} for
\[
T_\theta=\partial_s+J_0\partial_t-  \theta I:H^1_V((0,+\infty)\times \T,\R^{2}) \rightarrow L^2((0,+\infty)\times \T,\R^{2}),
\]
and for 
\[
T_Q=\partial_s+J_0\partial_t-Q
  :H^1_V((0,+\infty)\times \T,\R^{2}) \rightarrow L^2((0,+\infty)\times \T,\R^{2})
\]
where $I$ denotes the identity map on $\R^2$, $Q=\begin{pmatrix}
    -1 & 0 \\
    0 & 1
  \end{pmatrix}$, and $V$ is now a closed subspace of $\mathbb H_{1/2}=H^{1/2}(\T,\R^2)$. Since $V$ is a compact perturbation of $\mathbb{H}^-_{1/2}$, the spaces $V^\perp\cap\mathbb{H}^-_{1/2}$ and $V\cap(\R^{2}\oplus\mathbb{H}^+_{1/2})$ are finite dimensional. In other words, there exist $\ell\in\N$, a subspace $W_k$ of $\R^2e^{-2\pi kJ_0t}$ for $-\ell\leq k\leq\ell$, and a closed subspace $W^-$ of $\mathbb{H}^-_{1/2}$ such that 
\[
\mathbb{H}^-_{1/2}=W^-\oplus\bigoplus_{-\ell\leq k\leq -1}\R^2e^{-2\pi kJ_0t}
\]
and
\[
V= W^-\oplus \bigoplus_{-\ell\leq k\leq \ell} W_k.
\]
Then we have 
\[
V^\perp=W^+\oplus\bigoplus_{-\ell\leq k\leq \ell} W_k^\perp
\]
where $W^+$ is a closed subspace of $\mathbb{H}^+_{1/2}$ such that 
\[
\mathbb{H}^+_{1/2}=W^+\oplus\bigoplus_{1\leq k\leq\ell}\R^2e^{-2\pi kJ_0t}
\]
and $W^\perp_k$ is the orthogonal complement of $W_k$ in $\R^2e^{-2\pi kJ_0t}$.

We first compute the index of $T_\theta$. Let $u\in H^1_V((0,+\infty)\times \T,\R^{2n})$. We represent $u$ as its Fourier series
\[
u(s,t)=\sum_{k\in\Z}e^{-2\pi kJ_0t}\hat u_k(s),\qquad \hat u_k(s)\in\R^2.
\]
Then the condition $u\in\ker T_\theta$ implies 
\[
0=T_\theta u=\partial_su+J_0\partial_tu-\theta u=\sum_{k\in\Z}e^{-2\pi kJ_0t}\big(\hat u'_k(s)+(2\pi k-\theta)\hat u_k(s)\big),
\]
from which we deduce 
\[
\hat u_k(s)=e^{-(2\pi k -\theta)s}\hat u_k(0).
\]
The condition that $u$ has finite $H^1$-norm translates to  
\[
\hat u_k(0)=0,\qquad \forall k\leq\frac{\theta}{2\pi}.
\]
Next we study the boundary condition $u(0,\cdot)\in V$. Let us consider the case $\theta<-2\pi\ell$. In this case, $u\in\ker T_\theta$ if and only if
\[
u(0,\cdot)\in \bigoplus_{k=\lceil\frac{\theta}{2\pi}\rceil}^{-\ell-1}\R^2e^{-2\pi kJ_0t}\oplus\bigoplus_{k=-\ell}^\ell W_k
\]
and hence,
\[
\dim\ker T_\theta=-2\left(\left\lceil\frac{\theta}{2\pi}\right\rceil+\ell\right)+\sum_{k=-\ell}^\ell \dim W_k.
\]
If $\theta>-2\pi\ell$, $u\in\ker T_\theta$ exactly when 
\[
u(0,\cdot)\in\bigoplus_{k=\lceil\frac{\theta}{2\pi}\rceil}^\ell W_k
\]
and thus,
\[
\dim\ker T_\theta=\sum_{k=\lceil\frac{\theta}{2\pi}\rceil}^\ell\dim W_k.
\]
In particular if $\theta>2\pi\ell$, then $\dim\ker T_\theta=0$. 

To compute the dimension of the cokernel of $T_\theta$, we use its formal adjoint operator 
\[
T_\theta^*:H^1_{V^\perp}((0,+\infty)\times\T,\R^{2n})\to L^2((0,+\infty)\times\T,\R^{2n}),\quad u\mapsto -\partial_s u+J_0\partial_tu-\theta u.
\]
Arguing as above we can show that if $u\in\ker T_\theta^*$, 
\[
u(s,t)=\sum_{k\in\Z}e^{-2\pi kJ_0t}e^{(2\pi k-\theta)s}\hat u_k(0).
\]
Since $u\in H^1_{V^\perp}((0,+\infty)\times\T,\R^{2n})$, we have $\hat u_k(0)=0$ for all $k\geq\frac{\theta}{2\pi}$ and $u(0,\cdot)\in V^\perp$. If $\theta<2\pi\ell$, $u\in\ker T_\theta^*$ is equivalent to
\[
u(0,\cdot)\in\bigoplus_{k=-\ell}^{\lfloor\frac{\theta}{2\pi}\rfloor}W_k^\perp
\]
and this computes
\[
\dim\ker T_\theta^*=\sum_{k=-\ell}^{\lfloor\frac{\theta}{2\pi}\rfloor}\dim W_k^\perp.
\]
In particular if $\theta<-2\pi\ell$, $\dim\ker T_\theta^*=0$. In the case of $\theta>2\pi\ell$, $u\in\ker T_\theta^*$ if and only if
\[
u(0,t)\in \bigoplus_{k=\ell+1}^{\lfloor\frac{\theta}{2\pi}\rfloor} \R^2e^{-2\pi kJ_0t}\oplus\bigoplus_{k=-\ell}^\ell W_k^\perp
\]
and therefore,
\[
\dim\ker T_\theta^*=2\left(\left\lfloor\frac{\theta}{2\pi}\right\rfloor-\ell\right)+\sum_{k=-\ell}^{\ell}\dim W_k^\perp.
\]

Using the identities that $\dim W^\perp_k=2-\dim W_k$ and $\lceil\frac{\theta}{2\pi}\rceil=\lfloor\frac{\theta}{2\pi}\rfloor+1$, we see that in all the cases
\begin{equation}
\label{indexT_theta}
\ind T_\theta=\dim \ker T_\theta-\dim\ker T_\theta^*=-2\left(\left\lceil\frac{\theta}{2\pi}\right\rceil+\ell\right)+\sum_{k=-\ell}^\ell \dim W_k.
\end{equation}
On the other hand, we have 
\begin{equation}\label{rel_dim}
\begin{split}
\mathrm{dim}(V,\mathbb{H}^-_{1/2} \oplus E^-)&=\dim\big(V\cap(\mathbb{H}^+_{1/2}\oplus E^+)\big)-\dim\big(V^\perp\cap(\mathbb{H}^-_{1/2}\oplus E^-)\big)\\
&=\dim(W_0\cap E^+)+\sum_{k=1}^\ell\dim W_k - \dim (W_0^\perp\cap E^-) - \sum_{k=-\ell}^{-1} \dim W_k^\perp\\
&=\dim W_0-1+\sum_{k=1}^\ell\dim W_k -\sum_{k=-\ell}^{-1} (2-\dim W_{k})\\
&=\sum_{k=-\ell}^\ell\dim W_k-2\ell-1.
\end{split}
\end{equation}
Combining \eqref{CZ=theta}, \eqref{indexT_theta}, and \eqref{rel_dim}, we conclude
\[
\ind T_\theta=\mathrm{dim}(V,\mathbb{H}^-_{1/2} \oplus E^-)-\mu_{CZ}(Z_\theta).
\]
\medskip

To compute the index of $T_Q$, where $Q=\begin{pmatrix}
    -1 & 0 \\
    0 & 1
  \end{pmatrix}$,
we write as before $u\in H^1_V((0,+\infty)\times \T,\R^{2n})$ as 
\[
u(s,t)=\sum_{k\in\Z}e^{-2\pi kJ_0t}\hat u_k(s),\qquad \hat u_k(s) = \big(\hat a_k(s),\hat b_k(s)\big) \in\R^2=E^+\oplus E^-.
\]
Then the condition $u\in\ker T_Q$ translates to 
\[
\sum_{k\in\Z}e^{-2\pi kJ_0t}\big( \hat a_k'(s)+(2\pi k+1)\hat a_k(s),  \hat b_k'(s)+(2\pi k-1) \hat b_k(s)\big)=0
\]
Therefore we have
\[
\hat a_k(s)=e^{-(2\pi k+1)s}\hat a_k(0),\qquad \hat b_k(s)=e^{-(2\pi k-1)s}\hat b_k(0)
\]
Since $u$ has finite $H^1$-norm, 
\[
\hat a_k(0)=0,\qquad \forall k\leq -1
\]
and 
\[
\hat b_k(0)=0,\qquad \forall k\leq 0.
\]
This together with the boundary condition $u(0,\cdot)\in V$ yields that $u\in\ker T_Q$ if and only if
\[
\hat u(0)\in\bigoplus_{1\leq k\leq \ell} W_k\oplus (W_0\cap E^+)
\]
and therefore
\[
\dim\ker T_Q=\sum_{k=1}^\ell\dim W_k+\dim (W_0\cap E^+).
\]
To compute the dimension of the cokernel of $T_Q$, we use its formal adjoint
\[
T_Q^*:H^1_{V^\perp}((0,+\infty)\times\T,\R^{2n})\to L^2((0,+\infty)\times\T,\R^{2n}),\quad u\mapsto -\partial_s u+J_0\partial_tu-Q u.
\]
Arguing as above, we see that $u\in T_Q^*$ if and only if $\hat u_k$ is such that
\[
\hat a_k(s)=e^{(2\pi k+1)s}\hat a_k(0),\qquad  \hat b_k(s)=e^{(2\pi k-1)s}\hat b_k(0)
\]
with
\[
\hat u(0)\in \bigoplus_{-\ell\leq k\leq -1}W_k^\perp\oplus (W_0^\perp\cap  E^-).
\]
Thus,
\[
\dim\coker T_Q=\dim\ker T_Q^*=\sum_{k=-\ell}^{-1}\dim W_k^\perp+\dim(W_0^\perp\cap E^-).
\]
Finally we have
\[
\begin{split}
\ind T_Q&=\sum_{k=-\ell}^\ell\dim W_k-\dim W_0-2\ell+\dim (W_0\cap E^-) - \dim (W_0^\perp\cap E^-)\\
&=\sum_{k=-\ell}^\ell\dim W_k-2\ell-1\\
&=\mathrm{dim}(V,\mathbb{H}^-_{1/2} \oplus E^-)-\mu_{CZ}(Z_{Q})
\end{split}
\]
where the last equality is again by \eqref{rel_dim} and $\mu_{CZ}(Z_{Q})=0$.
\bigskip

\noindent{\bf Case 2}:~The Conley-Zehnder index $\mu_{CZ}(Z_A)$ is even.\\[-2ex]

Suppose that $n\geq2$. We pick any $\theta_1$, $\theta_2\in\R\setminus 2\pi\Z$ and define
\[
A_{even}=
  \begin{pmatrix}
    \theta_1 & 0 \\
    0 & \theta_1
  \end{pmatrix}
  \oplus 
\begin{pmatrix}
    \theta_2 & 0 \\
    0 & \theta_2
      \end{pmatrix}
\oplus 
\begin{pmatrix}
    -1 & 0 \\
    0 & 1
  \end{pmatrix}^{\oplus n-2}.
\]
such that 
\[
\mu_{CZ}(Z_{A_{even}})=2\left\lfloor\frac{\theta_1}{2\pi}\right\rfloor+2\left\lfloor\frac{\theta_2}{2\pi}\right\rfloor+2=\mu_{CZ}(A).
\] 
Arguing as in Case 1 and using the computations of $\ind T_\theta$ and $\ind T_Q$ in Case 1, we obtain again \eqref{index_T_0} in this case.

If $n=1$, we consider $A\oplus A:[0,+\infty]\times\T\to L(\R^4)$ and the associated Fredholm operator. We also double $V$ to have a closed subspace $V\oplus V$ in $\mathbb{H}_{1/2}\oplus\mathbb{H}_{1/2}=H^{1/2}(\T,\R^4)$. Due to the additivity properties of the indices and the relative dimension, the case $n=1$ follows from the case $n=2$, that we have just shown. 
\end{proof}

When $x\neq y$, a generic choice of the $\omega_0$-compatible almost complex structure $J$ and of the Riemannian metric on $M$ makes the operator $D\overline{\partial}_{J,H}(u)$ surjective for every $u\in \mathcal{M}(x,y)$. When $x=y$, $\mathcal{M}(x,y)= \mathcal{M}(x,x)$ consists of just the constant half-cylinder mapping to $x$, see Proposition \ref{hybrid_ineq}. At such a stationary solution $u$, changing the almost complex structure and the metric does not affect the linearized operator $D\overline{\partial}_{J,H}(u)$. Therefore, we need the following automatic transversality result.

\begin{prop}
\label{autotrans}
Let $x$ be 1-periodic orbit of $X_H$. Assume that $J$ satisfies in addition that $J(s,t,x(t))=J_0$ for all $(s,t)\in[0,+\infty)\times\T$. Let $u\in\mathcal{M}(x,x)$ be the constant half-cylinder mapping to $x$. Then the linear operator 
\[
D\overline{\partial}_{J,H}(u): T_u \mathcal{H}_{x,x} \longrightarrow L^2((0,+\infty)\times\T,\R^{2n})
\]
is invertible.
\end{prop}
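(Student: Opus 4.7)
The plan is to use the Fredholm index computation from Proposition \ref{fredholm}: since $D\overline{\partial}_{J,H}(u)$ has index $\mu_{CZ}(x)-\mu_{CZ}(x)=0$, invertibility will follow once we show that its kernel is trivial. The extra hypothesis $J(s,t,x(t))=J_0$, combined with the fact that $\partial_t x=X_{H_t}(x)$, makes all $\nabla J$-contributions drop out of the linearization at the constant half-cylinder $u\equiv x$, and a short computation identifies it with
\[
D\overline{\partial}_{J,H}(u)v = \partial_s v + Lv, \qquad L := J_0\partial_t - \nabla^2 H_t(x(t)),
\]
on the domain $H^1_V((0,+\infty)\times\T,\R^{2n})$, where
\[
V = T_{\pi(x)}W^u(\pi(x);-\nabla\psi_{H^*})\oplus\R^{2n}\oplus\mathbb{H}_{1/2}^-.
\]
The decisive feature is that $L$ is precisely the self-adjoint $L^2$-operator representing the Hessian $d^2\Phi_H(x)$; by non-degeneracy of $x$ it has trivial kernel and discrete spectrum.

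Given any $v\in\ker D\overline{\partial}_{J,H}(u)$, the equation $\partial_s v=-Lv$ together with $v\in L^2$ forces, via the spectral decomposition of $L$, the boundary trace $v(0,\cdot)$ to lie in the positive spectral subspace of $L$, and then $v(s,\cdot)=e^{-sL}v(0,\cdot)$ decays exponentially at $+\infty$ in every Sobolev norm; in particular $v$ is smooth for $s>0$. Differentiating $s\mapsto\langle v(s,\cdot),Lv(s,\cdot)\rangle_{L^2(\T)}$ and using the self-adjointness of $L$ yields $-2\|\partial_s v(s,\cdot)\|_{L^2(\T)}^2$, and integrating on $(0,+\infty)$---interpreting the boundary pairing at $s=0$ as the $\mathbb{H}_{1/2}$-duality pairing---I obtain the energy identity
\[
d^2\Phi_H(x)[v(0,\cdot),v(0,\cdot)] = 2\,\|\partial_s v\|_{L^2((0,+\infty)\times\T)}^2 \geq 0.
\]

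The heart of the argument is then to show that $d^2\Phi_H(x)$ is strictly negative on $V\setminus\{0\}$, since combined with the previous identity this forces $v(0,\cdot)=0$, hence $\partial_s v=0$, hence $v\equiv 0$. To prove strict negativity I would take a second-order expansion of the inequality of Proposition \ref{confronto} at the critical point: substituting $x+\varepsilon\alpha$ and $\varepsilon\beta$ (with $\alpha\in H^1(\T,\R^{2n})$ and $\beta\in\R^{2n}\oplus\mathbb{H}_{1/2}^-$), using $\Phi_H(x)=\Psi_{H^*}(\pi(x))$ and the vanishing of both first variations at the critical point, and comparing coefficients of $\varepsilon^2$ gives
\[
d^2\Phi_H(x)[\alpha+\beta,\alpha+\beta] \leq d^2\Psi_{H^*}(\pi(x))[\pi(\alpha),\pi(\alpha)] - \|P^-\beta\|_{1/2}^2.
\]
Decomposing $v(0,\cdot)\in V$ as $\alpha+\beta$ with $\alpha\in T_{\pi(x)}W^u(\pi(x);-\nabla\psi_{H^*})$ (a subspace on which $d^2\Psi_{H^*}(\pi(x))$ is negative definite, by Proposition \ref{prop:reduction}(d) together with the saddle-point block structure at the critical point) and $\beta\in\R^{2n}\oplus\mathbb{H}_{1/2}^-$, the right-hand side is strictly negative unless $\alpha=0$ and $\beta\in\R^{2n}$; this residual case is handled by the direct computation $d^2\Phi_H(x)[\beta,\beta]=-\int_\T\nabla^2 H_t(x)\beta\cdot\beta\,dt\leq-\underline{h}\,|\beta|^2<0$ that uses quadratic convexity. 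The main subtlety I anticipate is the rigorous justification of the energy identity: the pairing $\langle v(0,\cdot),Lv(0,\cdot)\rangle$ must be interpreted as the $\mathbb{H}_{1/2}$-duality pairing, and one must verify that the boundary term at $+\infty$ genuinely vanishes---both points are resolved by the spectral description of $v$ outlined above, which yields smoothness for $s>0$ and exponential decay of both $v$ and $Lv$ at infinity.
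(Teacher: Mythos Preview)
Your proposal is correct and follows essentially the same line as the paper: both reduce to kernel triviality via the index-zero computation, use the self-adjointness of $L=J_0\partial_t-\nabla^2 H_t(x)$ to show $d^2\Phi_H(x)[v(0,\cdot),v(0,\cdot)]\geq 0$ for any kernel element $v$, and derive a contradiction from Proposition~\ref{confronto}, which forces the opposite sign on $V$. The only cosmetic differences are that the paper obtains the non-negativity from the convexity of $s\mapsto\|v(s,\cdot)\|_{L^2(\T)}^2$ rather than from your energy identity, and applies Proposition~\ref{confronto} in its nonlinear form at $x+\varepsilon v(0,\cdot)$ rather than through your second-order expansion.
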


\begin{proof}
Since we have seen in Proposition \ref{fredholm} that $D\overline{\partial}_{J_0,H}(u)$ is Fredholm of index 0, it is enough to show that  the kernel of $D\overline{\partial}_{J_0,H}(u)$ is trivial. Let $v\in\ker D\overline{\partial}_{J_0,H}(u)$. Since $u(s,\cdot)=x$ is a critical point of $\Phi_H$, this translates into
\[
\partial_s v+\nabla_{L^2}^2\Phi_H(x)v=0
\]
where $\nabla_{L^2}^2\Phi_H(x)=J_0\partial_t-\nabla^2H(x)$ is the $L^2$-Hessian of $\Phi_H$ at $x$ which satisfies 
\[
\big(\nabla_{L^2}^2\Phi_H(x)\,\cdot,\cdot\big)_{L^2(\T)}=d^2\Phi_H(x).
\]
We define a function $\varphi:[0,+\infty)\to [0,+\infty)$ by $\varphi(s):=\|v(s,\cdot)\|_{L^2(\T)}^2$. Its first and second derivatives are
\[
\varphi'(s) = -2\big( v(s,\cdot),\nabla_{L^2}^2\Phi_H(x)v(s,\cdot)\big)_{L^2(\T)}, \qquad  \varphi''(s) = 4\big\|\nabla_{L^2}^2\Phi_H(x)v(s,\cdot)\big\|^2_{L^2(\T)},
\]
where we used the fact that $\nabla_{L^2}^2\Phi_H(x)$ is symmetric with respect to the $L^2$-inner product.
Since $v$ has finite $H^1$-norm, $\varphi(s)$ converges to $0$ as $s$ goes to $+\infty$. Unless $v=0$, this happens only if 
\begin{equation}
\label{varphi'}
-2\big( v(0,\cdot), \nabla_{L^2}^2\Phi_H(x)v(0,\cdot)\big)_{L^2(\T)}=\varphi'(0)<0
\end{equation}
since $\varphi''\geq0$. Using that $x$ is a critical point of $\Phi_H$ again, we deduce 
\[
\Phi_H(x+\epsilon v(0,\cdot))=\Phi_H(x)+\frac{\epsilon^2}{2}d^2\Phi_H(x)[v(0,\cdot),v(0,\cdot)]+O(\epsilon^3).
\]
Therefore \eqref{varphi'} yields that for small $\epsilon$,
\begin{equation}
\label{ineq_autotrans}
\Phi_H(x+\epsilon v(0,\cdot))>\Phi_H(x).
\end{equation}
On the other hand, the condition
\[
v(0,\cdot)\in V= T_xW^u(\pi(x);-\nabla\psi_{H^*})\oplus\R^{2n}\oplus\mathbb{H}^-_{1/2}
\]
implies the opposite. To see this, we write 
\[
v(0,\cdot)=v_0+v_1+v_2,\quad v_0\in T_xW^u(\pi(x);-\nabla\psi_{H^*}),\;\; v_1\in \R^{2n}, \;\;  v_2\in\mathbb{H}^-_{1/2}.
\]
If $v_0\neq0$, then
\[
d^2\Psi_{H^*}(\pi(x))[v_0,v_0]< 0.
\]
Arguing as above, this implies that for small $\epsilon$,
\[
\Psi_{H^*}\big(\pi(x+\epsilon(v_0+v_1))\big)= \Psi_{H^*}\big(\pi(x+\epsilon v_0)\big)\leq \Psi_{H^*}(\pi(x)).
\]
Using this together with Proposition \ref{confronto}, we deduce
\[
\Phi_H(x+\epsilon v(0,\cdot))\leq \Psi_{H^*}\big(\pi(x+\epsilon(v_0+v_1))\big)-\frac{1}{2}\|\epsilon v_2\|_{1/2}^2\leq  \Psi_{H^*}(\pi(x))=\Phi_H(x).
\]
This contradicts \eqref{ineq_autotrans}. This proves $v=0$ and completes the proof.
\end{proof}

\section{Compactness properties of the hybrid problem}

In this section, we investigate the compactness properties of the set $\mathcal{M}(x,y)$. We keep the same assumptions on the Hamiltonian and the almost complex structure: $H\in C^{\infty}(\T\times \R^{2n})$ is non-degenerate, quadratically convex and non-resonant at infinity, while $J=J(s,t,x)$ is $\omega_0$-compatible, globally bounded, equal to $J_0$ if $s\in [0,1]$ and independent of $s$ for $s$ large.

We start by observing that the elements of $\mathcal{M}(x,y)$ have uniform energy bounds due to Proposition \ref{hybrid_ineq}.
Then arguments similar to those of Section \ref{floer-complex} and building on Proposition \ref{linfbound} lead to the following result.

\begin{prop}
\label{awabd}
For every $\sigma>0$ the set 
\[
\{u|_{[\sigma,+\infty)\times \R} \mid u\in \mathcal{M}(x,y)\}
\]
is bounded in $L^{\infty}([\sigma,+\infty)\times\T,\R^{2n})$. Moreover, for every $S>\sigma$ the set
\[
\{u|_{[\sigma,S]\times \R} \mid u\in \mathcal{M}(x,y)\}
\]
is pre-compact in $C^{\infty}([\sigma,S]\times\T,\R^{2n})$. 
\end{prop}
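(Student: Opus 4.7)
The plan is to combine the uniform energy bound supplied by Proposition \ref{hybrid_ineq} with the a priori $L^\infty$ estimate of Proposition \ref{linfbound}, applied on a half-cylinder truncated away from the boundary $\{s=0\}$, and then to upgrade the resulting bound to $C^\infty_{\mathrm{loc}}$ pre-compactness by an elliptic bootstrap of Calder\'on-Zygmund type exactly as in the last part of the proof of Proposition \ref{linfbound}.

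\textbf{Step 1 (Uniform energy bound).} Proposition \ref{hybrid_ineq} gives
\[
\int_{[0,+\infty)\times\T} |\partial_s u|^2\,dsdt \leq \Phi_H(x)-\Phi_H(y)=:E,
\]
uniformly for $u\in\mathcal{M}(x,y)$. Since $J$ is uniformly bounded, this yields the same bound, up to a multiplicative constant, for the energy $\int|\partial_s u|_J^2\,dsdt$.

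\textbf{Step 2 ($L^\infty$ bound on $[\sigma,+\infty)\times\T$).} Quadratic convexity of $H$ implies that $X_H$ has linear growth at infinity, while $H$ is non-resonant at infinity by assumption, so the hypotheses of Proposition \ref{linfbound} are met (and, since $J$ is allowed to depend on $s$, one invokes its $s$-dependent version from Remark \ref{sdep}). Applying it with $I=(\sigma/2,+\infty)$ and $I'=[\sigma,+\infty)$, for which $\overline{I'}\subset I$, we obtain a constant $M=M(E,\sigma)$ such that
\[
\sup_{(s,t)\in[\sigma,+\infty)\times\T}|u(s,t)|\leq M,\qquad\forall\,u\in\mathcal{M}(x,y).
\]

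\textbf{Step 3 ($C^\infty_{\mathrm{loc}}$ pre-compactness).} Fix $S>\sigma$ and choose $0<\sigma'<\sigma$ and $S<S'<+\infty$. Applying Step 2 with $\sigma$ replaced by $\sigma'/2$, the family $\mathcal{M}(x,y)$ is uniformly bounded in $L^\infty$ on $[\sigma'/2,S'+1]\times\T$. The Floer equation then gives a uniform $L^\infty$-bound on $\overline{\partial}_J u=\nabla_J H_t(u)$ on this region, because $H$ is smooth and $u$ is confined to a compact subset of $\R^{2n}$. Iterating the Calder\'on-Zygmund estimate on a decreasing sequence of cylinders, and differentiating the Floer equation to exploit the smoothness of $H$ and $J$ (as at the end of the proof of Proposition \ref{linfbound}), one obtains uniform bounds in $W^{k,p}([\sigma,S]\times\T,\R^{2n})$ for every $k\in\N$ and every $p<+\infty$. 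Sobolev embedding together with Arzel\`a-Ascoli then yields pre-compactness in $C^\infty([\sigma,S]\times\T,\R^{2n})$.

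\textbf{Main obstacle.} The substantive step is Step 2, since it is precisely here that the non-resonance at infinity of $H$ is used in an essential way; without it one could not exclude solutions whose $L^2$-norm on slices $\{s\}\times\T$ escapes to infinity. Step 3 is a clean interior bootstrap, made particularly simple by the fact that we work in $\R^{2n}$, so no bubbling analysis is necessary.
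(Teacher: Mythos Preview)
Your proposal is correct and follows exactly the approach the paper itself outlines: the paper does not give a detailed proof of this proposition but simply points to the uniform energy bound of Proposition~\ref{hybrid_ineq}, the a priori $L^\infty$ estimate of Proposition~\ref{linfbound}, and the bootstrap argument from Section~\ref{floer-complex}, which is precisely what you have unpacked. One small remark: the reference to Remark~\ref{sdep} is unnecessary here, since in the hybrid problem only $J$ depends on $s$ while $H$ does not, and Proposition~\ref{linfbound} already allows for $(s,t)$-dependent $J$.
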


In order to find bounds near the boundary for the elements of $\mathcal{M}(x,y)$ we start with the following:

\begin{lem}
\label{interface}
Let $u\in \mathcal{M}(x,y)$ and write
\[
u(0,\cdot) = v + w_0 + w_1,
\]
where $v\in W^u(\pi(x);-\nabla\psi_{H^*})$, $w_0\in \R^{2n}$ and $w_1\in \mathbb{H}_{1/2}^-$. Then:
\begin{enumerate}[(i)]
\item $v$ belongs to a compact subset of $C^{\infty}(\T,\R^{2n})$ which depends only on $x$ and $y$;
\item $\|w_1\|_{1/2}^2 \leq 2 (\Psi_{H^*}(x) - \Phi_H(y))$;
\item $w_0$ belongs to a compact subset of $\R^{2n}$ which depends only on $x$ and $y$.
\end{enumerate}
\end{lem}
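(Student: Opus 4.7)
The plan is to exploit three ingredients in combination: the energy identity for a Floer solution, Proposition~\ref{confronto} applied to the decomposition of $u(0,\cdot)$, and the fact that $v$ sits in the unstable manifold of $\pi(x)$ under $-\nabla\psi_{H^*}$. For any $u\in\mathcal{M}(x,y)$, the energy identity gives $\Phi_H(u(0,\cdot))-\Phi_H(y)=\int_{[0,+\infty)\times\T}|\partial_s u|^2_J\,dsdt\geq 0$, while Proposition~\ref{confronto} applied to the zero-mean smooth loop $v$ and the element $w_0+w_1\in\R^{2n}\oplus\mathbb{H}_{1/2}^-$ (note $P^-(w_0+w_1)=w_1$ and $\pi(v)=v$) produces
\[
\Phi_H(v+w_0+w_1)\leq\Psi_{H^*}(v)-\tfrac12\|w_1\|_{1/2}^2.
\]
Finally, the action decreases along negative gradient trajectories of $\psi_{H^*}$, so $\Psi_{H^*}(v)=\psi_{H^*}(v)\leq\psi_{H^*}(\pi(x))=\Psi_{H^*}(\pi(x))=\Phi_H(x)$ by Lemma~\ref{crit}. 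Chaining these three inequalities yields simultaneously
\[
\Phi_H(y)\leq\psi_{H^*}(v)\leq\Phi_H(x),\qquad \tfrac12\|w_1\|_{1/2}^2\leq\Psi_{H^*}(v)-\Phi_H(y),
\]
which immediately gives (ii).

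For (i), the plan is to invoke the Morse-theoretic structure of $\psi_{H^*}$ provided by Proposition~\ref{prop:reduction}: $\psi_{H^*}$ is a smooth Morse function with finitely many critical points on the finite-dimensional manifold $M$ and satisfies the Palais--Smale condition. The two-sided action bound above confines $v$ to $W^u(\pi(x))\cap\{\psi_{H^*}\geq\Phi_H(y)\}$. A standard sequential argument shows this set is relatively compact in $M$: writing elements as $v_n=\phi_{t_n}(p_n)$ with $p_n$ in a compact unstable disk near $\pi(x)$ and $t_n\geq 0$, either $t_n$ stays bounded (giving a convergent subsequence by continuity of the flow) or $t_n\to\infty$, in which case Palais--Smale and the action bound force the sequence to accumulate on a critical point of action in $[\Phi_H(y),\Phi_H(x)]$. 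Because $M$ consists of smooth loops and its inclusion into $C^k(\T,\R^{2n})$ is continuous for every $k$ (Proposition~\ref{prop:reduction}(b)), compactness in $M$ transfers to compactness in $C^\infty$.

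For (iii), the key observation is that the symplectic quadratic form in the action is insensitive to the constant mode: since $v$ and $w_1$ are periodic of zero mean, $\int_\T J_0\dot v\cdot w_0\,dt=\int_\T J_0\dot w_1\cdot w_0\,dt=0$, so
\[
\Phi_H(v+w_0+w_1)=\tfrac12\int_\T J_0(\dot v+\dot w_1)\cdot(v+w_1)\,dt-\int_\T H_t(v+w_0+w_1)\,dt,
\]
and the first integral is bounded in terms of the $\mathbb{H}_{1/2}$-norms of $v$ and $w_1$, both already controlled by (i) and (ii). Combined with $\Phi_H(u(0,\cdot))\geq\Phi_H(y)$, this yields a uniform upper bound on $\int_\T H_t(v+w_0+w_1)\,dt$. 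Quadratic convexity gives $H_t(z)\geq\tfrac{\underline h}{2}|z|^2-C(1+|z|)$, and $L^2$-orthogonality of the constant $w_0$ to the zero-mean functions $v,w_1$ produces $\int_\T|v+w_0+w_1|^2\,dt=|w_0|^2+\|v\|_{L^2}^2+\|w_1\|_{L^2}^2$. This forces an inequality of the form $\tfrac{\underline h}{2}|w_0|^2\leq C_1(x,y)+C_2|w_0|$ and yields (iii). The main obstacle is the compactness step underlying (i): because $\psi_{H^*}$ may be unbounded from below on $M$, the closure of the unstable manifold in $M$ is not compact in general, and the extraction of limits depends essentially on the Palais--Smale property.
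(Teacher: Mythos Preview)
Your argument follows the paper's proof closely: both use Proposition~\ref{confronto} together with $\Phi_H(u(0,\cdot))\geq\Phi_H(y)$ to get (ii) and the lower action bound on $v$, both invoke the pre-compactness of $W^u(\pi(x))\cap\{\psi_{H^*}\geq\Phi_H(y)\}$ for (i) (the paper asserts this without your Palais--Smale elaboration), and both derive (iii) from a quadratic lower bound on $H$ combined with the bound $\Phi_H(u(0,\cdot))\geq\Phi_H(y)$.

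One small correction in your (iii): the identity $\int_\T|v+w_0+w_1|^2\,dt=|w_0|^2+\|v\|_{L^2}^2+\|w_1\|_{L^2}^2$ is not valid, because $v\in M$ need not be $L^2$-orthogonal to $w_1\in\mathbb{H}_{1/2}^-$ (points of $M$ have the form $(x,Y(x))$ with $Y(x)\in\widehat{\mathbb{H}}_1^{N,+}$, which contains negative Fourier modes). What is correct, and sufficient, is $\int_\T|v+w_0+w_1|^2\,dt=|w_0|^2+\|v+w_1\|_{L^2}^2$, since $w_0$ is orthogonal to the zero-mean loop $v+w_1$; the rest of your estimate then goes through unchanged.
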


\begin{proof}
By Proposition \ref{confronto}, we have
\[
\Phi_H(y) \leq \Phi_H(u(0,\cdot) = \Phi_H(v + w_0 + w_1) \leq \Psi_{H^*}(v) - \frac{1}{2} \|w_1\|_{1/2}^2,
\]
which immediately implies (ii). The above inequality also implies that $v$ belongs to the set
\[
W^u(\pi(x);-\nabla\psi_{H^*}) \cap \{\Psi_{H^*}\geq \Phi_{H}(y)\},
\]
which is pre-compact in $C^{\infty}(\T,\R^{2n})$. So (i) holds.

The quadratic convexity assumption on $H$ guarantees that
\[
H_t(z) \geq a |z|^2 - b \qquad \forall (t,z)\in \T\times \R^{2n},
\]
for suitable positive numbers $a,b$. Then we find
\[
\begin{split}
\int_{\T} H_t(v+w_0+w_1)\, dt &\geq a \int_{\T} |v+w_0+w_1|^2\, dt - b \geq a \int_{\T} (|w_0| - |v| - |w_1|)^2\, dt - b \\ & \geq a |w_0|^2 - 2 a |w_0| \int_{\T} (|v|+|w_1|)\, dt - b.
\end{split}
\]
The  integral $\int_\T(|v|+|w_1|)dt$ is uniformly bounded thanks to (i) and (ii), so we get the uniform bound
\[
\int_{\T} H_t(v+w_0+w_1)\, dt \geq a|w_0|^2 - 2ac|w_0|-b=a(|w_0|-c)^2-ac^2-b,
\]
for a suitable positive number $c$ which depends only on $x$ and $y$. From the above bound we deduce
\[
\begin{split}
 \Phi_H(y) &\leq \Phi_H(v + w_0 + w_1) = \frac{1}{2} \int_{\T} J_0 (\dot{v} + \dot{w_1}) \cdot (v+w_1)\, dt - \int_{\T} H_t(v+w_0+w_1)\, dt \\ &\leq \|v+w_1\|_{1/2}^2 - a( |w_0|-c)^2 +ac^2+b.
 \end{split}
 \]
 This inequality, together with the fact that $\|v+w_1\|_{1/2}$ is uniformly bounded thanks to (i) and (ii), implies that $w_0$ is uniformly bounded in $\R^{2n}$. This concludes the proof of (iii).
\end{proof}

We can now prove the compactness properties of the restrictions of elements of $\mathcal{M}(x,y)$ near the boundary. The proof consists in making the argument of the regularity result Proposition \ref{bdryreg} quantitative.

\begin{prop}
\label{bdrybds}
For every $S>0$ the set
\[
\{u|_{[0,S]\times \T} \mid u\in \mathcal{M}(x,y)\}
\]
is pre-compact in $C^{\infty}([0,S]\times \T,\R^{2n})$. Moreover, $\mathcal{M}(x,y)$ is bounded in $L^{\infty}([0,+\infty)\times \T,\R^{2n})$.
\end{prop}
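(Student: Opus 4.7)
The plan is to turn the qualitative regularity argument from the proof of Proposition~\ref{bdryreg} into quantitative estimates, combining with the uniform bounds on the boundary trace from Lemma~\ref{interface} and the uniform energy bound
\[
\|\partial_s u\|_{L^2([0,+\infty)\times\T)}^2\leq\Phi_H(x)-\Phi_H(y)
\]
from Proposition~\ref{hybrid_ineq}. Since $J=J_0$ on $[0,1]\times\T$, every boundary estimate can be performed there using the standard Cauchy--Riemann operator, and the remaining portion $[1,S]\times\T$ is handled directly by Proposition~\ref{awabd}.

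First I would establish a uniform $H^1$ bound on $u|_{[0,1]\times\T}$. Decomposing $u(0,\cdot)=v+w_0+w_1$ as in Lemma~\ref{interface}, the smooth part $v+w_0$ lies in a compact subset of $C^\infty(\T,\R^{2n})$ and $w_1$ is uniformly bounded in $\mathbb{H}_{1/2}^-$. Integrating the Floer equation along $s$ from the boundary and using the energy bound gives a uniform $L^2$ bound for $u$ on $[0,1]\times\T$. Applying the first identity in Lemma~\ref{formula} on $(0,1)\times\T$, and expanding
\[
2\int_\T u(0,\cdot)^*\lambda_0=2\int_\T v^*\lambda_0+2\int_\T J_0\dot v\cdot w_1\,dt-\|w_1\|_{1/2}^2,
\]
the first term is uniformly bounded, the third is nonpositive, and the cross term is controlled via Cauchy--Schwarz using the $C^\infty$-bound on $v$ and the $L^2$-bound on $w_1$. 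The term $\int_\T u(1,\cdot)^*\lambda_0$ is uniformly bounded by Proposition~\ref{awabd}, and $\|\overline{\partial}u\|_{L^2([0,1]\times\T)}^2=\|\nabla H_t(u)\|_{L^2}^2$ is controlled by the quadratic convexity of $H$ together with the $L^2$-bound on $u$. Having the uniform $H^1$ bound, the $L^p$ Calderon--Zygmund inequality applied with the same boundary argument upgrades it to a uniform $W^{1,p}$ bound for any $p>2$, hence a uniform $L^\infty$ bound on $u|_{[0,1]\times\T}$. Combined with Proposition~\ref{awabd} for $\sigma=1$, this yields the second assertion of the proposition.

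To prove pre-compactness in $C^\infty([0,S]\times\T,\R^{2n})$, it suffices by Proposition~\ref{awabd} to establish it on $[0,1]\times\T$. The plan is to iteratively apply Lemma~\ref{reglemma} together with Lemma~\ref{bootstrap}, exactly as in the proof of Proposition~\ref{bdryreg}. At each stage the boundary trace decomposes as a uniformly $C^\infty$-bounded part $v+w_0$ (using that the $H^k$-norm of $v$ is bounded on the compact subset of the unstable manifold supplied by Lemma~\ref{interface}(i)) plus an element of $\mathbb{H}_{1/2}^-$; the trace at $s=1$ is uniformly $H^{k+1/2}$-bounded by Proposition~\ref{awabd}; and the right-hand side of the Floer equation and its derivatives are uniformly $H^{k-1}$-bounded thanks to the $L^\infty$-bound on $u$ together with the bounds achieved at the previous step. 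This yields uniform $H^k$ bounds on $[0,1]\times\T$ for every $k$, whence pre-compactness in $C^\infty$ by Rellich and Sobolev embedding.

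The main subtlety I anticipate is the sign bookkeeping in the boundary contribution of the $H^1$ estimate: the quadratic form $\int_\T x^*\lambda_0$ is indefinite on $\mathbb{H}_{1/2}$, and the argument hinges on the fact that it is negative definite on the component $\mathbb{H}_{1/2}^-$ in which $w_1$ lives. This is precisely why the boundary condition~\eqref{bdrycondintro} was chosen to involve $\mathbb{H}_{1/2}^-$: the Liouville integral that emerges from Stokes' theorem at $s=0$ becomes favorable exactly on the half-cylinder $[0,+\infty)\times\T$ whose boundary lies at $s=0$.
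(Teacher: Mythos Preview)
Your proposal follows the paper's approach closely: reduce to $[0,1]\times\T$ via Proposition~\ref{awabd}, obtain a uniform $H^1$ bound from Lemma~\ref{formula} and Lemma~\ref{interface}, then bootstrap to arbitrary $H^k$ using Lemma~\ref{reglemma} and Lemma~\ref{bootstrap}. Your expansion of the boundary term $2\int_\T u(0,\cdot)^*\lambda_0$ is correct, and your diagnosis of why the $\mathbb{H}_{1/2}^-$-component cooperates is exactly the point.

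There is one step that is not justified as written. After the $H^1$ bound you invoke ``the $L^p$ Calderon--Zygmund inequality applied with the same boundary argument'' to reach $W^{1,p}$ for $p>2$. But the identity in Lemma~\ref{formula} is intrinsically $L^2$ (it comes from Stokes' theorem and the quadratic form $\int_\T x^*\lambda_0$), and there is no $L^p$ analogue that would let you absorb the boundary contribution when $p\neq 2$. Calderon--Zygmund estimates up to the boundary normally require a boundary condition of Dirichlet or Neumann type, or a reflection trick, none of which is available for the non-local condition $u(0,\cdot)\in\pi^{-1}(W^u(\pi(x)))+\mathbb{H}_{1/2}^-$. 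The paper avoids this issue entirely: it obtains the $L^\infty$ bound not from a $W^{1,p}$ estimate but from the first application of the $H^2$ estimate (\ref{extremis}) in Lemma~\ref{reglemma}, followed by the Sobolev embedding $H^2\hookrightarrow C^0$ in two dimensions. Since you are already planning to use Lemma~\ref{reglemma} for the higher-order steps, simply use it for this first step as well and delete the Calderon--Zygmund sentence; the rest of your argument then goes through exactly as in the paper.
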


\begin{proof}
Once the first statement has been proven, the boundedness of $\mathcal{M}(x,y)$  in the space $L^{\infty}([0,+\infty)\times \T,\R^{2n})$ follows immediately from the first statement of Proposition \ref{awabd}. Moreover by the second statement of Proposition \ref{awabd}, the restriction of every $u\in \mathcal{M}(x,y)$ to $[1,S]\times \T$ is uniformly bounded in $C^{\infty}([1,S]\times \T,\R^{2n})$ for every $S\geq 1$, so 
it suffices to prove the first statement for $S=1$.

The restriction of each $u\in \mathcal{M}(x,y)$ to $[0,1]\times \T$ satisfies the equation
\begin{equation}
\label{arifloer}
\overline{\partial} u = \nabla H_t(u) \qquad \mbox{on } [0,1]\times \T
\end{equation}
and the boundary condition
\begin{equation}
\label{aribound}
u(0,\cdot) =  v + w_0 + w_1, \qquad \mbox{where } v\in W^u(\pi(x); -\nabla\psi_{H^*}), \; w_0\in \R^{2n}, \; w_1 \in \mathbb{H}_{1/2}^-.
\end{equation}
The fact that $u$ has uniformly bounded energy, together with the uniform bound on $\|u(0,\cdot)\|_{L^2(\T)}$ following from Lemma \ref{interface}, implies a uniform bound for the $L^2$ norm of $u$ on $(0,1)\times \T$. By the linear growth of $\nabla H$, we deduce that also $\nabla H(u)$ has a uniform $L^2$ bound on $(0,1)\times \T$. Since both $u(0,\cdot)$ and $u(1,\cdot)$ are uniformly bounded in $\mathbb{H}_{1/2}$, because of Lemma \ref{interface} and the second statement in Proposition \ref{awabd}, the first formula of Lemma \ref{formula} and (\ref{arifloer}) give us a uniform $L^2$ bound for $\nabla u$ on $(0,1)\times \T$. Therefore, the elements of $\mathcal{M}(x,y)$ are uniformly bounded in $H^1((0,1)\times \T,\R^{2n})$.

Now we wish to prove that the elements of $\mathcal{M}(x,y)$ are uniformly bounded in $H^k((0,1)\times \T,\R^{2n})$ for every natural number $k$. By the Sobolev embedding theorem, this will give us the boundedness of 
\[
\{u|_{[0,1]\times \T} \mid u\in \mathcal{M}(x,y)\}
\]
in $C^k([0,1]\times \T,\R^{2n})$ for every every natural number $k$, and by the Ascoli-Arzel\`a theorem its pre-compactness in $C^{\infty}([0,1]\times \T,\R^{2n})$.

The uniform bound in $H^1((0,1)\times \T,\R^{2n})$ has just been proven. Then the Floer equation (\ref{arifloer})
and the growth conditions on $H$ imply that $\overline{\partial} u$ has a uniform bound in $H^1((0,1)\times \T,\R^{2n})$. Thanks to the boundary condition (\ref{aribound}),
the bound (\ref{extremis}) from Lemma \ref{reglemma}, together with Lemma \ref{interface} and the fact that all the derivatives of $u(1,\cdot)$ are uniformly bounded, implies a uniform bound for $u$ in $H^2((0,1)\times \T,\R^{2n})$. In particular, the restriction of $u$ to $(0,1)\times \T$ is uniformly bounded in $L^{\infty}((0,1)\times \T,\R^{2n})$.

Arguing by induction, we now assume that  the elements of $\mathcal{M}(x,y)$ are uniformly bounded in $H^k((0,1)\times \T,\R^{2n})$ for some integer $k\geq 2$, and we wish to prove a uniform bound in $H^{k+1}((0,1)\times \T,\R^{2n})$. By differentiating the Floer equation $k$ times with respect to $t$, we find, thanks to Lemma \ref{bootstrap},
\[
\overline{\partial} \partial_t^k u = \partial_t^k \overline{\partial} u = \nabla^2 H_t(u) \partial_t^k u + p,
\]
where $p$ is a $\R^{2n}$-valued polynomial map in $\partial_t u,\partial_t^2 u, \dots,\partial_t^{k-1} u$ whose coefficients are smooth functions of $(t,u(s,t))$. The fact that $u$ has a uniform bound in $L^{\infty}((0,1)\times \T,\R^{2n})$ implies that these coefficients are also uniformly bounded in this space. In particular, the polynomial map $p$ has the pointwise estimate
\[
|p|^2 \leq C_0 ( 1 + |\partial_t u|^N + \dots + |\partial_t^{k-1} u|^N),
\]
for suitable constants $C_0$ and $N$. Then the inductive hypothesis and the continuity of the  Sobolev embedding of $H^k$ into $W^{k-1,N}$ imply a uniform bound of the form
\[
\| \overline{\partial} \partial_t^k u \|_{L^2((0,1)\times \T)} \leq C_1 \qquad \forall u\in \mathcal{M}(x,y),
\]
for some constant $C_1$.

By (\ref{aribound}) we have
\[
\partial_t^k u(0,\cdot) = v^{(k)} + w^{(k)}_1.
\]
Here, $v^{(k)}$ is uniformly bounded in $H^{1/2}(\T,\R^{2n})$ thanks to statement (i) in Lemma \ref{interface}. On the other hand, $w_1^{(k)}$ is an element of $\mathbb{H}_{1/2}^-$, because the time derivative of a loop in $\mathbb{H}_{1/2}^- \cap C^{\infty}(\T,\R^{2n})$ belongs to $\mathbb{H}_{1/2}^-$. Therefore 
\[
\left| \int_{\T} (v^{(k)})^* \lambda_0 \right| \leq \|v^{(k)}\|_{1/2}^2 \leq C_2 \qquad \mbox{and} \qquad \int_{\T} (w_1^{(k)})^* \lambda_0 = - \|w_1^{(k)}\|_{1/2}^2 \leq 0,
\]
for some constant $C_2$. By the second statement in Proposition \ref{awabd}, we also have
\[
\left| \int_{\T} (\partial_t^k u(1,\cdot))^* \lambda_0 \right| \leq \|\partial_t^k u(1,\cdot)\|_{1/2}^2 \leq C_3,
\]
for some constant $C_3$. Then the first formula of Lemma \ref{formula} applied to $\partial_t^k u$ gives
\[
\begin{split}
\int_{(0,1)\times \T} |\nabla \partial_t^k u|^2\, ds dt &= \int_{(0,1)\times \T} |\overline{\partial} \partial_t^k u|^2\, ds dt - 2 \int_{\T} (\partial_t^k u(1,\cdot))^* \lambda_0 \\ &+ 2 \int_{\T} (v^{(k)})^*\lambda_0 + 2\int_{\T}  (w_1^{(k)})^* \lambda_0 \leq  C_1^2 + 2 C_3 + 2 C_2.
\end{split}
\]
This shows that the partial derivatives $\partial_s \partial_t^k u$ and $\partial_t^{k+1} u$ have uniform $L^2$-bounds on $(0,1)\times \T$. The uniform $L^2$ bounds on all the other partial derivatives of order $k+1$ now follow easily from the Floer equation and Lemma \ref{bootstrap}. Indeed, by applying the differential operator $\partial_s \partial_t^{k-1}$ to the Floer equation we find
\begin{equation}
\label{pezzi}
\partial_s^2 \partial_t^{k-1} u = - J_0  \partial_s \partial_t^k u + \nabla^2 H_t(u) \partial_s \partial_t^{k-1} u + q,
\end{equation}
where $q$ is a $\R^{2n}$-valued polynomial map in the variables
\begin{equation}
\label{parziali}
\partial_t u,\; \partial_t^2 u, \dots, \; \partial_t^{k-1} u,\;  \partial_s u, \; \partial_s \partial_t u, \dots, \; \partial_s \partial_t^{k-2} u,
\end{equation}
whose coefficients are uniformly bounded. The first term on the right-hand side of (\ref{pezzi}) is bounded in $L^2((0,1)\times \T)$, as shown above. The middle term is also bounded in $L^2((0,1)\times \T)$ by the inductive hypothesis. Being a polynomial in the partial derivatives (\ref{parziali}) with uniformly bounded coefficients, $q$ has the pointwise estimate
\[
|q|^2 \leq C \bigl( 1 + |\partial_t u|^N + \dots + |\partial_t^{k-1} u|^N + |\partial_s u|^N + \dots + |\partial_s \partial_t^{k-2} u|^N \bigr),
\]
for a suitable positive number $C$ and a suitable natural number $N$. Integration over $(0,1)\times \T$ and the observation that the partial derivatives appearing above have order at most $k-1$ imply that
\[
\|q\|_{L^2((0,1)\times \T)}^2 \leq C \bigl( 1 + \|u\|_{W^{k-1,N}((0,1)\times \T)}^N \bigr). 
\]
By the continuity of the Sobolev embedding
\[
H^k((0,1)\times \T)) \hookrightarrow W^{k-1,N}((0,1)\times \T)
\]
and by the inductive hypothesis we deduce that $q$ has a uniform $L^2$ bound on $(0,1)\times \T$. Then all terms on the right-hand side of (\ref{pezzi}) have uniform $L^2$ bounds on $(0,1)\times \T$ and hence the same is true for the term on the left-hand side, that is, $\partial_s^2 \partial_t^{k-1} u$. By iterating this argument inductively in $h$, we obtain that $\partial_s^h \partial_t^{k-h+1}$ has a uniform $L^2$ bound on $(0,1)\times \T$ for every $h\in \{0,\dots,k+1\}$. We conclude that the elements of $\mathcal{M}(x,y)$ are uniformly 
bounded in $H^{k+1}((0,1)\times \T)$. This proves the induction step and concludes the proof. 
\end{proof}

\section{The chain complex isomorphism}
\label{sec:isom}

Let $H\in C^{\infty}(\T\times\R^{2n})$ be non-degenerate, quadratically convex and non-resonant at infinity. As we have seen in Section \ref{sec:Morse}, the dual action functional $\Psi_{H^*}$ restricts to a smooth Morse function $\psi_{H^*}$ on a finite dimensional manifold $M\subset \mathbb{H}_1$, whose Morse complex $\{M_*(\psi_{H^*}),\partial^M\}$ is well-defined. On the other hand, we have the Floer complex $\{F_*(H),\partial^F\}$ of the Hamiltonian $H$. These two complexes depend on auxiliary data - a generic metric on $M$ and a generic $\omega_0$-compatible almost complex structure on $\R^{2n}$ - but choices of different auxiliary data change them by isomorphisms preserving gradings and actions. 

The generators of  these two chain complexes are in one-to-one correspondence: The generator $x$ of $F_*(H)$ - a 1-periodic orbit of $X_H$ - induces the generator $\pi(x)$ of $M_*(\psi_{H^*})$ and the relationships between grading and actions are
\[
\mathrm{ind}(\pi(x);\psi_{H^*}) = \mu_{CZ}(x) - n, \qquad \psi_{H^*}(\pi(x)) = \Phi_H(x)
\]
by Propositions \ref{cz=relind}, \ref{relind=ind}, \ref{prop:reduction}, and Lemma \ref{crit}. 
In this section, we wish to construct a chain complex isomorphism
\[
\Theta:\{M_{*-n}(\psi_{H^*}),\partial^M\} \longrightarrow\{F_*(H),\partial^F\}
\]
that preserves the action filtrations, and hence induces chain complex isomorphisms
\[
\Theta^{<a}: M_{*-n}^{<a}(\psi_{H^*}) \rightarrow F_*^{<a}(H)
\]
for every real number $a$.

Let $x$ and $y$ be 1-periodic orbits of $X_H$. For the moduli space $\mathcal{M}(x,y)$ of solutions of the hybrid problem, we use a family of uniformly bounded $\omega_0$-compatible almost complex structures smoothly parametrized by $[0,+\infty)\times\T$, such that $J(s,t,z)=J_0$ for all $(s,t,z)\in [0,1]\times \T\times \R^{2n}$, $J(s,t,x(t))=J_0$ for every 1-periodic orbit $x$ of $X_H$ and every $(s,t)\in [0,+\infty)\times \T$, and $J(s,t,z)$ is independent of $s$ for all $s$ large. Standard transversality results, together with Propositions  \ref{fredholm}  and \ref{autotrans}, imply that for a generic choice of such a $J$ and of the Riemannian metric on $M$ the differential of the map $\overline{\partial}_{J,H}$ at every $u\in \mathcal{M}(x,y)$ is onto, for every pair of 1-periodic orbits $x,y$ of $X_H$. We fix such generic auxiliary data.

Let $x$ and $y$ have the same Conley-Zehnder index, $\mu_{CZ}(x)=\mu_{CZ}(y)$. Proposition \ref{fredholm} now implies that $\mathcal{M}(x,y)$ is a zero-dimensional manifold. Together with Propositions \ref{awabd} and \ref{bdrybds}, we deduce that $\mathcal{M}(x,y)$ is a finite set and we denote its parity by $n^\mathrm{hyb}(x,y)$. 

We define the sequence of homomorphisms
\[
\Theta_k:M_{k-n}(\psi_{H^*})\longrightarrow F_k(H),\qquad k\in\mathbb{Z}
\]
by the linear extension of
\[
\Theta_k\pi(x)=\sum_y n^\mathrm{hyb}(x,y)y
\]
where the sum runs over all 1-periodic orbits $y$ of $X_H$ with $\mu_{CZ}(y)=\mu_{CZ}(x)=k$. A standard argument using compactness and gluing results shows that $\{\Theta_k\}_{k\in\N}$ is a chain homomorphism.

Proposition \ref{hybrid_ineq} implies that $\mathcal{M}(x,y)$ is empty whenever $x\neq y$ and $\psi_{H^*}(\pi(x)) \leq \Phi_H(y)$. Therefore, the chain homomorphism $\Theta$ maps the subcomplex $M^{<a}(\psi_{H^*})$ into the subcomplex $F^{<a}(H)$, for every $a\in \R$.

To prove that $\Theta_k$ is an isomorphism, we label all 1-periodic orbits of $X_H$ with Conley-Zehnder index $k$ by $y_1,\dots,y_m$ to satisfy
\[
\Phi_H(y_1)\leq \Phi_H(y_2)\leq \cdots \leq \Phi_H(y_m).
\]
The homomorphism $\Theta_k$ with respect to the bases $\{\pi(y_1),\dots,\pi(y_m)\}$ of $M_k(\psi_{H^*})$ and $\{y_1,\dots,y_m\}$ of $F_k(H)$ is an $m$-by-$m$ matrix which is upper-triangular since $\mathcal{M}(y_j,y_i)$ is empty if $j<i$, by Proposition \ref{hybrid_ineq}. Moreover, $\mathcal{M}(y_i,y_i)$ consists of a single element, namely the constant half-cylinder mapping to $y_i$, for all $i\in\{1,\dots,m\}$, again by Proposition \ref{hybrid_ineq}, and hence the matrix has the entries 1 on the diagonal. In particular,  $\Theta_k$ is an isomorphism for all $k\in\mathbb{Z}$. This concludes the proof of the main theorem stated in the introduction.

\section{Symplectic homology and $SH$-capacity of convex domains}
\label{sec:SHcap}

We briefly recall the definition of symplectic homology and of the associated $SH$-capacity in the special case of a smooth starshaped domain $W\subset \R^{2n}$. 

Let $\mathcal{H}(W)$ be the set of smooth non-degenerate Hamiltonian functions $H:\T\times\R^{2n}\to\R$ that are non-resonant at infinity, have a Hamiltonian vector field $X_H$ with linear growth, and satisfy 
\[
H|_{\T\times W}<0.
\]
As we have seen in Section \ref{floer-complex}, the Floer homology of such a Hamiltonian $H$ is well-defined.
On $\mathcal H(W)$ we consider the standard partial order $\leq$ given by pointwise inequality. For $H,\, K\in\mathcal{H}(W)$ with $H\leq K$, we have a continuation homomorphism
\[
HF^{<a}(H)\rightarrow HF^{<a}(K)
\] 
for any $a\in (-\infty,+\infty]$. Such maps form a direct system over $\mathcal H(W)$. Taking the direct limit, we define the symplectic homology of $W$ filtered by action $a\in (-\infty,+\infty]$:
\[
SH^{<a}(W):=\varinjlim_{H\in\mathcal H(W)} HF^{<a}(H).
\]
If $\epsilon>0$ is smaller than the smallest action of a closed characteristic on $\partial W$ then
\[
SH_n^{<\epsilon}(W) \cong H_n(W,\partial W; \Z_2) = \Z_2.
\]
 On the other hand, the full symplectic homology of $W$ is isomorphic to the full symplectic homology of the unit ball which is zero, see e.g.~\cite{oan04}. This vanishing result also follows from the fact that $W$ is displaceable from itself by a compactly supported Hamiltonian diffeomorphism on $\R^{2n}$, see e.g.~\cite{Kan14}. Therefore we can define the $SH$-capacity of $W$ as the positive number
\[
c_{SH}(W) := \inf \{a > \epsilon \mid SH_n^{<\epsilon}(W) \rightarrow SH_n^{<a}(W) \mbox{ is zero} \} \in (0,+\infty).
\]
It is well-known that the number $c_{SH}(W)$ is an element of the action spectrum of $\partial W$. The boundary of a  general smooth starshaped domain $W$ might possess a closed characteristic whose action is less than $c_{SH}(W)$. Let $W$ be a  Bordeaux-bottle-shaped domain as in \cite[Chapter 3.5]{hz94}. It contains a ball $B^{2n}_R$ of radius $R$ and is contained in a cylinder $B^2_R\times \R^{2n-2}$ of the same radius, and hence $c_{SH}(W)=\pi R^2$. But there is a closed characteristic on the bottle neck of $W$, which corresponds to the boundary of a piece of the cylinder of radius $r<R$, which has action $\pi r^2$. 
It is also worth pointing out that the $SH$-capacity of any starshaped domain $W$ is represented by a closed Reeb orbit on $(\partial W,\alpha_W)$ of transversal Conley-Zehnder index $n+1$, but minimal periodic orbits do not necessarily have transversal Conley-Zehnder index $n+1$.

\begin{rem}
Symplectic homology was introduced in \cite{fh94} and the above symplectic capacity was defined in \cite{fhw94}. See also \cite{vit99} for a somehow different and quite influential approach to symplectic homology. Note that the growth conditions for the Hamiltonians $H\in \mathcal{H}(W)$ in \cite{fh94} and \cite{vit99} are different: In the former reference, it is required that
\[
H(t,z) = \eta |z|^2 + \xi 
\]
for every $z$ outside a large ball and every $t\in \T$, where $\eta \in (0,+\infty) \setminus \pi \Z$ and $\xi\in \R$, while in the latter the condition (\ref{slope}) from Lemma \ref{cresce} adapted to $W$ is used. Both conditions define sets of Hamiltonians that are cofinal in the larger set of non-resonant Hamiltonians that are considered here, so these different definitions of symplectic homology actually coincide. The definition of continuation homomorphisms for $s$-dependent Hamiltonians that are non-resonant at infinity for $|s|$ large requires the compactness results that are discussed in Remark \ref{sdep}.
\end{rem}

The aim of this section is to prove the corollary stated in the introduction: If $C\subset \R^{2n}$ is a bounded open convex set with smooth boundary, then its $SH$-capacity $c_{SH}(C)$ coincides with the minimum of the action spectrum of $\partial C$. 

In order to prove this, we may assume that $C$ is a smooth strongly convex domain containing the origin, in the sense of Section \ref{dualsec}, and that all the closed Reeb orbits on $R_{\alpha_C}$ on $\partial C$ are non-degenerate. The latter requirement means that for every closed orbit $\gamma:\R/T\Z\to\partial C$ of ${R_{\alpha_C}}$, the linearized return map $d\phi_{R_{\alpha_C}}^T(\gamma(0))$ of the flow of the Reeb vector field $R_{\alpha_C}$ restricted to the distribution $\ker\alpha_C$ does not have 1 as an eigenvalue. Indeed, up to a translation, any bounded open convex set with smooth boundary can be approximated by domains with these further properties, and both $c_{SH}$ and the minimum of the action spectrum of $\alpha_C$ are continuous on the space of bounded convex open sets with smooth boundary. Here, the convergence can be either the Hausdorff convergence, or the convergence induced by the smooth topology.

Under the above assumption, the action spectrum of $\partial C$ is discrete and we can prove the following result.

\begin{lem}\label{hamiltonian}
Let $A_{\min}(\partial C)$ and $A_{2nd}(\partial C)$ be the smallest and the second smallest numbers in the action spectrum of $\partial C$, respectively. For every $\epsilon>0$ and  $\eta\in(A_{\min}(\partial C),A_\mathrm{2nd}(\partial C))$, there exists $H\in\mathcal H(C)$ which is quadratically convex, satisfies
\[
H(t,x) = \eta H_C(x) + \xi \qquad \forall t\in \T, \; \forall |x|\geq R,
\]
for suitable real numbers $\xi\in\R$ and $R>0$, and has the following properties: The set of 1-periodic orbits of the Hamiltonian vector field $X_H$ consists of $2m+1\geq 3$ elements $z, y_1^-,y_1^+,\dots,y_m^-,y_m^+$ satisfying the following conditions:
\begin{enumerate}[(a)]
\item The Conley-Zehnder indices are 
\[
\mu_{CZ}(z)=n,\quad \mu_{CZ}(y_i^-)=n+1,\quad \mu_{CZ}(y_i^+)=n+2,\qquad\forall i\in\{1,\dots,m\}.
\]
\item The action values are
\[
-\epsilon<\Phi_H(z)<\epsilon,\quad A_{\min}(\partial C)-\epsilon<\Phi_H(y_i^-)<\Phi_H(y_i^+)<A_{\min}(\partial C)+\epsilon,\quad\forall i\in\{1,\dots,m\}.
\]
\item In the Floer chain complex $\{F(H),\partial^F\}$, 
\[
\partial^F y_i^+=0,\qquad\forall i\in\{1,\dots,m\},
\]
and there exists $j\in\{1,\dots,n\}$ such that
\begin{equation}\label{eq:kill_z}
\partial^F y_j^-=z.
\end{equation}

\end{enumerate}
\end{lem}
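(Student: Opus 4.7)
The plan proceeds in three stages: construct $H$, verify the index and action constraints, and transfer the Floer differential calculation to the Morse complex of $\psi_{H^*}$ via the main isomorphism theorem.

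\emph{Stage 1 (Construction and indices).} I would define $H$ to be $\delta|x|^2 - \epsilon_0$ on a small ball about the origin (with $\delta\in(0,\pi)$ and $\epsilon_0\in(0,\epsilon)$) and $\varphi(H_C(x))$ away from the origin, smoothly interpolated so as to preserve the quadratic convexity bound $\underline h\, I\leq \nabla^2 H\leq \overline h\, I$. Here $\varphi\colon [0,\infty)\to\R$ is smooth, strictly convex, equals $\eta r+\xi$ for $r$ sufficiently large (with $\xi$ chosen so $H|_{\overline C}\leq 0$), and satisfies $\varphi'(r_*)=A_{\min}(\partial C)$, $\varphi''(r_*)>0$, and $\varphi(r_*)=A_{\min}(\partial C)(r_*-1)$ at a uniquely determined $r_*\in(0,1)$. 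Lemma~\ref{cresce} combined with $\eta\in(A_{\min},A_{\mathrm{2nd}})$ gives non-resonance at infinity, so $H\in\mathcal{H}(C)$; and the only 1-periodic orbits of $X_H$ are the constant orbit $z\equiv 0$ together with, on $\{H_C=r_*\}$, one Morse--Bott $S^1$-family per closed characteristic of $\partial C$ of action $A_{\min}$. For $z$, $\nabla^2 H(0)=2\delta I$ yields $d\phi^t_{X_H}(0)=e^{-2\delta J_0 t}$ with $2\delta\in(0,2\pi)$, so $\mu_{CZ}(z)=n$. For a Morse--Bott orbit $x_\gamma$ on $\{H_C=r_*\}$, the 2-homogeneity-based rescaling $\tilde H_C:=H_C/r_*$ and $\tilde\varphi(s):=\varphi(r_* s)$ satisfies $\tilde\varphi(\tilde H_C)=H$, $\tilde\varphi'(1)=r_*A_{\min}$ = period of minimal Reeb orbit on $\partial(\sqrt{r_*}\,C)$, and $\tilde\varphi''(1)=r_*^2\varphi''(r_*)>0$; combining Proposition~\ref{index2} (which gives relative index $n$) with Proposition~\ref{index1} (which adds $+1$ and decreases the nullity by $1$) gives relative index $n+1$ and nullity $1$ for $x_\gamma$.

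\emph{Stage 2 (Non-degeneracy and actions).} The formula
\[
\Phi_H(x_\gamma)=\frac{1}{2}\int_\T J_0\dot x_\gamma\cdot x_\gamma\,dt-\varphi(r_*)=A_{\min}\,r_*-\varphi(r_*)=A_{\min},
\]
where I used $\dot x_\gamma=A_{\min}X_{H_C}(x_\gamma)$, the Euler identity $\lambda_0[X_{H_C}]=H_C$, and the choice $\varphi(r_*)=A_{\min}(r_*-1)$, gives the action on the Morse--Bott circle. A sufficiently small time-dependent perturbation of $H$ supported near each $S^1$-family breaks the circle into two non-degenerate orbits $y_i^-,y_i^+$ corresponding to the min and max of an induced Morse function on $S^1$; by the standard Morse--Bott perturbation formula the Conley--Zehnder indices are $\mu_{CZ}(y_i^-)=n+1$ and $\mu_{CZ}(y_i^+)=n+2$, and the actions lie within $\epsilon$ of $A_{\min}$ with $\Phi_H(y_i^-)<\Phi_H(y_i^+)$ by design. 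Also $\Phi_H(z)=-H(0)=\epsilon_0\in(0,\epsilon)$, so (a), (b) hold.

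\emph{Stage 3 (Boundary operator).} Via the main theorem's chain isomorphism $\Theta\colon M_{*-n}(\psi_{H^*})\to F_*(H)$, together with the upper-triangular structure of $\Theta$ in each degree with respect to the action ordering, statement (c) is equivalent to $\partial^M\pi(y_i^+)=0$ for all $i$ and $\partial^M\pi(y_j^-)=\pi(z)$ for some $j$, inside the Morse complex of $\psi_{H^*}$. The first is the standard Morse--Bott statement that the two gradient arcs connecting max to min on $S^1$ cancel modulo $2$; taking the perturbation small and localized near each circle eliminates cross-contributions between different $S^1$-families. The second uses that $\pi(z)$ is the unique index-$0$ critical point of $\psi_{H^*}$ and that $M\cong\R^N$ is contractible: from \eqref{morse=singular} and the long exact sequence of the pair $(M,\{\psi_{H^*}<a\})$ one reads $HM_0(\psi_{H^*})=0$ as soon as $\{\psi_{H^*}<a\}$ is non-empty for $a$ below the minimum critical value, i.e., as soon as $\psi_{H^*}$ is unbounded below. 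Then $\pi(z)$ lies in the image of $\partial^M$, which, since the only generators of $M_1(\psi_{H^*})$ are the $\pi(y_j^-)$, forces $\partial^M\pi(y_j^-)=\pi(z)$ for at least one $j$.

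The main obstacle is showing that $\psi_{H^*}$ is unbounded below. Since $H(x)=\eta H_C(x)+\xi$ outside a large ball, strict convexity plus Fenchel duality give $H^*(y)=(1/\eta)H_C^*(y)+O(1)$ for $|y|$ large, so evaluating on the 1-parameter family $x_\rho(t):=\rho e^{-2\pi J_0 t}v$ one obtains
\[
\Psi_{H^*}(x_\rho)=\rho^2\Bigl(-\pi|v|^2+\frac{4\pi^2}{\eta}\int_\T H_C^*\bigl(e^{-2\pi J_0 t}v\bigr)\,dt\Bigr)+O(\rho).
\]
The coefficient of $\rho^2$ can be made strictly negative: the Clarke--Ekeland variational characterization of closed characteristics (as in the proof of Proposition~\ref{index2}) identifies $\inf_{|v|=1}\int_\T H_C^*(e^{-2\pi J_0 t}v)\,dt$ with $A_{\min}(\partial C)/(4\pi)$, which is strictly less than $\eta/(4\pi)$ by hypothesis, and taking $v$ in the direction of a minimizing loop gives $\Psi_{H^*}(x_\rho)\to-\infty$. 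A subsidiary delicate point is the quantitative choice of Morse--Bott perturbation needed to ensure the absence of cross-family gradient trajectories contributing to $\partial^M\pi(y_i^+)$, but this is a standard small-perturbation argument relative to the positive separation between the distinct $S^1$-families.
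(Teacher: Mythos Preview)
Your overall architecture matches the paper's: construct $H$ from a convex reparametrization of $H_C$, perturb near the origin and near the Morse--Bott circles, compute indices via Propositions~\ref{index1} and~\ref{index2}, and deduce \eqref{eq:kill_z} from the vanishing of $HM_0(\psi_{H^*})$ through the chain isomorphism of the main theorem. The construction and the index/action bookkeeping in Stages~1--2 are fine, and your transfer of statement (c) to the Morse complex is legitimate because $\Theta$ is an isomorphism in each degree.

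The gap is in your argument that $\psi_{H^*}$ is unbounded below. You test $\Psi_{H^*}$ only along first Fourier modes $x_\rho(t)=\rho\,e^{-2\pi J_0 t}v$ and assert that the Clarke--Ekeland principle identifies $\inf_{|v|=1}\int_\T H_C^*(e^{-2\pi J_0 t}v)\,dt$ with $A_{\min}(\partial C)/(4\pi)$. This is false: Clarke's duality minimizes over \emph{all} loops, and the minimal closed characteristic on $\partial C$ is generically not a pure first Fourier mode. Since $\Psi_{(A_{\min}H_C)^*}\geq 0$ on $\mathbb{H}_1$ (its minimum $0$ is attained exactly on the minimal orbit line), one always has $\int_\T H_C^*(e^{-2\pi J_0 t}v)\,dt\geq A_{\min}|v|^2/(4\pi)$, with equality only for balls. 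For an eccentric ellipsoid $E$ with semi-axes $a\gg b$ in $\R^2$ one computes $\int_\T H_E^*(e^{-2\pi J_0 t}v)\,dt=(a^2+b^2)|v|^2/8$, so your coefficient of $\rho^2$ is negative only when $\eta>\pi(a^2+b^2)/2$, which can exceed $A_{2\mathrm{nd}}=2\pi ab$; hence for the allowed range $\eta\in(A_{\min},A_{2\mathrm{nd}})$ your test family may fail to detect unboundedness.

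The fix is to test along the actual minimal closed characteristic $\gamma$ rather than a Fourier mode: since $\Psi_{(A_{\min}H_C)^*}(\gamma)=0$ and $\eta>A_{\min}$ gives $\Psi_{(\eta H_C)^*}(\gamma)<0$, the $2$-homogeneity of $\Psi_{(\eta H_C)^*}$ together with $H^*\leq(\eta H_C)^*+\text{const}$ yields $\Psi_{H^*}(s\gamma)\to-\infty$. This is exactly the argument given in the Remark following the paper's proof. The paper's main proof actually bypasses the direct unboundedness argument altogether: it first obtains $\partial^F y_i^+=0$ from the Morse--Bott perturbation (all critical circles share the same action $A_{\min}$, so cascades between distinct circles have zero energy and are trivial), hence $HF_{n+2}(H)\neq 0$, and then uses the isomorphism to conclude $H_2(M,\{\psi_{H^*}<a\})\neq 0$; contractibility of $M$ then forces the sublevel set to be non-empty and $H_0(M,\{\psi_{H^*}<a\})=0$.
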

\begin{proof}
We choose a smooth function $\varphi:\R\to\R$ such that
\begin{enumerate}[(i)]
\item[-] $\varphi(s)=\varphi(0)$ for all $s\leq0$;
\item[-] $\varphi''(s)>0$ for all $s>0$;
\item[-] $\varphi(s)=\eta s+\xi$ for some $\xi \in\R$ and for all $s>2$;
\item[-] $\varphi(s)\in(-\epsilon/2,0)$ for all $s\leq 1$;
\item[-] $\varphi'(1)=A_{\min}(\partial C)$.
\end{enumerate}
There are two types of 1-periodic orbits of $X_{\varphi\circ H_C}$:
\begin{enumerate}
\item[-] the constant curve $z:\T\to\R^{2n}$ mapping to the origin with action
\[
\Phi_{\varphi\circ H_C}(z)=-\varphi\circ H_C(z)=-\varphi(0)\in(0,\epsilon/2);
\]
\item[-] curves $y_i:\T\to\partial C$, $i\in\{1,\dots,m\}$ where $y_i(t/A_{\min}(\partial C))$ is a closed orbit  of $R_{\alpha_C}$ with period $A_{\min}(\partial C)$, with action
\[
\Phi_{\varphi\circ H_C}(y_i)=A_{\min}(\partial C)-\varphi(1)\in\big(A_{\min}(\partial C),A_{\min}(\partial C)+\epsilon/2\big),
\]
and with index
\[
\mu_{CZ}(y_i)=n+1,\qquad\forall i\in\{1,\dots,m\}
\]
by Propositions \ref{cz=relind},  \ref{index1} and \ref{index2}. 
\end{enumerate}
Note that $\varphi\circ H_C$ is everywhere smooth, $\nabla^2(\varphi\circ H_C)$ is positive definite on $\R^{2n}\setminus\{0\}$ and $\nabla^2(\varphi\circ H_C)(0)=0$. We choose a $C^2$-small function $f:\R^{2n}\to\R$ supported in a small neighborhood of the origin such that  the origin is a critical point of $f$ and the Hessian $\nabla^2f$ is sufficiently small and positive definite near the origin. Then the function $\varphi\circ H_C+f$ has positive definite Hessian, has a unique critical point at the origin, and coincides with $\varphi\circ H_C$ outside a neighborhood of the origin. In particular, $\varphi\circ H_C+f$ is quadratically convex. Since the origin is a minimizer of $\varphi\circ H_C+f$, the Conley-Zehnder index of $z$ is
\[
\mu_{CZ}(z)=\ind(0;\varphi\circ H_C+f)+n=n,
\]  
where $\ind(0;\varphi\circ H_C+f)$ denotes the Morse index of $\varphi\circ H_C+f$ at the origin. 
Adding an additional small perturbation supported in neighborhoods of the periodic orbits $y_i$ as in \cite{bo09b} to $\varphi\circ H_C+f$, we obtain a smooth function $H:\T\times\R^{2n}\to\R$  such that:
\begin{enumerate}
\item[-] $H$ is quadratically convex;
\item[-] $H=\varphi\circ H_C$ outside the neighborhoods of the origin and of the $y_i(\T)$;
\item[-]  $H$ satisfies all the properties in the statement except possibly for \eqref{eq:kill_z}.
\end{enumerate}

In order to establish \eqref{eq:kill_z}, we use Clarke's duality. By Lemma \ref{crit} and Proposition \ref{prop:reduction}, the reduced dual functional $\psi_{H^*}:M\to\R$ has non-degenerate critical points 
\[
\pi(z),\quad\pi(y_i^-),\quad\pi(y_i^+),\qquad i\in\{1,\dots,m\}
\]
with Morse indices
\[
\ind(\pi(z),\psi_{H^*})=0,\quad \ind(\pi(y_i^-),\psi_{H^*})=1,\quad \ind(\pi(y_i^+),\psi_{H^*})=2,\qquad \forall i\in\{1,\dots,m\}.
\]
Thanks to the Theorem from the introduction and \eqref{morse=singular}, we have
\begin{equation}\label{isomorphisms}
HF_{k+n}(H)\cong HM_k(\psi_{H^*})\cong H_k(M,\{\psi_{H^*}<a\}),\qquad\forall k\in\Z
\end{equation}
for any $a<\psi_{H^*}(\pi(z))$. Since the $y_i^+$ represent a non-zero class in $FH_{n+2}(H)$,  by \eqref{isomorphisms}
\[
H_2(M,\{\psi_{H^*}<a\})\neq0.
\]
Since $M$ is diffeomorphic to $\R^{2nN}$, this implies that
\[
\{\psi_{H^*}<a\}\neq\emptyset,
\]
and therefore we have
\begin{equation}\label{eq:vanishing_H_0}
H_0(M,\{\psi_{H^*}<a\})=0.	
\end{equation}
Applying this to \eqref{isomorphisms}, we conclude that the cycle $z$ vanishes in $HF_n(H)$. Hence, $\partial^Fy_j^-=z$ for some $j\in\{1,\dots,m\}$. This completes the proof.
\end{proof}

\begin{rem}
	The vanishing result \eqref{eq:vanishing_H_0} can be seen also in a more direct way. We use the same notation as in Lemma \ref{hamiltonian}. We choose any large $\zeta>0$ such that 
	\[
	H(t,x)>\eta H_C(x)-\zeta \qquad \forall (t,x)\in\T\times \R^{2n}
	\]
	which is equivalent to 
	\[
	H^*(t,x)<(\eta H_C)^*(x)+\zeta \qquad \forall (t,x)\in\T\times \R^{2n}.
	\]
	Let $\gamma:\T\to \partial C$ be a closed characteristic with minimal action, i.e.~$A_{\min}(\partial C)=\int_\T\gamma^*\alpha_C$, and set $K:=A_{\min}(\partial C)H_C$. Then there holds
	\[
	0=\Phi_{K}(\gamma)=\Psi_{K^*}(\gamma)> \Psi_{(\eta H_C)^*}(\gamma)
	\]
	where the last inequality follows from the inequality $K<\eta H_C$ on $\R^{2n}\setminus\{0\}$. This, together with the bound
	\[
	\Psi_{H^*}(s\gamma)<\Psi_{(\eta H_C)^*}(s\gamma)+\zeta =s^2\Psi_{(\eta H_C)^*}(\gamma)+\zeta \qquad \forall s>0,
	\]
	where we used the fact that $\Psi_{(\eta H_C)^*}$ is positively 2-homogeneous, yields
	\[
	\lim_{s\to+\infty}\Psi_{H^*}(s\gamma)=-\infty.
	\]
	Therefore, the functional $\Psi_{H^*}$ is unbounded from below and, since by construction $\psi_{H^*}$ and $\Psi_{H^*}$ have the same infimum, so is 
	$\psi_{H^*}$. We conclude that $\{\psi_{H^*}<a\}$ is not empty  and \eqref{eq:vanishing_H_0} follows. 
\end{rem}

We can now prove the corollary stated in the introduction. The inequality $c_{SH}(C)\geq A_{\min}(\partial C)$ follows from the already mentioned fact that $c_{SH}(C)$ belongs to the action spectrum of $\partial C$. We prove the opposite inequality.  Fix some positive numbers 
\[
\epsilon<A_{\min}(\partial C), \qquad \eta\in (A_{\min}(\partial C),A_{\mathrm{2nd}}(\partial C)),
\]
and let $H$ be as in Lemma \ref{hamiltonian}. The Hamiltonian $H$ belongs to $\mathcal{H}(C)$ and $HF_n^{<\epsilon}(H)$ is isomorphic to $\Z_2$ and generated by $z$. Moreover the homomorphism 
\[
\sigma:HF_n^{<\epsilon}(H) \longrightarrow SH^{<\epsilon}_n(C)
\] 
in the direct limit defining $SH^{<\epsilon}_n(C)$ is an isomorphism. One way to see this is to observe that there is a cofinal subset  $\{H_\nu\}_{\nu\in\N}$ of $\mathcal H(C)$ such that $H_1=H$, and for every $\nu\in\N$, $X_{H_\nu}$ has a unique constant orbit $z$ mapping to the origin and all other 1-periodic orbits have $\Phi_{H_\nu}$-action larger than $\epsilon$. 

Consider the following commutative diagram:
\[
\begin{tikzcd}[row sep=4em, column sep=5em]
HF^{<\epsilon}_n(H) \arrow[rr,"\tau"] \arrow[d,swap,"\sigma"]
&& 
HF^{<A_{\min}(\partial C)+\epsilon}_n(H) \arrow[d,"\sigma'"] 
\\
SH^{<\epsilon}_n(C) \arrow[rr,"\tau'"] 
&&  
SH^{<A_{\min}(\partial C)+\epsilon}_n(C)\;.
\end{tikzcd}
\]
The horizontal maps are induced by the canonical inclusions, and the vertical ones are the homomorphisms into the direct limit in the definition of symplectic homology.
Since $\sigma$ is an isomorphism and $HF_n^{<A_{\min}(\partial C)+\epsilon}(H)\cong HF_n(H)$ vanishes due to Lemma \ref{hamiltonian}, the homomorphism $\tau'$ is zero. This implies that $c_{SH}(C)\leq A_{\min}(\partial C) + \epsilon$, and by the arbitrariness of $\epsilon$ we conclude that $c_{SH}(C)\leq A_{\min}(\partial C)$. This concludes the proof of the corollary stated in the introduction.

\begin{rem}
Here is another consequence of our main theorem, which can be proven in a similar fashion: Let $C$ be a smooth strongly convex domain and assume that the Reeb flow on $\partial C$ has more than one orbit with transversal Conley-Zehnder index equal to 3. Then it has a closed orbit of transversal Conley-Zehnder index 4 with action larger than the action of any closed orbit of transversal Conley-Zehnder index 3.
\end{rem}

\appendix 

\section{Appendix: Interior regularity of solutions of the Floer equation}
\label{appA}

In this appendix we prove the following interior regularity result for solutions of the Floer equation. The argument used in the proof was explained to us by Urs Fuchs.

\begin{prop}
\label{H1_regularity}
Let $U$ be an open subset of $\R\times \T$. 
Let $J$ be a uniformly bounded $\omega_0$-compatible almost complex structure on $\R^{2n}$, smoothly depending on $(s,t)\in U$. Let $H:\T\times\R^{2n}\to\R$ be a smooth Hamiltonian function such that
\[
|X_{H_t}(z)| \leq c (1+|z|) \qquad \forall (t,z)\in \T\times \R^{2n}
\]
for some $c>0$. If $u\in H^1_{\mathrm{loc}}(U,\R^{2n})$ is a weak solution of the Floer equation 
\begin{equation}
\label{lingro}
\partial_su+J(s,t,u)\big(\partial_t u - X_{H_t}(u)\big) = 0,
\end{equation}
then it is smooth.
\end{prop}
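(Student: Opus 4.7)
Plan. The argument is local, so fix a relatively compact open set $V\Subset U$. The goal is to cross the critical Sobolev exponent $p=2$ --- after which the standard Floer regularity bootstrap kicks in.

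Since $\dim(\R\times\T)=2$, Sobolev embedding gives $u\in L^q_{\mathrm{loc}}(U,\R^{2n})$ for every $q<\infty$, and the linear growth of $X_H$ then yields $J(\cdot,\cdot,u)\,X_{H_t}(u)\in L^q_{\mathrm{loc}}$ for every $q<\infty$. The key step is to promote $\nabla u$ from $L^2_{\mathrm{loc}}$ to $L^p_{\mathrm{loc}}$ for some $p>2$. On a small ball $B=B_r(z_0)\subset V$, let $\bar u:=|B|^{-1}\int_B u$ and freeze the almost complex structure by setting $\tilde J(s,t):=J(s,t,\bar u)$. Rewrite \eqref{lingro} as
\[
\mathcal L_{\tilde J}\,u = F, \qquad \mathcal L_{\tilde J}:=\partial_s+\tilde J\,\partial_t,\qquad F:=\bigl(\tilde J-J(\cdot,\cdot,u)\bigr)\partial_t u + J(\cdot,\cdot,u)\,X_{H_t}(u).
\]
Since $\mathcal L_{\tilde J}$ is a Cauchy--Riemann type operator with smooth coefficients depending only on $(s,t)$, standard interior Calder\'on--Zygmund estimates give
\[
\|\nabla u\|_{L^p(B_{r/2}(z_0))} \le C_p\bigl(\|F\|_{L^p(B)}+\|u\|_{L^p(B)}\bigr), \qquad p\in(1,\infty).
\]
The smoothness of $J$ in the third slot together with the Sobolev--Poincar\'e inequality $\|u-\bar u\|_{L^m(B)}\le C_m r^{2/m}\|\nabla u\|_{L^2(B)}$ controls the "bad" factor $\tilde J-J(\cdot,\cdot,u)$; combining this with a Caccioppoli-type estimate obtained by testing $\mathcal L_{\tilde J}u=F$ against a suitably cut-off multiple of $u-\bar u$ produces a reverse H\"older inequality
\[
\Bigl(\tfrac{1}{|B|}\int_B|\nabla u|^2\Bigr)^{1/2} \le C\Bigl(\tfrac{1}{|2B|}\int_{2B}|\nabla u|^{q}\Bigr)^{1/q} + (\text{l.o.t.})
\]
for some $q<2$, where the lower-order terms come from the contribution of $J(\cdot,\cdot,u)X_{H_t}(u)$ and are controlled in every $L^m$. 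Gehring's self-improvement lemma then upgrades $\nabla u$ to $L^p_{\mathrm{loc}}$ for some $p>2$.

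Once the threshold has been crossed, Morrey embedding gives $u\in C^{0,\alpha}_{\mathrm{loc}}$, hence $J(\cdot,\cdot,u)$ is continuous and one is back in the classical regime: iterating the Calder\'on--Zygmund estimate for $\mathcal L_{\tilde J}$ raises the integrability of $\nabla u$ to every finite $L^q$, and then differentiating the equation in $t$ and bootstrapping via Schauder estimates produces $u\in C^\infty_{\mathrm{loc}}$, exactly as in \cite[Appendix B.4]{ms04}.

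The hard part is the reverse H\"older inequality across the critical exponent $p=2$: at this scaling, the naive Calder\'on--Zygmund plus H\"older pairing gives no absorbable smallness for $(\tilde J-J(\cdot,\cdot,u))\partial_t u$, since $u$ is not a priori locally bounded and so $J(\cdot,\cdot,u)$ is only controlled in $L^q$ rather than in $L^\infty$. The resolution is that the Sobolev--Poincar\'e smallness of $u-\bar u$ on small balls, combined with the smoothness of $J$ in its target argument, yields a quantitative $L^m$-averaged smallness which, through the Caccioppoli identity and Gehring's lemma, upgrades to genuine $L^p$-improvement with $p>2$.
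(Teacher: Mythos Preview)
Your overall strategy---cross the critical exponent $p=2$ and then bootstrap---is the right one, and the final paragraph is correct: once $u\in W^{1,p}_{\mathrm{loc}}$ for some $p>2$, the standard machinery of \cite[Appendix B.4]{ms04} applies. The gap is in the step that gets you across $p=2$.

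First, the Caccioppoli step does not work as you describe it. Testing a \emph{first-order} Cauchy--Riemann operator $\mathcal L_{\tilde J}u=F$ against a cut-off multiple of $u-\bar u$ produces only terms of the form $\int \varphi\,\partial\varphi\,|u-\bar u|^2$ and $\int \varphi^2\langle F,u-\bar u\rangle$; no $|\nabla u|^2$ term ever appears. The genuine Caccioppoli inequality for equations of this type comes not from testing but from the pointwise identity $|\nabla u|^2=|\partial u|^2+|\bar\partial u|^2$ together with the null-Lagrangian structure of the Jacobian $J_u=|\partial u|^2-|\bar\partial u|^2$. That route requires writing the equation in Beltrami form $\bar\partial u + G\,\partial u = g$ with $\|G\|_\infty<1$, so that $|\bar\partial u|\le k|\partial u|+|g|$ with $k<1$ and the $|\nabla u|^2$ term can be absorbed.

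Second, and this is the deeper issue, your freezing decomposition throws away exactly the structure that would make this work. With $F=(\tilde J-J(\cdot,\cdot,u))\partial_t u+\cdots$, the factor $\tilde J-J(\cdot,\cdot,u)$ has no smallness: $J$ is only smooth (not globally Lipschitz) in its target argument, $u$ is not a priori bounded, so all you know is $|\tilde J-J(\cdot,\cdot,u)|\le 2\|J\|_\infty$. The Sobolev--Poincar\'e bound $\|u-\bar u\|_{L^m(B)}\le C_m r^{2/m}\|\nabla u\|_{L^2(B)}$ is of no help here because you would need $L^\infty$ smallness of $u-\bar u$ to turn it into $L^\infty$ smallness of $\tilde J-J(\cdot,\cdot,u)$, and that is precisely what is unavailable in $H^1$ in two dimensions.

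The paper's argument avoids both problems by \emph{not} freezing. One treats $J(s,t,u(s,t))$ simply as a bounded measurable $\omega_0$-compatible almost complex structure. The $\omega_0$-compatibility alone forces the associated Beltrami coefficient $G=(J+J_0)^{-1}(J-J_0)$ to satisfy $\|G\|_\infty\le \sqrt{1-4\alpha^3/(1+\alpha)^2}<1$ with $\alpha=1/\|J\|_\infty$ (Lemma~\ref{lem:J_estimate}). One then writes the equation as $(I+G\,\partial T)\,\bar\partial v=h$, where $T$ is the Cauchy transform; since $\partial T$ is an isometry on $L^2$ and $\|\partial T\|_{L^q}\to 1$ as $q\downarrow 2$ by Riesz--Thorin, the operator $I+G\,\partial T$ is invertible on $L^q$ for all $q$ in some interval $[2,q_0]$, giving $\bar\partial v\in L^q$ and hence $v\in W^{1,q}_{\mathrm{loc}}$ for some $q>2$. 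No smallness of $u-\bar u$ is needed anywhere; the entire argument runs on the uniform bound $\|G\|_\infty<1$, which is a consequence of $\omega_0$-compatibility and nothing else.
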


Since $u$ is a priori only in $H^1_{\mathrm{loc}}$, it may not be continuous and hence the map $(s,t) \mapsto J(s,t,u(s,t))$ may as well be not continuous. This prevents us from using standard arguments, in which one looks at a small neighbourhood of a point in $U$ and sees $J(s,t,u(s,t))$ there as a small perturbation of a constant complex structure on $\R^{2n}$. We overcome this difficulty by the following result.

\begin{lem}
\label{lemmulo}
Let $U$ be an open subset of $\R^2$, let $\{J(s,t)\}_{(s,t)\in U}$ be a bounded measurable family of $\omega_0$-compatible almost complex structures, and let  $f$ be a map in $L^p_{\mathrm{loc}}(U,\R^n)$ for some $p>2$. Then there exists a number $q>2$, depending only on $\|J\|_{\infty}$ and $p$, such that every $u\in L^p_{\mathrm{loc}}(U,\R^{2n})$ solving the linear equation
\begin{equation}
\label{linfloer}
\partial_s u + J \partial_t u = f
\end{equation}
in the distributional sense belongs to $W^{1,q}_{\mathrm{loc}}(U,\R^{2n})$.
\end{lem}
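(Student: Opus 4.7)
The plan is to recast the equation $\partial_s u + J\partial_t u = f$ as a matrix-valued Beltrami equation whose coefficient has pointwise operator norm strictly less than $1$, and then to apply the $L^q$-theory of the Beurling transform for some $q$ slightly above $2$.

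\textbf{Step 1 (Algebraic reduction).} Identify $\R^{2n}$ with $\C^n$ via $J_0$ and introduce the first-order operators $\partial_z := \tfrac12(\partial_s - J_0\partial_t)$ and $\partial_{\bar z} := \tfrac12(\partial_s + J_0\partial_t)$. From $\partial_s u = \partial_z u + \partial_{\bar z} u$ and $\partial_t u = J_0(\partial_{\bar z} u - \partial_z u)$, the equation rewrites as
\[
(I - JJ_0)\,\partial_z u + (I + JJ_0)\,\partial_{\bar z} u = f.
\]
A direct linear-algebraic computation using the $\omega_0$-compatibility of $J$ shows that $Q(s,t) := J(s,t)J_0$ is symmetric and positive definite at every point, with spectrum contained in a compact subinterval of $(0,+\infty)$ depending only on $\|J\|_\infty$. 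In particular $I + Q$ is pointwise invertible, and the equation takes the Beltrami form
\[
\partial_{\bar z} u = \mu(s,t)\,\partial_z u + g, \qquad \mu := (I+Q)^{-1}(Q - I), \qquad g := (I+Q)^{-1} f \in L^p_{\mathrm{loc}}.
\]
Diagonalizing $Q$ pointwise shows that $\mu$ has eigenvalues $(\lambda - 1)/(\lambda + 1) \in (-1,1)$, so $\|\mu\|_\infty \leq k < 1$ for some constant $k$ depending only on $\|J\|_\infty$.

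\textbf{Step 2 (Beurling transform and cut-off).} Let $T$ be the Beurling--Ahlfors transform on $L^r(\R^2, \C^n)$, acting componentwise by the scalar Beurling kernel. It is bounded on $L^r$ for every $r \in (1,\infty)$, satisfies $\|T\|_{L^2 \to L^2} = 1$, and $r \mapsto \|T\|_{L^r \to L^r}$ is continuous (Iwaniec--Martin). Hence there exists $q \in (2, p]$, depending only on $\|J\|_\infty$ and $p$, such that $k\,\|T\|_{L^q \to L^q} < 1$. Fix an arbitrary $z_0 \in U$ and a cutoff $\chi \in C^\infty_c(U)$ with $\chi \equiv 1$ near $z_0$. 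Then $v := \chi u \in L^q_c(\R^2,\R^{2n})$ satisfies
\[
\partial_{\bar z} v = \mu\,\partial_z v + g', \qquad g' := \chi\,g + (\partial_{\bar z}\chi - \mu\,\partial_z\chi)\,u \in L^q_c(\R^2,\R^{2n}).
\]

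\textbf{Step 3 (Existence of a $W^{1,q}$-solution).} Let $M_\mu$ denote pointwise multiplication by $\mu$. Since $\|T M_\mu\|_{L^q \to L^q} \leq k\,\|T\|_{L^q} < 1$, the operator $I - T M_\mu$ is invertible on $L^q$ via Neumann series. Setting $F := (I - T M_\mu)^{-1} T g' \in L^q$ and $\Phi := P(\mu F + g')$, where $P$ is the Cauchy transform (a bounded right-inverse of $\partial_{\bar z}$ from $L^q_c$ into $W^{1,q}_{\mathrm{loc}}$), one verifies that $\Phi \in W^{1,q}(\R^2,\R^{2n})$, $\partial_z \Phi = F$, and $\partial_{\bar z}\Phi = \mu\,\partial_z\Phi + g'$.

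\textbf{Step 4 (Uniqueness and conclusion).} The difference $w := v - \Phi \in L^q$ has compact support and satisfies the homogeneous equation $\partial_{\bar z} w = \mu\,\partial_z w$ distributionally. Mollifying $w_\varepsilon := \rho_\varepsilon * w \in C^\infty_c$ and using that the commutator $[\rho_\varepsilon *, M_\mu]$ converges strongly to $0$ on $L^q$ (by continuity of translations and boundedness of $\mu$), we obtain $(I - T M_\mu)\partial_z w_\varepsilon \to 0$ in $L^q$. The invertibility of $I - T M_\mu$ forces $\partial_z w = 0$, hence also $\partial_{\bar z} w = 0$, so $w$ is a compactly supported holomorphic distribution and vanishes. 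Thus $v = \Phi \in W^{1,q}$, and since $z_0$ was arbitrary, $u \in W^{1,q}_{\mathrm{loc}}(U, \R^{2n})$.

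\textbf{Main obstacle.} The decisive technical point is carrying the scalar Beurling-transform theory through for a \emph{matrix-valued} Beltrami coefficient $\mu$. The availability of an exponent $q > 2$ with $k\,\|T\|_{L^q} < 1$ is classical. The more delicate step is the uniqueness argument, because the distributional identity $T\,\partial_{\bar z} = \partial_z$ cannot be applied directly to a function known only to lie in $L^q$; the mollification-commutator argument sketched in Step 4 is what bridges this gap. All other matrix-valued issues reduce to pointwise estimates on $\|\mu\|_\infty$, which are handled in Step 1.
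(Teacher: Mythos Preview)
Your approach coincides with the paper's: recast the equation as a matrix Beltrami equation with coefficient of operator norm strictly less than $1$, cut off to a compactly supported function, and use that the Beurling-type operator has norm less than $1$ on $L^q$ for $q$ just above $2$ (via $\|\partial T\|_{L^2}=1$ and Riesz--Thorin interpolation). The paper's Beltrami coefficient is $G=(J+J_0)^{-1}(J-J_0)$, and the bound $\|G\|_\infty<1$ comes from a direct linear-algebra lemma equivalent in content to your diagonalization of $Q$.

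Two comments. First, there is a sign slip in Step~1: from $\partial_{\bar z}-\partial_z=J_0\partial_t$ one gets $\partial_t u=J_0(\partial_z u-\partial_{\bar z}u)$, so the correct identity is $(I+JJ_0)\partial_z u+(I-JJ_0)\partial_{\bar z}u=f$; in the paper's conventions $\omega_0(Ju,u)=-J_0Ju\cdot u$, so $-J_0J$ is the positive operator and $Q=JJ_0$ is \emph{negative} definite, whence it is $I-Q$ (not $I+Q$) that is safely invertible. The bound $\|\mu\|_\infty<1$ survives once the signs are straightened out, so this is cosmetic. Second, the paper sidesteps your existence/uniqueness detour: after cutting off to $v$, it rewrites the equation as $(I+G\,\partial T)\,\overline{\partial}v=h$ (with $T$ the Cauchy transform and $\partial T$ the Beurling transform, using $T\overline{\partial}=I$) and directly inverts on $L^q$ to obtain $\overline{\partial}v\in L^q$, hence $v\in W^{1,q}$ by Calder\'on--Zygmund. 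Your Step~4 is the delicate spot of your route: $\Phi$ and hence $w=v-\Phi$ are not compactly supported, so $w_\varepsilon\notin C^\infty_c$; more importantly, the Friedrichs-type commutator $[\rho_\varepsilon*,M_\mu]$ is being applied to the \emph{distribution} $\partial_z w$ rather than to an $L^q$ function, so the claimed $L^q$-convergence is not justified as written. In the actual application one already knows $u\in H^1_{\mathrm{loc}}$, hence $\overline{\partial}v\in L^2$, and then the paper's inversion can be carried out first on $L^2$ (where $\|\partial T\|_{L^2}=1$ and $\|G\|_\infty<1$) and bootstrapped to $L^q$; the direct route is simply shorter.
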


The above Lemma can be applied to solutions of the equation (\ref{lingro}) from Proposition \ref{H1_regularity}, because the map 
$(s,t) \mapsto J(s,t,u(s,t))$ is measurable and bounded and the map 
\[
f(s,t) = J(s,t,u(s,t)) X_{H_t}(u(s,t)) 
\]
belongs to $L^p_{\mathrm{loc}}(U,\R^n)$ for every $p\in (1,+\infty)$, thanks to the growth assumption on $X_H$ and to the fact that $u$ belongs to $L^p_{\mathrm{loc}}(U,\R^n)$ for every $p\in (1,+\infty)$, by the Sobolev embedding theorem. Then this lemma allows us to upgrade the regularity of $u$ to $W^{1,q}_{\mathrm{loc}}$ regularity, for some $q>2$. Based on this, standard regularity arguments (see e.g.\ \cite[Appendix B.4]{ms04}) imply that the solution $u$ of (\ref{lingro}) is smooth.

The remaining part of this appendix is devoted to the proof of Lemma \ref{lemmulo}. The argument consists of transforming the linear Floer equation (\ref{linfloer}) for a variable almost complex structure $J$ into a Beltrami equation, and then proving regularity for solutions of this equation using the standard Calderon-Zygmund estimates. The reader interested in learning more about the Beltrami equation and its regularity theory might refer to \cite{vek62} and \cite{boj10}.

In order to reduce the linear Floer equation to a Beltrami equation, we shall make use of the following well-known facts about complex structures (see e.g.\ \cite[Section 1.2]{is99}). Here, $|\cdot|$ and $\cdot$ denote the euclidean norm and scalar product on $\R^{2n}$ and $\|\cdot\|$ the induced operator norm on the space $L(\R^{2n})$ of linear endomorphisms of $\R^{2n}$.

\begin{lem}\label{lem:J_estimate}
Let $J$ be a complex structure on $\R^{2n}$. 
\begin{enumerate}[(i)]
\item Assume that there exists $\alpha>0$ such that
\[
\omega_0(Ju,u)\geq \alpha|u|^2,\qquad \forall u\in\R^{2n}.
\]
Then $\alpha\leq1$, $J+J_0$ is invertible, and the following inequalities hold
\[
\|(J+J_0)^{-1}\|\leq \frac{1}{1+\alpha}, \qquad \|(J+J_0)^{-1}(J-J_0)\|\leq \sqrt{1-\frac{4\alpha^3}{(1+\alpha)^2}}\,.
\]
\item Assume that $J$ is $\omega_0$-compatible, meaning that $(u,v) \mapsto \omega_0(Ju,v)$ is a scalar product on $\R^{2n}$. Then
\[
\omega_0(Ju,u) \geq \frac{1}{\|J\|} |u|^2 \qquad \forall u\in \R^{2n}.
\]
\end{enumerate}
\end{lem}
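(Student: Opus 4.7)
For part (i), my plan is to exploit the identity
\[
\omega_0\big((J+J_0)u,u\big) = \omega_0(Ju,u) + \omega_0(J_0 u,u) = \omega_0(Ju,u) + |u|^2 \geq (1+\alpha)|u|^2,
\]
where the second equality uses $\omega_0(J_0 u, u) = |u|^2$ (immediate from $u\cdot v = \omega_0(J_0 u, v)$). Combined with $|\omega_0(v,u)|\leq |v|\,|u|$, this yields $|(J+J_0)u|\geq (1+\alpha)|u|$, giving simultaneously the invertibility of $J+J_0$ and the bound $\|(J+J_0)^{-1}\|\leq 1/(1+\alpha)$. The inequality $\alpha\leq 1$ I would deduce as follows: Cauchy--Schwarz applied to $\omega_0(Ju,u) = Ju\cdot J_0 u \leq |Ju|\,|u|$ yields $|Ju|\geq \alpha|u|$; applying the same inequality to $Ju$ in place of $u$ and using $J^2=-I$ gives $|u|=|J^2u|\geq \alpha|Ju|\geq \alpha^2|u|$, forcing $\alpha\leq 1$.

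For the second bound the plan is to expand
\[
|(J\pm J_0)u|^2 = |Ju|^2 + |u|^2 \pm 2\omega_0(Ju,u),
\]
so that $|(J-J_0)v|^2 = |(J+J_0)v|^2 - 4\omega_0(Jv,v)$. The identity $J^2=-I$ produces the anti-commutation $(J+J_0)(J-J_0) = -(J-J_0)(J+J_0)$, whence $(J+J_0)^{-1}(J-J_0) = -(J-J_0)(J+J_0)^{-1}$ and the two operators have equal norm. Substituting $v = (J+J_0)^{-1}u$ reduces the estimate to bounding
\[
\frac{|(J-J_0)v|^2}{|(J+J_0)v|^2} = 1 - \frac{4\,\omega_0(Jv,v)}{|(J+J_0)v|^2}
\]
from above. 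The key step is the upper bound $|(J+J_0)v|\leq (1+\alpha)|v|/\alpha$, obtained from $|(J+J_0)v|\leq |Jv|+|v|$ together with $|Jv|\leq |v|/\alpha$ (apply the reverse inequality $|Ju|\geq \alpha|u|$ to $u=Jv$ and use $J^2=-I$). Combined with $\omega_0(Jv,v)\geq \alpha|v|^2$ this yields $\omega_0(Jv,v)/|(J+J_0)v|^2\geq \alpha^3/(1+\alpha)^2$, and the claimed bound follows.

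For part (ii) the plan is to set $S := -J_0 J$ and write $\omega_0(Ju,u) = Su\cdot u$, using $\omega_0(w,v) = -J_0 w\cdot v$. The $\omega_0$-compatibility of $J$ translates exactly into $S$ being symmetric positive definite, while $\|J\|=\|S\|$ since $J_0$ is an isometry. The relation $J^2=-I$ rewrites as $S J_0 S = J_0$, equivalently $J_0 S J_0 = -S^{-1}$. From this I deduce that whenever $\lambda>0$ is an eigenvalue of $S$ with eigenvector $v$, then $\lambda^{-1}$ is an eigenvalue of $S$ with eigenvector $J_0 v$: indeed $S(J_0 v) = -J_0 S^{-1}J_0 \cdot J_0 v = J_0 S^{-1}v = \lambda^{-1}J_0 v$. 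Therefore $\lambda_{\min}(S)\geq 1/\lambda_{\max}(S) = 1/\|J\|$, and hence $\omega_0(Ju,u) = Su\cdot u \geq \lambda_{\min}(S)|u|^2 \geq |u|^2/\|J\|$. The main obstacle throughout is the sign and substitution bookkeeping in (i); once the anti-commutation $(J+J_0)(J-J_0)=-(J-J_0)(J+J_0)$ is in hand and the sharp upper bound on $|(J+J_0)v|$ is isolated, the rest of the argument is mechanical.
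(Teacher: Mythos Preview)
Your proof is correct. The first inequality in (i), the bound $\alpha\leq 1$, and part (ii) follow essentially the same lines as the paper; the paper reaches $\min\sigma(-J_0J)=1/\max\sigma(-JJ_0)$ via the identity $(-J_0J)^{-1}=-JJ_0$ rather than your eigenvalue-pairing $Sv=\lambda v\Rightarrow S(J_0v)=\lambda^{-1}J_0v$, but the content is identical.

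For the second inequality in (i), however, your route is genuinely different and more direct. The paper introduces the auxiliary operator $S:=I-JJ_0=-J(J+J_0)$, proves the algebraic identity $S(I-WW^T)S^T=2(J_0J^T-JJ_0)$ with $W:=(J+J_0)^{-1}(J-J_0)$, bounds $(J_0J^T-JJ_0)u\cdot u\geq 2\alpha|u|^2$, and combines this with $\|S\|\leq 1+1/\alpha$ to control $WW^Tu\cdot u$. You bypass this machinery entirely: the expansion $|(J-J_0)v|^2=|(J+J_0)v|^2-4\omega_0(Jv,v)$ together with the anti-commutation $(J+J_0)^{-1}(J-J_0)=-(J-J_0)(J+J_0)^{-1}$ reduces the problem to bounding the ratio $\omega_0(Jv,v)/|(J+J_0)v|^2$ from below, and the sharp upper bound $|(J+J_0)v|\leq (1+1/\alpha)|v|$ (coming from $\|J\|\leq 1/\alpha$, which is also implicit in the paper) closes it immediately. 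Your argument is shorter and makes the emergence of the constant $4\alpha^3/(1+\alpha)^2$ transparent; the paper's approach packages the estimate through $WW^T$, which is a bit more opaque but perhaps more systematic if one wanted to track the dependence on other norms.
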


Note that the upper bound in the second inequality in (i) is strictly less than 1 for every $\alpha\in (0,1]$, vanishes if and only if $\alpha=1$ - which hence corresponds to the case $J=J_0$ - and tends to 1 for $\alpha\downarrow 0$.
 
\begin{proof}
(i) The invertibility of $J+J_0$ readily follows from 
\[
(\alpha+1)|u|^2 \leq \omega_0(Ju,u)+\omega_0(J_0u,u) = \omega_0\big((J+J_0)u,u\big).
\]
Moreover by the Cauchy-Schwarz inequality, we have
\[
(\alpha+1)|u|^2\leq \omega_0\big((J+J_0)u,u\big)= -J_0(J+J_0)u\cdot u\leq |(J+J_0)u||u|.
\]
Substituting $u=(J+J_0)^{-1}v$, we obtain
\[
|(J+J_0)^{-1}v|\leq \frac{1}{\alpha+1} |v|,\qquad\forall v\in\R^{2n}.
\]
This proves the first inequality in the lemma. From
\[
\alpha|Ju|^2\leq \omega_0(JJu,Ju)=\omega_0(Ju,u)\leq |Ju||u|
\]
we deduce the inequality
\[
\|J\|\leq \frac{1}{\alpha},
\]
and from $1=\|J^2\|\leq \|J\|^2\leq \alpha^{-2}$ we obtain $\alpha\leq 1$. If we set $S:=I-JJ_0=-J(J+J_0)$, where $I$ is the identity map, we find
\[
\|S\| \leq 1 + \|J\| \leq 1 + \frac{1}{\alpha}.
\]
Moreover, if we denote $W:=(J+J_0)^{-1}(J-J_0)$, a straightforward computation shows that
\[
S(I-WW^T)S^T=2(J_0J^T-JJ_0).
\]
From the estimate
\[
(J_0J^T-JJ_0)u\cdot u=-2JJ_0u\cdot u=2\, \omega_0(JJ_0u,J_0u)\geq 2\alpha|u|^2,
\]
we find
\[
\begin{split}
(I-WW^T)u\cdot u&=S^{-1}S(I-WW^T)S^T(S^T)^{-1}u\cdot u=2(J_0J^T-JJ_0)(S^T)^{-1}u\cdot (S^T)^{-1}u	\\
&\geq 4\alpha|(S^T)^{-1}u|^2\geq 4\alpha \|S^T\|^{-2}|u|^2 = 4\alpha \|S\|^{-2}|u|^2.
\end{split}
\]
Together with the above upper bound on the norm of $S$, we deduce
\[
WW^T u\cdot u \leq \left(1 - \frac{4\alpha}{\|S\|^2} \right) |u|^2 \leq \left( 1 - \frac{4\alpha^3}{(1+\alpha)^2} \right) |u|^2.
\]
We conclude that
\[
\|W\|^2 = \|W^T\|^2 = \max_{\substack{u\in \R^{2n} \\ |u|\leq 1}} WW^T u\cdot u \leq 1 - \frac{4\alpha^3}{(1+\alpha)^2},
\]
which proves the second bound in (i).

(ii) The assumption, together with the identity
\[
\omega_0(Ju,v) = Ju\cdot J_0 v = - J_0 J u\cdot v \qquad \forall u,v\in \R^{2n},
\]
implies that the endomorphism $-J_0J$ is self-adjoint and positive. Therefore, its spectrum $\sigma(-J_0J)$ is contained in the positive real axis and we have
\[
\omega_0 (Ju,u) \geq \min \sigma(-J_0 J) |u|^2 = \frac{1}{\max \sigma((-J_0 J)^{-1})} |u|^2 =  \frac{1}{\max \sigma(-JJ_0)} |u|^2 \quad \forall u\in \R^{2n}.
\]
The desired inequality now follows from the identity
\[
\max \sigma(-JJ_0) = \|-JJ_0\| = \|J\|.
\]
\end{proof}

\begin{proof}[Proof of Lemma \ref{lemmulo}.]
We identify $\R^2$ with $\C$ by mapping $(s,t)$ into $w=s+it$, and we set
\[
\partial := \partial_s-J_0\partial_t,\qquad \overline{\partial} := \partial_s+J_0\partial_t.
\] 
Moreover, we identify $\R^{2n}$ with $\C^n$ by identifying $J_0$ with the multiplication by $i$, 
and we consider the linear convolution operator
\[
T: C^\infty_c(\C,\R^{2n}) \rightarrow C^\infty(\C,\R^{2n}), \qquad 
(Tv)(z):=\frac{1}{2\pi}\int_{\C}\frac{v(w)}{z-w} \,dsdt,
\] 
where $w=s+it$. The operator $T$ commutes with partial derivatives and satisfies
\[
\overline{\partial} Tv=T \overline{\partial}v=v,
\]
see \cite[Appendix A.4]{hz94}.
Moreover by the Calderon-Zygmund inequality, see \cite[Theorem B.2.7]{ms04}, for every $p\in (1,+\infty)$ there exists $C_p>0$  such that for all $v\in C^\infty_c(\C,\R^{2n})$,
\[
\|\partial Tv\|_{L^p} = \|T \partial v\|_{L^p} \leq C_p\|v\|_{L^p}.
\]
Therefore $\partial T$ extends to a bounded linear operator on $L^p(\C,\R^{2n})$. If $p=2$, this operator is an isometry. Indeed, for $v\in C^\infty_c(\C,\R^{2n})$ we have
\[
\begin{split}
\|\partial Tv \|^2_{L^2} &= \| 2 \partial_s T v  - \overline{\partial} Tv\|_{L^2}^2 = 4 \|\partial_s T v \|_{L^2}^2 - 4 (\partial_s T v,  \overline{\partial} Tv)_{L^2} + \| \overline{\partial} Tv\|_{L^2}^2 \\ &= 4 \|\partial_s T v \|_{L^2}^2 - 4 (\partial_s T v,  \partial_s Tv + J_0 \partial_t Tv)_{L^2} + \| v\|_{L^2}^2 \\ &= - 4 (\partial_s T v,  J_0 \partial_t Tv)_{L^2} + \| v\|_{L^2}^2 =  \| v\|_{L^2}^2,
\end{split}
\] 
where the last equality follows from partial integration and the skew symmetry of $J_0$. Applying the Riesz-Thorin interpolation theorem, we also have 
\[
\|\partial T\|_{L^p} \leq \|\partial T\|_{L^2}^{1-\theta}  \|\partial T\|_{L^{p'}}^\theta =  \|\partial T\|_{L^{p'}}^\theta
\]
for all $p'\geq2$ and $\theta\in(0,1)$, where $p\geq2$ is determined by $1/p=(1-\theta)/2+\theta/p'$. By fixing $p'>2$ and letting $\theta$ go to $0$, or equivalently $p$ go to $2$, we obtain
\begin{equation}\label{interpolation}
\limsup_{p\downarrow 2} \|\partial T\|_{L^p} \leq 1.
\end{equation}

Let $u\in L^p_{\mathrm{loc}}(U,\R^{2n})$ be a solution of the linear equation (\ref{linfloer}), which we rephrase as
\begin{equation}
\label{rephra}
\big(J+J_0\big)\overline{\partial} u + \big(J-J_0\big)\partial u = 2Jf.
\end{equation}
Since $J$ is $\omega_0$-compatible and uniformly bounded, Lemma \ref{lem:J_estimate} (ii) implies that
\[
\omega_0(J(w) v,v) \geq \alpha |v|^2 \qquad \forall w \in U, \; \forall v\in \R^{2n},
\]
where $\alpha:= 1/\|J\|_{\infty}$. In particular, all the complex structures $J(w)$ satisfy the assumption of \ref{lem:J_estimate} (i) with the same constant $\alpha$. 

Multiplying the both sides of (\ref{rephra}) by $(J+J_0)^{-1}$, which exists as observed in Lemma \ref{lem:J_estimate} (i), we obtain the following Beltrami equation on $U$
\begin{equation}
\label{Floer_eq1}
\overline{\partial} u + G\partial u = g
\end{equation}
where
\[
G:=(J+J_0)^{-1}(J-J_0)
\] 
and 
\[
g:=2(J+J_0)^{-1}Jf.
\]
The map $g$ belongs to $L^p_{\mathrm{loc}}(U,\R^{2n})$. By the second inequality in Lemma \ref{lem:J_estimate} (i), 
\[
\sup_{(s,t)\in U}|G(s,t)|<1.
\]
Let $\Omega\subset U$ be an open subset  with compact closure contained in $U$. Let $\rho: \C \rightarrow \R$ be a smooth function with compact support contained in $U$ and taking the value 1 on $\Omega$. We define $v\in L^p(\C,\R^{2n})$ by
\[
v:= \rho u.
\]
Since $\Omega$ is arbitrary, it is enough to show that $v$ belongs to $W^{1,q}(\C,\R^{2n})$ for some $q>2$.

From (\ref{Floer_eq1}) we deduce that $v$ solves the following Beltrami equation on $\C$
\begin{equation}
\label{Floer_eq2}
\overline{\partial} v + G\partial v = h,
\end{equation}
where the map
\[
h:= \rho g + (\overline{\partial} \rho) u + (\partial \rho) G u
\]
belongs to $L^p(\C,\R^{2n})$, and $G$ is extended to the whole $\C$ by setting it equal to $0$ outside $U$, so that we still have
\begin{equation}\label{norm_G}
\sup_{w\in \C}|G(w)|<1.
\end{equation}
Having compact support, $v$ and $h$ belong also to $L^{p'}(\C,\R^{2n})$ for every $p'\leq p$.
Since $T\overline{\partial} = I$, equation (\ref{Floer_eq2}) can be rewritten as
\begin{equation}
\label{Floer_eq3}
(I + G \partial T) \overline{\partial} v  = h.
\end{equation}
Thanks to (\ref{interpolation}) and (\ref{norm_G}), we can find $q_0>2$ such that 
\[
\| G \partial T\|_{L^{q}} < 1
\]
for every $q\in [2,q_0]$. The above inequality implies that the operator $I + G \partial T$ is invertible on $L^q(\C,\R^{2n})$ for every $q\in [2,q_0]$.
Let $q\in (2,q_0]$ be a number not larger than $p$. Since $h$ is in $L^q(\C,\R^{2n})$, the equation (\ref{Floer_eq3}) can be restated as
\[
 \overline{\partial} v  =(I + G \partial T)^{-1} h,
\]
and shows that $\overline{\partial} v$ belongs to $L^q(\C,\R^{2n})$. By the standard Calderon-Zygmund estimates, we conclude that $v$ belongs to $W^{1,q}(\C,\R^{2n})$, as we wished to prove.
\end{proof}

\section{Appendix: Heuristics behind the definition of the isomorphism}
\label{appB}

The aim of this appendix is to show a heuristic argument that explains why the Floer complex of $H$  (that is a sort of Morse complex of the direct action functional $\Phi_H$) should be isomorphic to the Morse complex of the dual action functional $\Psi_{H^*}$ and suggests the hybrid problem that leads to the formal construction of the isomorphism $\Theta$ in our main theorem. See \cite{as15} for a similar argument applied to the Hamiltonian and Lagrangian action functionals that are associated to a fiberwise convex Hamiltonian on the cotangent bundle of a closed manifold. We shall not worry about precise regularity assumptions and pretend that we can work with smooth loops. The Hamiltonian $H\in C^{\infty}(\T \times \R^{2n})$ is assumed to satisfy the conditions of the main theorem of the Introduction: It is non-degenerate, quadratically convex and non-resonant at infinity.

We denote by $C^{\infty}_0(\T,\R^{2n})$ the vector space of smooth 1-periodic curves in $\R^{2n}$ with vanishing average and by
\[
\Pi: C^{\infty}_0(\T,\R^{2n}) \rightarrow C^{\infty}_0(\T,\R^{2n}), \qquad (\Pi x)(t) := \int_0^t x(s)\, ds - \int_\T \left(\int_0^t x(s)\, ds\right) dt,
\]
the linear operator that maps each $x\in C^{\infty}_0(\T,\R^{2n})$ into its primitive having zero average. 

Next, we notice that the convexity assumptions on $H$ imply that for every $x: \T \rightarrow \R^{2n}$ the map
\[
\R^{2n} \rightarrow \R^{2n}, \qquad z\mapsto \int_{\T} \nabla H_t(x(t)+z)\, dt,
\]
is a diffeomorphism. In particular, there is a well-defined smooth map
\[
\zeta: C^{\infty}(\T,\R^{2n}) \rightarrow \R^{2n}
\]
mapping each 1-periodic curve $x$ into the unique vector $\zeta(x)$ in $\R^{2n}$ such that
\[
\int_{\T} \nabla H_t(x(t)+\zeta(x))\, dt = 0.
\]
Notice that the 1-periodic solutions $x\in C^{\infty}(\T,\R^{2n})$ of the Hamiltonian system
\[
\dot{x}(t) = X_{H_t}(x(t)) = - J_0 \nabla H_t(x(t))
\]
satisfy $\zeta(x)=0$. 

We now introduce the following extension of the direct action functional $\Phi_H$:  
\[
\widetilde{\Phi}_H : C^{\infty}(\T,\R^{2n}) \times C^{\infty}_0(\T,\R^{2n}) \rightarrow \R, \qquad
\widetilde{\Phi}_H(x,y) := \Phi_H(x) + F(x,y),
\]
where
\[
F(x,y) := \int_{\T} \bigl( H_t(x+\zeta(x)) + H_t^*(J_0\dot{y} + \nabla H_t(x+\zeta(x))) - (x+\zeta(x))\cdot (J_0 \dot{y} + \nabla H_t(x+\zeta(x))) \bigr)\, dt.
\]
By the Fenchel inequality, the integrand in the definition of $F$ is non-negative, and vanishes if and only if 
\[
J_0 \dot{y} + \nabla H_t(x+\zeta(x)) = \nabla H_t(x+\zeta(x)),
\]
that is, if and only if $y=0$. Moreover, $F$ is strictly convex in its second variable. Therefore, for every $x\in C^{\infty}(\T,\R^{2n})$ the functional $y\mapsto F(x,y)$ has a unique critical point at $y=0$, where it takes the value zero. In particular, the critical sets of $\Phi_H$ and $\widetilde{\Phi}_H$ are related by
\[
\mathrm{crit}\, \widetilde{\Phi}_H = \mathrm{crit}\,\Phi_H \times \{0\}.
\]
The Floer complex of $H$ is the Morse complex of the functional $\Phi_H$ with respect to its negative $L^2$-gradient vector field $-\nabla^{L^2}\Phi(x)$. The vector field
\[
X_{\widetilde{\Phi}_H}(x,y) := \bigl( - \nabla^{L^2} \Phi_H(x), - \rho(x,y) \nabla_2^{H^1} F(x,y) \bigr),
\]
where $\nabla_2^{H^1}$ denotes the $H^1$-gradient with respect to the second variable and the function $\rho$ is everywhere positive and satisfies
\begin{equation}
\label{pseudograd}
\rho(x,y) \|\nabla_2^{H^1}F(x,y)\|_{H^1}^2 \geq - \langle \nabla_1^{L^2} F(x,y), \nabla^{L^2} \Phi_H(x) \rangle_{L^2},
\end{equation}
is a pseudo-gradient vector field for $\widetilde{\Phi}_H$, meaning that
\[
d\widetilde{\Phi}_H(x,y)[X_{\widetilde{\Phi}_H}(x,y)]<0 \qquad \forall (x,y)\notin \mathrm{crit}\, \widetilde{\Phi}_H .
\]
Notice that a positive function $\rho$ satisfying (\ref{pseudograd}) exists because $\nabla_2^{H_1}F(x,y)$ is non-zero for $y\neq 0$ and satisfies
\[
\|\nabla_2^{H^1}F(x,y)\|_{H^1}\geq \delta \|y\|_{H^1}
\]
for some $\delta>0$, while the right-hand side of (\ref{pseudograd})  is $O(\|y\|_{H^1}^2)$ for $y\rightarrow 0$ in $H^1$.

The Floer complex of $H$  is naturally identified with the Morse complex of $\widetilde{\Phi}_H$ with respect to $X_{\widetilde{\Phi}_H}$.  Indeed, we have
\begin{eqnarray}
\label{staphi}
W^s\bigl( (x,0); X_{\widetilde{\Phi}_H}) &=& W^s\bigl( x; - \nabla^{L^2} \Phi_H) \times C^{\infty}_0(\T,\R^{2n}), \\
\label{unstaphi}
W^u\bigl( (x,0); X_{\widetilde{\Phi}_H}) &=& W^u\bigl( x; - \nabla^{L^2} \Phi_H) \times \{0\},
\end{eqnarray}
and hence
\[
W^u\bigl( (x,0); X_{\widetilde{\Phi}_H}) \cap W^s\bigl( (y,0); X_{\widetilde{\Phi}_H}) = \Bigl( W^u\bigl( x; - \nabla^{L^2} \Phi_H) \cap W^s\bigl( y; - \nabla^{L^2} \Phi_H) \Bigr) \times \{0\},
\]
for every pair of critical points $x,y$ of $\Phi_H$. Here, the stable and unstable manifolds of the critical point $x$ of $\Phi_H$ should be understood in a loose sense:  They should be interpreted as traces at $s=0$ of solutions $u=u(s,t)$ of the Floer equation
\begin{equation}
\label{floerappB}
\partial_s u + J_t(u) (\partial_t u - X_{H_t}(u)) = 0
\end{equation}
on the positive half-cylinder $[0,+\infty)\times \T$ (for the stable manifold) or on the negative half-cylinder $(-\infty,0]\times \T$ (for the unstable manifold) that are asymptotic to $x$ for $s\rightarrow +\infty$ (for the stable manifold) or for  $s\rightarrow -\infty$ (for the unstable manifold). The intersection
\[
W^u\bigl( x; - \nabla^{L^2} \Phi_H) \cap W^s\bigl( y; - \nabla^{L^2} \Phi_H) 
\]
should be interpreted as the space of traces at $s=0$ of solutions of the Floer equation (\ref{floerappB}) on the whole cylinder $\R\times \T$ that are asymptotic to $x$ for $s\rightarrow -\infty$ and to $y$ for $s\rightarrow +\infty$.

Next we introduce the following extension of the dual action functional $\Psi_{H^*}$:
\[
\begin{split}
\widetilde{\Psi}_{H^*} : C^{\infty}_0(\T,\R^{2n}) \times C^{\infty}_0(\T,\R^{2n}) \times \R^{2n} \rightarrow \R, \\
\widetilde{\Psi}_{H^*}(x,y,z) := \Psi_{H^*}(x) +  \Omega(y) + G(x,y,z),
\end{split}
\]
where
\[
\Omega(y) := \frac{1}{2} \int_{\T} J_0 \dot{y} \cdot y\, dt, \quad 
G(x,y,z) := \int_{\T} \bigl( H_t(x+y+\zeta(x+y)) - H_t(x+y+z+ \zeta(x+y)) \bigr)\, dt.
\]
The functional $\Omega$ is a non-degenerate quadratic form on $C^{\infty}_0(\T,\R^{2n})$ with positive and negative eigenspaces
\[
\mathbb{H}^+ := \mathbb{H}^+_{1/2} \cap C^{\infty}_0(\T,\R^{2n}), \qquad \mathbb{H}^- := \mathbb{H}^-_{1/2} \cap C^{\infty}_0(\T,\R^{2n}).
\]
The functional $G(x,y,z)$ it strictly concave in the third variable $z\in \R^{2n}$ and its gradient with respect to this variable is
\[
\nabla_3 G(x,y,z) = - \int_{\T} \nabla H_t(x+y+z+\zeta(x+y))\, dt.
\]
The above formula and the definition of the map $\zeta$ show that the differential of $G(x,y,z)$ with respect to the third variable vanishes for $z=0$, and from the strict concavity we deduce that 
\[
G(x,y,z)\leq G(x,y,0) = 0 \qquad \forall (x,y,z)\in C^{\infty}_0(\T,\R^{2n}) \times C^{\infty}_0(\T,\R^{2n}) \times \R^{2n},
\]
with equality if and only if $z=0$. These facts imply that the critical sets of the functionals $\widetilde{\Psi}_{H^*}$ and $\Psi_{H^*}$ are related through the identity
\[
\mathrm{crit}\, \widetilde{\Psi}_{H^*} = \mathrm{crit}\, \Psi_{H^*} \times \{0\} \times \{0\}.
\]
Arguing as above, it is easy to construct a pseudo-gradient vector field $X_{\widetilde{\Psi}_{H^*}}$ for $\widetilde{\Psi}_{H^*}$ such that the Morse complex of $\widetilde{\Psi}_{H^*}$ with respect to it is naturally identified with the Morse complex of $\Psi_{H^*}$. Indeed, one can define
\[
X_{\widetilde{\Psi}_{H^*}}(x,y,z) := \Bigl( - \nabla^{H^1} \Psi_{H^*}(x), - \nabla^{H^{1/2}} \Omega(y), - \sigma(x,y,z) \nabla_3 G(x,y,z) \Bigr),
\]
where $\sigma(x,y,z)$ is a suitable positive function that satisfies a condition analogous to (\ref{pseudograd}). With this choice, we have
\begin{eqnarray}
\label{stapsi}
W^s\bigl( (x,0,0);X_{\widetilde{\Psi}_{H^*}}) &=& W^s\bigl( x; - \nabla^{H^1} \Psi_{H^*}) \times \mathbb{H}^+ \times \{0\}, \\
\label{unstapsi}
W^u\bigl( (x,0,0); X_{\widetilde{\Psi}_{H^*}}) &=& W^u\bigl( x; - \nabla^{H^1} \Psi_{H^*}) \times  \mathbb{H}^- \times \R^{2n},
\end{eqnarray}
for every critical point $x$ of $\Psi_{H^*}$. In particular, the intersection
\[
W^u\bigl( (x,0,0); X_{\widetilde{\Psi}_{H^*}}) \cap W^s\bigl( (y,0,0);X_{\widetilde{\Psi}_{H^*}})
\]
is given by
\[
\Bigl( W^u\bigl( x; - \nabla^{H^1} \Psi_{H^*}) \cap W^s\bigl( y; - \nabla^{H^1} \Psi_{H^*}) \Bigr) \times \{0\} \times \{0\},
\]
and the Morse complex of $\widetilde{\Psi}_{H^*}$ with respect to $X_{\widetilde{\Psi}_{H^*}}$ is naturally identified with the Morse complex of $\Psi_{H^*}$ with respect to $-\nabla^{H^1} \Psi_{H^*}$ (here we are pretending that the Morse complex of $\Psi_{H^*}$ is well-defined, without the need of performing the finite dimensional reduction that we have introduced in Section \ref{sec:Morse}). 

The final observation is that the extended functionals $\widetilde{\Phi}_H$ and $\widetilde{\Psi}_{H^*}$ are, up to a change of variables, the same functional, and hence must have isomorphic Morse complexes. Indeed, a simple computation shows that
\begin{equation}
\label{idegamma}
\widetilde{\Psi}_{H^*}= \widetilde{\Phi}_H \circ \Gamma,
\end{equation}
where the map
\[
\begin{split}
\Gamma: C^{\infty}_0(\T,\R^{2n}) \times C^{\infty}_0(\T,\R^{2n}) \times \R^{2n} \rightarrow C^{\infty}(\T,\R^{2n}) \times C^{\infty}_0(\T,\R^{2n}), \\
\Gamma(x,y,z) := \bigl( x + y + z + \zeta(x+y), x + J_0 \Pi \nabla H(x+y+\zeta(x+y)) \bigr)
\end{split}
\]
is a diffeomorphism, with inverse
\[
\Gamma^{-1}(x,y) = \bigl( y - J_0 \Pi \nabla H(x+\zeta(x)), x - \overline{x} - y + J_0 \Pi \nabla H(x+\zeta(x)), - \zeta(x) \bigr).
\]
Here, $\overline{x}\in \R^{2n}$ denotes the average of the loop $x\in C^{\infty}(\T,\R^{2n})$. 

Thanks to (\ref{idegamma}), an isomorphism from the Morse complex of $\widetilde{\Psi}_{H^*}$ - which is naturally identified with the Morse complex of $\Psi_{H^*}$ - to the Morse complex of $\widetilde{\Phi}_H$ -  which is naturally identified with the Floer complex of $H$ - can be defined by looking at the following spaces
\[
\Gamma\Bigl( W^u\bigl( (x,0,0); X_{\widetilde{\Psi}_{H^*}}\bigr) \Bigr) \cap W^s\bigl( (y,0); X_{\widetilde{\Phi}_H}),
\]
for every critical point $x$ of $\Psi_{H^*}$ and $y$ of $\Phi_H$. By the identities (\ref{staphi}) and (\ref{unstapsi}), the above space is given by the traces at $s=0$ of the solutions $u$ of the Floer equation (\ref{floerappB}) on the positive half-cylinder $[0,+\infty) \times \T$ that are asymptotic to $y$ for $s\rightarrow +\infty$ and satisfy the boundary condition
\[
u(0,\cdot) \in \pi^{-1}\bigl( W^u(x;-\nabla^{H^1} \Psi_{H^*}) \bigr)  + \mathbb{H}^-,
\]
where $\pi: C^{\infty}(\T,\R^{2n}) \rightarrow C^{\infty}_0(\T,\R^{2n})$ denotes the canonical projection $\pi(x) = x - \overline{x}$. Up to replacing the functional $\Psi_{H^*}$ with its finite dimensional reduction $\psi_{H^*}$, this is precisely the hybrid problem that we have considered in order to construct the isomorphism $\Theta$ of our main theorem.

The above heuristic considerations also show which hybrid problem one would have to consider in order to define directly an isomorphism from the Floer complex of $H$ to the Morse complex of $\Psi_{H^*}$. Indeed, the relevant space is in this case
\[
W^u\bigl( (x,0); X_{\widetilde{\Phi}_H} \bigr) \cap \Gamma\Bigl( W^s\bigl( (y,0,0); X_{\widetilde{\Psi}_{H^*}}\bigr) \Bigr)
\]
for $x\in \mathrm{crit}\, \Phi_H$ and $y\in \mathrm{crit}\, \Psi_{H^*}$, and the identities (\ref{unstaphi}) and (\ref{stapsi}) show that one should look at solutions $u$ of the Floer equation (\ref{floerappB}) on the negative half-cylinder $(-\infty,0]\times \T$ that satisfy the boundary conditions
\[
\begin{split}
\int_{\T} \nabla H_t(u(0,t))\, dt &= 0, \\
- J_0 \Pi \nabla H(u(0,\cdot)) &\in W^s(y;-\nabla^{H^1} \Psi_{H^*}), \\
u(0,\cdot) - \overline{u(0,\cdot)} +  J_0 \Pi  \nabla H(u(0,\cdot)) &\in \mathbb{H}^+.
\end{split}
\]
Since the fact that the chain map $\Theta$ is an isomorphism can be proven in a much simpler way, we shall not elaborate on the above hybrid problem any further.


\providecommand{\bysame}{\leavevmode\hbox to3em{\hrulefill}\thinspace}
\providecommand{\MR}{\relax\ifhmode\unskip\space\fi MR }
\providecommand{\MRhref}[2]{%
  \href{http://www.ams.org/mathscinet-getitem?mr=#1}{#2}
}
\providecommand{\href}[2]{#2}

\end{document}